\theoremstyle{plain}
\newtheorem{pretheo}{Theorem}[section]
\newtheorem{preassu}[pretheo]{Assumption}
\newtheorem{precoro}[pretheo]{Corollary}
\newtheorem{predefi}[pretheo]{Definition}
\newtheorem{preexam}[pretheo]{Example}
\newtheorem{prelemm}[pretheo]{Lemma}
\newtheorem{preprop}[pretheo]{Proposition}
\newtheorem{prerema}[pretheo]{Remark}
\newenvironment{theo}{\begin{pretheo}}{\end{pretheo}}
\newenvironment{defi}{\begin{predefi}}{\end{predefi}}
\newenvironment{lemm}{\begin{prelemm}}{\end{prelemm}}
\newenvironment{prop}{\begin{preprop}}{\end{preprop}}
\newenvironment{rema}{\begin{prerema}\rm}{\end{prerema}}
\DeclareMathOperator{\di}{div}
\DeclareMathOperator{\Di}{Div}
\newcommand{\intd}{\,d}
\newcommand{\supp}{{\rm supp}\,}
\newcommand{\loc}{{\rm loc}}
\newcommand{\pa}{\partial}
\newcommand{\wh}[1]{\widehat{#1}}
\newcommand{\wt}[1]{\widetilde{#1}}
\newcommand{\tp}[1]{{}^{\mathsf{T}}#1}
\newcommand{\BBA}{\mathbb{A}}
\newcommand{\BBB}{\mathbb{B}}
\newcommand{\BBF}{\mathbb{F}}
\newcommand{\BBG}{\mathbb{G}}
\newcommand{\BBH}{\mathbb{H}}
\newcommand{\BBI}{\mathbb{I}}
\newcommand{\BBJ}{\mathbb{J}}
\newcommand{\BBK}{\mathbb{K}}
\newcommand{\BBM}{\mathbb{M}}
\newcommand{\Ba}{\mathbf{a}}
\newcommand{\Bb}{\mathbf{b}}
\newcommand{\Bd}{\mathbf{d}}
\newcommand{\Be}{\mathbf{e}}
\newcommand{\Bf}{\mathbf{f}}
\newcommand{\Bg}{\mathbf{g}}
\newcommand{\Bh}{\mathbf{h}}
\newcommand{\Bn}{\mathbf{n}}
\newcommand{\Bu}{\mathbf{u}}
\newcommand{\Bv}{\mathbf{v}}
\newcommand{\Bw}{\mathbf{w}}
\newcommand{\Bx}{\mathbf{x}}
\newcommand{\By}{\mathbf{y}}
\newcommand{\Bz}{\mathbf{z}}
\newcommand{\BC}{\mathbf{C}}
\newcommand{\BD}{\mathbf{D}}
\newcommand{\BF}{\mathbf{F}}
\newcommand{\BG}{\mathbf{G}}
\newcommand{\BH}{\mathbf{H}}
\newcommand{\BI}{\mathbf{I}}
\newcommand{\BK}{\mathbf{K}}
\newcommand{\BM}{\mathbf{M}}
\newcommand{\BN}{\mathbf{N}}
\newcommand{\BR}{\mathbf{R}}
\newcommand{\BS}{\mathbf{S}}
\newcommand{\BT}{\mathbf{T}}
\newcommand{\BU}{\mathbf{U}}
\newcommand{\BV}{\mathbf{V}}
\newcommand{\CA}{\mathcal{A}}
\newcommand{\CB}{\mathcal{B}}
\newcommand{\CC}{\mathcal{C}}
\newcommand{\CD}{\mathcal{D}}
\newcommand{\CE}{\mathcal{E}}
\newcommand{\CF}{\mathcal{F}}
\newcommand{\CG}{\mathcal{G}}
\newcommand{\CH}{\mathcal{H}}
\newcommand{\CI}{\mathcal{I}}
\newcommand{\CJ}{\mathcal{J}}
\newcommand{\CK}{\mathcal{K}}
\newcommand{\CL}{\mathcal{L}}
\newcommand{\CM}{\mathcal{M}}
\newcommand{\CN}{\mathcal{N}}
\newcommand{\CP}{\mathcal{P}}
\newcommand{\CR}{\mathcal{R}}
\newcommand{\CS}{\mathcal{S}}
\newcommand{\CT}{\mathcal{T}}
\newcommand{\CU}{\mathcal{U}}
\newcommand{\Fa}{\mathfrak{a}}
\newcommand{\Fb}{\mathfrak{b}}
\newcommand{\Fc}{\mathfrak{c}}
\newcommand{\Fd}{\mathfrak{d}}
\newcommand{\Fe}{\mathfrak{e}}
\newcommand{\Fg}{\mathfrak{g}}
\newcommand{\Fm}{\mathfrak{m}}
\newcommand{\Fn}{\mathfrak{n}}
\newcommand{\Fp}{\mathfrak{p}}
\newcommand{\Fq}{\mathfrak{q}}
\newcommand{\Fr}{\mathfrak{r}}
\newcommand{\SSF}{\mathsf{F}}
\newcommand{\SSG}{\mathsf{G}}
\newcommand{\SSH}{\mathsf{H}}
\newcommand{\SSK}{\mathsf{K}}
\newcommand{\SSN}{\mathsf{N}}
\newcommand{\SSR}{\mathsf{R}}
\newcommand{\bdry}{{\BR_0^3}}
\newcommand{\lhs}{{\BR_-^3}}
\newcommand{\al}{\alpha}
\newcommand{\ga}{\gamma}
\newcommand{\de}{\delta}
\newcommand{\ep}{\varepsilon}
\newcommand{\te}{\theta}
\newcommand{\ka}{\kappa}
\newcommand{\la}{\lambda}
\newcommand{\si}{\sigma}
\newcommand{\ph}{\varphi}
\newcommand{\Ga}{\Gamma}
\newcommand{\De}{\Delta}
\newcommand{\Te}{\Theta}
\newcommand{\La}{\Lambda}
\newcommand{\Si}{\Sigma}
\newcommand{\Om}{\Omega}
\numberwithin{equation}{section} 
\newcommand{\pr}{\prime}
\begin{document}
\title[Free boundary problem for the Navier-Stokes system]
{On the global wellposedness of free boundary 
problem for the Navier-Stokes system with surface tension}

\author[H. Saito]{Hirokazu Saito}
\address[H. Saito]{Graduate School of Informatics and Engineering,
The University of Electro-Communications,
5-1 Chofugaoka 1-chome, Chofu, Tokyo 182-8585, Japan}
\email{hsaito@uec.ac.jp}


\author[Y. Shibata]{Yoshihiro Shibata}
\address[Y. Shibata]{Professor Emeritus of Waseda University;
adjunct faculty member
in the Department of Mechanical Engineering and Materials Science, University of Pittsburgh}
\email{yshibata325@gmail.com}


\subjclass[2010]{Primary: 35Q30; Secondary: 76D05.}

\keywords{Global wellposedness, Large-time behavior,
Navier-Stokes equations, Free boundary problem, Surface tension.}

\thanks{This work was supported by JSPS KAKENHI Grant Numbers JP17K14224,  JP17H01097.}



\begin{abstract}
The aim of this paper is to show the global wellposedness of the Navier-Stokes equations,
including surface tension and gravity, with a free surface in an unbounded domain such as bottomless ocean.
In addition, it is proved that the solution decays polynomially as time $t$ tends to infinity.
To show these results, we first use the Hanzawa transformation
in order to reduce the problem in a time-dependent domain $\Omega_t\subset\mathbf{R}^3$, $t>0$,
to a problem in the lower half-space $\mathbf{R}_-^3$.
We then establish some time-weighted estimate of solutions, 
in an $L_p$-in-time and $L_q$-in-space setting, for the linearized problem around the trivial steady state
with the help of $L_r\text{-}L_s$ time decay estimates of semigroup.
Next, the time-weighted estimate,
combined with the contraction mapping principle, shows that
the transformed problem in $\mathbf{R}_-^3$ admits 
a global-in-time solution in the $L_p\text{-}L_q$ setting and that the solution decays polynomially as time $t$ tends to infinity
under the assumption that $p$, $q$ satisfy the conditions:
$2<p<\infty$, $3<q<16/5$, and $(2/p)+(3/q)<1$.
Finally, we apply the inverse transformation of Hanzawa's one to the solution in $\mathbf{R}_-^3$
to prove our main results mentioned above for the original problem in $\Omega_t$.
Here we want to emphasize that
it is not allowed to take $p=q$ in the above assumption about $p$, $q$,
which means that the different exponents $p$, $q$
of $L_p\text{-}L_q$ setting play an essential role in our approach.
\end{abstract}

\maketitle



\section{Introduction}\label{sec1}
\subsection{Introduction}\label{subsec1_1}
This paper is concerned with the global wellposedness and large-time behavior
of solutions for the Navier-Stokes equations,
including surface tension and gravity,
with a free surface in an unbounded domain such as bottomless ocean.
This system describes the motion of a viscous incompressible fluid 
in $\Om_t=\{x=(x_1,x_2,x_3) \mid x' = (x_1,x_2)\in\BR^2, x_3<h(x',t)\}$
for $t>0$ and for an unknown scalar function $h=h(x',t)$.
To be precise, the problem is stated as follows:
We are given a scalar function $h_0=h_0(x')$ as well as 
an initial velocity field $\Bu_0=\Bu_0(x)=\tp(u_{01}(x),u_{02}(x),u_{03}(x))$ of the fluid,
where $\tp\BM$ denotes the transpose of matrix $\BM$. 
Let $\Om_0=\{x=(x_1,x_2,x_3) \mid x'=(x_1,x_2)\in\BR^2, x_3<h_0(x') \}$,
which is a region filled with the fluid at $t=0$.
We wish to find for each $t\in(0,\infty)$
a velocity field $\Bu=\Bu(x,t)=\tp(u_1(x,t),u_2(x,t),u_3(x,t))$ of the fluid,
a pressure field $\Fp=\Fp(x,t)$ of the fluid,
and a scalar function $h=h(x',t)$ so that
\begin{equation}\label{NS1}
\left\{\begin{aligned}
\rho\left(\pa_t\Bu+(\Bu\cdot\nabla)\Bu\right)
&= \Di\BT(\Bu,\Fp), && x\in\Om_t, \\
\di\Bu &= 0, && x\in\Om_t, \\
\BT(\Bu,\Fp)\Bn_t &= (c_\si \ka_t-c_g h -\Fp_0)\Bn_t, && x\in\Ga_t, \\
\pa_t h - u_3 &= -u_1\pa_1 h-u_2\pa_2 h, && x\in\Ga_t, \\
\Bu|_{t=0} &= \Bu_0, && x\in\Om_0,  \\
h|_{t=0} &= h_0, && x'\in \BR^2, 
\end{aligned}\right.
\end{equation}
together with
\begin{align}
\Om_t &=\{x=(x_1,x_2,x_3) \mid x'=(x_1,x_2)\in\BR^2, x_3<h(x',t)\}, \label{Omega_t} \\
\Ga_t &= \{x=(x_1,x_2,x_3) \mid x'=(x_1,x_2)\in\BR^2, x_3=h(x',t)\}. \label{Gamma_t}
\end{align}

Here, $\rho$, $c_g$, and $c_\si$ are positive constants
describing the density of the fluid, the gravitational acceleration,
and the surface tension coefficient of the fluid, respectively;
$\Fp_0$ is a constant that denotes the atmospheric pressure; 
$\BT(\Bu,\Fp)=\mu\BD(\Bu) -\Fp\BI$ is the stress tensor,
where $\mu$ is a positive constant describing the viscosity coefficient of the fluid,
$\BI$ the $N\times N$ identity matrix,
and $\BD(\Bu)=\nabla\Bu+\tp\nabla\Bu$ the doubled deformation tensor;
$\Bn_t$ is the unit outer normal to $\Ga_t$;
$\ka_t$ denotes the doubled mean curvature of $\Ga_t$ that is negative when $\Om_t$ is convex
in a neighborhood of $x\in\Ga_t$.
Here and subsequently, we use the following symbols for differentiations:
Let $f=f(x)$, $\Bg=\tp(g_1(x),g_2(x),g_3(x))$, and $\BM=(M_{ij}(x))$ be
a scalar-, a vector-, and a $3\times 3$ matrix-valued function 
defined in a domain of $\BR^3$.
Then, for $\pa_j = \pa/\pa x_j$, 
\begin{align*}
&\nabla f = \tp(\pa_1 f,\pa_2 f,\pa_3 f), \quad
\De f = \sum_{j=1}^3\pa_j^2 f, \quad
\De \Bg = \tp(\De g_1,\De g_2, \De g_3),  \\
&\di\Bg = \sum_{j=1}^3\pa_j g_j,
~\nabla\Bg = 
\begin{pmatrix}
\pa_1 g_1 & \pa_2 g_1 & \pa_3 g_1 \\
\pa_1 g_2 & \pa_2 g_2 & \pa_3 g_2 \\
\pa_1 g_3 & \pa_2 g_3 & \pa_3 g_3
\end{pmatrix},
~\nabla^2 \Bg = \{\pa_j\pa_k g_l \mid j,k,l=1,2,3\}, \\
&(\Bg\cdot\nabla)\Bg = \tp\bigg(\sum_{j=1}^3g_j\pa_j g_1,
\sum_{j=1}^3g_j\pa_j g_2,\sum_{j=1}^3g_j\pa_j g_3\bigg), \\
&\Di\BM = \tp\bigg(\sum_{j=1}^3 \pa_j M_{1j},
\sum_{j=1}^3 \pa_j M_{2j},\sum_{j=1}^3 \pa_j M_{3j}\bigg).
\end{align*}
It especially holds that
\begin{equation}\label{DivT}
	\Di\BT(\Bu,\Fp) = \mu (\De\Bu + \nabla\di\Bu)-\nabla\Fp.
\end{equation}

The key idea of this paper is to use an $L_p$-in-time and $L_q$-in-space setting
with different exponents $p$, $q$ in order to show the global wellposedness
and large-time behavior of solutions to System \eqref{NS1}.
The advantage of the different exponents is that
we can choose a $p$ large enough for a fixed $q>3$
to guarantee the condition $(2/p)+(3/q)<1$,
which yields some important properties such as
embeddings of function spaces (cf. Lemma \ref{lemm:embed} below).
In fact, we choose $p$, $q$ as follows:
First, we take a $q\in (3,16/5)$ so that
\begin{equation}\label{160518_1}
	\al> 1, \quad
	\|\Bu(t)\|_q\|\nabla\Bu(t)\|_q = O(t^{-\al}) \quad\text{as $t\to\infty$,}
\end{equation}
which enables us to control the convection term $(\Bu\cdot\nabla)\Bu$
in the fixed point argument.
Next, we choose a $p\in(2,\infty)$ large enough to satisfy $(2/p)+(3/q)<1$.
Consequently, we have the global $L_p$-integrability
of e.g. $\|\Bu(t)\|_q$, $\|\nabla\Bu(t)\|_q$ with respect to time $t$,
because they decay as time $t$ tends to infinity
and $p$ is a sufficiently large positive number.

The above choice of $q$ in \eqref{160518_1}
plays an essential role when solutions decay {\it polynomially}
although it is not so important when solutions decay {\it exponentially}.
In other words, a suitable choice of $p$, $q$ is crucial
in the case of {\it unbounded domains},
because we can only expect polynomial decay of solutions in such cases.

One of main tasks in the $L_p\text{-}L_q$ approach
is to establish a time-weighted estimate of solutions 
for the linearized problem, associated with System \eqref{NS1},
around the trivial steady state $(\Bu,\Fp, h)=(0,\Fp_0,0)$
under the assumption that
\begin{equation}\label{pq:lin}
	2<p<\infty, \quad 3<q<4, \quad p\left(\frac{2}{q}-\frac{1}{2}\right)>1, \quad \frac{2}{p}+\frac{3}{q}<1
\end{equation}
(cf. Theorem \ref{theo:main1} below for details).
The restriction $q<4$ arises from $L_r\text{-}L_s$ estimates
of analytic semigroup,
with $1 < s \leq 2 \leq r < \infty$ and $(r,s)\neq (2,2)$,
associated with the linearized problem\footnote{For more details of the $L_r\text{-}L_s$ estimates,
we refer to \cite[Chapter 3]{Saito15}, \cite{SaS1}.}.
In fact, we make use of them for $(r,s)=(q,q/2)$ and for $(r,s)=(2,q/2)$ in the present paper,
which requires the above restriction $q<4$.
On the other hand, the condition $p(2/q-1/2)>1$ guarantees 
the global $L_p$-integrability of $\|h(t)\|_{L_2(\BR^2)}$ with respect to time $t$. 
Then, by the time-weighted estimate and
the contraction mapping principle,
we have 
a global-in-time solution of System \eqref{NS1} in the $L_p\text{-}L_q$ setting
under the assumption\footnote{The condition \eqref{pq} below implies \eqref{pq:lin}.} that
\begin{equation}\label{pq}
	2<p<\infty, \quad 3<q<\frac{16}{5}, \quad \frac{2}{p}+\frac{3}{q}<1
\end{equation}
(cf. 
Theorems \ref{theo:main2} and \ref{theo:main3} below for details),
where the further restriction $q<16/5$ is imposed to 
guarantee the condition \eqref{160518_1}.  
Furthermore, the solution decays polynomially as time $t$ tends to infinity
(cf. Theorem \ref{theo:main4} below for details).

Here, we want to emphasize that
it is not allowed to take $p=q$ in \eqref{pq} (and also in \eqref{pq:lin}),
which suggests to us that
$L_p$-settings in both time and space are not appropriate
to proving the global wellposedness for System \eqref{NS1}.

\subsection{Summary of main results}\label{subsec1_2}
In this subsection, we sketch out main results of this paper.

The symbol $\BN$ stands for the set of all natural numbers,
and $\BN_0=\BN\cup\{0\}$.
Let $1 \leq q \leq \infty$ and $m\in\BN$,
and let $\Om$ be a domain of $\BR^n$ for $n=2$ or $n=3$.
Then, $L_q(\Om)$ and $W_q^m(\Om)$ denote
the Lebesgue spaces and the Sobolev spaces on $\Om$, respectively,
and also their norms are written as
$\|\cdot\|_{L_q(\Om)}$ and $\|\cdot\|_{W_q^m(\Om)}$, respectively.
We set $W_q^0(\Om) = L_q(\Om)$.
In addition, $W_q^s(\Om)$ denotes the Sobolev-Slobodecki\u\i{} spaces  on $\Om$,
for $1 \leq q < \infty$ and $s\in(0,\infty)\setminus\BN$, with norm
\begin{equation*}
	\|f\|_{W_q^s(\Om)} =
		\|f\|_{W_q^{[s]}(\Om)} + \sum_{|\al|=[s]}
		\left\{\int_\Om\int_\Om
		\left(\frac{|\pa_x^\al f(x)-\pa_y^\al f(y)|}{|x-y|^{s-[s]}}\right)^q
		\frac{\intd x \intd y}{|x-y|^n}\right\}^{1/q},
\end{equation*}
where $[s]$ is the largest integer less than $s$ and
\begin{equation*}
	\pa_x^\al f(x) =
		\frac{\pa^{|\al|}}{\pa x_1^{\al_1}\dots\pa x_n^{\al_n}}f(x)
		\quad \text{for $\al=(\al_1,\dots,\al_n)\in \BN_0^n$, $|\al|=\al_1+\dots+\al_n$.}
\end{equation*}
Let $(\cdot\,,\cdot)_{\te,p}$ be the real interpolation functor
with $0<\te<1$ and $1<p<\infty$ (cf. e.g. \cite[Definition 1.37]{DK13}).
For $1<p,q<\infty$ and $s>0$,
following \cite[Section 34]{Tartar07},
we define the Besov spaces $B_{q,p}^s(\Om)$ as
\begin{equation*}
	B_{q,p}^s(\Om) =
		(W_q^{s_1}(\Om),W_q^{s_2}(\Om))_{\te,p}
		\quad \text{with $s=(1-\te)s_1+\te s_2$,}
\end{equation*}
where $s_1$, $s_2$, and $\te$ are non-negative real numbers
satisfying $s_1\neq s_2$ and $0<\te<1$.
To treat the initial data $\Bu_0$, $h_0$ of \eqref{NS1},
we introduce the following function spaces:
For $p$, $q$ satisfying \eqref{pq} and $\bar{q}=q/2$,
\begin{align*}
	&\BBI_{q,p} =
		B_{q,p}^{3-(1/p)-(1/q)}(\BR^2)
		\cap B_{2,p}^{3-(1/p)-(1/2)}(\BR^2) \cap L_{\bar{q}}(\BR^2), \\
	&\BBJ_{q,p,\te}(h_0) =  
		B_{q,p}^{2-(2/p)}(\Om_0)^3 \cap B_{q(\te),p}^{2-(2/p)}(\Om_0)^3 
		\quad \text{for $h_0\in\BBI_{q,p}$,}
\end{align*}
where $q(\te)$ is the dual exponent of $2(1+1/(1-\te))$, i.e.
\begin{equation}\label{q_theta}
	q(\te) = 2\left(1-\frac{1}{3-\te}\right), \quad 0\leq \te\leq 1.
\end{equation}
Here and subsequently, $X^m$ stands for the $m$-product of Banach space $X$ with norm $\|\cdot\|_X$,
and we also denote the norm of $X^m$ by $\|\cdot\|_X$ for short, i.e. 
\begin{equation*}
	\|\Bf\|_X = \sum_{j=1}^m\|f_j\|_X \quad \text{for $\Bf=\tp(f_1,\dots,f_m)\in X^m$.}
\end{equation*}
In what follows,
we denote $(a/p)\pm(b/q)$ by $a/p\pm b/q$, respectively,
for any real numbers $a,b,p,q$ if there is no confusion.

Then, the main results of this paper are summarized as follows
(cf. Subsection \ref{subsec2_4} below for details):

\begin{theo}[global wellposedness]\label{theo:sec1_1}
Let $p$, $q$ be exponents satisfying \eqref{pq} and $0<\te< 1$.
For any initial data $h_0\in\BBI_{q,p}$ and $\Bu_0\in\BBJ_{q,p,\te}(h_0)$,
satisfying a smallness condition and compatibility conditions,
System \eqref{NS1} admits a unique global-in-time  solution $(\Bu,\Fp,h)$.
\end{theo}

\begin{theo}[large-time behavior]\label{theo:sec1_2}
Let $p$, $q$ be exponents satisfying \eqref{pq} and $\bar{q}=q/2$,
and let $0<\te< 1$.
Suppose that $(\Bu,\Fp,h)$ is the solution of System \eqref{NS1},
with the initial data $h_0\in \BBI_{q,p}$ and $\Bu_0\in\BBJ_{q,p,\te}(h_0)$,
obtained in Theorem $\ref{theo:sec1_1}$.
For $1 \leq s_1\leq 2 \leq s_2 <\infty$, we set
\begin{equation}\label{mn}
	 \Fm(s_1,s_2) = 
	 	\left(\frac{1}{s_1}-\frac{1}{s_2}\right)
	 	+ \frac{1}{2}\left(\frac{1}{2}-\frac{1}{s_2}\right), \quad
	\Fn(s_1,s_2) =
		\frac{3}{2}\left(\frac{1}{s_1}-\frac{1}{s_2}\right).
\end{equation}
It then holds that, for $2\leq r\leq q$,
\begin{alignat*}{3}
	&\|\Bu(t)\|_{L_r(\Om_t)}
		= O\left(t^{-\Fm(\bar{q},r)}\right) , \quad
	&&\|\nabla\Bu(t)\|_{L_r(\Om_t)}
		= O\left(t^{-\Fn\left(\bar{q},r\right)-\frac{1}{8}}\right), \quad
	&&\|h(t)\|_{L_r(\BR^2)}
		= O\left(t^{-\left(\frac{1}{\bar{q}}-\frac{1}{r}\right)}\right), \\
	&\|\nabla' h(t)\|_{L_r(\BR^2)}
		= O\left(t^{-\Fm\left(\bar{q},r\right)-\frac{1}{4}}\right), \quad
	&&\|\pa_t h(t)\|_{L_r(\BR^2)}
		= O\left(t^{-\Fm\left(\bar{q},r\right)}\right),
\end{alignat*}
as time $t$ tends to infinity, where $\nabla'h = \tp(\pa_1h,\pa_2h)$.
\end{theo}

\subsection{Previous researches and our approach}\label{subsec1_3}
In what follows, an $L_p\text{-}L_q$ setting means
a function space based on the $L_p$-in-time and $L_q$-in-space framework.

Beale \cite{Beale81} is the pioneer who treats mathematically
the Navier-Stokes equations with a free surface
in an unbounded domain such as ocean.
To be precise, he considered, for a given smooth function $b=b(x')$,
the case where
\begin{equation*}
	\Om_t = \{(x',x_3) \mid x'\in\BR^2, -b(x')<x_3<h(x',t)\}
\end{equation*}
instead of \eqref{Omega_t},
together with a further boundary condition: $\Bu = 0$
on the solid surface
$S = \{(x',x_3) \mid x'\in\BR^2, x_3=-b(x')\}$.
Here note in \cite{Beale81} that surface tension is not taken into account
although gravity works.
He first introduced the so-called Lagrangian transformation
to transform the time-dependent domain $\Om_t$
into a fixed domain $\Om$ independent of time $t$.
Then, for the transformed problem in $\Om$,
resolvent estimates of the Stokes operator
were investigated in the $L_2$-framework.
Next, the resolvent estimates, combined with
a Helmholtz-type decomposition and
the Fourier transform with respect to time $t$,
yield estimates of solutions to a linear time-dependent problem
with homogeneous (boundary and initial) data
and with divergence free condition.
Furthermore, the last estimates were extended to
the case where the problem is fully inhomogeneous.
Thus, the contraction mapping principle,
together with the above linear analysis,
proved the local existence of solutions
to the transformed problem in $\Om$.
Finally, applying the inverse transformation of Lagrangian one to
the solutions showed in an $L_2\text{-}L_2$ setting 
the local existence of solutions to the original problem in $\Om_t$.

Another paper \cite{Beale83} due to Beale proved
the global existence and uniqueness of solutions
to the Navier-Stokes equations,
including both surface tension and gravity, with a free surface
in the same domain as in \cite{Beale81}. 
The result is also in an $L_2\text{-}L_2$ setting.
He pointed out in the paper that
surface tension plays an essential role to show the global existence of solutions 
(cf. also \cite[page 363]{Beale81}),
and used a transformation different from \cite{Beale81}
in order to derive the fixed domain problem.

There is a study due to Sylvester \cite{Sylvester90}
in a manner similar to Beale's work \cite{Beale83}.
The paper treated the same problem as in \cite{Beale81},
i.e. without surface tension,
and proved the global existence and uniqueness of solutions 
in an $L_2\text{-}L_2$ setting.

Allain \cite{Allain87} proved in an $L_2\text{-}L_2$ setting
the local existence of solutions to the same problem as in \cite{Beale83},
but it is in two space dimensions.
He used a technique different from Beale's one in order to analyze linearized problems.
More precisely,
he constructed a weak solution of the linearized problem 
by using a Galerkin approximation,
and it was proved that
the weak solution satisfies in a strong sense the linearized problem 
if given data are sufficiently regular.
After a while, the result of local existence 
was extended to three space dimensions by Tani \cite{Tani96}.

Tani and Tanaka \cite{TT95} considered both situations (with and without surface tension),
and proved in the Lagrangian coordinates 
the global wellposedness 
by using a continuation argument of local solutions based on a priori estimates
in an $L_2\text{-}L_2$ setting.
They relaxed regularity assumptions about the solid surface $S$ and
initial data $\Bu_0$, $h_0$
compared to \cite{Beale83}, \cite{Sylvester90}.

Bae \cite{Bae11} proved the global existence of unique solutions to
the same problems as in \cite{Beale83} with flat bottom surface
(i.e. $b$ is a positive constant) 
by introducing a priori estimates on the time-dependent domain $\Om_t$.
The result is in an $L_2\text{-}L_2$ setting,
whereas his solutions preserve initial regularity in contrast to \cite{Beale83}, \cite{TT95}.

Beale and Nishida \cite{BN85} showed
large-time behavior of the solutions obtained in \cite{Beale83}
in the case where the bottom surface $S$ is flat. 
In such a case, by using the partial Fourier transform
with respect to the tangential variables $x'=(x_1,x_2)$,
one can calculate in the Fourier space
asymptotic expansion of eigenvalues of the Stokes operator
with respect to the dual variables $\xi'=(\xi_1,\xi_2)$ of $x'$.
By the expansion formula, they obtained decay properties of the Stokes semigroup
generated by the Stokes operator (cf. also Hataya \cite{Hataya11} for details),
and proved the large-time behavior mentioned above.
On the other hand, Hataya and Kawashima \cite{HK09} proved
large-time behavior of solutions to the same problem as in \cite{Beale81}
with flat bottom surface
in addition to the global existence of solutions in an $L_2\text{-}L_2$ setting.

Concerning large-time behavior of solutions,
there is also a series of papers due to Guo and Tice \cite{GT13a, GT13b}.
They concerned the case without surface tension
and proved polynomial decay of solutions in an $L_2$-setting
by introducing a two-tier energy method
that couples the boundedness of high-order energy to the decay of low-order energy.
In addition, this technique is applied to the viscous surface internal wave problem,
i.e. the two-phase problem of Navier-Stokes equations, in \cite{WTK14}.
We can find in \cite{WT12}
a nice table that summarizes stability characterization for the two-phase problem.

We next introduce results in $L_p\text{-}L_p$ settings 
or in $L_p\text{-}L_q$ settings more generally.
To this end, we here summarize one of typical approaches in
such settings as follows:
First, by some transformation,
the problem on the time-dependent domain $\Om_t$
is reduced to a fixed domain problem.
Next, we establish the maximal regularity theorem
for the linearized problem associated with the fixed domain problem.
Then, the contraction mapping principle,
combined with the maximal regularity theorem,
yields the local wellposedness 
of the fixed domain problem
in the maximal regularity framework.
Finally, applying the inverse transformation to the solutions
solves the original problem in $\Om_t$.
The following papers are also based on this approach.

Abels \cite{Abels05} proved, in the Lagrangian coordinates,
the local wellposedness 
of the same problem as in \cite{Beale81}
in the $L_p\text{-}L_p$ setting for $N<p<\infty$,
where $N\geq 2$ denotes the space dimension.
Especially, the maximal regularity theorem is stated in \cite[Theorem 3.2]{Abels05},
and is extended to the case of different exponents $p$, $q$ by Saito \cite{Saito15b}.

Pr$\ddot{{\rm u}}$ss and Simonett \cite{PS11} 
considered two-phase problems of the Navier-Stokes equations
with surface tension and gravity.
As a special case, the paper contains our one-phase problem \eqref{NS1}.
They proved the local wellposedness 
in the $L_p\text{-}L_p$ setting for $p>N+2$,
and proved the solutions are real analytic.
The result is extended to the case of non-Newtonian fluids by Hieber and Saito \cite{HS16}.
In \cite{PS11}, the Hanzawa transformation (cf. e.g. \cite{Hanzawa81}, \cite[Subsection 1.3.2]{PS16})
is introduced in order to
transform the time-dependent domain $\Om_t$ to a fixed domain independent of time $t$.
We also use the Hanzawa transformation in the present paper
to obtain a fixed domain problem.

Shibata \cite{Shibata16} proved, in the Lagrangian coordinates,
the local wellposedness 
of the Navier-Stokes equations with a free surface,  
including both surface tension and gravity,
in general domains 
in the $L_p\text{-}L_q$ setting
for $2<p<\infty$ and $N<q<\infty$.
The definition of general domains is given by \cite[Definitions 1.1-1.3]{Shibata16},
and our situation of this paper gives an example of them.

As long as we know, 
there has been no study of the global wellposedness of solutions to System \eqref{NS1}.
This motivated us to research
\eqref{NS1} from the viewpoint of 
the global wellposedness and large-time behavior of solutions.
Our approach is also based on the standard theory of
$L_p\text{-}L_q$ setting mentioned above,
but we additionally prove a time-weighted estimate of solutions
with the help of 
$L_r\text{-}L_s$ estimates of semigroup
associated with the linearized problem.
That estimate enables us to
show the global wellposedness 
of System \eqref{NS1}
in the $L_p\text{-}L_q$ setting with exponents $p$, $q$
satisfying \eqref{pq}
and to show large-time behavior of the solutions,
as was discussed in Subsection \ref{subsec1_1}.
This is completely different from all of the above approaches.

The next section first introduces the notation and function spaces 
that are used throughout this paper.
Secondly, we reduce System \eqref{NS1} to a problem in
the lower half-space $\lhs=\{x_3<0\}$
by using the Hanzawa transformation.
Thirdly, we introduce the definition of solutions System \eqref{NS1}. 
Fourthly, the main results of this paper are stated.
The last part of Section 2 gives the outline of this paper.

\section{Notation and main results}\label{sec2}
\subsection{Notation and function spaces}\label{subsec2_1}
In this subsection, we introduce the notation and function spaces that used throughout this paper.

Let $m,n\in\BN$ and $\Om$ be a domain of $\BR^n$.
For any $m$-vectors $\Ba = \tp(a_1,\dots,a_m)$, $\Bb=\tp(b_1,\dots,b_m)$,
we set $\Ba\cdot\Bb=\sum_{j=1}^m a_j b_j$ and
\begin{equation*}
	\Ba\otimes\Bb =
		\begin{pmatrix}
			a_1b_1 & \dots & a_1b_m \\
			\vdots & \ddots & \vdots \\
			a_m b_1 & \dots & a_m b_m 
		\end{pmatrix}.
\end{equation*}
On the other hand, for any $m$-vector functions $\Bf=\Bf(x)$ and $\Bg=\Bg(x)$ on $\Om$,
we set $(\Bf,\Bg)_\Om=\int_\Om \Bf(x)\cdot\Bg(x)\intd x$.

Let $X$ and $Y$ be Banach spaces with norms $\|\cdot\|_X$ and $\|\cdot\|_Y$, respectively.
Then, $\CL(X,Y)$ is the Banach space of all bounded linear operators from $X$ to $Y$,
and $\CL(X)=\CL(X,X)$. 
The symbol $X\hookrightarrow Y$ means that $X$ is continuously embedded into $Y$,
i.e. there is a positive constant $C$ such that
	$\|f\|_Y\leq C \|f\|_X$ for all $f\in X$.
For $1\leq p\leq \infty$, $L_p(\Om,X)$ and $W_p^m(\Om,X)$ 
denote the standard $X$-valued Lebesgue and Sobolev spaces on $\Om$, respectively.
Let $C(\Om,X)$ be the vector space of all $X$-valued continuous functions on $\Om$,
and let $BUC(\Om,X)$ be the Banach space of all $X$-valued uniformly continuous and bounded functions on $\Om$. 
In addition,
\begin{align*}
	C^m(\Om,X) 
		&=\{f\in C(\Om,X) \mid \pa_x^\al f(x) \in C(\Om,X) \text{ for $1\leq |\al| \leq m$}\}, \\
	BUC^m(\Om,X) 
		&=\{f\in BUC(\Om,X) \mid \pa_x^\al f(x) \in BUC(\Om,X) \text{ for $1\leq |\al| \leq m$}\}.
\end{align*}
If $X=\BR$ or $X=\BC$, then we denote $C(\Om,X)$, $BUC(\Om,X)$, $C^m(\Om,X)$, and $BUC^m(\Om,X)$
by $C(\Om)$, $BUC(\Om)$, $C^m(\Om)$, and $BUC(\Om)$, respectively, for simplicity.
On the other hand, 
$C_0^\infty(\Om)$ stands for the vector space of all $C^\infty$-functions $f$
such that $\supp f$ is compact and $\supp f\subset\Om$.

For $1<q<\infty$, let $L_{q,\loc}(\overline{\Om})$ be the vector space of all measurable functions $f$ on $\Om$
such that $f\in L_q(\Om\cap B)$ for any ball $B$ of $\BR^n$.
We then define $\wh W_q^1(\Om)$ as
\begin{equation*}
	\wh W_q^1(\Om) =
		\{f\in L_{q,\loc}(\overline{\Om}) \mid \|\nabla f\|_{L_q(\Om)}<\infty\}
\end{equation*}
with semi-norm $\|\cdot\|_{\wh W_q^1(\Om)}=\|\nabla\cdot\|_{L_q(\Om)}$.
Inductively, we set, for $l=2,3$, 
\begin{align*}
	&\wh W_q^l(\Om) = \{f\in \wh W_q^{l-1}(\Om) \mid \|\nabla^l f\|_{L_q(\Om)}<\infty\}, \quad
	\|\cdot\|_{\wh W_q^l(\Om)} 
		= \|\nabla \cdot\|_{L_q(\Om)} + \dots +\|\nabla^l\cdot\|_{L_q(\Om)}.
\end{align*}
In addition, $W_{q,0}^1(\lhs)$, $\wh {W}_{q,0}^1(\lhs)$, and $J_q(\lhs)$ are defined as
\begin{align*}
	&W_{q,0}^1(\lhs) = \{f\in W_q^1(\lhs) \mid f =0 \text{ on $\bdry$}\}, \quad
	\wh W_{q,0}^1(\lhs) = \{f\in \wh W_q^1(\lhs) \mid f =0 \text{ on $\bdry$}\}, \\
	&J_q(\lhs) = \{\Bf\in L_q(\lhs)^3 \mid (\Bf,\nabla\ph)_\lhs=0 \text{ for all $\ph\in \wh W_{q',0}^1(\lhs)$}\},
\end{align*}
where $q'=q/(q-1)$ and $\bdry=\{x_3=0\}$.
Furthermore,  we denote the dual space of $\wh W_{q',0}^1(\lhs)$ by $\wh W_q^{-1}(\lhs)$.

As solution spaces, we set, for an interval $I\subset \BR$,
\begin{equation*}
		W_{q,p}^{2,1}(\Om\times I) = W_p^1(I,L_q(\Om))\cap L_p(I, W_q^2(\Om)).
\end{equation*}
In addition, Bessel potential spaces are defined as follows:
Let 
\begin{equation}\label{160920_2}
	\CF_t[u](\tau) = \int_\BR e^{-i t \tau} u(t)\intd t, \quad
	\CF_\tau^{-1}[v](t)=\frac{1}{2\pi}\int_\BR e^{i t\tau} v(\tau)\intd \tau,
\end{equation}
and let $[\cdot,\cdot]_\te$ be the complex interpolation functor with $\te\in(0,1)$ (cf. e.g. \cite[Definition 1.38]{DK13}).
Then, for $1<p<\infty$ and $s\in\BR$,
\begin{align*}
	&H_p^s(\BR,X) = \{f\in \CS'(\BR,X) \mid \|f\|_{H_p^s(\BR,X)}<\infty\}, \\
	&\|f\|_{H_p^s(\BR,X)} = \|D_t^s f\|_{L_p(\BR,X)}, \quad (D_t^s f)(t) =  \CF_\tau^{-1}[(1+|\tau|^2)^{s/2}\CF_t[f](\tau)](t), 
\end{align*}
where $\CS'(\BR,X)=\CL(\CS(\BR),X)$ for the Schwartz space $S(\BR)$ of rapidly decreasing functions.
Furthermore, for $s\in(0,\infty)\setminus\BN$,
\begin{equation*}
	H_p^s(I,X) = [W_p^{[s]}(I,X), W_p^{[s]+1}(I,X)]_{s-[s]},
\end{equation*}
where $[s]$ is the largest integer smaller than $s$.
Note that the last two spaces coincide for $I=\BR$ (cf. e.g. \cite[Remark 1.57 (ii)]{DK13}).
We set
\begin{equation*}
	H_{q,p}^{1,1/2}(\Om\times I) = H_p^{1/2}(I,L_q(\Om))\cap L_p(I,W_q^1(\Om)).
\end{equation*}

Throughout this paper, the letter $C$ denotes generic constants and
$C_{a,b,c,\dots}$ means that the constant depends on the quantities $a,b,c,\dots$.
The values of $C$ and $C_{a,b,c,\dots}$ may change from line to line.

\subsection{Transformed problem in $\BR_-^3$}\label{subsec2_2}
In this subsection, we reduce System \eqref{NS1} to a problem in $\BR_-^3$
by using the Hanzawa transformation
under the assumption that there is a sufficiently regular solution $(\Bu,\Fp,h)$
of \eqref{NS1}-\eqref{Gamma_t} with a suitable initial data $(\Bu_0,h_0)$.
In what follows, we suppose that $\Fp_0=0$ and $\rho=1$ without loss of generality.

First, we define the harmonic extension of function $f=f(x')$
for $x'=(x_1,x_2)\in\BR^2$.
Let us denote the Fourier transform with respect to $x'$
and its inverse by
\begin{equation}\label{PFT}
	\wh{u}(\xi') 
		= \int_{\BR^2} e^{-i x'\cdot\xi'} u(x')\intd x', \quad
	\CF_{\xi'}^{-1}[v](\xi')
		= \frac{1}{(2\pi)^2}\int_{\BR^2}e^{ix'\cdot\xi'}v(\xi')\intd\xi',
\end{equation}
respectively. We then set, for $y=(y',y_3)=(y_1,y_2,y_3)\in\BR_-^3$,
\begin{equation}\label{har_ext}
	\CE(f)=[\CE f](y)
		= \CF_{\xi'}^{-1}\big[e^{|\xi'|y_3}\wh{f}(\xi')\big](y'),
\end{equation}
which is called the harmonic extension of $f$.
Then, $\Ga_t$ is represented as
\begin{align}\label{160601_1}
	\Ga_t 
		&= \{(y',h(y',t)) \mid y'=(y_1,y_2)\in\BR^2\}\\
		&= \{(y_1,y_2,y_3+[\CE h](y,t)) \mid y=(y_1,y_2,y_3)\in\BR_0^3\}
		 \quad (t>0) \notag.
\end{align}

Secondly, by using the harmonic extension,
we construct a suitable transformation. Let
\begin{equation}\label{Te_t}
	\Te_t(y)=(y_1,y_2,y_3+[\CE h](y,t)) \quad \text{for $y=(y_1,y_2,y_3)\in\BR_-^3$ and $t>0$.}
\end{equation}
If it holds that
\begin{equation*}
	\|\nabla\CE(h)\|_{L_\infty(\BR_+,L_\infty(\BR_-^3))}<c_0
\end{equation*}
for some positive constant $c_0\in(0,1/2)$, then
$\Te_t:\BR_-^3\to\Om_t$ is bijective for each $t>0$.
In fact, the existence of $c_0$ furnishes that
\begin{equation*}
	\frac{\pa x_3}{\pa y_3}
		= 1 + \frac{\pa}{\pa y_3} [\CE h](y,t)
		\geq 1-c_0 \geq \frac{1}{2},
\end{equation*}
which implies that
$x_3$ is an increasing function with respect to $y_3\in(-\infty,0)$.
This means that $\Te_t:\BR_-^3\to\Om_t$ is bijective
by the second representation of \eqref{160601_1}.
In what follows, we assume the existence  of $c_0$ in this subsection.
Thus, there is the inverse mapping $\Te_t^{-1}$ of $\Te_t$ from $\Om_t$ onto $\BR_-^3$.
Since the regularity of $h$ yields the smoothness of $\CE(h)$,
we see that $\Te_t^{-1}$ is smooth by the inverse function theorem.
Consequently, $\Te_t:\BR_-^3\to\Om_t$ is a diffeomorphism for each $t>0$.

Thirdly, we calculate derivatives associated with the change of variables $x=\Te_t(y)$.
Let $D_j = \pa/\pa y_j$ and $\pa_j=\pa/\pa x_j$ for $j=1,2,3$,
and let $\eta=\CE(h)$ for simplicity.
It then holds that
\begin{equation}\label{Jacobian}
	\frac{\pa x}{\pa y} = 
		\begin{pmatrix}
			1 & 0 & 0 \\
			0 & 1 & 0 \\
			D_1 \eta & D_2 \eta & 1+D_3 \eta  
		\end{pmatrix},  \quad
	\frac{\pa y}{\pa x}
		= \left(\frac{\pa x}{\pa y}\right)^{-1} = 
		\begin{pmatrix}
			1 & 0 & 0 \\
			0 & 1 & 0 \\
			-\frac{D_1\eta}{1+D_3\eta}
			& -\frac{D_2\eta}{1+D_3\eta}
			& \frac{1}{1+D_3\eta}
		\end{pmatrix}, 
\end{equation}
where $\pa x/\pa y$ and $\pa y/\pa x$ are Jacobian matrices.
For a sufficiently regular function $f(x,t)$, $x\in\Om_t$ and $t>0$,
we set $\bar{f}(y,t)=f(\Te_t(y),t)$.
Since $f(x,t)=\bar{f}(\Te_t^{-1}(x),t)$,
we see by the chain rule and \eqref{Jacobian} that
\begin{equation}\label{deriv}
	\pa_j f = 
		\left(D_j-\left(\frac{D_j \eta}{1+D_3  \eta}\right)D_3\right)\bar{f}, \quad
	\pa_j \pa_k f =
		\left(D_j D_k - \CD_{jk}(\eta)\right)\bar{f},
\end{equation}
where $\CD_{jk}(\eta)$ are second order differential operators defined as
\begin{align*}
	\CD_{jk}(\eta) =& 
		\frac{1}{(1+D_3 \eta)^3}
		\Big\{(D_j D_k\eta)(1+D_3\eta)^{2}-(D_k\eta)(D_j D_3\eta)(1+D_3\eta) \\
		&-(D_j\eta)(D_3 D_k\eta)(1+D_3\eta)+(D_j\eta)(D_k\eta)(D_3^2\eta)\Big\}D_3 \\
		&+\Big(\frac{D_k\eta}{1+D_3\eta}\Big)D_j D_3
		+\Big(\frac{D_j\eta}{1+D_3\eta}\Big)D_3 D_k
		-\frac{(D_j\eta)(D_k\eta)}{(1+D_3\eta)^2}D_3^2.
\end{align*}
In addition, it holds that
\begin{align*}
	\pa_t \bar{f}(y,t) &= \pa_t\{f(\Te_t(y),t)\}
		=  (\pa_t f)(\Te_t(y),t) + (\nabla f)(\Te_t(y),t) \cdot \pa_t\Te_t(y) \\
		&= (\pa_t f)(x,t) + (\pa_3 f)(x,t)\pa_t\eta(y,t),
\end{align*}
which, combined with the first identity of \eqref{deriv}, furnishes that
\begin{equation}\label{deriv:t}
	\pa_t f(x,t) = \pa_t \bar{f}(y,t)
		- \left(\frac{D_3\bar{f}(y,t)}{1+D_3\eta(y,t)}\right)\pa_t\eta(y,t).
\end{equation}

Fourthly, we derive a system of $\Bv=\Bv(y,t)=\Bu(\Te_t(y),t)$
and $\Fq=\Fq(y,t)=\Fp(\Te_t(y),t)$ from the system \eqref{NS1}.
Note that $h(x',t)=h(y',t)=\eta(y',0,t)$ and that,
by \eqref{har_ext}, \eqref{deriv}, and \eqref{deriv:t},
\begin{align}\label{deriv:h}
	&\pa_t h (x',t)=\pa_t h(y',t), \quad
	\pa_{x'}^{\al'}h(x',t) =D_{y'}^{\al'}h(y',t)=(D_{y'}^{\al'}\eta)(y',0,t)=D_{y'}^{\al'}(\eta(y',0,t)), \\
	&\text{where} \quad \pa_{x'}^{\al'}=\frac{\pa^{|\al'|}}{\pa x_1^{\al_1}\pa x_2^{\al_2}}, \quad
	D_{y'}^{\al'} = \frac{\pa^{|\al'|}}{\pa y_1^{\al_1}\pa y_2^{\al_2}} \notag
\end{align}
for any multi-index $\al'=(\al_1,\al_2)\in\BN_0^2$.
Set matrices $\BM_i(\eta)$ $(i=1,2,3)$ as 
\begin{equation*}
	\BM_1(\eta) =
		\begin{pmatrix}
			D_3 \eta & 0 & 0 \\
			0 & D_3 \eta & 0 \\
			-D_1 \eta & -D_2 \eta & 0
		\end{pmatrix}, \quad
	\BM_2(\eta) =
		\begin{pmatrix}
			0&0&-D_1 \eta \\
			0&0&-D_2 \eta \\
			0&0&0
		\end{pmatrix}, \quad
	\BM_3(\eta) =
		\begin{pmatrix}
			0 & 0 & D_1 \eta \\
			0 & 0 & D_2 \eta \\
			0 & 0 & D_3 \eta
		\end{pmatrix}.
\end{equation*}
We then have, by \eqref{deriv},
\begin{align}
	&\nabla_x =\frac{1}{1+D_3\eta}(\BI+\tp\BM_1(\eta))\nabla_y, \label{nabla} \\
	&{\rm div}_x\Bu
		= {\rm div}_y\Bv -\frac{\nabla_y\eta\cdot D_3 \Bv}{1+D_3\eta}
		= \frac{{\rm div}_y\left\{(\BI+\BM_1(\eta))\Bv\right\}}{1+D_3\eta}, \label{div} \\
	&\BD_x(\Bu)
		=\BD_y(\Bv)-\frac{1}{1+D_3\eta}\left\{(\nabla_y\eta\otimes D_3\Bv)
		+\tp(\nabla_y\eta\otimes D_3\Bv)\right\}, \label{Deformation}
\end{align}
where the subscripts $x,y$ denote derivatives of their coordinates.
Here the second identity of \eqref{div} is in fact derived as follows:
For any $\ph=\ph(x)\in C_0^\infty(\Om_t)$,
together with $\bar{\ph} = \bar{\ph}(y,t):=\ph(\Te_t(y))$ and \eqref{nabla},
\begin{align*}
	&(\di_x\Bu,\ph)_{\Om_t} = -(\Bu,\nabla_x\ph)_{\Om_t} 
		=-(\Bv,(\BI+\tp\BM_1(\eta))\nabla_y\bar{\ph})_{\BR_-^3} \\
		&=({\rm div}_y\{(\BI+\BM_1(\eta))\Bv\},\bar{\ph})_{\BR_-^3}
		=((1+D_3\eta)^{-1}{\rm div}_y\{(\BI+\BM_1(\eta))\Bv\},\ph)_{\Om_t}.
\end{align*}

{\bf Step 1: The first equation of \eqref{NS1}.}
The term $(\Bv,\Fq)$ is inserted into the first equation of \eqref{NS1},
and the resultant formula multiplied by
$\BI+\BM_3(\eta)=(1+D_3\eta)(\BI+\tp\BM_1(\eta))^{-1}$
from the left-hand side yields that,
by \eqref{DivT}, \eqref{deriv:t}, and \eqref{nabla},
\begin{equation*}
	\pa_t\Bv-{\rm Div}_y\BT(\Bv,\Fq) = \BF(\Bv,\eta),\quad \text{$y\in\BR_-^3$,}
\end{equation*}
where $\BT(\Bv,\Fq)=\mu\BD_y(\Bv)-\Fq\BI$ and we have set
\begin{align*}
	&\BF(\Bv,\eta) = \pa_t\Bv -(\BI+\BM_3(\eta))
		\left(\pa_t\Bv-\left(\frac{\pa_t\eta}{1+D_3\eta}\right)D_3\Bv\right) 
		-(\BI+\BM_3(\eta))\left((\Bv\cdot\nabla_y)\Bv 
		-\left(\frac{\Bv\cdot\nabla_y\eta}{1+D_3\eta}\right)D_3\Bv\right) \notag \\
		&-{\rm Div}_y(\mu\BD_y(\Bv))
		+\mu(\BI+\BM_3(\eta))\left(\De_y\Bv - \sum_{j=1}^3\CD_{jj}(\eta)\Bv\right) 
		-\mu\nabla_y{\rm div}_y\Bv+\mu\nabla_y\left({\rm div}_y\Bv - \frac{\nabla_y\eta\cdot D_3 \Bv}{1+D_3\eta}\right). \notag
\end{align*}

{\bf Step 2: The second equation of \eqref{NS1}.}
We insert $\Bv$ into the second equation of \eqref{NS1},
together with \eqref{div}, in order to obtain
\begin{equation*}
	{\rm div}_y\Bv = G(\Bv,\eta) = {\rm div}_y \BG(\Bv,\eta), \quad \text{$y\in\BR_-^3$,}
\end{equation*} 
where we have set
\begin{equation*}
	\BG(\Bv,\eta) = -\BM_1(\eta)\Bv, \quad
	G(\Bv,\eta) = 
		\nabla_y\eta\cdot D_3\Bv-(D_3\eta){\rm div}_y\Bv.
\end{equation*}

{\bf Step 3: The third equation of \eqref{NS1}.}
We first calculate $\Bn_t$ and $\ka_t$
based on the first representation of \eqref{160601_1}.
Let $\nabla' h(y',t) =  \tp(D_1 h(y',t), D_2 h(y',t))$,
$\De' h(y',t) =  \sum_{j=1}^2 D_j^2 h(y',t)$, and 
\begin{align*}
	\CK(h) &=
		\frac{|\nabla' h(y',t)|^2\De' h(y',t)}
		{(1+\sqrt{1+|\nabla' h(y',t)|^2})\sqrt{1+|\nabla' h(y',t)|^{2}}} 
		+\sum_{j,k=1}^{2}\frac{D_j h(y',t)D_k h(y',t)
		D_j D_k h(y',t)}{(1+|\nabla' h(y',t)|^2)^{3/2}}.
\end{align*}
It then holds by \eqref{deriv:h} that, for $y=(y',y_3)\in\BR_0^3$ and $\Be_3=\tp(0,0,1)$,
\begin{align}\label{n-ka}
	\Bn_t &= \frac{1}{\sqrt{1+|\nabla' h(y',t)|^2}}
		\begin{pmatrix}
			-\nabla' h(y',t) \\ 1
		\end{pmatrix}
		= \frac{1}{\sqrt{1+|\nabla' \eta(y,t)|^2}}
		\begin{pmatrix}
			-\nabla' \eta(y,t) \\ 1
		\end{pmatrix} \\
		&=\frac{1}{\sqrt{1+|\nabla' \eta(y,t)|^2}}
		\left(\BI+\BM_2(\eta)\right)\Be_3, \notag \\
	\ka_t &= \sum_{j=1}^2 D_ j\left(\frac{D_j h(y',t)}{\sqrt{1+|\nabla' h(y',t)|^2}}\right)
		=\De' h(y',t)-\CK(h)
		=\De' h(y',t) -\CK(\eta). \notag
\end{align}
The term $(\Bv,\Fq)$ is inserted into the third equation of \eqref{NS1},
and the resultant formula multiplied by
\begin{equation*}
	\sqrt{1+|\nabla' \eta(y,t)|^2}(\BI-\BM_2(\eta))
		=\sqrt{1+|\nabla' \eta(y,t)|^2}(\BI+\BM_2(\eta))^{-1}
\end{equation*}
from the left-hand side
furnishes that, by \eqref{Deformation} and \eqref{n-ka},
\begin{align*}
	\BT(\Bv,\Fq)\Be_3 + (c_g -c_\si\De')h\Be_3 = \BH(\Bv,\eta), \quad \text{$y\in \BR_0^3$},
\end{align*}
where we have set
\begin{align*}
	\BH(\Bv,\eta) &= -c_\si\CK(\eta)\Be_3 +\mu\BD_y(\Bv)\Be_3 
		 - \mu(\BI-\BM_2(\eta))\bigg[\BD_y(\Bv) \\
		&-\frac{1}{1+D_3\eta}\left\{(\nabla_y\eta\otimes D_3\Bv)
		+\tp (\nabla_y\eta\otimes D_3\Bv)\right\}\bigg](\BI+\BM_2(\eta))\Be_3. \notag
\end{align*}

{\bf Step 4: The fourth equation of \eqref{NS1}.}
Writing $\Bv$ as $\Bv=\tp(v_1,v_2,v_3)$,
we insert it into the fourth equation of \eqref{NS1},
together with \eqref{deriv:h}, in order to obtain
\begin{equation*}
	\pa_t h -v_3 = K(\Bv,\eta), \quad \text{$y\in\BR_0^3$,}
\end{equation*}
where $K(\Bv,\eta) = -v_1D_1\eta-v_2D_2\eta$.

Summing up Step 1-Step 4,
we have achieved the following set of equations: 
For each $t>0$,
\begin{alignat}{2}
	\pa_t\Bv-\Di\BT(\Bv,\Fq) = \BF(\Bv,\CE(h)),&
		&\quad & \text{$y\in\BR_-^3$,} \label{NS3_1} \\
	\di\Bv =G(\Bv,\CE(h)) = \di\BG(\Bv,\CE(h)),&
		&\quad & \text{$y\in \BR_-^3$,} \label{NS3_2} \\
	\BT(\Bv,\Fq)\Be_3+(c_g-c_\si\De')h\Be_3 =\BH(\Bv,\CE(h)),&
		&\quad & \text{$y\in\BR_0^3$,} \label{NS3_3} \\
	\pa_t h-v_3=K(\Bv,\CE(h)), &
		&\quad & \text{$y\in \BR_0^3$,} \label{NS3_4} \\
	\Bv|_{t=0} = \Bv_0, \quad \text{$y\in \BR_-^3$},
		\quad h|_{t=0} =h_0,& &\quad & \text{$y'\in \BR^2$}, \label{NS3_5}
\end{alignat}
where $\Bv_0$, $h_0$ are given initial data.
Here, 
\begin{align*}
	\BF(\Bv,\CE(h)) 
		&= \BF_1(\Bv,\CE(h))+\BF_2(\Bv,\CE(h))+\BF_3(\Bv,\CE(h)), \\
	\BF_1(\Bv,\CE(h)) 
		&= (\BI+\BM_3(\CE(h)))
		\bigg(-(\Bv\cdot\nabla)\Bv
		+\frac{(\Bv\cdot\nabla\CE(h))D_3\Bv}{1+D_3\CE(h)}\bigg), \\
	\BF_2(\Bv,\CE(h))
		&= (\BI+\BM_3(\CE(h)))\frac{\pa_t\CE(h) D_3\Bv}{1+D_3\CE(h)}, \\
	\BF_3(\Bv,\CE(h))
		&= (-\pa_t v_3+\mu\De v_3)\nabla\CE(h) 
		-\mu(\BI+\BM_3(\CE(h)))
		\bigg(\sum_{j=1}^{3}\CD_{jj}(\CE(h))\Bv\bigg) 
		-\mu\nabla\bigg(\frac{\nabla\CE(h)\cdot D_3\Bv}{1+D_3\CE(h)}\bigg), \\
	\BG(\Bv,\CE(h))
		&= -\BM_1(\CE(h))\Bv, \quad
	G(\Bv,\CE(h)) 
		= \nabla\CE(h)\cdot D_3\Bv -(D_3\CE(h))\di\Bv, \\
	\BH(\Bv,\CE(h))
		&= -c_\si\CK(\CE(h))\Be_3 -\mu\BD(\Bv)\BM_2(\CE(h))\Be_3 
		+\mu\BM_2(\CE(h))\BD(\Bv)(\BI+\BM_2(\CE(h)))\Be_3\\
		&+\frac{\mu}{1+D_3\CE(h)}(\BI-\BM_2(\CE(h))) 
		\Big\{(\nabla\CE(h)\otimes D_3\Bv)
		+\tp(\nabla\CE(h)\otimes D_3\Bv)\Big\}(\BI+\BM_2(\CE(h)))\Be_3, \\
	K(\Bv,\CE(h))
		&= -v_1D_1\CE(h)-v_2D_2\CE(h).
\end{align*}

\subsection{Definition of solutions}\label{subsec2_3}
Following \cite{EPS03},
we introduce the definition of solutions of System \eqref{NS1} as follows:

\begin{defi}\label{def:sol2}
We say that System \eqref{NS1} is solvable globally in time
if the following assertions hold for some $1<p,q<\infty$
and for given initial data $h_0\in B_{q,p}^{3-1/p-1/q}(\BR^2)$, $\Bu_0\in B_{q,p}^{2-2/p}(\Om_0)^3$ with
$\Om_0=\{x_3<h_0(x')\}$.
\begin{enumerate}[$(1)$]
\item
Let $\Bv_0=\Bu_0\circ \Te_0$ for some diffeomophism $\Te_0$ from $\BR_-^3$ onto $\Om_0$.
Then \eqref{NS3_1}-\eqref{NS3_5}
admit a global-in-time solution $(\Bv,\Fq,h)$ satisfying
\begin{equation*}
\Bv\in W_{q,p}^{2,1}(\BR_-^3\times\BR_+)^3, \quad 
\Fq\in L_p(\BR_+,\wh W_q^1(\BR_-^3)), \quad
h\in W_p^1(\BR_+,W_q^{2-1/q}(\BR^2))\cap L_p(\BR_+,W_q^{3-1/q}(\BR^2)).
\end{equation*}
	\item
		Let $\Om_t$ be given by \eqref{Omega_t}. 
		Then the mapping $\Te_t$, defined as \eqref{Te_t},
		is a diffeomorphism from $\BR_-^3$ onto $\Om_t$ for each $t>0$.
\end{enumerate}
In this case, setting $\Bu = \Bu(x,t) = \Bv(\Te_t^{-1}(x),t)$ and $\Fp =\Fp(x,t) = \Fq(\Te_t^{-1}(x),t)$ for $x\in\BR_-^3$ and $t>0$,
we call $(\Bu,\Fp,h)$ a solution to System \eqref{NS1}.
\end{defi}

\subsection{Main results}\label{subsec2_4}
The main results of this paper are stated in this subsection.

First, we introduce function spaces to state our main results precisely.
Let $X$ be a Banach space and its norm $\|\cdot\|_X$,
and let $I$ be an interval of $\BR_+=(0,\infty)$.
Then, for $s>0$ and $1\leq p,q\leq \infty$, we set
\begin{align*}
	&L_p^s(I,X)
		=\{u \in L_p(I,X) \mid \|u\|_{L_p^s(I,X)}<\infty\}, \quad
	\|u\|_{L_p^s(I,X)}
		=\|(t+2)^s u\|_{L_p(I,X)}; \\
	&W_p^{1,s}(I,X)
		=\{u\in W_p^1(I,X) \mid  \|u\|_{W_p^{1,s}(I,X)}<\infty\}, \\
	&\|u\|_{W_p^{1,s}(I,X)}
		=\|(t+2)^s\pa_t u\|_{L_p(I,X)}+\|(t+2)^su\|_{L_p(I,X)};
\end{align*}
and furthermore,
\begin{align*}
	H_{q,p}^{1,1/2,s}(\BR_-^3\times\BR_+)
		&=\{u\in H_{q,p}^{1,1/2}(\BR_-^3\times\BR_+) \mid \|u\|_{H_{q,p}^{1,1/2,s}(\BR_-^3\times\BR_+)}<\infty\}, \\
	\|u\|_{H_{q,p}^{1,1/2,s}(\BR_-^3\times\BR_+)} 
		&= \|(t+2)^s u\|_{H_{q,p}^{1,1/2}(\BR_-^3\times\BR_+)}.
\end{align*}
For the right members of the equations \eqref{NS3_1}-\eqref{NS3_4}, we set
\begin{align*}
	&\BBF_1 = \BBF_2 = \BBF_3
		= \bigcap_{r\in\{q,2\}} L_{p}(\BR_+,L_r(\BR_-^3))^3, \quad
	\CG
		= \bigcap_{r\in\{q,2\}} W_p^1(\BR_+,L_r(\BR_-^3))^3, \\
	&\BBG
		= \bigcap_{r\in\{q,2\}} H_{r,p}^{1,1/2}(\lhs\times\BR_+), \quad
	\BBH
		= \bigcap_{r\in\{q,2\}} H_{r,p}^{1,1/2}(\BR_-^3\times\BR_+)^3, \quad
	\BBK
		=\bigcap_{r\in\{q,2\}}L_p(\BR_+,W_r^2(\BR_-^3)).
\end{align*}
In addition, for positive numbers $\Fa$, $\Fb$
and for $\bar{q}=q/2$ with $q\geq 2$, we set
\begin{alignat*}{2}
	&\wt\BBF_i(\Fa,\Fb)
		\,&=&~L_p^\Fa(\BR_+,L_q(\BR_-^3))^3\cap L_\infty^\Fb(\BR_+,L_{\bar{q}}(\BR_-^3))^3, 
		\quad i=1,2,\\
	&\wt\BBF_3(\Fa,\Fb)
		\,&=&~L_p^\Fa(\BR_+,L_q(\BR_-^3))^3\cap L_p^\Fb(\BR_+,L_{\bar{q}}(\BR_-^3))^3, \\
	&\wt\CG(\Fa,\Fb)
		&=&~W_p^{1,\Fa}(\BR_+,L_q(\BR_-^3))^3\cap W_p^{1,\Fb}(\BR_+,L_{\bar{q}}(\BR_-^3))^3, \\
	&\wt\BBG(\Fa,\Fb)
		&=&~L_p^\Fa(\BR_+,W_q^1(\BR_-^3))\cap L_p^\Fb(\BR_+,W_{\bar{q}}^1(\BR_-^3)), \\
	&\wt\BBH(\Fa,\Fb)
		&=&~H_{q,p}^{1,1/2,\Fa}(\BR_-^3\times\BR_+)^3 \cap H_{\bar{q},p}^{1,1/2,\Fb}(\BR_-^3\times\BR_+)^3, \\
	&\wt\BBK(\Fa,\Fb)
		&=&~L_p^\Fa(\BR_+,W_q^2(\BR_-^3))\cap L_p^\Fb(\BR_+,W_{\bar{q}}^2(\BR_-^3)).
\end{alignat*}
By \eqref{mn}, additional function spaces $\BBA_1$, $\BBA_2$, $\BBA_3$, $\BBA_4$, $\BBB_1$, and $\BBB_2(\te)$ are defined as
\begin{align*}
	&\BBA_1
		= L_\infty^{\Fm(\bar{q},q)}(\BR_+,L_q(\lhs))
		\cap L_\infty^{\Fm(\bar{q},2)}(\BR_+,L_2(\lhs)), \\
	&\BBA_2
		= L_\infty^{\Fm(\bar{q},q)+1/2}(\BR_+,L_q(\BR_-^3))
		\cap L_\infty^{\Fm(\bar{q},2)+1/2}(\BR_+,L_2(\BR_-^3)), \\
	&\BBA_3
		= L_p^{\Fm(\bar{q},q)+1/2}(\BR_+,W_q^1(\BR_-^3))
		\cap L_p^{\Fm(\bar{q},2)+1/2}(\BR_+,W_2^1(\BR_-^3)), \\
	&\BBA_4
		= L_\infty^{\Fm(\bar{q},q)+1/2}(\BR_+,\wh{W}_q^1(\BR_-^3))
		\cap L_\infty^{\Fm(\bar{q},2)+1/2}(\BR_+,\wh{W}_2^1(\BR_-^3)), \\
	&\BBB_1
		=L_\infty^{\Fm(\bar{q},q)}(\BR_+,L_q(\BR_0^3))
		\cap L_\infty^{\Fm(\bar{q},2)}(\BR_+,L_2(\BR_0^3)), \\
	&\BBB_2(\te)
		=L_\infty(\BR_+,L_{q(\te)}(\BR_0^3))\cap L_\infty(\BR_+,L_2(\BR_0^3)) \quad \text{for $0\leq \te\leq 1$,}
\end{align*}
where $q(\te)\in[1,4/3]$ is given in \eqref{q_theta}.
For the initial data, we set
\begin{align*}
	&\BBI_1(\te)
		=B_{q,p}^{2-2/p}(\BR_-^3)^3\cap B_{q(\te),p}^{2-2/p}(\BR_-^3)^3 \quad (0\leq \te \leq 1), \quad 
	\BBI_2
		=B_{q,p}^{3-1/p-1/q}(\BR^2)\cap B_{2,p}^{3-1/p-1/2}(\BR^2) \cap L_{\bar{q}}(\BR^2).
\end{align*}

Next, we define several norms for solutions. Let $\Bz = (\Bu,\Fp,h,\eta)$, and then
\begin{align*}
	\CM_{q,p}(\Bz) 
		&=\|(\pa_t\Bu,\Bu,\nabla\Bu,\nabla^2\Bu,\nabla\Fp)\|_{L_p(\BR_+,L_q(\BR_-^3))}
		+\|\pa_t h\|_{L_p(\BR_+,W_q^{2-1/q}(\BR^2))} \\
		&+\|h\|_{L_p(\BR_+,W_q^{3-1/q}(\BR^2))}
		+\|\pa_t \eta\|_{L_p(\BR_+,\wh W_q^2(\BR_-^3))}
		+\|\eta\|_{L_p(\BR_+,\wh W_q^3(\BR_-^3))}, \\
	\CN_q(\Bz)
		&=\|\Bu\|_{L_\infty^{\Fm(\bar{q},q)}(\BR_+,L_q(\BR_-^3))}
		+\|\nabla\Bu(t)\|_{L_\infty^{\Fn(\bar{q},q)+1/8}(\BR_+,L_q(\BR_-^3))} 
		+\|h\|_{L_\infty^{1/\bar{q}-1/q}(\BR_+,L_q(\BR^2))} \\
		&+\|\pa_t h\|_{L_\infty^{\Fm(\bar{q},q)}(\BR_+,L_q(\BR^2))} 
		+\|\nabla\eta\|_{L_\infty^{\Fm(\bar{q},q)+1/4}(\BR_+,W_q^1(\BR_-^3))}
		+\|\nabla\pa_t \eta\|_{L_\infty^{\Fm(\bar{q},q)+1/2}(\BR_+,L_q(\BR_-^3))}
\end{align*}
that are the norm of maximal $L_p\text{-}L_q$ regularity class appearing in Definition \ref{def:sol2}
and the norm of decay properties of lower order terms, respectively. 
In addition, a time-weighted norm $\CN_{q,p}(\Bz;\Fa_1,\Fa_2)$
with positive numbers $\Fa_1$, $\Fa_2$ is defined as
\begin{equation*}
	\CN_{q,p}(\Bz;\Fa_1,\Fa_2)
		=\|(\pa_t\Bu,\nabla^2\Bu)\|_{L_p^{\Fa_1}(\BR_+,L_q(\BR_-^3))}
		+\|(\nabla^2\pa_t\eta,\nabla^3\eta)\|_{L_p^{\Fa_2}(\BR_+,L_q(\BR_-^3))}.
\end{equation*}

The equations \eqref{NS3_1}-\eqref{NS3_5} lead us to a linearized problem as follows:

\begin{equation}\label{linear1}
\left\{\begin{aligned}
	\pa_t \Bu -\Di\BT(\Bu,\Fp) = \Bf& && \text{in $\BR_-^3$, $t>0$,} \\
	\di\Bu =g=\di \Bg& && \text{in $\BR_-^3$, $t>0$,} \\
	\BT(\Bu,\Fp)\Be_3 +(c_g -c_\si\De')h\Be_3 = \Bh& && \text{on $\BR_0^3$, $t>0$,} \\
	\pa_t h - u_3 = k& && \text{on $\BR_0^3$, $t>0$,} \\
	\Bu|_{t=0} = 0& && \text{in $\BR_-^3$,} \\
	h|_{t=0} = 0 & && \text{on $\BR^2$,}
\end{aligned}\right.
\end{equation}
where $\Bu=\tp(u_1,u_2,u_3)$ and $\BT(\Bu,\Fp)=\mu\BD(\Bu)-\Fp\BI$.
Concerning System \eqref{linear1}, we have the following time-weighted estimate of solutions: 

\begin{theo}\label{theo:main1}
Let $p$, $q$ be exponents satisfying \eqref{pq:lin} and $\bar{q}=q/2$.
Suppose that $\Fa_1$, $\Fa_2$, $\Fb_1$, $\Fb_2$, and $\Fb_3$ are positive numbers
satisfying the following conditions:
\begin{align}\label{ab}
	& 
	\Fb_1,\Fb_2>1, \quad  \Fb_3,\Fb_4\geq1, \\
	&p(\min(\Fb_1,\Fb_2,\Fb_3,\Fb_4)-\Fa_1)>1, \quad
	p(\Fm\left(\bar{q},q\right)+1/4-\Fa_1)>1, \notag \\
	&p(\min(\Fb_1,\Fb_2,\Fb_3,\Fb_4)-\Fa_2)>1, \quad
	p(1/2+2/q-\Fa_2)>1. \notag
\end{align}
Let $\Fa_0=\max(\Fa_1,\Fa_2)$, and let
the right members $\Bf = \Bf_1+\Bf_2+\Bf_3$, $\Bg$, $g$, $\Bh$, and $k$
of \eqref{linear1} satisfy the following conditions:
\begin{enumerate}[$(1)$]
	\item
		$\Bf_1\in\BBF_1\cap \wt\BBF_1(\Fa_0,\Fb_1)$,
		$\Bf_2\in\BBF_2\cap \wt\BBF_2(\Fa_0,\Fb_2)$,
		$\Bf_3\in\BBF_3\cap\wt\BBF_3(\Fa_0,\Fb_3);$
	\item
		$\Bg\in\CG\cap\wt\CG(\Fa_0,\Fb_3)\cap\wt\CG(\Fa_0,\Fb_4)\cap\BBA_1;$
	\item
		$g\in\BBG\cap\wt\BBG(\Fa_0,\Fb_3)\cap \wt\BBG(\Fa_0,\Fb_4)\cap \BBA_2\cap\BBA_3;$
	\item
		$\Bh\in\BBH\cap\wt\BBH(\Fa_0,\Fb_3)\cap\wt\BBH(\Fa_0,\Fb_4)\cap\BBA_3;$
	\item
		$k\in\BBK\cap\wt\BBK(\Fa_0,\Fb_4)\cap\BBA_4\cap\BBB_1\cap\BBB_2(\te)$ for some $0<\te< 1;$
	\item
		$g$ and $\Bh$ satisfy additionally
		\begin{equation*}
				g\in L_p^{\Fc_1}(\BR_+,W_q^1(\BR_-^3))\cap L_p^{\Fd_1}(\BR_+,W_{\bar{q}}^1(\BR_-^3)), \quad
				\Bh\in L_p^{\Fc_1}(\BR_+,W_q^1(\BR_-^3))^3 \cap L_p^{\Fd_1}(\BR_+,W_{\bar{q}}^1(\BR_-^3))^3
		\end{equation*}
		for non-negative real numbers $\Fc_1$, $\Fd_1$ satisfying
		\begin{equation*}
			p\left(1+\Fc_1-\Fa_0\right)>1, \quad
			p\left(1+\Fd_1-\max\left(\Fb_3,\Fb_4\right)\right)>1,
		\end{equation*}
\end{enumerate}
and the following compatibility conditions:
\begin{enumerate}[$(7)$]
	\item
		$g|_{t=0}=0$ in $\BR_-^3$ and $[\Bh]_\tau|_{t=0}=0$ on $\BR_0^3$,
		where $[\Bh]_{\tau}=\Bh-(\Bh\cdot\Be_3)\Be_3$.
\end{enumerate}
Then System \eqref{linear1} admits a unique solution $(\Bu,\Fp,h)$ with
\begin{align*}
	&\Bu \in W_{q,p}^{2,1}(\BR_-^3\times \BR_+)^3, \quad
	\Fp \in L_{p}(\BR_+,\wh{W}_q^1(\BR_-^3)), \\
	& h\in W_{p}^1(\BR_+,W_q^{2-1/q}(\BR^2))\cap L_{p}(\BR_+,W_q^{3-1/q}(\BR^2)), \\
	&\CE(h)\in W_{p}^1(\BR_+,\wh W_q^2(\BR_-^3))\cap L_{p}(\BR_+,\wh W_q^3(\BR_-^3)),
\end{align*}
and also $\Bz=(\Bu,\Fp,h,\CE(h))$ satisfies the estimate:
\begin{equation*}
	\CN_{q,p}(\Bz;\Fa_1,\Fa_2)+\sum_{r\in\{q,2\}}\left(\CM_{r,p}(\Bz)+\CN_r(\Bz)\right)
		+\|\pa_t\CE(h)\|_{L_\infty(\BR_+,L_2(\lhs))}\leq C_{p,q}\SSN
\end{equation*}
for some positive constant $C_{p,q}$,
where 
\begin{align*}
	\SSN &=
			\sum_{i=1}^3 \|\Bf_i\|_{\BBF_i\cap \wt\BBF_i(\Fa_0,\Fb_i)}	
			+\|\Bg\|_{\CG\cap\wt\CG(\Fa_0,\Fb_3)\cap\wt\CG(\Fa_0,\Fb_4)\cap\BBA_1} \\
			&+\|g\|_{\BBG\cap\wt\BBG(\Fa_0,\Fb_3)\cap \wt\BBG(\Fa_0,\Fb_4)\cap \BBA_2\cap\BBA_3} 
			+\|\Bh\|_{\BBH\cap\wt\BBH(\Fa_0,\Fb_3)\cap\wt\BBH(\Fa_0,\Fb_4)\cap\BBA_3} \\
			&+\|k\|_{\BBK\cap\wt\BBK(\Fa_0,\Fb_4)\cap\BBA_4\cap\BBB_1\cap \BBB_2(\te)}
			+\|(g,\Bh)\|_{L_p^{\Fc_1}(\BR_+,W_q^1(\BR_-^3))\cap L_p^{\Fd_1}(\BR_+,W_{\bar{q}}(\BR_-^3))}.
\end{align*}
\end{theo}


To construct solutions of \eqref{NS3_1}-\eqref{NS3_5}, we set
\begin{align}\label{160920_1}
	X_{q,p}(r;\Fa_1,\Fa_2) &= \{\Bz = (\Bu,\Fp,h,\eta) \mid 
		\|\Bz\|_{X_{q,p}(\Fa_1,\Fa_2)}\leq r\}, \\
	\|\Bz\|_{X_{q,p}(\Fa_1,\Fa_2)} &=\CN_{q,p}(\Bz;\Fa_1,\Fa_2)
		+\sum_{r\in\{q,2\}}\left(\CM_{r,p}(\Bz)+\CN_r(\Bz)\right)  
		+\|\pa_t\eta\|_{L_\infty(\BR_+,L_2(\lhs))}. \notag
\end{align}
By combining Theorem \ref{theo:main1} with the contraction mapping principle, we have

\begin{theo}\label{theo:main2}
Let $p,q$ be exponents satisfying \eqref{pq}, and let $0<\te< 1$.
Then there exist positive constants $\de_0,\ep_0\in(0,1)$ such that,
for any initial data $(\Bv_0,h_0)\in\BBI_1(\te)\times\BBI_2$ satisfying
a smallness condition: $\|(\Bv_0,h_0)\|_{\BBI_1(\te) \times\BBI_2}\leq \de_0$
and compatibility conditions:
\begin{equation}\label{comp:1}
	\di\Bv_0 =
		G(\Bv_0,\CE(h_0)) \text{ {\rm in}  $\BR_-^3$}, \quad
	[\mu\BD(\Bv_0)\Be_3]_{\tau}=
		[\BH(\Bv_0,\CE(h_0))]_{\tau} \text{ {\rm on} $\BR_0^3$,}
\end{equation}
the equations \eqref{NS3_1}-\eqref{NS3_5} 
admit a unique global-in-time solution $(\Bv,\Fq,h)$
satisfying $\|\Bz\|_{X_{q,p}(1/2,3/4)}\leq \ep_0$ with $\Bz=(\Bv,\Fq,h,\CE(h))$.
%
%
\end{theo}


Recall in System \eqref{NS1} that $\Om_0 =\{(x',x_3) \mid x'\in\BR^2,x_3<h_0(x')\}$,
and let $\Bn_0$ be the unit outer normal to $\Ga_0=\{(x',x_3) \mid x'\in\BR^2,x_3=h_0(x')\}$.
Then Theorem \ref{theo:main2} enables us to prove the following theorem:

\begin{theo}\label{theo:main3} 
Let $p$, $q$ be exponents satisfying \eqref{pq}, and let $0<\te< 1$.
Suppose that $\de_0$, $\ep_0$ are the positive constants given in Theorem $\ref{theo:main2}$. 
Then there exists a positive number $r_0$ such that, for any initial data
$h_0\in \BBI_{q,p}$, $\Bu_0\in \BBJ_{q,p,\te}(h_0)$ satisfying
a smallness condition:
$\|h_0\|_{\BBI_{q,p}}+\|\Bu_0\|_{\BBJ_{q,p,\te}(h_0)}\leq r_0$
and compatibility conditions:
\begin{equation}\label{comp:2}
	\di\Bu_0 = 0 \text{ {\rm in} $\Om_0$},\quad
	\mu\{\BD(\Bu_0)\Bn_0-(\Bn_0\cdot\BD(\Bu_0)\Bn_0)\Bn_0\}=0 \text{ {\rm on} $\Ga_0$},
\end{equation}
the following assertions hold.
\begin{enumerate}[$(1)$]
\item
	$\Te_0(y)=(y_1,y_2,y_3+\CE(h_0)(y))$ is a $C^2$-diffeomophism from $\lhs$ onto $\Om_0$. 
\item
	$\Bv_0=\Bu_0\circ\Te_0$ and $h_0$ satisfy the smallness condition $\|(\Bv_0,h_0)\|_{\BBI_1(\te)\times\BBI_2}\leq \de_0$ 
	and the compatibility condition \eqref{comp:1}, and thus
	the equations \eqref{NS3_1}-\eqref{NS3_5} admits a unique global-in-time solution $(\Bv,\Fq,h)$
	satisfying $\|\Bz\|_{X_{q,p}(1/2,3/4)}\leq \ep_0$ with $\Bz=(\Bv,\Fq,h,\CE(h))$.
\item
	The mapping $\Te_t$, defined as \eqref{Te_t},
	is a $C^2$-diffeomorphism from $\BR_-^3$ onto $\Om_t$ for each $t>0$.
\end{enumerate}
Namely, System \eqref{NS1} admits a unique global-in-time solution $(\Bu,\Fp,h)$ given by
\begin{equation*}
	(\Bu,\Fp,h)=(\Bv(\Te_t^{-1}(x),t),\Fq(\Te_t^{-1}(x),t),h(x',t)).
\end{equation*}
\end{theo}

Large-time behavior of the solution obtained in Theorem \ref{theo:main3} is stated as follows:

\begin{theo}\label{theo:main4} 
Let $p$, $q$ be exponents satisfying \eqref{pq} and $0<\te< 1$,
and let $r_0$ is the positive number given in Theorem $\ref{theo:main3}$.
Suppose that $h_0\in \BBI_{q,p}$, $\Bu_0\in \BBJ_{q,p,\te}(h_0)$ satisfy
the smallness condition: $\|h_0\|_{\BBI_{q,p}}+\|\Bu_0\|_{\BBJ_{q,p,\te}(h_0)}\leq r_0$
and the compatibility condition \eqref{comp:2}.
Then the solution $(\Bu,\Fp,h)$ of System \eqref{NS1}, obtained in Theorem $\ref{theo:main3}$,
satisfies the following large-time behavior:
\begin{align*}
	&\|\Bu(t)\|_{L_r(\Om_t)}
		= O\left(t^{-\Fm(\bar{q},r)}\right) , \quad
	\|\nabla\Bu(t)\|_{L_r(\Om_t)}
		= O\left(t^{-\Fn\left(\bar{q},r\right)-\frac{1}{8}}\right),
	\|h(t)\|_{L_r(\BR^2)}
		= O\left(t^{-\left(\frac{1}{\bar{q}}-\frac{1}{r}\right)}\right), \\
	&\|\nabla' h(t)\|_{L_r(\BR^2)}
		= O\left(t^{-\Fm\left(\bar{q},r\right)-\frac{1}{4}}\right), 
	\|\pa_t h(t)\|_{L_r(\BR^2)}
		= O\left(t^{-\Fm\left(\bar{q},r\right)}\right)
\end{align*}
for $2\leq r \leq q$ as time $t$ tends to infinity.
\end{theo}


This paper consists of seven sections as follows: 
Section 3 is concerned with an inhomogeneous boundary value problem in $\lhs$,
and proves decay properties and time-weighted estimates of solutions
to the inhomogeneous boundary value problem.
In Section 4, we deal with two linear time-dependent problems in $\lhs$, with homogeneous boundary condition,
associated with the linearized problem of \eqref{NS3_1}-\eqref{NS3_5}.
Then we show time-weighted estimates of the solutions
by $L_r\text{-}L_s$ estimates  proved in \cite{SaS1}. 
Section 5 proves Theorem \ref{theo:main1} by means of results obtained in Sections 3 and 4.
In Section 6, we first construct an initial flow. 
Next, after subtracting the initial flow from the equations \eqref{NS3_1}-\eqref{NS3_5},  
we combine Theorem \ref{theo:main1} with the contraction mapping principle in order to show Theorem \ref{theo:main2}.
Section 7 proves Theorem \ref{theo:main3} and Theorem \ref{theo:main4}
by the solution obtained in Theorem \ref{theo:main2}.

\section{An inhomogeneous boundary value problem}\label{sec3}
In this section, we consider an inhomogeneous boundary value problem as follows:
\begin{equation}\label{linear2}
\left\{\begin{aligned}
	\pa_t\Bu+\Bu-\Di\BT(\Bu,\Fp) &=0 && \text{in $\BR_-^3$, $t>0$,} \\
	\di\Bu &=0 && \text{in $\BR_-^3$, $t>0$,} \\
	\BT(\Bu,\Fp)\Be_3 &=\Bh && \text{on $\BR_0^3$, $t>0$,} \\
	\Bu|_{t=0} &=0 && \text{in $\BR_-^3$,}
\end{aligned}\right.
\end{equation}
where $\BT(\Bu,\Fp) =\mu\BD(\Bu)-\Fp\BI$.
Our aim here is to prove the following two theorems.

\begin{theo}\label{theo:sec3_1}
Let $2<p<\infty$, $1<q<\infty$, and $2/p+1/q<1$, and suppose that
\begin{equation*}
	\Bh \in H_{q,p}^{1,1/2}(\BR_-^3\times\BR_+)^3 \quad \text{with $\Bh|_{t=0}$ on $\BR_0^3$.}
\end{equation*}
Then System \eqref{linear2} admits
a unique solution 
$$(\Bu,\Fp)\in W_{q,p}^{2,1}(\BR_-^3\times\BR_+)^3\times L_{p}(\BR_+,\wh{W}_q^1(\BR_-^3)),$$
and also 
\begin{equation}\label{140825_3}
	\|(\pa_t\Bu,\Bu,\nabla\Bu,\nabla^2\Bu,\nabla\Fp)\|_{L_p(\BR_+,L_q(\BR_-^3))}
		\leq C_{p,q}\|\Bh\|_{H_{q,p}^{1,1/2}(\BR_-^3\times\BR_+)}
\end{equation}
for a positive constant $C_{p,q}$.
In addition, the following assertions hold.
\begin{enumerate}[$(1)$]
\item\label{theo:sec3_1_1}
	The solution $\Bu=\Bu(x,t)$ is represented as
	\begin{equation*}
		\Bu(x,t)
			=\int_0^t\left[\CB(t-s)\Bh(\cdot,0,s)\right](x)\,ds\quad(x\in\BR_-^3,\,t>0)
	\end{equation*}
	with an operator $\CB(\tau)\in\CL(L_q(\BR^2)^3,W_q^1(\BR_-^3)^3)$, $\tau>0$, satisfying
	\begin{equation*}
		\|\nabla^l\CB(\tau)\Bf\|_{L_q(\BR_-^3)} 	\leq
			C_q\tau^{-\frac{1+l}{2}+\frac{1}{2q}}
			e^{-\tau}\|\Bf\|_{L_q(\BR^2)}\quad(\tau>0,\ l=0,1)
	\end{equation*}
	for any $\Bf\in L_q(\BR^2)^3$ and a positive constant $C_q$
	independent of $\tau$ and $\Bf$.
\item\label{theo:sec3_1_2}
	There exists a positive constant $C_{p,q}$ such that
	\begin{equation*}
		\|\CE(u_3|_{\BR_0^3})\|_{W_{q,p}^{2,1}(\BR_-^3\times\BR_+)}
			\leq C_{p,q}\|\Bh\|_{H_{q,p}^{1,1/2}(\BR_-^3\times\BR_+)},
	\end{equation*}
	where $u_3$ is the third component of $\Bu$
	and $\cdot\,|_{\BR_0^3}$  the boundary trace. 
\end{enumerate}
\end{theo}

\begin{theo}\label{theo:sec3_2}
Let $p$, $q$ be exponents satisfying 
\begin{equation}\label{pq:lin2}
	2<p<\infty, \quad 3<q<4, \quad \frac{2}{p}+\frac{3}{q}<1,
\end{equation}
and let $r$ be another exponent satisfying $(1-2/p)^{-1}<r\leq q$.
For $\Bh\in H_{r,p}^{1,1/2}(\BR_-^3\times \BR_+)^3$
with $\Bh|_{t=0}=0$ on $\BR_0^3$,
let $(\Bu,\Fp)$ be the solution of \eqref{linear2}
obtained in Theorem $\ref{theo:sec3_1}$.
If we additionally assume that,
for non-negative real numbers $\Fc_2$, $\Fd_2$,
\begin{equation*}
	\Bh \in L_{p}^{\Fc_2}(\BR_+,W_r^1(\BR_-^3))^3, \quad
	\Bh\in H_{r,p}^{1,1/2,\Fd_2}(\BR_-^3\times\BR_+)^3,
\end{equation*}
then the following assertions hold.
\begin{enumerate}[$(1)$]
	\item\label{theo:sec3_2_1}
		There exists a positive constant $C_{p,r}$,
		independent of $\Bu$, $\Fp$, and $\Bh$, such that
		\begin{equation*}
			\|\Bu(t)\|_{W_r^1(\BR_-^3)} \leq
				C_{p,r}(t+2)^{-\Fc_2}
				\|\Bh\|_{L_p^{\Fc_2}(\BR_+,W_r^1(\BR_-^3))}
				\quad \text{for any $t>0$.}
		\end{equation*}
	\item\label{theo:sec3_2_2}
		There exists a constant $C_{p,r}>0$,
		independent of $\Bu$, $\Fp$, and $\Bh$, such that
		\begin{equation*}
			\|(\pa_t\Bu,\Bu,\nabla\Bu,\nabla^2\Bu,\nabla\Fp)
				\|_{L_p^{\Fd_2}(\BR_+,L_r(\BR_-^3))}  
			\leq C_{p,r}
				\left(\|\Bh\|_{L_p^{\Fc_2}(\BR_+,W_r^1(\BR_-^3))}
				+\|\Bh\|_{H_{r,p}^{1,1/2,\Fd_2}(\BR_-^3\times\BR_+)}\right),
		\end{equation*}
		provided that $p(1+\Fc_2-\Fd_2)>1$.
\end{enumerate}
\end{theo}

\subsection{Preliminaries}\label{subsec3_1}

Let $\CL[f](\la)$ and $\CL_\la^{-1}[g](t)$ be
the Laplace transform of $f=f(t)$ and the inverse Laplace transform of $g=g(\la)$, respectively, i.e.
\begin{equation}\label{la:trans}
	\CL[f](\la) = \int_\BR e^{-\la t} f(t)\intd t, \quad
	\CL_\la^{-1}[g](t) = \frac{1}{2\pi}\int_\BR e^{\la t}g(\la) \intd \tau
\end{equation}
for $\la=\ga+i\tau\in\BC$.
Note that, by \eqref{160920_2},
\begin{equation*}
	\CL[f](\la) =\CF_t[e^{-\ga t} f](\tau), \quad
	\CL_\la^{-1}[g](t) = e^{\ga t}\CF_\tau^{-1}[g(\ga+i\,\cdot)](t).
\end{equation*}
In addition, let $\La_\ga^{1/2} f=\CL_\la^{-1}[|\la|^{1/2}\CL[f](\la)](t)$ $(\la=\ga+i\tau)$,
and let $\wt{h}(\xi',\la)$ be the Fourier-Laplace transform
of $h=h(x',t)$ defined on $\BR^2\times\BR$, that is,
\begin{equation*}
	\wt{h}(\xi',\la) = \int_{\BR^2\times\BR}e^{-(ix'\cdot\xi'+\la t)}h(x',t)\intd x'dt
		=\CL[\wh{h}(\xi',\cdot)](\la)
\end{equation*}
with $\xi'=(\xi_1,\xi_2)\in\BR^2$ and $\la\in\BC$,
where the last identity follows from \eqref{PFT}, \eqref{la:trans}.

Next, we define several function spaces with exponential weight.
Let $X$ be a Banach space and $\de\geq 0$.
Then, for $1<p,q<\infty$ and an interval $I$ of $\BR$, we set
\begin{align*}
	&L_{p,\de}(I,X) = \{f:I\to X \mid e^{-\de t} f(t)\in L_p(I,X)\}, \\
	&W_{p,\de}^1(I,X) = \{f\in L_{p,\de}(I,X) \mid e^{-\de t}\pa_t f(t) \in L_p(I,X)\}, \\
	&W_{q,p,\de}^{2,1}(\BR_-^3\times I) = W_{p,\de}^1(I,L_q(\BR_-^3))\cap  L_{p,\de}(I,W_q^2(\BR_-^3)),
\end{align*}
and also
\begin{align*}
	&{}_0L_{p,\de}(\BR,X) = \{f\in L_{p,\de}(\BR,X) \mid f(t)=0 \text{ for $t<0$}\}, \\
	&{}_0W_{p,\de}^1(\BR,X) = \{f\in {}_0L_{p,\de}(\BR,X) \mid e^{-\de t}\pa_t f(t) \in L_p(\BR,X)\}, \\
	&{}_0W_{q,p,\de}^{2,1}(\BR_-^3\times\BR) = {}_0W_{p,\de}^1(\BR,L_q(\BR_-^3))\cap {}_0L_{p,\de}(\BR,W_q^2(\BR_-^3)).
\end{align*}
In addition, 
Bessel potential spaces are defined as
\begin{align*}
		&H_{p,\de}^{1/2}(\BR,X) =\{f\in L_p(\BR,X) \mid e^{-\de t}\La_\de^{1/2} f\in L_p(\BR,X)\}, \\
		&H_{q,p,\de}^{1,1/2}(\BR_-^3\times\BR) = H_{p,\de}^{1/2}(\BR,L_q(\BR_-^3))\cap L_{p,\de}(\BR, W_q^1(\BR_-^3)). 
\end{align*}

Here, we introduce two classes of multipliers.
Let $0<\ep<\pi/2$, $\ga_0\geq0$, and
\begin{equation*}
	\Si_{\ep,\ga_0} = \{\la\in\BC \mid |\arg\la|<\pi-\ep,\,|\la|>\ga_0\}, \quad \Si_\ep = \Si_{\ep,0}.
\end{equation*}
Let $m(\xi',\la)$ be a function,
defined on $(\BR^2\setminus\{0\})\times\Si_{\ep,\ga_0}$,
that is infinitely many times differentiable
with respect to $\xi'=(\xi_1,\xi_2)\in\BR^2\setminus\{0\}$
and is holomorphic with respect to $\la=\ga+i\tau\in\Si_{\ep,\ga_0}$.
If there exists a real number $s$ such that,
for any multi-index $\al'=(\al_1,\al_2)\in\BN_0^2$ and
$(\xi',\la)\in(\BR^2\setminus\{0\})\times\Si_{\ep,\ga_0}$,
\begin{equation*}
	|\pa_{\xi'}^{\al'}m(\xi',\la)|
		\leq C_{s,\al',\ga_0,\ep}(|\la|^{1/2}+|\xi'|)^{s-|\al'|}, \quad
	\left|\pa_{\xi'}^{\al'}\left(\tau\frac{\pa}{\pa \tau}m(\xi',\la)\right)\right|
		\leq C_{s,\al',\ga_0,\ep}(|\la|^{1/2}+|\xi'|)^{s-|\al'|},
\end{equation*}
with a positive constant $C_{s,\al',\ga_0,\ep}$,
then $m(\xi',\la)$ is called a multiplier of order $s$ with type $1$.
If there exists a real number $s$ such that,
for any multi-index $\al'=(\al_1,\al_2)\in\BN_0^2$ and
$(\xi',\la)\in(\BR^2\setminus\{0\})\times\Si_{\ep,\ga_0}$,
\begin{equation*}
	|\pa_{\xi'}^{\al'}m(\xi',\la)|
		\leq C_{s,\al',\ga_0,\ep}(|\la|^{1/2}+|\xi'|)^s|\xi'|^{-|\al'|}, \\\
	\left|\pa_{\xi'}^{\al'}\left(\tau\frac{\pa}{\pa \tau}m(\xi',\la)\right)\right|
		\leq C_{s,\al',\ga_0,\ep}(|\la|^{1/2}+|\xi'|)^s|\xi'|^{-|\al'|},
\end{equation*}
with a positive constant $C_{s,\al',\ga_0,\ep}$,
then $m(\xi',\la)$ is called a multiplier of order $s$ with type $2$.
In what follows, we denote the set of all multiplies,
defined on $(\BR^2\setminus\{0\})\times \Si_{\ep,\ga_0}$,
of order $s$ with type $l$ $(l=1,2)$ by $\BBM_{s,l}(\Sigma_{\ep,\ga_0})$.
Typical examples of such multipliers are as follows:
the Riesz kernel $\xi_j/|\xi'|$ $(j=1,2)$ is a multiplier of order $0$ with type $2$.
Functions $\xi_j$ are multipliers of order $1$ with type $1$.
We also introduce the following fundamental lemma (cf. \cite[Lemma 5.1]{SS12}).

\begin{lemm}\label{lemm:multi1}
Let $s_1,s_2\in\BR$, $0<\ep<\pi/2$, and $\ga_0\geq 0$.
Then the following assertions hold:
\begin{enumerate}[$(1)$]
	\item
		Given $m_i\in \BBM_{s_i,1}(\Sigma_{\ep,\ga_0})$ $(i=1,2)$,
		we have $m_1m_2\in\BBM_{s_1+s_2,1}(\Sigma_{\ep,\ga_0})$.
	\item
		Given $l_i\in\BBM_{s_i,i}(\Sigma_{\ep,\ga_0})$ $(i=1,2)$,
		we have $l_1l_2\in \BBM_{s_1+s_2,2}(\Sigma_{\ep,\ga_0})$.
	\item
		Given $n_i\in\BBM_{s_i,2}(\Sigma_{\ep,\ga_0})$ $(i=1,2)$,
		we have $n_1 n_2\in \BBM_{s_1+s_2,2}(\Sigma_{\ep,\ga_0})$.
\end{enumerate}
\end{lemm}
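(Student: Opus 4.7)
The plan is to prove all three assertions by a direct application of Leibniz's rule, together with the elementary observation that $(|\la|^{1/2}+|\xi'|)^{-k}\leq |\xi'|^{-k}$ for every $k\geq 0$, which allows one to convert the ``type 1'' smoothness in $(|\la|^{1/2}+|\xi'|)$ into the ``type 2'' smoothness in $|\xi'|$ when needed. For any multi-index $\al'\in\BN_0^2$ and any pair of functions $m_1,m_2$ that are infinitely differentiable in $\xi'$ and holomorphic in $\la$, one has
\begin{equation*}
\pa_{\xi'}^{\al'}(m_1 m_2)
=\sum_{\be'\leq \al'}\binom{\al'}{\be'}
(\pa_{\xi'}^{\be'} m_1)(\pa_{\xi'}^{\al'-\be'}m_2).
\end{equation*}
Since $\tau\pa_\tau$ is a derivation, the same Leibniz identity holds after applying $\tau\pa_\tau$, and each of the resulting $\pa_{\xi'}^{\be'}(\tau\pa_\tau m_i)$ factors obeys by assumption the very same bound as $\pa_{\xi'}^{\be'}m_i$. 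Thus it suffices to bound the unweighted products term by term.

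For assertion (1), I would substitute the type 1 bounds for $m_1$ and $m_2$ into the Leibniz expansion; each summand is dominated by
\begin{equation*}
C(|\la|^{1/2}+|\xi'|)^{s_1-|\be'|}(|\la|^{1/2}+|\xi'|)^{s_2-|\al'-\be'|}
= C(|\la|^{1/2}+|\xi'|)^{s_1+s_2-|\al'|},
\end{equation*}
which is exactly the type 1 bound of order $s_1+s_2$. For assertion (3), substituting type 2 bounds yields
\begin{equation*}
C(|\la|^{1/2}+|\xi'|)^{s_1}|\xi'|^{-|\be'|}(|\la|^{1/2}+|\xi'|)^{s_2}|\xi'|^{-|\al'-\be'|}
= C(|\la|^{1/2}+|\xi'|)^{s_1+s_2}|\xi'|^{-|\al'|},
\end{equation*}
which is the type 2 bound of order $s_1+s_2$. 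For the mixed case (2), the only manipulation needed is
\begin{equation*}
(|\la|^{1/2}+|\xi'|)^{s_1-|\be'|}
=(|\la|^{1/2}+|\xi'|)^{s_1}(|\la|^{1/2}+|\xi'|)^{-|\be'|}
\leq(|\la|^{1/2}+|\xi'|)^{s_1}|\xi'|^{-|\be'|},
\end{equation*}
after which the product with the type 2 factor gives exactly the required type 2 bound of order $s_1+s_2$.

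There is no genuine obstacle: the proof is pure bookkeeping. The only point that requires any attention is the mixed case (2), where one must recognize that a negative power of $(|\la|^{1/2}+|\xi'|)$ can always be traded for the corresponding negative power of $|\xi'|$ (never the reverse), so that type 1 combined with type 2 falls under the type 2 scheme, not type 1. The $\tau\pa_\tau$-estimates are handled in exactly the same way because of the derivation property, so no additional argument is needed for them.
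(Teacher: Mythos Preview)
Your proof is correct and is precisely the standard Leibniz-rule computation one expects here. The paper does not supply its own proof of this lemma but only cites \cite[Lemma 5.1]{SS12}, so there is no in-paper argument to compare against; your treatment is self-contained and complete.
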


We here set, for $\xi'=(\xi_1,\xi_2)\in\BR^2$ and
$\la\in\BC\setminus(-\infty,-\mu|\xi'|^2]$,
\begin{align}\label{symbols}
	&A=|\xi'|, \quad
	B=\sqrt{\frac{\la}{\mu}+|\xi'|^2}\quad({\rm Re}\,B\geq0), \quad
	\CM(a) = \frac{e^{Ba}-e^{Aa}}{B-A} \quad (a\in\BR), \\
	&D(A,B) = B^3 + AB^2 + 3A^2B -A^3.
	\notag
\end{align}
Then, we have  (cf. \cite[Lemma 5.2, Lemma 5.3]{SS12})

\begin{lemm}\label{lemm:multi2}
Let $s\in\BR$ and $0<\ep<\pi/2$.
Then, the following assertions hold.
\begin{enumerate}[$(1)$]
	\item
		$A^s\in \BBM_{s,2,}(\Sigma_\ep)$, provided that $s\geq 0$.
	\item
		$B^s\in\BBM_{s,1}(\Sigma_\ep)$.
	\item
		$D(A,B)^s\in\BBM_{3s,2}(\Sigma_\ep)$.
	\item
		Let $a<0$. 
		For any multi-index $\al'\in\BN_0^2$, there exists a positive constant $C_{\al'}$, independent of $a$,
		such that
		\begin{equation*}
			\left|\pa_{\xi'}^{\al'} e^{Aa}\right|\leq C_{\al'}A^{-|\al'|}e^{-(1/2)A|a|} \quad (\xi'\in\BR^2\setminus\{0\}).
		\end{equation*}
	\item
		Let $a<0$, $l=0,1$, and $0<\ep<\pi/2$.
		For any multi-index $\al'\in\BN_0^2$, there exist constants $0<b_{\ep,\mu}\leq 1$, $C_{\al',\ep,\mu}>0$ such that,
		for any $\la\in\Si_\ep$ and $\xi'\in\BR^2\setminus\{0\}$,
		\begin{equation*}
			\left|\pa_{\xi'}^{\al'}\left\{(\tau\pa_\tau)^l e^{Ba}\right\}\right|\leq C_{\al',\ep,\mu}(|\la|^{1/2}+A)^{-|\al'|}
			e^{-b_{\ep,\mu}(|\la|^{1/2}+A)|a|}.
		\end{equation*}
\end{enumerate}
\end{lemm}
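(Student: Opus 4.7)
My plan is to verify the five assertions separately, reducing each to a routine computation in the multiplier classes $\BBM_{s,l}(\Si_\ep)$. The common input is that, for $\la\in\Si_\ep$, the quantity $B=\sqrt{\la/\mu+A^2}$ satisfies the two resolvent-type estimates
\[
|B|\geq c_\ep(|\la|^{1/2}+A),\qquad \mathrm{Re}\,B\geq b_\ep(|\la|^{1/2}+A),
\]
with $\ep$-dependent constants; both follow from the fact that $\la/\mu+A^2$ stays in a sector strictly inside $\BC\setminus(-\infty,0]$ uniformly in $A\geq 0$. With these in hand, parts (1)--(3) become algebraic, while (4)--(5) reduce to absorbing polynomial growth into the exponential decay of $e^{Aa}$ and $e^{Ba}$.

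For (1), the function $A^s$ is $\la$-free, so the $\tau\pa_\tau$ part of the multiplier definition is trivial; the bound $|\pa_{\xi'}^{\al'}|\xi'|^s|\leq C_{s,\al'}|\xi'|^{s-|\al'|}$ combined with $|\xi'|^s\leq(|\la|^{1/2}+|\xi'|)^s$ for $s\geq 0$ yields the type-$2$ estimate. For (2), I would induct on $|\al'|$ using $\pa_{\xi_j}B=\xi_j/B$ and $\pa_\la B=1/(2\mu B)$: every derivative produces an extra factor of $B^{-1}$, which the lower bound on $|B|$ converts into $(|\la|^{1/2}+A)^{-1}$. The $\tau\pa_\tau$ requirement is verified from $\tau\pa_\tau B^s=(is\tau/(2\mu))B^{s-2}$ together with $|\tau|\leq|\la|\leq(|\la|^{1/2}+A)^2$.

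The heart of the matter is (3). The algebraic piece is cheap: $D$ is a sum of monomials $A^jB^k$ with $j+k=3$, so (1), (2), and Lemma \ref{lemm:multi1} immediately give $D^n\in\BBM_{3n,2}(\Si_\ep)$ for every $n\in\BN$. To pass to arbitrary real $s$, one needs the nondegeneracy bound $|D(A,B)|\geq c_\ep(|\la|^{1/2}+A)^3$. I would obtain it by rescaling: write $D=A^3 p(w)$ with $w=B/A$ and $p(w)=w^3+w^2+3w-1$. The cubic $p$ has a single real root $w_\ast\in(0,1)$ (since $p(0)=-1$, $p(1)=4$) and two complex conjugate roots with negative real part (from the resulting quadratic factor $w^2+(1+w_\ast)w+1/w_\ast$, whose discriminant is negative); only $w_\ast$ threatens the half-plane $\mathrm{Re}\,w\geq 0$ in which $w=B/A$ lies. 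But $w=w_\ast$ would force $\la/(\mu A^2)=w_\ast^2-1\in(-1,0)$, i.e.\ the negative real axis, which is separated from $\Si_\ep$ by a positive angle. A short continuity/compactness argument in $z=\la/(\mu A^2)$ then delivers $|p(w)|\geq c_\ep$ uniformly; combined with the coercivities $|D|\gtrsim|B|^3$ when $|B|\gg A$ and $|D|\gtrsim A^3$ when $A\gg|B|$, this produces the desired lower bound, and Lemma \ref{lemm:multi1} closes (3). This nondegeneracy step is where I expect the main work to lie.

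Finally, (4) and (5) follow by induction on $|\al'|$. A direct computation gives $\pa_{\xi'}^{\al'}e^{Aa}=A^{-|\al'|}P_{\al'}(Aa)\,e^{Aa}$ for some polynomial $P_{\al'}$; splitting $e^{Aa}=e^{Aa/2}e^{Aa/2}$ and using $t^j e^{-t/2}\leq C_j$ yields (4). For (5) the same bookkeeping applies with $B$ in place of $A$, but one must additionally track $\tau\pa_\tau$: each such derivative brings $\tau\pa_\tau B=i\tau/(2\mu B)$, and the bound $|\tau/B|\leq C(|\la|^{1/2}+A)$ shows that applying $\tau\pa_\tau$ to $e^{Ba}$ produces exactly one extra factor $|a|(|\la|^{1/2}+A)$, again absorbed by the exponential through $\mathrm{Re}\,B\geq b_{\ep,\mu}(|\la|^{1/2}+A)$ at the cost of slightly shrinking the decay constant in the final estimate.
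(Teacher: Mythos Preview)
Your sketch is correct and essentially complete. The paper does not give an independent proof of this lemma at all: it simply cites \cite[Lemma 5.2, Lemma 5.3]{SS12} and moves on. What you outline---the homogeneity estimate for $A^s$, the inductive derivative bookkeeping for $B^s$ via $\pa_{\xi_j}B=\xi_j/B$, the nondegeneracy of $D(A,B)$ through the root analysis of $p(w)=w^3+w^2+3w-1$ combined with the exclusion of the negative real axis from $\Si_\ep$, and the polynomial-into-exponential absorption for $e^{Aa}$ and $e^{Ba}$---is exactly the standard argument recorded in that reference, so there is nothing to compare beyond noting that you have supplied the details the paper omits.
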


The following lemma plays an essential role
to prove Theorem \ref{theo:sec3_1}.

\begin{lemm}\label{lemm:IJ}
Let $1<p,q<\infty$, $0<\ep<\pi/2$,  and
\begin{equation*}
	f\in L_{p,\de}(\BR,L_q(\BR^2)) \text{ with $f=0$ $(t<0)$}  
\end{equation*}
for some $\de\geq 0$.
Suppose that $m(\xi^\pr,\la)\in\BBM_{-1,2}(\Sigma_\ep)$
and set, for $\la=\ga+i\tau$ $(\ga\geq0,\,\tau\in\BR\setminus\{0\})$,
\begin{equation*}
\begin{aligned}
	I(x,t) &= \CL_\la^{-1}\CF_{\xi'}^{-1}
		\left[m(\xi',\la)e^{Bx_3}\wt{f}(\xi',\la)\right](x',t)
		 && (x\in\BR_-^3,\,t>0), \\
	J(x,t) &= \CL_\la^{-1}\CF_{\xi'}^{-1}
		\left[m(\xi',\la)A
		\CM(x_3)\wt{f}(\xi',\la)\right](x',t)
		 && (x\in\BR_-^3,\,t>0).
\end{aligned}
\end{equation*}
Then there exist operators $\CI(t),\CJ(t)\in\CL(L_q(\BR^2),W_q^1(\BR_-^3))$,
$t\in\BR\setminus\{0\}$, such that
\begin{equation}\label{160418_2}
	\|(\nabla^l\CI(t)g,\nabla^l\CJ(t)g)\|_{L_q(\BR_-^3)}
		\leq C_q\,|t|^{-\frac{1+l}{2}+\frac{1}{2q}}\|g\|_{L_q(\BR^2)} \quad (l=0,1)
\end{equation}
for any $g\in L_q(\BR^2)$ with a positive constant $C_q$,
independent of $t$ and $g$, and that
\begin{alignat*}{2}
	I(x,t) &= \int_0^t[\CI(t-s)f(\cdot,s)](x)\,ds\quad && (x\in\BR_-^3,\,t>0), \\
	J(x,t) &= \int_0^t[\CJ(t-s)f(\cdot,s)](x)\,ds \quad && (x\in\BR_-^3,\,t>0).
\end{alignat*}
\end{lemm}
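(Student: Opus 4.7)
The plan is to recognize $I$ and $J$ as Laplace convolutions in time and then extract the required operator kernels. Since $f(\cdot,s)=0$ for $s<0$, the Laplace transform $\wt f(\xi',\la)=\CL[\wh f(\xi',\cdot)](\la)$ is well defined for $\operatorname{Re}\la>\de$, and the Laplace convolution theorem yields
\[
\CL_\la^{-1}\bigl[m(\xi',\la)e^{Bx_3}\wt f(\xi',\la)\bigr](t)
=\int_0^t \ka_I(\xi',x_3,t-s)\wh f(\xi',s)\intd s,
\]
where $\ka_I(\xi',x_3,t):=\CL_\la^{-1}[m(\xi',\la)e^{Bx_3}](t)$, and analogously $\ka_J(\xi',x_3,t):=\CL_\la^{-1}[m(\xi',\la)A\CM(x_3)](t)$. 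Define $\CI(t),\CJ(t)$ to be the Fourier multiplier operators on $\BR^2$ with symbols $\ka_I(\cdot,x_3,t),\ka_J(\cdot,x_3,t)$ acting parametrically in $x_3$; Fubini then gives the convolution representations of $I$ and $J$.

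The core of the proof is the estimate \eqref{160418_2}. First I would deform the Bromwich contour of $\CL_\la^{-1}$ into a path $\la=\ga_0\pm re^{\pm i\vartheta}$, $r\ge0$, with $\vartheta\in(\pi/2,\pi-\ep)$, so $\la\in\Si_{\ep,\ga_0}$; this is permitted because $m(\xi',\la)e^{Bx_3}$ and $m(\xi',\la)A\CM(x_3)$ are holomorphic in $\la$ on $\Si_{\ep,\ga_0}$ and decay sufficiently at infinity by Lemma \ref{lemm:multi2}. Combining the multiplier bound $|\pa_{\xi'}^{\al'}m(\xi',\la)|\le C(|\la|^{1/2}+A)^{-1}|\xi'|^{-|\al'|}$ (the definition of $\BBM_{-1,2}(\Si_{\ep,\ga_0})$) with the pointwise bound $|\pa_{\xi'}^{\al'}e^{Bx_3}|\le C_{\al'}(|\la|^{1/2}+A)^{-|\al'|}e^{-b(|\la|^{1/2}+A)|x_3|}$ from Lemma \ref{lemm:multi2}(5), I expect pointwise bounds of the form
\[
\bigl|\pa_{\xi'}^{\al'}\pa_{x_3}^{l_0}\ka_I(\xi',x_3,t)\bigr|
\le C_{\al',l_0}\,(t+|x_3|^2)^{-(1+l_0)/2}\,|\xi'|^{-|\al'|},
\]
and a similar bound for $\ka_J$, after executing the remaining radial integral in $r$ with the help of $|e^{\la t}|\le e^{-crt}$ on the deformed contour. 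The factor of $A$ in $A\CM(x_3)$ and the apparent singularity of $\CM(x_3)=(e^{Bx_3}-e^{Ax_3})/(B-A)$ at $B=A$ (i.e.\ $\la=0$) are handled with Lemma \ref{lemm:multi1} and the representation $\CM(x_3)=x_3\int_0^1 e^{(A+\te(B-A))x_3}\intd\te$, which exhibits the integrand as holomorphic on $\Si_\ep$ and produces an extra factor $|x_3|$.

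Granted these pointwise bounds, the Mikhlin Fourier multiplier theorem on $\BR^2$ (applicable since the $\xi'$-derivatives satisfy the Hörmander--Mikhlin condition uniformly in $(x_3,t)$) gives, for fixed $x_3<0$ and $t>0$,
\[
\|\nabla_{x'}^l\CI(t)g(\cdot,x_3)\|_{L_q(\BR^2)}
\le C\,(t+|x_3|^2)^{-(1+l)/2}\|g\|_{L_q(\BR^2)},
\]
and likewise for $\CJ$ and $\pa_{x_3}\CI$, $\pa_{x_3}\CJ$ with the one-step-higher-order bound. Taking the $q$-th power and integrating in $x_3\in(-\infty,0)$ via the substitution $y=t^{-1/2}|x_3|$,
\[
\int_{-\infty}^0(t+|x_3|^2)^{-q(1+l)/2}\intd x_3
=C\,t^{-q(1+l)/2+1/2},
\]
yields $\|\nabla^l\CI(t)g\|_{L_q(\BR_-^3)}\le C|t|^{-(1+l)/2+1/(2q)}\|g\|_{L_q(\BR^2)}$, which is exactly \eqref{160418_2}.

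\textbf{Main obstacle.} The delicate point will be verifying the pointwise multiplier estimates for $\ka_J$: the product $A\CM(x_3)$ is of type~2 in $\xi'$, and we must track the cancellation in $(e^{Bx_3}-e^{Ax_3})/(B-A)$ carefully enough to obtain the $(t+|x_3|^2)^{-1/2}$ temporal/spatial decay with sharp $|\xi'|^{-|\al'|}$ dependence, uniformly as $\vartheta\to\pi-\ep$. Once this is done, the contour deformation, Mikhlin step, and integration in $x_3$ are routine.
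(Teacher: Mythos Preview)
Your outline is workable, but it diverges from the paper's argument at two technical junctures, and both deserve comment.

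\emph{Time decay.} The paper does not deform the Bromwich contour. It stays on the imaginary axis $\la=i\tau$ and extracts the $|t|^{-1/2}$ decay from the $(\tau\pa_\tau)^l$-bounds built into the multiplier class $\BBM_{-1,2}(\Sigma_\ep)$, via the real-variable decay lemma Proposition~\ref{prop:SS01} (if $\|\pa_\xi^\al f\|\le M|\xi|^{s-|\al|}$ then $\|\CF_\xi^{-1}f\|\lesssim |x|^{-(n+s)}$). Your contour-deformation route can also be made to work, but take care with the vertex: if the rays issue from $\ga_0>0$ then $|e^{\la t}|\le e^{(\ga_0-cr)t}$, not $e^{-crt}$, and you pick up exponential growth. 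For a pure power bound you need the rays to emanate from the origin, which is permissible here since $m\in\BBM_{-1,2}(\Sigma_\ep)$ with $\Sigma_\ep=\Sigma_{\ep,0}$ and the symbol is bounded near $\la=0$ for each fixed $\xi'\neq0$.

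\emph{Estimate of $\CJ$.} For $\CJ$ the paper does \emph{not} use Mikhlin in $\xi'$. After writing $\CM(x_3)=x_3\int_0^1 e^{(B\te+A(1-\te))x_3}\intd\te$ (as you also propose), it applies Proposition~\ref{prop:SS01} in the $\xi'$-variable to obtain a pointwise $L_1(\BR^2)$ bound on the physical-space kernel, controls the $L_q(\BR^2)$ norm by Young's inequality, and then applies Proposition~\ref{prop:SS01} once more in $\tau$, this time with the Banach space $X=L_q(\BR_-,L_1(\BR^2))$. The reason is the extra factor $A$: the natural derivative estimate (the analogue of Lemma~\ref{lemm:multi2}) yields $A^{\de-|\al'|}$ for an auxiliary $0<\de<1$, not $A^{-|\al'|}$, and the paper sidesteps this via the $L_1$-kernel route. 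Your Mikhlin plan can still be pushed through by taking $\de=0$ (using $(|x_3|A)e^{-bA|x_3|}\le C$), but then the only remaining $x_3$-decay comes from $e^{-b|\la|^{1/2}\te|x_3|}$, and after integrating over $\te\in(0,1)$ you acquire a logarithmic factor $\log(|x_3|/t^{1/2})$ in the region $|x_3|>t^{1/2}$. That factor is harmless for the final $L_q(\BR_-^3)$ bound, but it is not simply ``a similar bound'' to the one for $\ka_I$, and you should expect to write it out.

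In sum, both approaches succeed; the paper's is more systematic in that it reuses a single decay lemma in both the $\tau$- and $\xi'$-variables, while yours is closer to semigroup intuition but needs the two repairs above.
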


\begin{proof}
Since the function $f$ in Lemma \ref{lemm:IJ}
can be approximated by a function of $C_0^\infty(\BR^2\times\BR_+)$,
it suffices to consider the case where $f\in C_0^\infty(\BR^2\times\BR_+)$.


For 
$(x,t)\in\BR_-^3\times(\BR\setminus\{0\})$, we set
\begin{align}
	&[\CI(t)g](x) =
		\CF_{\xi'}^{-1}\left[\CL_\la^{-1}
		\left[m(\xi',\la)e^{Bx_3}\right](t)
		\,\wh{g}(\xi')\right](x'), \label{op:I} \\
	&[\CJ(t)g](x) =
		\int_0^1\CF_{\xi'}^{-1}
		\left[\CL_\la^{-1}\left[m(\xi',\la)x_3A
		e^{(B\te+A(1-\te))x_3}\right](t)
		\,\wh{g}(\xi')\right](x')\intd\te \label{op:J}.
\end{align}
Then, by using
\begin{equation*}
	\CM(a) = a \int_0^1 e^{(B\te + A(1-\te))a}\intd\te \quad (a\in\BR),
\end{equation*}
we can write $I(x,t)$, $J(x,t)$ as follows: for $(x,t)\in\BR_-^3\times\BR_+$,
\begin{equation}\label{IJ}
	I(x,t) = \int_0^t [\CI(t-s)f(\cdot,s)](x)\,ds,\quad
	J(x,t) = \int_0^t [\CJ(t-s)f(\cdot,s)](x)\,ds.
\end{equation}
In fact, since $\CL_\la^{-1}[m(\xi',\la)e^{Bx_3}](t)=0$ for $x_3<0$ and $t>0$
by Cauchy's integral theorem
and since $\wh f(\xi',t)=0$ for $t<0$,
we see by Fubini's theorem that 
\begin{align*}
	I(x,t) =& \CF_{\xi'}^{-1}\left[\int_\BR\CL_{\la}^{-1}
			\left[m(\xi',\la)e^{B x_3}\right](t-s)\wh{f}(\xi',s)\intd s\right](x') \\
		=&\int_0^t\CF_{\xi'}^{-1}\left[\CL_{\la}^{-1}
			\left[m(\xi',\la)e^{B x_3}\right](t-s)\wh{f}(\xi',s)\right](x')\intd s,
\end{align*}
which implies the first identity of \eqref{IJ}.
Analogously, we can obtain the second identity of \eqref{IJ}.

We next show estimates of $\CI(t)g$ in \eqref{160418_2}
by using the following proposition
that was proved in \cite[Theorem 2.3]{SS01}
(cf. also \cite[Lemma 3.6]{SS12}).

\begin{prop}\label{prop:SS01}
Let $X$ be a Banach space and $\|\cdot\|_X$ its norm.
Suppose that $L$ and $n$ are a non-negative integer and
a positive integer, respectively.
Let $\si\in(0,1]$ and $s=L+\si-n$, and set
\begin{equation*}
	l(\si) =
		\left\{\begin{aligned}
			&1, && \si=1, \\
			&0, && \si\in(0,1).
		\end{aligned}\right.
\end{equation*}
Let $f=f(\xi)$ be a function
of $C^{L+l(\si)+1}(\BR^n\setminus\{0\},X)$
that satisfies the following two conditions:
\begin{enumerate}[$(1)$]
	\item
		$\pa_\xi^\al f\in L_1(\BR^n,X)$ for any multi-index
		$\al\in\BN_0^n$ with $|\al|\leq L$.
	\item
		For any multi-index $\al\in\BN_0^n$
		with $|\al|\leq L+l(\si)+1$,
		there exists a positive constant $M_\al$ such that
		\begin{equation*}
			\|\pa_\xi^\al f(\xi)\|_X \leq
				M_\al |\xi|^{s-|\al|}
				\quad	(\xi\in\BR^n\setminus\{0\}).
		\end{equation*}
\end{enumerate}
Then there exists a positive constant $C_{n,s}$ such that
\begin{equation*}
	\|\CF_\xi^{-1}[f](x)\|_X \leq
		C_{n,s} \left(\max_{|\al|\leq L+l(\si)+1}M_\al\right)
		|x|^{-(n+s)} \quad
		(x\in\BR^n\setminus\{0\}).
\end{equation*}
\end{prop}

Let $\la=i\tau$ for $\tau\in\BR\setminus\{0\}$, and then
\begin{equation}\label{140910_1}
	\pa_{\xi'}^{\al'}\CL_\la^{-1}[m(\xi',\la)e^{Bx_3}](t)
		=\CF_\tau^{-1}[\pa_{\xi'}^{\al'}(m(\xi',i\tau)e^{Bx_3})](t)
\end{equation}
for any $\al'\in\BN_0^2$. 
By Lemma \ref{lemm:multi2}, $m\in\BBM_{-1,2}(\Sigma_\ep)$,
and Leibniz's rule, we have
\begin{equation}\label{140825_1}
	|(\tau\pa_\tau)^{l}\pa_{\xi'}^{\al'}(m(\xi',i\tau)e^{Bx_3})| 
		\leq C_{\al'}(|\tau|^{1/2}+A)^{-1}
			e^{-b_{\ep,\mu}(|\tau|^{1/2}+A)|x_3|}A^{-|\al'|} 
		\leq C_{\al'}|\tau|^{-1/2}A^{-|\al'|} 
\end{equation}
for $l=0,1$ with positive constants $C_{\al'}$,
which, combined with Proposition \ref{prop:SS01}
with $X=\BR$, $L=0$, $n=1$, and $\si=1/2$,
furnishes that 
\begin{equation}\label{160418_1}
	|\CF_\tau^{-1}[\pa_{\xi'}^{\al'}(m(\xi',i\tau)e^{Bx_3})](t)|
		\leq C_{\al'}|t|^{-1/2}A^{-|\al'|} \quad (t\in\BR\setminus\{0\}).
\end{equation}
On the other hand,
by using \eqref{140910_1} and \eqref{140825_1} with $l=0$,
we have
\begin{equation*}
	|\CF_\tau^{-1}[\pa_{\xi'}^{\al'}(m(\xi',i\tau)e^{Bx_3})](t)|
		\leq C_{\al'}A^{-|\al'|}
			\int_\BR|\tau|^{-1/2}
			e^{-b_{\ep,\mu}|\tau|^{1/2}|x_3|}\intd\tau 
		\leq C_{\al'}|x_3|^{-1}A^{-|\al'|} ,
\end{equation*}
which, combined with \eqref{160418_1}, furnishes that
\begin{equation*}
	\left|\CF_\tau^{-1}\left[\pa_{\xi'}^{\al'}
			\left(m(\xi',i\tau)e^{Bx_3}\right)\right](t)\right|
		\leq C_{\al'}\left(\frac{1}{|t|^{1/2}+|x_3|}\right)
			A^{-|\al'|}.
\end{equation*}
Thus, applying the Fourier multiplier theorem of
H\"olmander-Mikhlin type (cf. \cite[Appendix Theorem 2]{Mikhlin65}) to \eqref{op:I}, we have
\begin{equation*}
	\|[\CI(t)g](\cdot,x_3)\|_{L_q(\BR^2)}
		\leq C_{q}\left(\frac{1}{|t|^{1/2}+|x_3|}\right)\|g\|_{L_q(\BR^2)},
\end{equation*}
and therefore
$\|\CI(t)g\|_{L_q(\BR_-^3)}\leq C_q |t|^{-1/2+1/(2q)}\|g\|_{L_q(\BR^2)}$.
Analogously, we can obtain the estimate of $\nabla \CI(t)g$ in \eqref{160418_2},
which completes the case of $\CI(t)g$.

We finally show the estimates of $\CJ(t)g$ given by \eqref{op:J}.
Let $\la=i\tau$ with $\tau\in\BR\setminus\{0\}$.
Then, Young's inequality yields that
\begin{equation*}
	\|[\CJ(t)g](\cdot,x_3)\|_{L_q(\BR^2)} 
		\leq \int_0^1
			\left\|\CF_\tau^{-1}\left[\CF_{\xi'}^{-1}
			\left[m(\xi',i\tau) x_3A
			e^{(B\te+A(1-\te))x_3}\right](\cdot)\right](t)
			\right\|_{L_1(\BR^2)}\intd\te\|g\|_{L_q(\BR^2)}.
\end{equation*}
By taking $\|\cdot\|_{L_q((-\infty,0))}$ with respect to $x_3$
on both sides of this inequality,
we have for $X=L_q(\BR_-,L_1(\BR^2))$ 
\begin{equation}\label{141213_1}
	\|\CJ(t)g\|_{L_q(\BR_-^3)}  \\
		\leq
		\int_0^1\left\|\CF_\tau^{-1}\left[\CF_{\xi'}^{-1}
		\left[m(\xi',i\tau)x_3A e^{(B\te+A(1-\te))x_3}
		\right](x')\right](t)
		\right\|_X\intd\te\|g\|_{L_q(\BR^2)}. 
\end{equation}
To continue the proof,
we apply Proposition \ref{prop:SS01}
with $X=L_q(\BR_-,L_1(\BR^2))$, $L=0$, $n=1$, and $\si=1/2-1/(2q)$
to the right-hand side of \eqref{141213_1}.
For $\al'\in\BN_0^2$, $l=0,1$, and $0<\de<1$, we have,
by Lemma \ref{lemm:multi2}, $m\in\BBM_{-1,2}(\Sigma_\ep)$, and Leibniz's rule,
\begin{align}\label{140825_2}
	&\left|\pa_{\xi'}^{\al'}\left\{(\tau\pa_\tau)^l
			\left(m(\xi',i\tau)x_3 A 
			e^{(B\te+A(1-\te))x_3}\right)\right\}\right|  
		\leq C_{\al'}\frac{|x_3|A}{|\tau|^{1/2}+A}
			e^{-b_{\ep,\mu}(|\tau|^{1/2}\te+A)|x_3|}A^{-|\al'|}  \\
		&\leq
			C_{\al'}|x_3|^\de|\tau|^{-1/2}
			e^{-b_{\ep,\mu}|\tau|^{1/2}\theta |x_3|}(|x_3|A)^{1-\de}
			e^{- b_{\ep,\mu} A|x_3|}
			A^{\de-|\al'|} \notag \\
		&\leq
			C_{\al',\de}|x_3|^\de|\tau|^{-1/2}
			e^{-b_{\ep,\mu}|\tau|^{1/2}\theta|x_3|}
			e^{-(b_{\ep,\mu}/2)A|x_3|}
			A^{\de-|\alpha'|} \notag
\end{align}
for some positive constants $C_{\al',\de}$,
which, combined with Proposition \ref{prop:SS01}
with $X=\BR$, $L=2$, $n=2$, and $\sigma=\de$,
furnishes that
\begin{equation*}
	\left|\CF_{\xi'}^{-1}\left[(\tau\pa_\tau)^l
			\left(m(\xi',i\tau)x_3A
			e^{(B\te+A(1-\te))x_3}\right)\right](x')\right| 
		\leq C_{\de}|x_3|^\de|\tau|^{-1/2}
			e^{-b_{\ep,\mu}|\tau|^{1/2}\te |x_3|}|x'|^{-(2+\de)}.
\end{equation*}
On the other hand,
we use \eqref{140825_2} again with $\al'=0$,
and then
\begin{align*}
	&\left|\CF_{\xi'}^{-1}
			\left[(\tau\pa_\tau)^l\left(m(\xi',i\tau)x_3A 
			e^{(B\te+A(1-\te))x_3}\right)\right](x')\right| \\
		&\leq
			C_{\de}|x_3|^\de|\tau|^{-1/2}
			e^{-b_{\ep,\mu}|\tau|^{1/2}\te|x_3|}
			\int_{\BR^2}|\xi'|^\de e^{-(b_{\ep,\mu}/2)|\xi'||x_3|}
			\intd\xi' \\
		&\leq
			C_{\de}|x_3|^\de|\tau|^{-1/2}
			e^{-b_{\ep,\mu}|\tau|^{1/2}\te|x_3|}|x_3|^{-(2+\de)}
\end{align*}
for $l=0,1$ with a positive constant $C_{\de}$.
We thus obtain 
\begin{equation*}
	\left|\CF_{\xi'}^{-1}\left[(\tau\pa_\tau)^l
			\left(m(\xi',i\tau)x_3A
			e^{(B\te+A(1-\te)x_3)}\right)\right](x')
			\right| \\
		\leq
			C_{\de} \frac{|x_3|^{\de}|\tau|^{-1/2}
			e^{-b_{\ep,\mu}|\tau|^{1/2}\te|x_3|}}
			{|x'|^{2+\de}+|x_3|^{2+\de}},
\end{equation*}	
which furnishes that
\begin{equation*}
	\left\|(\tau\pa_\tau)^{l}\CF_{\xi'}^{-1}
		\left[m(\xi',i\tau)x_3A e^{(B\te+A(1-\te))x_3}\right]\right\|_X
		\leq C_{q,\de}\te^{-\frac{1}{q}}|\tau|^{-\frac{1}{2}-\frac{1}{2q}}
\end{equation*}
for $l=0,1$ with a positive constant $C_{q,\de}$.
Then, Proposition \ref{prop:SS01} and \eqref{141213_1}
yields that, for any $t\in\BR\setminus\{0\}$,
\begin{equation*}
	\|\CJ(t)g\|_{L_q(\BR_-^3)}
		\leq C_{q,\de}\int_0^1\te^{-\frac{1}{q}}\intd\te
			|t|^{-\frac{1}{2}+\frac{1}{2q}}\|g\|_{L_q(\BR^2)} 
		\leq C_{q,\de}|t|^{-\frac{1}{2}+\frac{1}{2q}}\|g\|_{L_q(\BR^2)}
\end{equation*}
with some positive constant $C_{q,\de}$ independent of $t$ and $g$.
Analogously, we can prove the estimate of $\nabla\CJ(t)g$ in \eqref{160418_2}.
This completes the proof of the lemma.
\end{proof}

\subsection{Proof of Theorem \ref{theo:sec3_1}}\label{subsec:3_2}


We here prove Theorem \ref{theo:sec3_1} \eqref{theo:sec3_1_1} only.
For the other assertions of Theorem \ref{theo:sec3_1}, we refer e.g. to \cite{SS08,SS12,Saito15b}.
By Lemma \ref{lemm:A1} in the appendix below, there exists an extension $\BH$ of $\Bh$,
defined for $t\in\BR$, such that
$\BH=\Bh$ when $(x,t)\in\BR_-^3\times\BR_+$ and
\begin{equation*}
	\BH \in H_{q,p}^{1,1/2}(\BR_-^3\times\BR)^3 \quad \text{with $\BH=0$ on $\BR_0^3$ $(t<0)$,} \quad 
	\|\BH\|_{H_{q,p}^{1,1/2}(\BR_-^3\times\BR)} \leq C\|\Bh\|_{H_{q,p}^{1,1/2}(\BR_-^3\times\BR_+)}. \notag
\end{equation*}
Since $\SSH=e^t\BH \in H_{q,p,1}^{1,1/2}(\BR_-^3\times\BR)^3$ with $\SSH=0$ on $\BR_0^3$ $(t<0)$, 
we have, by \cite[Theorem 1.2]{SS12}\footnote{
The result, stated in \cite[Theorem 1.2]{SS12}, requires the assumption $\SSH=0$ in $\lhs$ $(t<0)$.
We can, however, relax it to the assumption $\SSH=0$ on $\BR_0^3$ $(t<0)$.}
a  solution $(\Bv,\Fq)$ with
\begin{equation*}
	(\Bv,\Fq) \in W_{q,p,1}^{2,1}(\BR_-^3\times\BR_+)^3 \times
		L_{p,1}(\BR_+,\wh{W}_q^1(\BR_-^3))
\end{equation*}
to the following system: 
\begin{equation*}
	\left\{\begin{aligned}
		\pa_t\Bv-\Di\BT(\Bv,\Fq) &=0
			&& \text{in $\BR_-^3$, $t>0$}, \\
		\di\Bv &=0
			&& \text{in $\BR_-^3$, $t>0$}, \\
		\BT(\Bv,\Fq)\Be_3 &=\SSH
			&& \text{on $\BR_0^3$, $t>0$}, \\
		\Bv|_{t=0}&=0
			&& \text{in $\BR_-^3$.}
	\end{aligned}\right.
\end{equation*}
%
%

Let
	$\Bv =\tp(v_1(x,t),v_2(x,t),v_3(x,t))$ and
	$\SSH =\tp (\SSH_1(x,t),\SSH_2(x,t),\SSH_3(x,t))$, and then
the representation formula of $\Bv$
is given by\footnote{We refer  to \cite[Section 4]{SS12} for the details.}
\begin{align*}
	v_j(x,t) &=
		\frac{2}{\mu}\sum_{k=1}^2\CL_\la^{-1}\CF_{\xi'}^{-1}
			\left[\frac{\xi_j\xi_k B}{A D(A,B)}
			A\CM(x_3)\,\wt{\SSH}_k(\xi',0,\la)\right](x',t) \\
		&-\frac{1}{\mu}\CL_\la^{-1}\CF_{\xi'}^{-1}
			\left[\frac{i\xi_j(B^2+A^2)}{AD(A,B)}
			A\CM(x_3)\,\wt{\SSH}_3(\xi',0,\la)\right](x',t) \\
		&-\frac{1}{\mu}\sum_{k=1}^2\CL_\la^{-1}\CF_{\xi'}^{-1}
			\left[\frac{\xi_j\xi_k(3B-A)}{B D(A,B)}
			e^{Bx_3}\,\wt{\SSH}_k(\xi',0,\la)\right](x',t) \\
		&+\frac{1}{\mu}\CL_\la^{-1}\CF_{\xi'}^{-1}
			\left[\frac{i\xi_j(B-A)}{D(A,B)}
			e^{Bx_3}\,\wt{\SSH}_3(\xi',0,\la)\right](x',t) \\
		&+\frac{1}{\mu}\CL_\la^{-1}\CF_{\xi'}^{-1}
			\left[\frac{1}{B}e^{Bx_3}\,\wt{\SSH}_j(\xi',0,\la)\right](x',t), 
			\quad j=1,2, \\
	v_3(x,t) &=
		-\frac{2}{\mu}\sum_{k=1}^2\CL_\la^{-1}\CF_{\xi'}^{-1}
			\left[\frac{i\xi_kB}{D(A,B)}
			A\CM(x_3)\,\wt{\SSH}_k(\xi',0,\la)\right](x',t) \\
		&-\frac{1}{\mu}\CL_\la^{-1}\CF_{\xi'}^{-1}
			\left[\frac{(B^2+A^2)}{D(A,B)}
			A\CM(x_3)\,\wt{\SSH}_3(\xi',0,\la)\right](x',t) \\
		&-\frac{1}{\mu}\sum_{k=1}^2\CL_\la^{-1}\CF_{\xi'}^{-1}
			\left[\frac{i\xi_k(B-A)}{D(A,B)}
			e^{Bx_3}\,\wt{\SSH}_k(\xi',0,\la)\right](x',t) \\
		&+\frac{1}{\mu}\CL_\la^{-1}\CF_{\xi'}^{-1}
			\left[\frac{A(B+A)}{D(A,B)}
			e^{Bx_3}\,\wt{\SSH}_3(\xi',0,\la)\right](x',t). 
\end{align*}
Since the symbols:
\begin{align*}
	&\frac{\xi_j\xi_k B}{A D(A,B)}, \quad
	\frac{i\xi_j(B^2+A^2)}{AD(A,B)}, \quad
	\frac{\xi_j\xi_k(3B-A)}{B D(A,B)}, \quad
	\frac{i\xi_j(B-A)}{D(A,B)}, \quad
	\frac{1}{B}, \\
	&\frac{i\xi_kB}{D(A,B)}, \quad
	\frac{B^2+A^2}{D(A,B)}, \quad
	\frac{i\xi_k(B-A)}{D(A,B)}, \quad
	\frac{A(B+A)}{D(A,B)}\quad(j,k=1,2)
\end{align*}
belong to $\BBM_{-1,2}(\Sigma_\ep)$ for any $0<\ep<\pi/2$
by Lemmas \ref{lemm:multi1} and \ref{lemm:multi2},
it follows from Lemma \ref{lemm:IJ} that
there exists an operator
$\CC(\tau)\in\CL(L_q(\BR^2)^3,W_q^1(\BR_-^3)^3)$,
$\tau\in\BR$, such that the solution $\Bv$ is represented as
\begin{equation}\label{161216_3}
	\Bv(x,t)
		=\int_0^t[\CC(t-s)\SSH(\cdot,0,s)](x)\,ds \quad (x\in\BR_-^3,\,t>0).
\end{equation}
In addition, for any $\Bf \in L_q(\BR^2)^3$, we have
\begin{equation}\label{141012_10}
	\|\nabla^l\CC(\tau)\Bf\|_{L_q(\BR_-^3)}
		\leq C_{q}|\tau|^{-\frac{1+l}{2}+\frac{1}{2q}}
			\|\Bf\|_{L_q(\BR^2)} \quad (\tau\in\BR\setminus\{0\},\,l=0,1)
\end{equation}
with some positive constant $C_{q}$ independent of $\tau$ and $\Bf$.

Let $(\Bu,\Fp)=(e^{-t}\Bv,e^{-t}\Fq)$, and then $(\Bu,\Fp)\in W_{q,p}^{2,1}(\lhs\times\BR_+)^3\times L_p(\BR_+,\wh W_q^1(\lhs))$
satisfies the following system:
\begin{equation*}
\left\{\begin{aligned}
	\pa_t\Bu+\Bu-\Di\BT(\Bu,\Fp) &=0
		&& \text{in $\BR_-^3$, $t>0$}, \\
	\di\Bu&=0
		&& \text{in $\BR_-^3$, $t>0$}, \\
	\BT(\Bu,\Fp)\Be_3&=e^{-t}\SSH
		&& \text{on $\BR_0^3$, $t>0$}, \\
	\Bu|_{t=0}	&=0 && \text{in $\BR_-^3$}.
\end{aligned}\right.
\end{equation*}
Since $e^{-t}\SSH=\Bh$ in $\BR_-^3$ for $t>0$,
we observe that $(\Bu,\Fp)$ is a solution to \eqref{linear2}.
In addition, it holds by \eqref{161216_3} that
\begin{equation*}
	\Bu(x,t) = \int_0^t [e^{-(t-s)}\CC(t-s)\Bh(\cdot,0,s)](x)\intd s \quad (x\in\BR_-^3,\,t>0).
\end{equation*}
%
Setting $\CB(\tau)=e^{-\tau}\CC(\tau)$, together with \eqref{141012_10},
completes the proof of Theorem \ref{theo:sec3_1} \eqref{theo:sec3_1_1}.

\subsection{Proof of Theorem \ref{theo:sec3_2}}\label{subsec:3_3}
In this subsection, we prove Theorem \ref{theo:sec3_2} by using Theorem \ref{theo:sec3_1}.
We first show Theorem \ref{theo:sec3_2} \eqref{theo:sec3_2_1}. 
For $t>0$,
\begin{align*}
	\Bu(x,t)
		= \left(\int_0^{t/2}+\int_{t/2}^{t}\right)[\CB(t-s)\Bh(\cdot,0,s)](x)\intd s
		=: \Bu_1(x,t)+\Bu_2(x,t).
\end{align*}
Concerning $\Bu_1(x,t)$, it follows from the trace theorem that,
for $p'=p/(p-1)$ and $l=0,1$,
\begin{align*}
	&\|\nabla^l \Bu_1(t)\|_{L_r(\BR_-^3)}
		\leq
			C\int_0^{t/2}e^{-(t-s)}(t-s)^{-\frac{1+l}{2}+\frac{1}{2r}}
			\|\Bh(s)\|_{L_r(\BR_0^3)}\intd s \\
		&\leq
			Ce^{-t/2}t^{-\frac{1+l}{2}+\frac{1}{2r}}
			\left(\int_0^{t/2}\,ds\right)^{1/p'}
			\|\Bh\|_{L_p(\BR_+,W_r^1(\BR_-^3))} \\
		&\leq
			C e^{-t/4}\|\Bh\|_{L_p(\BR_+,W_r^1(\BR_-^3))}
\end{align*}
with some positive constant $C $, where we note by \eqref{pq:lin2} that
\begin{equation}\label{141103_1}
	\frac{1}{p'}-\frac{1+l}{2}+\frac{1}{2r}
		\geq \frac{1}{p'}-1+\frac{1}{2q}
		= -\frac{1}{p}+\frac{1}{2q}
		> \frac{2}{q}-\frac{1}{2}>0.
\end{equation}
On the other hand, noting \eqref{141103_1}, we observe that
\begin{align*}
	&\|\nabla^l \Bu_2(t)\|_{L_r(\BR_-^3)}
		\leq
			C\int_{t/2}^t e^{-(t-s)}(t-s)^{-\frac{1+l}{2}+\frac{1}{2r}}
			\|\Bh(s)\|_{W_r^1(\BR_-^3)}\,ds \\
			&\leq
				C (t+2)^{-\Fc_2}
				\int_{t/2}^t e^{-(t-s)}(t-s)^{-\frac{l+1}{2}+\frac{1}{2r}}
				(s+2)^{\Fc_2}\|\Bh(s)\|_{W_r^1(\BR_-^3)}\intd s \\
			&\leq
				C (t+2)^{-\Fc_2}\left(\int_{t/2}^t
				e^{-p'(t-s)}(t-s)^{-p'\left(\frac{l+1}{2}-\frac{1}{2r}\right)}
				\intd s\right)^{1/p'}
				\|\Bh\|_{L_p^{\Fc_2}(\BR_+,W_r^1(\BR_-^3))}\\
			&\leq
				C (t+2)^{-{\Fc_2}}
				\|\Bh\|_{L_p^{\Fc_2}(\BR_+,W_r^1(\BR_-^3))}
\end{align*}
with a positive constant $C$,
which, combined with the estimates of $\Bu_1(x,t)$,
completes the proof of Theorem \ref{theo:sec3_2} \eqref{theo:sec3_2_1}. 

We next prove Theorem \ref{theo:sec3_2} \eqref{theo:sec3_2_2}. 
By Lemma \ref{lemm:A1} in the appendix below,
there exists an extension $\BG$ of $(t+2)^{\Fd_2}\Bh$, defined for $t\in\BR$,
such that $\BG = (t+2)^{\Fd_2}\Bh$ for $(x,t)\in\BR_-^3\times\BR_+$ and that
\begin{align}\label{161216_13}
	&\BG\in H_{q,p}^{1,1/2}(\BR_-^3\times\BR)^3 \quad \text{with $\BG=0$ on $\BR_0^3$ ($t<0$),} \\
	&\|\BG\|_{H_{q,p}^{1,1/2}(\BR_-^3\times\BR)}
		\leq C\|(t+2)^{\Fd_2}\Bh\|_{H_{q,p}^{1,1/2}(\BR_-^3\times\BR_+)}. \notag
\end{align}
We then see that $\BU = (t+2)^{\Fd_2}\Bu$ and $P=(t+2)^{\Fd_2}\Fp$ satisfy
\begin{equation*}
\left\{\begin{aligned}
	\pa_t\BU+\BU-\Di\BT(\BU,P)
		&=-\Fd_2(t+2)^{-1+\Fd_2}\Bu
		&& \text{in $\BR_-^3$, $t>0$,} \\
	\di\BU
		&=0 && \text{in $\BR_-^3$, $t>0$,} \\
	\BT(\BU,P)\Be_3
		&=\BG && \text{on $\BR_0^3$, $t>0$}, \\
	\BU|_{t=0}
		&=0 && \text{in $\BR_-^3$}.
\end{aligned}\right.
\end{equation*}
It follows from $p(1+\Fc_2-\Fd_2)>1$ and Theorem \ref{theo:sec3_2} \eqref{theo:sec3_2_1} that
\begin{align}\label{141014_3}
	\|(t+2)^{-1+\Fd_2}\Bu\|_{L_p(\BR_+,L_r(\BR_-^3))}
	&\leq C\|(t+2)^{-(1+\Fc_2-\Fd_2)}\|_{L_p(\BR_+)}
			\|\Bh\|_{L_p^{\Fc_2}(\BR_+,W_r^1(\BR_-^3))}  \\
		&\leq C\|\Bh\|_{L_p^{\Fc_2}(\BR_+,W_r^1(\BR_-^3))}. \notag
\end{align}
We thus have, by \cite[Theorem 5.1]{SS08}, \eqref{161216_13}, and \eqref{141014_3},
\begin{align}\label{141014_2}
	&\|(\pa_t\BU,\BU,\nabla\BU,\nabla^2\BU,\nabla P)\|_{L_p(\BR_+,L_r(\BR_-^3))} \\
		&\leq
			C\left(\|(t+2)^{-1+\Fd_2}\Bu\|_{L_p(\BR_+,L_r(\BR_-^3))}
			+\|\BG\|_{H_{r,p}^{1,1/2}(\BR_-^3\times\BR)}\right) \notag \\
		&\leq
			C\left(\|\Bh\|_{L_p^{\Fc_2}(\BR_+,W_r^1(\BR_-^3))}
			+\|(t+2)^{\Fd_2}\Bh\|_{H_{r,p}^{1,1/2}(\BR_-^3\times\BR_+)}\right) \notag
\end{align}
with some positive constant $C$. Noting that
	$(t+2)^{\Fd_2}\pa_t\Bu = \pa_t\BU-\Fd_2(t+2)^{-1+\Fd_2}\Bu$,	
we obtain, by \eqref{141014_3} and \eqref{141014_2},
\begin{equation*}
	\|(t+2)^{\Fd_2}\pa_t\Bu\|_{L_p(\BR_+,L_r(\BR_-^3))} 
		\leq
			C\left(\|\Bh\|_{L_p^{\Fc_2}(\BR_+,W_r^1(\BR_-^3))}
			+\|(t+2)^{\Fd_2}\Bh\|_{H_{r,p}^{1,1/2}(\BR_-^3\times\BR_+)}\right),
\end{equation*}
which, combined with \eqref{141014_2}, completes the proof of Theorem \ref{theo:sec3_2} \eqref{theo:sec3_2_2}.

\section{Time-weighted estimates: homogeneous boundary data}\label{sec4}
In this section, we prove time-weighted estimates of solutions 
for the following two linear time-dependent problems: 
\begin{align}
	&\left\{\begin{aligned}
		\pa_t\Bu-\Di\BT(\Bu,\Fp)
			&=0 && \text{in $\BR_-^3$, $t>0$,} \\
		\di\Bu
			&=0 && \text{in $\BR_-^3$, $t>0$,} \\
		\BT(\Bu,\Fp)\Be_3 + (c_g-c_\si\De')h\Be_3
			&=0 && \text{on $\BR_0^3$, $t>0$,} \\
		\pa_t h - u_3
			&= 0 && \text{on $\BR_0^3$, $t>0$,} \\
		\Bu|_{t=0}
			&= \Bu_0 && \text{in $\BR_-^3$,} \\
		h|_{t=0}
			&= h_0 && \text{on $\BR^2$,}
	\end{aligned}\right. \label{linear3} \\
	&\left\{\begin{aligned}
		\pa_t\Bu-\Di\BT(\Bu,\Fp)
			&=\Bf && \text{in $\BR_-^3$, $t>0$,} \\
		\di\Bu 
			&=0 && \text{in $\BR_-^3$, $t>0$,} \\
		\BT(\Bu,\Fp)\Be_3+(c_g-c_\si\De')h\Be_3
			&=0 && \text{on $\BR_0^3$, $t>0$,} \\
		\pa_t h-u_3
			&=k && \text{on $\BR_0^3$, $t>0$,} \\
		\Bu|_{t=0}
			&=0 && \text{in $\BR_-^3$,} \\
		h|_{t=0}
			&=0 && \text{on $\BR^2$,}
	\end{aligned}\right.	\label{linear4}
\end{align}
associated with the following resolvent problem:
\begin{equation}\label{resolvent}
	\left\{\begin{aligned}
		\la\BU-\Di\BT(\BU,P)
			&= \BF && \text{in $\BR_-^3$,} \\
		\di\BU
			&=0 && \text{in $\BR_-^3$,} \\
		\BT(\BU,P)\Be_3 + (c_g-c_\si\De')H\Be_3
			&= 0 && \text{on $\BR_0^3$,} \\
		\la H - U_3 
			&= K && \text{on $\BR_0^3$.}
	\end{aligned}\right.
\end{equation}

First, we recall resolvent estimates of System \eqref{resolvent}
that were proved in \cite[Theorem 1.1]{SS09} and \cite[Theorem 1.3]{SS12}.

\begin{lemm}\label{lemm:SS09}
Let $0<\ep<\pi/2$ and $1<q<\infty$.
Then there is a constant $\ga_0\geq 1$,
depending only on $\ep$, such that,
for any $\la\in\Si_{\ep,\ga_0}$, $\BF\in L_q(\BR_-^3)^3$,
and $K\in W_q^{2-1/q}(\BR^2)$,
System \eqref{resolvent} admits a unique solution
$(\BU,P,H)\in W_q^2(\BR_-^3)^3\times \wh{W}_q^1(\BR_-^3)\times W_q^{3-1/q}(\BR^2)$.
Furthermore, the solution $(\BU,P,H)$ satisfies 
\begin{align}\label{160609_1}
	&\|(\la\BU,\la^{1/2}\nabla\BU,\nabla^2\BU,\nabla P)\|_{L_q(\BR_-^3)}
	+\|\la H\|_{W_q^{2-1/q}(\BR^2)}+\|H\|_{W_q^{3-1/q}(\BR^2)} \\
		&\leq
			C_{q,\ga_0,\ep}
			\left(\|\BF\|_{L_q(\BR_-^3)}+\|K\|_{W_q^{2-1/q}(\BR^2)}\right)
			\notag
\end{align}
for any $\la\in \Si_{\ep,\ga_0}$ with some positive constant 
$C_{q,\ga_0,\ep}$ independent of $\la$.
\end{lemm}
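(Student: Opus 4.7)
The plan is to reduce \eqref{resolvent} to a one-dimensional boundary value problem via the partial Fourier transform in the tangential variable $x'=(x_1,x_2)$. After transformation, the momentum equations together with the divergence constraint yield a constant-coefficient ODE system on $(-\infty,0)$ for $\wh{\BU}(\xi',x_3,\la)$ and $\wh P(\xi',x_3,\la)$, whose characteristic exponents are $\pm A$ and $\pm B$ with $A,B$ as in \eqref{symbols}. Enforcing decay at $x_3\to-\infty$ restricts the homogeneous component to the modes $e^{Ax_3}$ and $e^{Bx_3}$, while a particular solution driven by $\wh{\BF}$ is produced by the Green's function using $\CM(x_3)$ and $A\CM(x_3)$ in a manner analogous to Lemma \ref{lemm:IJ}.

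Next, I would eliminate $\wh H=\la^{-1}(\wh U_3|_{x_3=0}+\wh K)$ from the kinematic boundary condition and substitute into the two tangential stress and one normal stress conditions at $\bdry$. This produces a $3\times 3$ linear system for the three free coefficients of the homogeneous component whose determinant generalizes the classical $D(A,B)$ of \eqref{symbols} by an additive contribution of schematic form $(c_g+c_\si A^2)\,q(A,B)/\la$, where $q$ is a polynomial of degree two in $A,B$. The hard part will be showing that, for $\la\in\Si_{\ep,\ga_0}$ with $\ga_0$ sufficiently large, this modified Lopatinski--Shapiro determinant is nonvanishing and its reciprocal, scaled appropriately by powers of $A$, belongs to the multiplier class $\BBM_{-3,2}(\Si_{\ep,\ga_0})$ uniformly: capillary-gravity wave resonances could in principle cause the determinant to vanish in $\Si_\ep$, and the lower bound $|\la|>\ga_0$ is precisely what rules this out by making the $1/\la$ correction subordinate to the leading $\mu^2$ terms of $D(A,B)$.

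Once multiplier bounds for the inverse determinant are in hand, the explicit representation formulas for $\wh{\BU}$, $\wh P$, and $\wh H$ all decompose into compositions of symbols in $\BBM_{s,i}(\Si_{\ep,\ga_0})$ multiplied by $e^{Ax_3}$, $e^{Bx_3}$, $\CM(x_3)$, or $A\CM(x_3)$. Applying a tangential Fourier multiplier theorem of H\"ormander--Mikhlin type fibrewise in $x_3$, and tracking compositions with Lemmas \ref{lemm:multi1} and \ref{lemm:multi2}, together with the pointwise exponential decay bounds of Lemma \ref{lemm:multi2}(4)--(5), yields the $L_q(\lhs)$ estimates on $\la\BU$, $\la^{1/2}\nabla\BU$, $\nabla^2\BU$, and $\nabla P$ asserted in \eqref{160609_1}; the $\la^{1/2}$ gain on $\nabla\BU$ is the natural trade-off between a $B$-order multiplier and the $(|\la|^{1/2}+A)^{-1}$ integrability factor arising from $e^{Bx_3}$.

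Finally, the trace estimates on $\la H$ and $H$ in $W_q^{2-1/q}(\BR^2)$ and $W_q^{3-1/q}(\BR^2)$ follow from the kinematic identity $\la H=U_3|_{\bdry}+K$ together with the trace theorem $W_q^2(\lhs)\hookrightarrow W_q^{2-1/q}(\BR^2)$, while the sharper bound on $H$ uses the representation $\wh H=\la^{-1}(\wh U_3|_{x_3=0}+\wh K)$ with the surface-tension term contributing the extra tangential derivative needed for $W_q^{3-1/q}$-regularity. Uniqueness is obtained by a standard duality argument against the adjoint resolvent problem, which is of the same type.
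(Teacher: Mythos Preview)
The paper does not prove this lemma; it merely records it as a known result, citing \cite[Theorem~1.1]{SS09} and \cite[Theorem~1.3]{SS12} for the proof. Your outline is in fact a faithful sketch of the argument carried out in those references: partial Fourier transform in $x'$, reduction to an ODE on $(-\infty,0)$ with modes $e^{Ax_3}$, $e^{Bx_3}$, analysis of the Lopatinski--Shapiro determinant (which there is written as $\mu D(A,B)+\la^{-1}A(c_g+c_\si A^2)(B-A)$, up to normalization), the choice of $\ga_0$ large to keep this determinant away from zero uniformly on $\Si_{\ep,\ga_0}$, and then Fourier multiplier estimates in the $x'$-variable to conclude. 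So your approach is correct and matches what the cited papers do.

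One minor point: for the $W_q^{3-1/q}$ bound on $H$ you do not need to go back through $\la^{-1}$ and the trace of $U_3$; in \cite{SS09,SS12} the height $H$ has its own explicit symbol (essentially $(c_g+c_\si A^2)$ divided by the full determinant, applied to the boundary data), and the extra tangential derivative comes directly from the multiplier class of that symbol rather than from a trace argument. Your route via the kinematic identity would also work but is slightly less direct.
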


Next, we formulate System \eqref{linear3} in the semigroup setting.
By \cite[Proposition 3.3.2]{Saito15}, we have

\begin{lemm}\label{lemm:weakDN}
Let $1<q<\infty$ and $q'=q/(q-1)$.
Then for any $\Bf\in L_q(\BR_-^3)^3$
there is a unique solution $\Fp\in\wh{W}_{q,0}^1(\BR_-^3)$
to the variational equation:
\begin{equation*}
	(\nabla\Fp,\nabla\ph)_{\BR_-^3} =
		(\Bf,\nabla\ph)_{\BR_-^3}
		\quad\text{for all $\ph\in \wh{W}_{q',0}^1(\BR_-^3)$.}
\end{equation*}
Furthermore, the solution $\Fp$ satisfies the estimate:
$\|\nabla\Fp\|_{L_q(\BR_-^3)}\leq C_q\|\Bf\|_{L_q(\BR_-^3)}$
with a positive constant $C_q$
independent of $\Fp$, $\Bf$, and $\ph$.
\end{lemm}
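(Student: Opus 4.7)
The plan is to reduce the problem to a full-space Poisson problem by reflection across $\bdry$. Given $\Bf = \tp(f_1,f_2,f_3) \in L_q(\lhs)^3$, I would define an extension $\tilde\Bf\in L_q(\ws)^3$ by odd-reflecting $f_1$ and $f_2$ across $\bdry$ and even-reflecting $f_3$ across $\bdry$, so that $\|\tilde\Bf\|_{L_q(\ws)}\leq 2^{1/q}\|\Bf\|_{L_q(\lhs)}$. The parities are forced by the requirement that $\di\tilde\Bf$ be odd in $x_3$, which is the parity the distributional Laplacian of any odd-reflected Dirichlet solution must inherit.

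For existence, I would solve $-\De\tilde\Fp = -\di\tilde\Bf$ in $\ws$ via Riesz transforms, obtaining $\pa_i\tilde\Fp=\sum_{j=1}^3 R_iR_j\tilde f_j$ where $R_i$ is the $i$-th Riesz transform on $\ws$. The Calder\'on--Zygmund theorem gives
\begin{equation*}
\|\nabla\tilde\Fp\|_{L_q(\ws)}\leq C_q\|\tilde\Bf\|_{L_q(\ws)}\leq C'_q\|\Bf\|_{L_q(\lhs)}.
\end{equation*}
Because the Newton kernel $\Phi(x)=1/(4\pi|x|)$ is even in $x_3$, the substitution $y_3\mapsto -y_3$ in the convolution $\tilde\Fp=\nabla\Phi*\tilde\Bf$ shows that $\tilde\Fp$ is odd in $x_3$, so $\tilde\Fp|_{\bdry}=0$ and $\Fp:=\tilde\Fp|_{\lhs}$ lies in $\wh W_{q,0}^1(\lhs)$ with the desired gradient estimate. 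To verify the variational identity, I would odd-extend a test function $\ph\in\wh W_{q',0}^1(\lhs)$ across $\bdry$ (legitimate because $\ph|_{\bdry}=0$) to $\tilde\ph\in\wh W_{q'}^1(\ws)$; in the full-space identity $(\nabla\tilde\Fp,\nabla\tilde\ph)_\ws=(\tilde\Bf,\nabla\tilde\ph)_\ws$ each side equals twice the corresponding half-space integral by the matching parities, which yields the desired identity on $\lhs$.

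For uniqueness, let $\Fp\in\wh W_{q,0}^1(\lhs)$ satisfy the homogeneous identity. Testing against $\ph\in C_0^\infty(\lhs)$ shows $\De\Fp=0$ distributionally in $\lhs$, and the boundary condition $\Fp|_{\bdry}=0$ permits an odd reflection to a harmonic tempered distribution $\tilde\Fp$ on $\ws$ with $\nabla\tilde\Fp\in L_q(\ws)^3$. A standard Liouville-type argument then shows that $\tilde\Fp$ is a polynomial; the $L_q$-integrability of $\nabla\tilde\Fp$ forces the polynomial to be constant, and the odd symmetry forces the constant to vanish, so $\Fp=0$. The step I expect to be most delicate is the rigorous justification that the parity of $\tilde\Fp$ in $x_3$ is preserved by the Riesz-transform representation when $\tilde\Bf$ is only $L_q$: this is clear at the level of the convolution kernels but requires either a density argument from Schwartz data or a direct distributional calculation. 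Once that is settled, the estimate and the variational identity follow directly, and the whole argument reduces to standard full-space harmonic analysis.
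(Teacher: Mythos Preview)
Your reflection-and-Riesz-transform argument is correct and self-contained. The parity bookkeeping is right: with $\tilde f_1,\tilde f_2$ odd and $\tilde f_3$ even in $x_3$, each term $\pa_j\Phi*\tilde f_j$ is odd in $x_3$ (since $\pa_1\Phi,\pa_2\Phi$ are even and $\pa_3\Phi$ is odd), so $\tilde\Fp$ is odd and its restriction lies in $\wh W_{q,0}^1(\lhs)$. The passage from the full-space identity to the half-space one by doubling is clean, and the Liouville step for uniqueness is standard. Your stated worry about justifying the parity of $\tilde\Fp$ for merely $L_q$ data is easily resolved by working at the level of $\nabla\tilde\Fp=\sum_j R_iR_j\tilde f_j$ (where boundedness on $L_q$ is immediate and the parities of the Riesz kernels give $\pa_1\tilde\Fp,\pa_2\tilde\Fp$ odd, $\pa_3\tilde\Fp$ even), or by the density argument you mention.

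The paper itself does not prove this lemma: it simply invokes \cite[Proposition~3.3.2]{Saito15}. So your route is not ``different'' from the paper's proof so much as it \emph{is} a proof where the paper only cites one. The reflection-to-$\ws$ construction you use is in fact the classical device behind such half-space results, and it is likely close in spirit to what lies behind the cited proposition. What your approach buys is transparency and independence from the external reference; what citing the reference buys is brevity.
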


By Lemma \ref{lemm:weakDN},
we see that, for $\Bf\in L_q(\BR_-^3)^3$ and $g\in W_q^{1-1/q}(\BR^2)$,
there is a unique solution $\Fp\in\wh{W}_{q,0}^1(\BR_-^3)$
to the variational equation:
\begin{equation*}
	(\nabla\Fp,\nabla\ph)_{\BR_-^3}
		= (\Bf-\nabla\wt{g},\nabla\ph)_{\BR_-^3}
		\quad\text{for all $\ph\in\wh{W}_{q',0}^1(\BR_-^3)$,}
\end{equation*}
where $\wt g$, defined on $\BR_-^3$, is an extension of $g$ such that
$g=\wt g$ on $\BR_0^3$ and
$\|\wt{g}\|_{W_q^1(\BR_-^3)}\leq C_q\|g\|_{W_q^{1-1/q}(\BR^2)}$
with some positive constant $C_q$ independent of $g$, $\wt g$.
The solution $\Fp$ then satisfies the estimate:
\begin{equation*}
	\|\nabla\Fp\|_{L_q(\BR_-^3)}
		\leq C_q\left(\|\Bf\|_{L_q(\BR_-^3)}+\|g\|_{W_q^{1-1/q}(\BR^2)}\right).
\end{equation*}
Let $\wh{W}_{q,0}^1(\BR_-^3)+W_q^1(\BR_-^3) 
=\{\Fp_1+\Fp_2 \mid \Fp_1\in \wh{W}_{q,0}^1(\BR_-^3),\Fp_2\in W_q^1(\BR_-^3)\}$.
Setting $\Fq=\Fp+\wt{g}\in\wh{W}_{q,0}^1(\BR_-^3)+W_q^1(\BR_-^3)$,
we observe that $\Fq$ satisfies 
\begin{equation*}
	(\nabla\Fq,\nabla\ph)_{\BR_-^3}
		= (\Bf,\nabla\ph)_{\BR_-^3}	
		\quad\text{for all $\ph\in\wh{W}_{q',0}^1(\BR_-^3)$,}
\end{equation*}
subject to $\Fq=g$ on $\BR_0^3$,
and that the last inequality yields 
\begin{equation}\label{160606_7}
	\|\nabla\Fq\|_{L_q(\BR_-^3)} \leq
		C_q\left(\|\Bf\|_{L_q(\BR\-^3)}+\|g\|_{W_q^{1-1/q}(\BR^2)}\right).
\end{equation}

Now,
for $\Bu\in W_q^2(\BR_-^3)^3$ and $h\in W_q^{3-1/q}(\BR^2)$,
we define $K_1(\Bu)$, $K_2(h)\in \wh{W}_{q,0}^1(\BR_-^3)+W_q^1(\BR_-^3)$
as solutions to  
\begin{align*}
	&\left\{\begin{aligned}
		&(\nabla K_1(\Bu),\nabla\ph)_{\BR_-^3}
			=(\Di(\mu\BD(\Bu))-\nabla\di\Bu,\nabla\ph)_{\BR_-^3}
			\quad \text{for all $\ph\in\wh{W}_{q',0}^1(\BR_-^3)$,} \\
		&\,K_1(\Bu) = \Be_3\cdot(\mu\BD(\Bu)\Be_3)-\di\Bu
			\quad\text{on $\BR_0^3$,}
	\end{aligned}\right. \\
	&\left\{\begin{aligned}
		&(\nabla K_2(h),\nabla \ph)_{\BR_-^3} =0
			\quad\text{for any $\ph\in\wh{W}_{q',0}^1(\BR_-^3)$,} \quad 
		\,K_2(h) = (c_g-c_\si\De')h \quad \text{on $\BR_0^3$.}
	\end{aligned}\right.
\end{align*}
Then $K_1(\Bu)$ and $K_2(h)$ satisfy by \eqref{160606_7} the estimates:
\begin{equation}\label{160606_8}
	\|\nabla K_1(\Bu)\|_{L_q(\BR_-^3)} \leq C_q\|\nabla\Bu\|_{W_q^1(\BR_-^3)}, \quad 
	\|\nabla K_2(h)\|_{L_q(\BR_-^3)} \leq C_q \|(h,\De' h)\|_{W_q^{1-1/q}(\BR^2)}. 
\end{equation} 
We here set an operator $\CA_q$ and its domain $\CD(\CA_q)$ as
\begin{align*}
	&\CA_q(\Bu,h) =
		(\Di\BT(\Bu,K_1(\Bu)+K_2(h)),u_3) \quad
 		\text{for $(\Bu,h)\in \CD(\CA_q)$,} \quad 
		\CD(\CA_q)=\wt{W}_q^2(\BR_-^3)\times W_q^{3-1/q}(\BR^2), \\
	&\wt{W}_q^2(\BR_-^3) =
		\{\Bu\in W_q^2(\BR_-^3)^3\cap J_q(\BR_-^3)
		\mid \text{$\BD(\Bu)\Be_3 -(\Be_3\cdot\BD(\Bu)\Be_3)\Be_3=0$ on $\BR_0^3$}\}.
\end{align*}
System \eqref{resolvent} is then equivalent to
the reduced system: $(\la-\CA_q)(\BU,H) = (\BF, K)$
for $(\BF,K)\in X_q := J_q(\BR_-^3)\times W_q^{2-1/q}(\BR^2)$
(cf. \cite[Section 3]{Shibata13} in more detail).
Let $(\la-\CA_q)^{-1}$ be the resolvent operator of $\CA_q$.
By the equivalence mentioned above and \eqref{160609_1},
we have
\begin{equation*}
	\|(\la-\CA_q)^{-1}\|_{\CL{(X_q)}} \leq \frac{C_{q,\ga_0,\ep}}{|\la|}
		\quad (\la\in\Si_{\ep,\ga_0}).
\end{equation*}
This estimate furnishes that $\CA_q$ generates
an analytic semigroup on $X_q$,
which is stated as follows:

\begin{lemm}\label{lemm:sg_1}
Let $1<q<\infty$. Then $\CA_q$ generates
an analytic $C_0$-semigroup $\{e^{\CA_q t}\}_{t\geq0}$ on $X_q$.
In addition, there are positive constants $\ga_1$, $C_{q,\ga_1}$ such that
for any $t>0$
\begin{alignat}{2}\label{160606_9}
	\|e^{\CA_q t}\BU_0\|_{X_q} 
		&\leq C_{q,\ga_1}e^{\ga_1 t}\|\BU_0\|_{X_q}
		&\quad
		&(\BU_0\in X_q),  \\
	\|\pa_t e^{\CA_q t}\BU_0\|_{X_q}
		&\leq C_{q,\ga_1}t^{-1}e^{\ga_1 t}\|\BU_0\|_{X_q}
		&\quad
		&(\BU_0\in X_q), \notag \\
	\|\pa_t e^{\CA_q t}\BU_0\|_{X_q}
		&\leq C_{q,\ga_1}e^{\ga_1 t}\|\BU_0\|_{\CD_q(\CA)}
		&\quad
		&(\BU_0\in \CD(\CA_q)). \notag
\end{alignat}
\end{lemm}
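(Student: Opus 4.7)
The plan is to deduce Lemma \ref{lemm:sg_1} directly from the resolvent estimate already recorded in Lemma \ref{lemm:SS09} together with the equivalence between System \eqref{resolvent} and the reduced equation $(\la - \CA_q)(\BU,H) = (\BF,K)$ on $X_q$. Concretely, one combines three ingredients: (i) $\CA_q$ is densely defined and closed on $X_q$, (ii) its resolvent set contains the sector $\Sigma_{\ep,\ga_0}$ with estimate $\|(\la-\CA_q)^{-1}\|_{\CL(X_q)} \leq C_{q,\ga_0,\ep}/|\la|$, and (iii) the Dunford--Taylor contour integral
\begin{equation*}
e^{\CA_q t}\BU_0 \;=\; \frac{1}{2\pi i}\int_{\Ga}e^{\la t}(\la-\CA_q)^{-1}\BU_0\intd\la,
\end{equation*}
where $\Ga\subset \Sigma_{\ep,\ga_0}$ is a suitable keyhole contour, defines an analytic $C_0$-semigroup.

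First I would verify density of $\CD(\CA_q) = \wt{W}_q^2(\lhs)\times W_q^{3-1/q}(\BR^2)$ in $X_q = J_q(\lhs)\times W_q^{2-1/q}(\BR^2)$. The second factor is clear by standard approximation of Sobolev--Slobodecki\u\i{} functions by smooth compactly supported ones. For the first factor, density of $\wt{W}_q^2(\lhs)$ in $J_q(\lhs)$ is standard: one approximates a solenoidal $L_q$-vector by solenoidal Schwartz functions and then corrects the tangential trace condition $\BD(\Bu)\Be_3-(\Be_3\cdot\BD(\Bu)\Be_3)\Be_3=0$ via a Bogovski\u\i{}-type corrector. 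Closedness of $\CA_q$ is a standard consequence of the resolvent estimate \eqref{160609_1}: from $\|(\la_0-\CA_q)^{-1}\|\le C/|\la_0|$ at one point $\la_0\in\Sigma_{\ep,\ga_0}$, one concludes that $\CA_q$ is closed on its domain.

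Next I would apply the standard generation theorem for analytic semigroups on sectors: if a densely defined closed operator $A$ satisfies $\|(\la-A)^{-1}\|\le M/|\la|$ on a sector $\Sigma_{\ep,\ga_0}$ with opening strictly greater than $\pi/2$, then $A-\ga_1 I$ with $\ga_1>\ga_0$ generates a bounded analytic semigroup in the translated sector, and hence $A$ itself generates an analytic $C_0$-semigroup $\{e^{At}\}_{t\ge 0}$ with exponential growth at most $e^{\ga_1 t}$. Deforming $\Gamma$ so that it consists of two rays $\la = \ga_1 + re^{\pm i(\pi-\ep')}$ ($r\geq\rho$) together with the arc $|\la-\ga_1|=\rho$, the parametrisation $\rho = t^{-1}$ in the Dunford integral yields the bounds \eqref{160606_9}. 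More precisely, applying the resolvent bound inside the integral gives
\begin{equation*}
\|e^{\CA_q t}\BU_0\|_{X_q} \;\le\; C e^{\ga_1 t}\|\BU_0\|_{X_q},\qquad
\|\pa_t e^{\CA_q t}\BU_0\|_{X_q} \;\le\; C t^{-1}e^{\ga_1 t}\|\BU_0\|_{X_q},
\end{equation*}
and when $\BU_0\in\CD(\CA_q)$ one writes $\pa_t e^{\CA_q t}\BU_0 = e^{\CA_q t}\CA_q\BU_0$, which together with the first bound yields the third inequality in \eqref{160606_9}.

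The main obstacle is not the semigroup theory itself, which is entirely standard once the resolvent estimate is in hand, but rather the verification that the reduced system truly corresponds to the operator $\CA_q$ on $X_q$: one must confirm that solutions produced by Lemma \ref{lemm:SS09} satisfy $\BU\in\wt{W}_q^2(\lhs)$ (in particular the tangential boundary condition), and that the pressure recovered from $K_1(\BU)+K_2(H)$ via the weak Dirichlet--Neumann problem of Lemma \ref{lemm:weakDN} coincides with $P$ in \eqref{resolvent}, so that $(\la-\CA_q)^{-1}(\BF,K)=(\BU,H)$. Once this identification is in place, everything else is a mechanical application of the standard analytic-semigroup machinery.
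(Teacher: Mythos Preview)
Your proposal is correct and follows essentially the same approach as the paper: the paper simply records the resolvent bound $\|(\la-\CA_q)^{-1}\|_{\CL(X_q)}\le C_{q,\ga_0,\ep}/|\la|$ on $\Si_{\ep,\ga_0}$ (obtained from Lemma~\ref{lemm:SS09} via the equivalence with the reduced system, for which it cites \cite[Section 3]{Shibata13}) and then invokes the standard generation theorem for analytic semigroups. Your write-up merely spells out the routine details (density, closedness, Dunford--Taylor contour) that the paper leaves implicit, and you correctly flag the one nontrivial point---the identification of the pressure $P$ with $K_1(\BU)+K_2(H)$ so that Lemma~\ref{lemm:SS09} really yields the resolvent of $\CA_q$---which is precisely what the reference to \cite{Shibata13} is meant to cover.
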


For $1<p,q<\infty$, we set function spaces
$\CD_{q,p}$, $\wt{B}_{q,p}^{2(1-1/p)}(\BR_-^3)$ as
\begin{align*}
	&\wt{B}_{q,p}^{2(1-1/p)}(\BR_-^3)
		=(J_q(\BR_-^3),\wt{W}_q^2(\BR_-^3))_{1-1/p,p}, \\
	&\CD_{q,p}=(X_q,\CD(\CA_q))_{1-1/p.p}
		=\wt{B}_{q,p}^{2(1-1/p)}(\BR_-^3)
		\times B_{q,p}^{3-1/p-1/q}(\BR^2).	
\end{align*}
On the other hand,
let $P_1$, $P_2$ be projections defined by
$P_1: X_q\to J_q(\BR_-^3)$, $P_2:X_q\to W_q^{2-1/q}(\BR^2)$.
We then have 

\begin{lemm}\label{lemm:sg_2}
Let $1<p,q<\infty$,
and let $\Bu=P_1(e^{\CA_q t}\BU_0)$, $h=P_2(e^{\CA_q t}\BU_0)$,
and $\Fp=K_1(P_1(e^{\CA_q t}\BU_0))+K_2(P_2(e^{\CA_q t}\BU_0))$
with $\BU_0=(\Bu_0,h_0)\in\CD_{q,p}$.
Then the following assertions hold. 
\begin{enumerate}[$(1)$]
	\item
		It holds that
		\begin{align}\label{160606_5}
			&\Bu \in C^1(\BR_+,J_q(\BR_-^3))
				\cap C(\BR_+,\wt{W}_q^2(\BR_-^3)), \\
			&h\in C^1(\BR_+,W_q^{2-1/q}(\BR^2))
				\cap C(\BR_+,W_q^{3-1/q}(\BR^2)), \notag \\
			&\Fp \in C(\BR_+,\wh{W}_q^1(\BR_-^3)), \notag
		\end{align}
		and furthermore, $(\Bu,\Fp,h)$ solves uniquely System \eqref{linear3} for all $t>0$ with
		\begin{alignat*}{2}
			&\Bu \in C(\BR_+,\wt B_{q,p}^{2(1-1/p)}(\BR_-^3)), \quad
			& &\lim_{t\to 0+}\|\Bu(t)-\Bu_0\|_{\wt{B}_{q,p}^{2(1-1/p)}(\BR_-^3)}=0, \\
			&h \in C(\BR_+,B_{q,p}^{3-1/p-1/q}(\BR^2)), \quad
			& &\lim_{t\to 0+}\|h(t)-h_0\|_{B_{q,p}^{3-1/p-1/q}(\BR^2)}=0. \notag
		\end{alignat*}
	\item
		For any $T>0$, there is a positive constant $C_{p,q,T}$ such that
		\begin{align}\label{161001_10}
			&\|\Bu\|_{W_{q,p}^{2,1}(\BR_-^3\times(0,T))}
				+\|\nabla\Fp\|_{L_p((0,T),L_q(\lhs))} 
				+\|\pa_t h\|_{L_p((0,T),W_q^{2-1/q}(\BR^2))} \\ 
				&+\|h\|_{L_p((0,T),W_q^{3-1/q}(\BR^2))} 
				 + \|\pa_t\CE(h)\|_{L_p((0,T),\wh{ W}_q^2(\lhs))}
				+\|\CE(h)\|_{L_p((0,T), \wh {W}_q^3(\lhs))} \notag\\
			&\leq
				C_{p,q,T}\|\BU_0\|_{\CD_{q,p}}. \notag
		\end{align}
\end{enumerate}
\end{lemm}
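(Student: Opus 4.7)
The plan is to derive both assertions from Lemma \ref{lemm:sg_1} together with the variational definitions of $K_1, K_2$ and standard analytic-semigroup theory. For (1), since $\BU_0 \in \CD_{q,p} = (X_q, \CD(\CA_q))_{1-1/p, p}$ and $\CA_q$ generates an analytic semigroup on $X_q$, the curve $U(t) := e^{\CA_q t}\BU_0$ lies in $C^1(\BR_+, X_q) \cap C(\BR_+, \CD(\CA_q))$, which upon applying $P_1$ and $P_2$ yields the first two lines of \eqref{160606_5}; the pressure $\Fp = K_1(\Bu) + K_2(h) \in \wh W_q^1(\BR_-^3)$ then inherits the stated continuity in $t$ from \eqref{160606_8}. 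The system \eqref{linear3} is recovered by unpacking the identity $\pa_t U = \CA_q U$ (which produces the bulk momentum equation and the kinematic identity $\pa_t h = u_3$ on $\BR_0^3$), using $\Bu(t) \in \wt W_q^2(\BR_-^3) \subset J_q(\BR_-^3)$ for the divergence-free condition and the tangential part of the stress balance, and combining the boundary trace identity in the definition of $K_1$ with the definition of $K_2$ for the normal component of the stress balance including the capillary term $(c_g - c_\si \De')h$. Continuity in $\CD_{q,p}$ at $t=0$ and the initial-time limit are classical properties of analytic semigroups on real interpolation spaces, and uniqueness follows from the resolvent bound of Lemma \ref{lemm:SS09}.

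For (2), the task is to upgrade analyticity to $L_p$-maximal regularity on the finite interval $(0,T)$. After the exponential shift $\CA_q - \ga_1 I$ afforded by \eqref{160606_9}, the resolvent estimate of Lemma \ref{lemm:SS09} extends uniformly to the full sector $\Si_{\ep,0}$; the $\CR$-bounded refinement of this estimate, available from the Shibata-Shimizu theory (cf. \cite{SS08, SS12}), combined with the Weis theorem in the UMD space $X_q$, yields maximal $L_p$-regularity for the shifted generator. The standard trace characterization for analytic semigroups,
\[
W_p^1((0,T), X_q) \cap L_p((0,T), \CD(\CA_q)) \hookrightarrow C([0,T], \CD_{q,p}),
\]
is surjective onto the interpolation space, so for $\BU_0 \in \CD_{q,p}$ the solution $U(t) = e^{\CA_q t}\BU_0$ satisfies
\[
\|\pa_t U\|_{L_p((0,T), X_q)} + \|U\|_{L_p((0,T), \CD(\CA_q))} \leq C_{p,q,T}\|\BU_0\|_{\CD_{q,p}}.
\]
Projecting via $P_1, P_2$ gives the maximal regularity bounds for $\Bu$ and $h$ in \eqref{161001_10}. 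The pressure term is controlled by applying \eqref{160606_8} pointwise in $t$,
\[
\|\nabla \Fp(t)\|_{L_q(\BR_-^3)} \leq C\bigl(\|\Bu(t)\|_{W_q^2(\BR_-^3)} + \|h(t)\|_{W_q^{3-1/q}(\BR^2)}\bigr),
\]
and integrating. The $\CE(h)$ terms follow from $\pa_t \CE(h) = \CE(\pa_t h)$ together with the Poisson-kernel estimate $\|\nabla^k \CE(f)\|_{L_q(\BR_-^3)} \leq C \|f\|_{W_q^{k-1/q}(\BR^2)}$ for $k \geq 1$, which transfers the $L_p$-in-time control of $h$ and $\pa_t h$ into the required $\wh W_q^2$ and $\wh W_q^3$ bounds.

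The principal technical hurdle is upgrading the sectoriality statement of Lemma \ref{lemm:SS09} to the $\CR$-sectoriality required by the Weis theorem; on the bounded interval the exponential shift neutralizes the offset $\ga_0 \geq 1$, but without $\CR$-boundedness one cannot directly conclude maximal $L_p$-regularity. Fortunately this refinement is already established in the cited Shibata-Shimizu works, so the remaining steps amount to bookkeeping: verifying that $P_1, P_2$ and the pressure reconstruction $\Fp = K_1(\Bu) + K_2(h)$ respect the time regularity, and invoking the harmonic-extension estimates to control the $\CE(h)$ norms.
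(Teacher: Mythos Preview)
Your argument is correct, but for part (2) you take a heavier route than the paper. The paper does not invoke $\CR$-boundedness or the Weis theorem at all: since the problem here is \emph{homogeneous} (zero right-hand side, only initial data), the estimate
\[
\|\pa_t U\|_{L_p((0,T),X_q)}+\|U\|_{L_p((0,T),\CD(\CA_q))}\leq C_{p,q,T}\|\BU_0\|_{\CD_{q,p}}
\]
is exactly the classical characterization of the real interpolation space $\CD_{q,p}=(X_q,\CD(\CA_q))_{1-1/p,p}$ by the analytic semigroup, and follows from the pointwise bounds \eqref{160606_9} alone. This is what the paper means by ``in the same manner as in \cite[Theorem 3.9]{SS08}''; it then recovers $\nabla\Fp$ from \eqref{160606_8} and the $\CE(h)$ terms from the boundedness of $\CE:W_q^{m-1/q}(\BR^2)\to\wh W_q^m(\BR_-^3)$, just as you do. Your route via $\CR$-sectoriality and Weis's theorem yields the full inhomogeneous maximal regularity, which is stronger than needed here; it works, but it imports an extra hypothesis (the $\CR$-bounded refinement of Lemma \ref{lemm:SS09}) that the paper avoids. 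Indeed, even inside your own argument the step you call the ``principal technical hurdle'' is superfluous: the surjectivity of the trace map you quote is already the elementary interpolation-space fact that delivers the estimate directly.
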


\begin{proof}
We here prove (2) only.
By using \eqref{160606_8} and \eqref{160606_9},
we can prove that
\begin{align*}
	&\|\Bu\|_{W_{q,p}^{2,1}(\BR_-^3\times(0,T))}
		+\|\nabla\Fp\|_{L_p((0,T),L_q(\lhs))} 
		 +\|\pa_t h\|_{L_p((0,T),W_q^{2-1/q}(\BR^2))} 
		+\|h\|_{L_p((0,T),W_q^{3-1/q}(\BR^2))} \\
		&\leq
		C_{p,q,T}\|\BU_0\|_{\CD_{q,p}}
\end{align*}
in the same manner as in \cite[Theorem 3.9]{SS08}.
Since $\CE$ is a bounded linear operator from $W_q^{m-1/q}(\BR^2)$ to $\wh W_q^m(\lhs)$ for $m=2,3$
and since $\pa_t\CE(h)=\CE(\pa_t h)$, 
the last inequality implies \eqref{161001_10}.
This completes the proof of the lemma.
\end{proof}

Next, concerning System \eqref{linear4},
we recall \cite[Theorem 1.4]{SS12}
(cf. Subsection \ref{subsec3_1} for function spaces with exponential weight).


\begin{lemm}\label{lemm:SS12}
Let $1<p,q<\infty$, and let $\gamma_0$ be the same constant as in Lemma $\ref{lemm:SS09}$.
Then for any
\begin{equation*}
	\Bf \in {}_0L_{p,\ga_0}(\BR,L_q(\BR_-^3))^3,\quad
	k \in {}_0L_{p,\ga_0}(\BR,W_q^{2-1/q}(\BR^2)),
\end{equation*}
System \eqref{linear4}
admit a unique solution $(\Bu,\Fp,h)$ with
\begin{align*}
	&\Bu \in {}_0W_{q,p,\ga_0}^{2,1}(\BR_-^3\times\BR)^3, \quad
	\Fp \in {}_0L_{p,\ga_0}(\BR,\wh{W}_q^1(\BR_-^3)), \\
	&h \in {}_0W_{p,\ga_0}^1(\BR,W_q^{2-1/q}(\BR^2))
		\cap {}_0L_{p,\ga_0}(\BR,W_q^{3-1/q}(\BR^2)), \notag \\
	&\CE(h) \in {}_0W_{p,\ga_0}^1(\BR,\wh{W}_q^2(\BR_-^3))
		\cap {}_0L_{p,\ga_0}(\BR,\wh{W}_q^3(\BR_-^3)). \notag
\end{align*}
In addition, the solution $(\Bu,\Fp,h)$ satisfies the estimate:
\begin{align}\label{141121_5}
	&\|e^{-\ga_0 t}(\pa_t\Bu,\Bu,\nabla\Bu,\nabla^2\Bu)\|_{L_p(\BR_+,L_q(\BR_-^3))} 
		+\|e^{-\ga_0 t}\nabla^2\CE(u_3|_{\BR_0^3})\|_{L_p(\BR_+,L_q(\BR_-^3))} \\
		&+\|e^{-\ga_0 t}\nabla \Fp\|_{L_p(\BR_+,L_q(\BR_-^3))}
		+\|e^{-\ga_0 t}\pa_t h\|_{L_p(\BR_+,W_q^{2-1/q}(\BR^2))}
		+\|e^{-\ga_0 t}h\|_{L_p(\BR_+,W_q^{3-1/q}(\BR^2))} \notag \\
		&+\|e^{-\ga_0 t}\pa_t \CE(h)\|_{L_p(\BR_+,\wh{W}_q^2(\BR_-^3))}
		+\|e^{-\ga_0 t}\CE(h)\|_{L_p(\BR_+,\wh{W}_q^3(\BR_-^3))} \notag \\
	&\leq
		C_{p,q,\ga_0}\left(\|e^{-\ga_0 t}\Bf\|_{L_p(\BR_+,L_q(\BR_-^3))}
		+\|e^{-\ga_0 t}k\|_{L_p(\BR_+,W_q^{2-1/q}(\BR^2))}\right) \notag
	\end{align}
for a positive constant $C_{p,q,\ga_0}$.
\end{lemm}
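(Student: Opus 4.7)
Since the initial data vanish, the plan is to extend the right members $\Bf$ and $k$ by zero for $t<0$ and perform a Laplace transform in time, reducing the evolution problem \eqref{linear4} to the generalized resolvent problem \eqref{resolvent} with $\lambda=\gamma_0+i\tau$, $\tau\in\BR$. The exponentially weighted maximal regularity then follows from the Weis operator-valued Fourier multiplier theorem applied to the symbol $\tau\mapsto S(\lambda)$, where $S(\lambda)$ denotes the solution operator to \eqref{resolvent} together with its higher-order components (such as $\lambda \BU$, $\lambda^{1/2}\nabla\BU$, $\nabla^2\BU$, $\nabla P$, $\lambda H$, $H$, and the corresponding quantities for $\CE(h)$).

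\textbf{Step 1: R-bounded resolvent estimates.} I would upgrade Lemma \ref{lemm:SS09} from a pointwise bound to the R-boundedness of the family
\[
	\{(\tau\partial_\tau)^\ell S(\lambda)\mid \lambda\in \Sigma_{\epsilon,\gamma_0}\},\qquad \ell=0,1,
\]
regarded as operators in $\CL(X_q)$ (and the corresponding boundary spaces). To do so I would use the explicit solution formulas for \eqref{resolvent} obtained by partial Fourier transform in $x'$ and ODE analysis in $x_3$ (of exactly the same type as the ones written out for $\Bv$ in Subsection \ref{subsec:3_2}, but now also carrying the boundary unknown $H$ and incorporating the symbol $c_g+c_\sigma|\xi'|^2$ arising from the capillary-gravity term). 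Each constituent symbol is then seen to belong to the classes $\BBM_{s,l}(\Sigma_{\epsilon,\gamma_0})$ of Subsection \ref{subsec3_1}, thanks to Lemmas \ref{lemm:multi1} and \ref{lemm:multi2}; R-boundedness of the associated operators on $L_q(\BR_-^3)$ and $L_q(\BR^2)$ then reduces, via the exponential decay estimates $|e^{Aa}|\lesssim e^{-A|a|/2}$ and $|e^{Ba}|\lesssim e^{-b_{\epsilon,\mu}(|\lambda|^{1/2}+A)|a|}$ from Lemma \ref{lemm:multi2}, to the R-boundedness of a finite family of tangential Mikhlin-type multipliers, which is standard via Littlewood--Paley theory.

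\textbf{Step 2: Weis multiplier theorem.} With the R-boundedness in hand, applying the Weis theorem to $\lambda=\gamma_0+i\tau$ and using the identity
\[
	e^{-\gamma_0 t}\mathcal{L}_\lambda^{-1}[F(\lambda)](t)=\CF_\tau^{-1}[F(\gamma_0+i\tau)](t),
\]
yields, for any $\Bf\in {}_0L_{p,\gamma_0}(\BR,L_q(\BR_-^3))^3$ and $k\in{}_0L_{p,\gamma_0}(\BR,W_q^{2-1/q}(\BR^2))$, a unique $(\Bu,\Fp,h)$ in the regularity class claimed by the lemma, together with the weighted estimate \eqref{141121_5} for all terms not involving $\CE(h)$. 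The zero initial conditions are automatic because the data and solution are supported in $t\geq 0$ (by Cauchy's integral theorem applied to the Laplace inversion, using holomorphy of $S(\lambda)$ on $\Sigma_{\epsilon,\gamma_0}$).

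\textbf{Step 3: Harmonic extension.} The remaining bounds for $\CE(h)$ follow at once from the continuity of $\CE:W_q^{m-1/q}(\BR^2)\to \wh W_q^m(\BR_-^3)$ for $m=2,3$ together with the commutation $\partial_t\CE(h)=\CE(\partial_t h)$.

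\textbf{Main obstacle.} The delicate point is Step 1, specifically the presence of the capillary-gravity term $c_g-c_\sigma\Delta'$ on the boundary, which introduces a third-order surface symbol and couples the velocity to $H$ through the Lopatinski-type determinant built from $D(A,B)$ in \eqref{symbols}; one must show that the resulting reduced denominator is uniformly elliptic on $\Sigma_{\epsilon,\gamma_0}$ and admits the multiplier bounds of classes $\BBM_{s,l}$, which is precisely what forces the largeness condition on $\gamma_0$ in the statement.
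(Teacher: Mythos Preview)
The paper does not prove this lemma at all: it simply recalls \cite[Theorem 1.4]{SS12} (see the sentence preceding the statement), so there is no proof in the paper to compare against. Your sketch is in fact the method of that reference: derive $\mathcal{R}$-bounded solution operators for the resolvent problem \eqref{resolvent} from the explicit Fourier--ODE formulas and the multiplier classes $\BBM_{s,l}(\Sigma_{\epsilon,\gamma_0})$, then apply the Weis operator-valued multiplier theorem on the line $\lambda=\gamma_0+i\tau$, and finally read off the $\CE(h)$ bounds from the continuity $\CE:W_q^{m-1/q}(\BR^2)\to\wh W_q^m(\BR_-^3)$. Your identification of the Lopatinski-type denominator coming from the coupling with $(c_g+c_\sigma|\xi'|^2)$ as the place where the threshold $\gamma_0$ enters is also correct; this is exactly why Lemma~\ref{lemm:SS09} carries the restriction $\lambda\in\Sigma_{\epsilon,\gamma_0}$.

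One small remark: in Step~1 you write ``operators in $\CL(X_q)$'', but the data space here is $L_q(\BR_-^3)^3\times W_q^{2-1/q}(\BR^2)$ rather than $X_q=J_q(\BR_-^3)\times W_q^{2-1/q}(\BR^2)$, since $\Bf$ is not assumed solenoidal; the $\mathcal{R}$-bounds in \cite{SS12} are established directly for this larger data space via the explicit symbols, not via the semigroup reformulation. Also do not forget the extra term $\nabla^2\CE(u_3|_{\BR_0^3})$ in \eqref{141121_5}; it is handled by the same symbol analysis applied to the trace of the third velocity component.
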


The aim of this section is to prove the following two theorems:

\begin{theo}\label{theo:sec4_1}
Let $p,q$ be exponents satisfying 
\begin{equation}\label{pq:lin3}
	2<p<\infty, \quad 3<q<4, \quad p\left(\frac{2}{q}-\frac{1}{2}\right)>1
\end{equation}
and set $\bar{q}=2/q$.
Let $(\Bu_0,h_0)\in \wt\BBI_1\times\BBI_2$ with
\begin{equation*}
	\wt\BBI_1 =\wt B_{q,p}^{2-2/p}(\BR_-^3)^3 \cap \wt B_{2,p}^{2-2/p}(\BR_-^3)^3 \cap L_{\bar{q}}(\BR_-^3)^3.
\end{equation*}
Suppose that $(\Bu,\Fp,h)$ is the solution of System \eqref{linear3}
obtained in Lemma $\ref{lemm:sg_2}$, and set $\Bz = (\Bu,\Fp,h,\CE(h))$.
Let $\Fa_1,\Fa_2$ be positive numbers satisfying
\begin{equation}\label{assump:sg}
	p\left(\Fm\left(\bar{q},q\right)+1/4-\Fa_1\right)>1, \quad 
	p\left(\Fm\left(\bar{q},q\right)+1-\Fa_2\right)>1.
\end{equation}
It then holds that
\begin{equation*}
	\CN_{q,p}(\Bz;\Fa_1,\Fa_2)
		+\sum_{r\in\{q,2\}}(\CM_{r,p}(\Bz)+\CN_{r}(\Bz))+\|\pa_t\CE(h)\|_{L_\infty(\BR_+,L_2(\lhs))}  
	\leq C_{p,q,\bar{q}}\|(\Bu_0,h_0)\|_{\wt\BBI_1\times \BBI_2} 
\end{equation*}
for some positive constant $C_{p,q,\bar{q}}$.
\end{theo}

\begin{theo}\label{theo:sec4_2}
Let $p,q$ be exponents satisfying \eqref{pq:lin},
and let $\Fa_1$, $\Fa_2$, $\Fb_1$, $\Fb_2$, $\Fb_3$, and $\Fb_4$
be positive numbers satisfying \eqref{ab}.
Suppose that $\Fa_0=\max(\Fa_1,\Fa_2)$
and the right members $\Bf=\Bf_1+\Bf_2+\Bf_3$, $k$ of System \eqref{linear4}
satisfy the following  conditions:
\begin{enumerate}[$(1)$]
	\item
		$\Bf_1\in\BBF_1 \cap \wt\BBF_1(\Fa_0,\Fb_1)$,
		$\Bf_2\in\BBF_2 \cap \wt\BBF_2(\Fa_0,\Fb_2)$,
		$\Bf_3\in\BBF_3 \cap \wt\BBF_3(\Fa_0,\Fb_3);$
	\item
		$k\in \BBK \cap \wt\BBK(\Fa_0,\Fb_4)
		\cap\BBA_4\cap \BBB_1$.
\end{enumerate}	
Let $(\Bu,\Fp,h)$ be the solution of System \eqref{linear4}
obtained in Lemma $\ref{lemm:SS12}$,
and let $\Bz = (\Bu,\Fp,h,\CE(h))$.
It then holds that
\begin{equation}\label{160510_1}
	\CN_{q,p}(\Bz;\Fa_1,\Fa_2)
		+ \sum_{r\in\{q,2\}}
		\left(\CM_{r,p}(\Bz) + \CN_r(\Bz)\right) 
	\leq
		C_{p,q}\left(\sum_{i=1}^3\|\Bf_i\|_{\BBF_i\cap\wt\BBF_i(\Fa_0,\Fb_i)}
		+\|k\|_{\BBK\cap\wt\BBK(\Fa_0,\Fb_3)\cap\BBA_4\cap \BBB_1}\right) 
\end{equation}
for some positive constant $C_{p,q}$.
\end{theo}

\subsection{Preliminaries}
In this subsection, we introduce
representation formulas of solutions of \eqref{linear3}, \eqref{linear4}
and some lemmas
in order to prove Theorems \ref{theo:sec4_1} and \ref{theo:sec4_2}.
Let  $\BU=\BU(x,\la)$, $P=P(x,\la)$, and $H=H(x,\la)$
be the solutions, obtained in Lemma \ref{lemm:SS09},
to the resolvent problem \eqref{resolvent} in what follows.

First, we decompose $(\BU,P,H)$ in the same way as in
\cite[Sections 3.2, 3.4]{Saito15}, \cite[Sections 2, 3]{SaS1}.
The solutions $\BU$, $P$ can be written as 
$\BU=\BU^0+\BV$ and $P=P^0+Q$, where
$(\BU^0,P^0)$ and $(\BV,Q)$ are solutions to 
\begin{align*}
	&\left\{\begin{aligned}
		\la\BU^0 -\Di\BT(\BU^0,P^0) &= \BF
			&& \text{in $\BR_-^3$,} \\
		\di\BU^0 &= 0 
			&& \text{in $\BR_-^3$,} \\
		\BT(\BU^0,P^0)\Be_3 &= 0 
			&& \text{on $\BR_0^3$,}
	\end{aligned}\right. \\
	&\left\{\begin{aligned}
		\la\BV-\Di\BT(\BV,Q) =0& 
			&& \text{in $\BR_-^3$,} \\
		\di\BV = 0& 
			&& \text{in $\BR_-^3$,} \\
		\BT(\BV,Q)\Be_3+(c_g-c_\si\De')H\Be_3 = 0&
			&& \text{on $\BR_0^3$,} \\
		\la H -V_3 = K + U_3^0&
			&& \text{on $\BR_0^3$.}
	\end{aligned}\right.
\end{align*}
Note that $U_3^0$ is the third component of $\BU^0$ and
$V_3$ the third component of $\BV$.
For functions $f=f(x)$ defined on $\BR_-^3$,
we introduce the even extension $f^e=f^e(x)$ and 
the odd extension $f^o=f^o(x)$ as follows:
\begin{equation}\label{oddeven}
	f^e(x) =
		\left\{\begin{aligned}
			&f(x',-x_N), && x_N>0, \\
			&f(x',x_N), && x_N<0, 
		\end{aligned}\right.\quad
	f^o(x) =
		\left\{\begin{aligned}
			&-f(x',-x_N), && x_N>0, \\
			&f(x',x_N), && x_N<0.
		\end{aligned}\right.
\end{equation}
Setting $\iota\BF = \tp(F_1^o,F_2^o,F_3^e)$, 
we further decompose $\BU^0$, $P^0$ in the following manner:
$\BU^0=\BV^1+\BV^2$ and $P^0=Q^1+Q^2$,
where $(\BV^1,Q^1)$, $(\BV^2,Q^2)$ are solutions to
\begin{align*}
	&\left\{\begin{aligned}
		\la\BV^1 -\Di\BT(\BV^1,Q^1) &=\iota\BF && \text{in $\BR^3$,} \\
		\di\BV^1 &=0 && \text{in $\BR^3$,} 
	\end{aligned}\right. \\
	&\left\{\begin{aligned}
		\la\BV^2 -\Di\BT(\BV^2,Q^2) &= 0 && \text{in $\BR_-^3$,}\\
		\di\BV^2 &=0 && \text{in $\BR_-^3$,} \\
		\mu(\pa_1 V_3^2+\pa_3 V_1^2) &= -\mu(\pa_1 V_3^1+\pa_3 V_1^1)  && \text{on $\BR_0^3$, } \\
		\mu(\pa_2 V_3^2+\pa_3 V_2^2) &= -\mu(\pa_2 V_3^1+\pa_3 V_2^1)  && \text{on $\BR_0^3$, } \\
		2\mu\pa_3 V_3^2 - Q^2 &= 0 && \text{on $\BR_0^3$.}
	\end{aligned}\right.
\end{align*}
Here $\BV^l = \tp(V_1^l,V_2^l,V_3^l)$ for $l=1,2$
and we have used the fact that
$\pa_3 V_3^1=0$ on $\BR_0^3$ and $Q^1=0$ on $\BR_0^3$
by the definition of extension $\iota\BF$ (cf. \cite[Section 4]{SS03})
to obtain the last equation.

Secondly, we introduce representation formulas of
$\BV^1$, $Q^1$, $\BV^2$, $Q^2$, $\BV$, $Q$, and $H$ as above.
Such formulas are given by
\begin{align}\label{160609_10}
	\BV^1(x,\la) &=
		\CF_\xi^{-1}\left[\frac{1}{\la+\mu|\xi|^2}\left(\BI-\frac{\xi\otimes\xi}{|\xi|^2}\right)\CF_x[\iota\BF](\xi)\right](x), \quad
	Q^1(x,\la) =
		-\CF_\xi^{-1}\left[\frac{i\xi\cdot\CF_x[\iota\BF](\xi)}{|\xi|^2}\right](x),
		 \\
	\BV^2(x,\la) &=
		\int_{-\infty}^0 \CF_{\xi'}^{-1}
		\left[\CU^0(\xi',x_3,y_3,\la)\,\wh{\BF}(\xi',y_3)\right](x')\intd y_3,
		\notag \\
	Q^2(x,\la) &=
		\int_{-\infty}^0 \CF_{\xi'}^{-1}
		\left[\CP^0(\xi',x_3,y_3,\la)\,\wh{\BF}(\xi',y_3)\right](x')\intd y_3,
		\notag \\
	\BV (x,\la) &=
		\int_{-\infty}^0 \CF_{\xi'}^{-1}
		\left[\CU(\xi',x_3,y_3,\la)\,\wh{\BF}(\xi',y_3)\right](x')\intd y_3 
		+\CF_{\xi'}^{-1}\left[\CU'(\xi',x_3,\la)\,\wh{K}(\xi')\right](x'),
		\notag \\ 
	Q(x,\la) &=
		\int_{-\infty}^0\CF_{\xi'}^{-1}
		\left[\CP(\xi',x_3,y_3,\la)\wh\BF(\xi',y_3)\right](x')\intd y_3 +\CF_{\xi'}^{-1}
		\left[\CP'(\xi',x_3,\la)\wh K(\xi')\right](x'),
		\notag\\
	H(x',\la) &=
		\int_0^\infty \CF_{\xi'}^{-1}
		\left[\CH(\xi',y_3,\la)\,\wh{\BF}(\xi',y_3)\right](x')\intd y_3 
		+\CF_{\xi'}^{-1}\left[\CH'(\xi',\la)\,\wh{K}(\xi')\right](x')
		\notag
\end{align}
for some symbols $\CU^0$, $\CP^0$, $\CU$, $\CU'$, $\CP$, $\CP'$, $\CH$, and $\CH'$
that are explicitly calculated in \cite[Section 3]{SaS1},
where the formulas of $\BV^1$, $Q^1$ can be found e.g. in \cite[Section 3]{SS12}.


Let $\zeta\in C_0^\infty(\BR^2)$ with $0\leq\zeta\leq1$
and satisfy
\begin{equation*}
	\zeta = \zeta(\xi') =
		\left\{\begin{aligned}
			&1 && (|\xi'|\leq 1), \\
			&0 && (|\xi'|\geq 2).
		\end{aligned}\right. 
\end{equation*}
In addition, we set
\begin{equation*}
	\zeta_\de^1(\xi')
		= 1-\zeta\left(\frac{\xi'}{\de}\right), \quad
	\zeta_\de^2(\xi')
		 = \zeta\left(\frac{\xi'}{\de}\right) \quad \text{for $\de>0$}.
\end{equation*}
The cut-off functions $\zeta_\de^1$, $\zeta_\de^2$
yield further decompositions of $\BV$, $Q$, and $H$ in the following manner:
$\BV=\BU_\de^1+\BU_\de^2$,
$Q= P_\de^1+P_\de^2$,
and $H=H_\de^1+H_\de^2$ with
\begin{align}\label{160609_11}
	\BU_\de^l(x,\la) &=
		 \int_{-\infty}^0\CF_{\xi'}^{-1}
		\left[\zeta_\de^l(\xi')\,\CU(\xi',x_3,y_3,\la)
		\,\wh{\BF}(\xi',y_3)\right](x')\intd y_3  
		+\CF_{\xi'}^{-1}\left[\zeta_\de^l(\xi')\,\CU'(\xi',x_3,\la)
		\,\wh{K}(\xi')\right](x'),  \\
	P_\de^l(x,\la) &=
		 \int_{-\infty}^0\CF_{\xi'}^{-1}
		\left[\zeta_\de^l(\xi')\,\CP(\xi',x_3,y_3,\la)
		\,\wh{\BF}(\xi',y_3)\right](x')\intd y_3  
		+\CF_{\xi'}^{-1}\left[\zeta_\de^l(\xi')\,\CP'(\xi',x_3,\la)
		\,\wh{K}(\xi')\right](x'),
		\notag \\
	H_\de^l(x',\la)
		&= \int_{-\infty}^0\CF_{\xi'}^{-1}
			\left[\zeta_\de^l(\xi')\,\CH(\xi',y_3,\la)
			\,\wh{\BF}(\xi',y_3)\right](x')\intd y_3 
			+\CF_{\xi'}^{-1}\left[\zeta_\de^l(\xi')\,\CH'(\xi',\la)
			\,\wh{K}(\xi')\right](x')
			\notag
\end{align}
for $l=1,2$.
Summing up the above argumentation, we see that,
together with \eqref{160609_10} and \eqref{160609_11},
the solution $(\BU,P,H)$ of System \eqref{resolvent}
is represented as
\begin{align}\label{160610_1}
	\BU &= \BU^0+\BU_\de^1+\BU_\de^2 = \BV^1+\BV^2+\BU_\de^1+\BU_\de^2, \quad 
	P = P^0 +P_\de^1+P_\de^2 = Q^1+Q^2 + P_\de^1+P_\de^2,  \\
	H&= H_\de^1+H_\de^2. \notag 
\end{align}

Thirdly, we consider System \eqref{linear3}.
The solution $(\Bu,\Fp,h)$ of the system \eqref{linear3},
obtained in Lemma \ref{lemm:sg_2}, is represented as
\begin{align}\label{160611_1}
	\Bu&=\frac{1}{2\pi i}\int_\Ga e^{\la t}\BU(x,\la)\intd\la, \quad
	 \Fp=\frac{1}{2\pi i}\int_\Ga e^{\la t} P(x,\la)\intd \la, \\
	h&= \frac{1}{2\pi i}\int_\Ga e^{\la t} H(x',\la)\intd\la
	\quad \text{with $(\BF,K)=(\Bu_0,h_0)$;} \notag \\
	\Ga&= \{\la \in \BC \mid \la=2\ga_0/\sin\ep+s e^{i(\pi-\ep)},s:-\infty\to\infty \} 
	\quad (0<\ep<\pi/2), \notag
\end{align}
where $\ga_0$ is the same constant as in Lemma \ref{lemm:SS09}.
Furthermore, by \eqref{160610_1}, these formulas can be written as
\begin{align*}
	\Bu
		&= R^0(t)\Bu_0+R_\de^1(t)(\Bu_0,h_0)+R_\de^2(t)(\Bu_0,h_0), \quad 
	\Fp
		=S^0(t)\Bu_0+S_\de^1(t)(\Bu_0,h_0)+S_\de^2(t)(\Bu_0,h_0), \notag \\
	h
		&=T_\de^1(t)(\Bu_0,h_0)+T_\de^2(t)(\Bu_0,h_0)
\end{align*}
for $\de>0$, where, for $l=1,2$,
\begin{align}\label{160611_2}
	&R^0(t)\Bu_0
		= \frac{1}{2\pi i}\int_\Ga e^{\la t}\BU^0(x,\la)\intd \la, \quad
	S^0(t)\Bu_0
		=\frac{1}{2\pi i}\int_\Ga e^{\la t}P^0(x,\la)\intd \la, \\
	&R_\de^l(t)(\Bu_0,h_0)
		=\frac{1}{2\pi i}\int_\Ga e^{\la t}\BU_\de^l(x,\la)\intd\la, \quad 
	S_\de^l(t)(\Bu_0,h_0)
		=\frac{1}{2\pi i}\int_\Ga e^{\la t} P_\de^l(x,\la)\intd\la, \notag \\
	&T_\de^l(t)(\Bu_0,h_0)
		=\frac{1}{2\pi i}\int_\Ga e^{\la t} H_\de^l(x,\la)\intd\la. \notag
\end{align}


Fourthly, we consider System \eqref{linear4}.
The solution $(\Bu,\Fp,h)$ of  \eqref{linear4},
obtained in Lemma \ref{lemm:SS12}, is given by
\begin{align*}
	&\Bu=\CL_\la^{-1}[\BU(x,\la)](t), \quad
	\Fp = \CL_\la^{-1}[P(x,\la)](t), \quad
	h=\CL_\la^{-1}[H(x,\la)](t), \\
	&\text{with  $(\BF,K)=(\CL[\Bf](\la),\CL[k](\la))$ and
	$\la=\ga+i\tau$ $(\ga>2\ga_0/\sin\ep)$,} \notag
\end{align*}
where $\CL$, $\CL_\la^{-1}$ are the Laplace transform and its inverse given by \eqref{la:trans}.
We insert $(\BU,P,H)$ of the form \eqref{160610_1} into the last formulas 
in order to obtain 
\begin{equation*}
	\Bu = \CR^0(x,t)+\CR_\de^1(x,t)+\CR_\de^2(x,t), \quad
	\Fp = \CS^0(x,t)+\CS_\de^1(x,t)+\CS_\de^2(x,t), \quad
	h=\CT_\de^1(x,t)+\CT_\de^2(x,t)  
\end{equation*}
for $\de>0$, where, for $l=1,2$, 
\begin{align}\label{160611_4}
	\CR^0(x,t) &=\CR^0(t)= \CL_\la^{-1}\left[\BU^0(x,\la)\right](t), \quad  
	\CS^0(x,t) = \CS^0(t)=\CL_\la^{-1}\left[P^0(x,\la)\right](t),  \\
	\CR_\de^l(x,t) &=\CR_\de^l(t) = \CL_{\la}^{-1}\left[\BU_\de^l(x,\la)\right](t) , \quad
	\CS_\de^l(x,t) =\CS_\de^l(t) = \CL_\la^{-1}\left[P_\de^l(x,\la)\right](t) , \notag \\	
	\CT_\de^l(x,t) &= \CT_\de^l(t)=\CL_\la^{-1}\left[H_\de^l(x',\la)\right](t). \notag
\end{align}
In addition, using \eqref{160611_2},
we have by Cauchy's integral theorem
 another representation of \eqref{160611_4} as follows:
\begin{alignat}{2}\label{conv}
	\CR^0(t) &= \int_0^t R^0(t-s)\Bf(s)\intd s, \quad
	&\CS^0(t) &= \int_0^t S^0(t-s)\Bf(s)\intd s, \\
	\CR_\de^l(t) &= \int_0^t R_\de^l(t-s)(\Bf(s),k(s))\intd s, \quad
	&\CS_\de^l(t) &= \int_0^t S_\de^l(t-s)(\Bf(s),k(s))\intd s, \notag \\
	\CT_\de^l(t) &= \int_0^t T_\de^l(t-s)(\Bf(s),k(s))\intd s & (l=&1,2). \notag
\end{alignat}

Concerning $R^0(t)$, $R_\de^l(t)$, $S^0(t)$, $S_\de^l(t)$, and $T_\de^l(t)$, 
we have 

\begin{lemm}\label{lemm:SaS}
Let $3<q<4$ and $2\leq r\leq q$, and let $Y_r = L_r(\lhs)^3\times W_r^{2-1/r}(\BR^2)$.
Suppose that $\bar{q}=q/2$ and $\BU_0=(\Bu_0,h_0)$ with 
\begin{equation*}
	\Bu_0\in L_r(\BR_-^3)^3 \cap L_{\bar{q}}(\BR_-^3)^3,\quad
	h_0\in W_{r}^{2-1/r}(\BR^2) \cap L_{\bar{q}}(\BR^2).
\end{equation*}
Then there is a constant $\de\in(0,1)$, independent of $\Bu_0$, $h_0$,
such that the following assertions hold. 
\begin{enumerate}[$(1)$]
	\item\label{lemm:SaS1}
		There exists a positive constant $C_r$ such that, for any $t>0$ and $l=0,1,2$,
		\begin{align*}
			\|(\pa_t R^0(t)\Bu_0,\nabla S^0(t)\Bu_0)\|_{L_r(\lhs)}
				&\leq C_r t^{-1}\|\Bu_0\|_{L_r(\lhs)}, \\
			\|\nabla^l R^0(t)\Bu_0\|_{L_r(\lhs)}
				&\leq C_r t^{-l/2}\|\Bu_0\|_{L_r(\lhs)}.
		\end{align*}
	\item\label{lemm:SaS2}
		There exists a positive constant $C_{\bar{q},r}$ such that, for any $t>0$ and $l=0,1$,
		\begin{equation*}
			\|\nabla ^l R^0(t)\Bu_0\|_{L_r(\BR_-^3)} 
				\leq C_{\bar{q},r}t^{-\Fn(\bar{q},r)-l/2} 
				\|\Bu_0\|_{L_{\bar{q}}(\BR_-^3)}.
		\end{equation*}
	\item\label{lemm:SaS3}
		There exist positive constants $\ga_2$, $C_{r,\ga_2}$ such that, for any $t\geq 1$,
		\begin{align*}
			&\|\pa_t R_\de^1(t)\BU_0\|_{L_r(\lhs)} + \|R_\de^1(t)\BU_0\|_{W_r^2(\lhs)} + \|\nabla S_\de^1(t)\BU_0\|_{L_r(\lhs)}\\
				&+\|\pa_t\CE(T_\de^1(t)\BU_0))\|_{W_r^2(\lhs)}+	\|\CE(T_\de^1(t)\BU_0))\|_{W_r^3(\lhs)}
				+\|T_\de^1(t)\BU_0\|_{L_r(\BR^2)}  \\
				&\leq C_{r,\ga_2} e^{-\ga_2 t}
				 \|\BU_0\|_{Y_r}.
		\end{align*}
	\item\label{lemm:SaS6}
		Let $\al>0$. Then, there exists positive constants $\ga_3$, $C_{r,\ga_3}$, and $C_{r,\al,\ga_3}$ such that, for any $t>0$, 
		\begin{align*}
			\|R_\de^1(t)\BU_0\|_{W_r^1(\lhs)} + \|\CE(T_\de^1(t)\BU_0)\|_{W_r^2(\lhs)}
				&\leq C_{r,\al, \ga_3}t^{-\al}e^{-\ga_3 t}\|\BU_0\|_{Y_r}, \\
			\|(\nabla^2 R_\de^1(t)\BU_0,\nabla^3\CE(T_\de^1(t)))\|_{L_r(\lhs)} 
				&\leq C_{r,\ga_3}t^{-1}e^{-\ga_3 t}\|\BU_0\|_{Y_r}.
		\end{align*}
	\item\label{lemm:SaS4}
		If we additionally assume that $h_0\in W_{\bar{q}}^{2-1/\bar{q}}(\BR^2)$,
		then we have, for any $t>0$,	
		\begin{equation*}
			\|R_\de^1(t)\BU_0\|_{W_r^1(\BR_-^3)}+\|\CE(T_\de^1(t)\BU_0)\|_{W_r^2(\BR_-^3)} 
				+\|T_\de^1(t)\BU_0\|_{L_r(\BR^2)} 
			\leq
				C_{\bar{q},r,\ga_4}
				t^{-\Fn(\bar{q},r)-1/2}
				e^{-\ga_4 t}\|\BU_0\|_{Y_{\bar q}},
		\end{equation*}
		with positive constants $\ga_4$, $C_{\bar{q},r,\ga_4}$.
	\item\label{lemm:SaS5}
		Let $k=1,2$ and $l=0,1,2$, and let $Z_{\bar q}=L_{\bar q}(\lhs)^3\times L_{\bar{q}}(\BR^2)$
		Then there exists a positive constant $C_{\bar{q}, r}$ such that, for any $t\geq 1$,
				\begin{align*}
			\|\pa_t R_\de^2(t)\BU_0\|_{L_r(\BR_-^3)}
				&\leq C_{\bar{q},r}(t+2)^{-\Fm\left(\bar{q},r\right)-1/4}
				\|\BU_0\|_{Z_{\bar{q}}}, \\
			\|R_\de^2(t)\BU_0\|_{L_r(\BR_-^3)}
				&\leq C_{\bar{q},r}(t+2)^{-\Fm\left(\bar{q},r\right)}
				\|\BU_0\|_{Z_{\bar q}}, \\
			\|\nabla^k R_\de^2(t)\BU_0\|_{L_r(\BR_-^3)}
				&\leq C_{\bar{q},r}(t+2)^{-\Fn\left(\bar{q},r\right)-k/8}
				\|\BU_0\|_{Z_{\bar q}},
				\\
			\|\nabla S_\de^2(t)\BU_0\|_{L_r(\lhs)}
				&\leq C_{\bar{q},r}(t+2)^{-\Fm\left(\bar{q},r\right)-1/4}
				\|\BU_0\|_{Z_{\bar q}},  \\
			\|\nabla^k\pa_t \CE(T_\de^2(t)\BU_0)\|_{L_r(\BR_-^3)}
				&\leq C_{\bar{q},r}(t+2)^{-\Fm\left(\bar{q},r\right)-k/2}
				\|\BU_0\|_{Z_{\bar q}}, \\
			\|\nabla^{1+l} \CE(T_\de^2(t)\BU_0)\|_{L_r(\BR_-^3)}
				&\leq C_{\bar{q},r}(t+2)^{-\Fm\left(\bar{q},r\right)-1/4-l/2}
				\|\BU_0\|_{Z_{\bar q}}, \\
			\|T_\de^2(t)\BU_0\|_{L_r(\BR^2)}
				&\leq C_{\bar{q},r}(t+2)^{-\left(1/\bar{q}-1/r\right)}
				\|\BU_0\|_{Z_{\bar q}}.
		\end{align*}
		On the other hand, for any $0<t\leq 1$ and any $\al>0$, 
		\begin{equation*}
			\|(R_\de^2(t)\BU_0,\nabla \CE(T_\de^2(t)\BU_0))\|_{W_r^2(\BR_-^3)}
			+\|\nabla S_\de^2(t)\BU_0\|_{L_r(\lhs)} 
			+\|T_\de^2(t)\BU_0\|_{L_r(\BR^2)}
			\leq C_{\bar{q},r,\al}t^{-\al}\|\BU_0\|_{Z_{\bar q}},
		\end{equation*}
		with some positive constant $C_{\bar{q},r,\al}$.
	\item\label{lemm:SaS7}
		There exists a positive constant $C_{q}$ such that, for any $t\geq 1$,
		\begin{align*}
			&\|(\nabla^2 R_\de^2(t)\BU_0,\nabla S_\de^2(t)\BU_0)\|_{L_q(\lhs)} 
				\leq C_q (t+2)^{-\Fm(2,q)-1/4}\|\BU_0\|_{Z_2}, \\
			&\|\nabla^3\CE(T_\de^2(t)\BU_0)\|_{L_q(\lhs)}
				\leq C_q (t+2)^{-\Fm(2,q)-5/4}\|\BU_0\|_{Z_2}.
		\end{align*}
		On the other hand, for any $0<t\leq 1$ and any $\al>0$,
		\begin{equation*}
			\|(\nabla^2 R_\de^2(t)\BU_0,\nabla S_\de^2(t)\BU_0,\nabla^3\CE(T_\de^2(t)\BU_0))\|_{L_q(\lhs)} 
			\leq C_{q,\al}t^{-\al}\|\BU_0\|_{Z_{2}},
		\end{equation*}		
		with some positive constant $C_{q,\al}$.
\end{enumerate}
\end{lemm}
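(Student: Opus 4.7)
The plan is to exploit the decomposition \eqref{160610_1}--\eqref{160611_2} of the solution of the resolvent problem \eqref{resolvent} into the Stokes part without surface tension ($\BU^0, P^0$), the high-frequency surface-tension part (cut by $\zeta_\de^1$), and the low-frequency surface-tension part (cut by $\zeta_\de^2$), and to reduce each assertion to the $L_r$-$L_s$ semigroup estimates established in the companion paper \cite{SaS1}. Each operator is given as a Dunford--Taylor integral over the contour $\Ga$ in \eqref{160611_1} of an explicit Fourier multiplier whose symbol lives in one of the classes $\BBM_{s,l}(\Si_{\ep,\ga_0})$, so the game is to deform $\Ga$ appropriately and apply Lemma \ref{lemm:multi1}, Lemma \ref{lemm:multi2}, and Proposition \ref{prop:SS01}.

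Assertions \eqref{lemm:SaS1} and \eqref{lemm:SaS2} concern $R^0(t), S^0(t)$ and are the standard analytic semigroup and smoothing estimates for the Stokes operator on $\lhs$ with free stress condition $\BT\Be_3=0$; the whole-space contribution $\BV^1$ in \eqref{160609_10} yields the heat-kernel scaling $t^{-\Fn(\bar q,r)-l/2}=t^{-\frac{3}{2}(1/\bar q - 1/r)-l/2}$ by Hausdorff--Young, while the boundary corrector $\BV^2$ is handled by the symbolic techniques of Lemma \ref{lemm:IJ} together with Proposition \ref{prop:SS01}. For \eqref{lemm:SaS3}--\eqref{lemm:SaS4}, on $\supp\zeta_\de^1=\{|\xi'|\geq\de\}$ the surface-tension Lopatinskii-type determinant $D(A,B)-\mu^{-1}A(c_g+c_\si A^2)$ is bounded away from zero uniformly in $\la\in\Sigma_\ep$, so the symbols $\CU,\CU',\CP,\CP',\CH,\CH'$ extend holomorphically to a region $\mathrm{Re}\,\la\leq-\ga_j<0$; shifting $\Ga$ into this region produces the factor $e^{-\ga_j t}$, and the remaining multiplier norm is controlled by Proposition \ref{prop:SS01}. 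The prefactor $t^{-\al}$ in \eqref{lemm:SaS6} for small $t$ reflects the order of the differentiation hitting the symbol before the exponential gain takes over; \eqref{lemm:SaS4} is obtained by replacing $\|\BU_0\|_{Y_r}$ by $\|\BU_0\|_{Y_{\bar q}}$ through the additional Sobolev gain that the low integrability index $\bar q$ provides.

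The main obstacle is \eqref{lemm:SaS5} and \eqref{lemm:SaS7}. On $\supp\zeta_\de^2=\{|\xi'|\leq 2\de\}$ the characteristic equation from $(\mu AB^2+\mu A^3+c_g+c_\si A^2)^{-1}$-type denominators possesses two purely imaginary roots $\la=\la_\pm(\xi')$ of order $|\xi'|^{3/2}$ as $|\xi'|\to 0$ -- the capillary-gravity dispersion relation -- together with a cluster of parabolic roots from $D(A,B)=0$. Deforming $\Ga$ around these roots and performing stationary-phase on the $\xi'$-integrals yields the claimed polynomial rates: $(t+2)^{-\Fm(\bar q,r)}$ for $R_\de^2$, the dispersive improvement $(t+2)^{-\Fn(\bar q,r)-k/8}$ for $\nabla^k R_\de^2$ (the $1/8$ is the capillary-wave gain, not a parabolic gain), and $(t+2)^{-(1/\bar q-1/r)}$ for $T_\de^2$. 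My plan is to import these via direct citation of the corresponding theorems in \cite{SaS1} and \cite[Chapter 3]{Saito15}, verifying case-by-case that the derivative order, the harmonic extension $\CE$, and the distinction between normal and tangential data match the claimed decay exponent. For $0<t\leq 1$ the symbol-level representation is uniformly bounded on a compact portion of $\Ga$, so any $t^{-\al}$ with $\al>0$ is trivially admissible and is harmless in the convolution arguments used in the proof of Theorem \ref{theo:sec4_2}.
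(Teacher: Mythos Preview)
Your strategy of citing \cite{SaS1} for the bulk of the estimates matches the paper exactly: assertions \eqref{lemm:SaS1}, \eqref{lemm:SaS3}, \eqref{lemm:SaS6}, \eqref{lemm:SaS5}, \eqref{lemm:SaS7} are simply referenced to \cite{SaS1} (and \cite[Theorem 3.1.3]{Saito15}) without further argument. Your mechanistic sketch of the contour shift for the high-frequency part and the capillary--gravity dispersive analysis for the low-frequency part is more detail than the paper actually supplies here, but it is an accurate summary of what happens in that reference.

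Where your route differs is in \eqref{lemm:SaS2} and \eqref{lemm:SaS4}. You propose to obtain the $L_{\bar q}\to L_r$ smoothing for $R^0(t)$ directly, treating the whole-space piece $\BV^1$ by Hausdorff--Young for the heat kernel and the boundary corrector $\BV^2$ by the symbolic machinery of Lemma \ref{lemm:IJ}. The paper instead derives both \eqref{lemm:SaS2} and \eqref{lemm:SaS4} purely by interpolation: it applies the Gagliardo--Nirenberg inequality \eqref{sobolev:2},
\[
\|f\|_{L_r(\lhs)}\leq C\,\|\nabla f\|_{L_{\bar q}(\lhs)}^{3(1/\bar q-1/r)}\|f\|_{L_{\bar q}(\lhs)}^{1-3(1/\bar q-1/r)},
\]
to $f=\nabla^l R^0(t)\Bu_0$ (for \eqref{lemm:SaS2}) and to $f=\nabla^l R_\de^1(t)\BU_0$, $\nabla^l\CE(T_\de^1(t)\BU_0)$ (for \eqref{lemm:SaS4}), feeding in only the \emph{diagonal} $L_{\bar q}\to L_{\bar q}$ bounds already recorded in \eqref{lemm:SaS1} and \eqref{lemm:SaS6}. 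For \eqref{lemm:SaS4} one inserts the $t^{-\al}$ and $t^{-1}$ bounds from \eqref{lemm:SaS6} at exponent $\bar q$ and then tunes $\al$ (or $\beta$) to hit the target exponent $\Fn(\bar q,r)+1/2$; the $T_\de^1$ bound on $\BR^2$ then follows by the trace theorem.

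This interpolation route is shorter and also avoids a genuine issue with your approach: Lemma \ref{lemm:IJ} as stated produces operators in $\CL(L_q(\BR^2),W_q^1(\lhs))$ with the \emph{same} exponent on both sides, so to run your direct argument on the boundary corrector $\BV^2$ you would need an off-diagonal $L_{\bar q}\to L_r$ variant, which the paper never establishes. The interpolation argument makes that unnecessary.
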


\begin{proof}
\eqref{lemm:SaS1}, \eqref{lemm:SaS3}, \eqref{lemm:SaS6}, \eqref{lemm:SaS5}, \eqref{lemm:SaS7}.
These estimates were proved in \cite{SaS1} (cf. also \cite[Theorem 3.1.3]{Saito15}).

\eqref{lemm:SaS2}.
The desired estimates follow from (1) 
and Sobolev's embedding theorem: 
\begin{equation}\label{sobolev:2}
	\|f\|_{L_{p_2}(\lhs)} \leq C_{p_1,p_2}
		\|\nabla f\|_{L_{p_1}(\lhs)}^{3\left(\frac{1}{p_1}-\frac{1}{p_2}\right)}
		\|f\|_{L_{p_1}(\lhs)}^{1-3\left(\frac{1}{p_1}-\frac{1}{p_2}\right)}
		, \quad 3\left(\frac{1}{p_1}-\frac{1}{p_2}\right)<1,
\end{equation}
with $p_1=\bar{q}$ and $p_2 = r$.
Here and subsequently, we note that, by $3<q<4$ and $2\leq r \leq q$,
\begin{equation*}
	3\left(\frac{1}{\bar{q}}-\frac{1}{r}\right) \leq 3\left(\frac{1}{\bar{q}}-\frac{1}{q}\right) = \frac{3}{q}<1.
\end{equation*} 


\eqref{lemm:SaS4}.
By \eqref{sobolev:2} and \eqref{lemm:SaS6}, we have, for any $\al,\beta>0$,
\begin{align*}
	\|R_\de^1(t)\BU_0\|_{L_r(\lhs)} 
		&\leq C_{\bar{q},r}\|\nabla R_\de^1(t)\BU_0\|_{L_{\bar{q}}(\lhs)}^{3\left(\frac{1}{\bar{q}}-\frac{1}{r}\right)}
		\|R_\de^1(t)\BU_0\|_{L_{\bar{q}}(\lhs)}^{1-3\left(\frac{1}{\bar{q}}-\frac{1}{r}\right)} 
		\leq C_{\bar{q},r,\al,\ga_3}t^{-\al}e^{-\ga_3 t}\|\BU_0\|_{Y_{\bar{q}}}, \\
	\|\nabla R_\de^1(t)\BU_0\|_{L_r(\lhs)} 
		&\leq C_{\bar{q},r}\|\nabla^2 R_\de^1(t)\BU_0\|_{L_{\bar{q}}(\lhs)}^{3\left(\frac{1}{\bar{q}}-\frac{1}{r}\right)}
		\|\nabla R_\de^1(t)\BU_0\|_{L_{\bar{q}}(\lhs)}^{1-3\left(\frac{1}{\bar{q}}-\frac{1}{r}\right)} \\
		&\leq C_{\bar{q},r,\beta,\ga_3}t^{-3\left(\frac{1}{\bar{q}}-\frac{1}{r}\right)-\beta\left\{1-3\left(\frac{1}{\bar{q}}-\frac{1}{r}\right)\right\}}
		e^{-\ga_3 t}\|\BU_0\|_{Y_{\bar{q}}}.
\end{align*}
Thus, setting $\al=\Fn(\bar{q},r)+1/2$ and $\beta=1/2$ in the first inequality and the second inequality, respectively,
implies that 
\begin{equation*}
	\|R_\de^1(t)\BU_0\|_{W_r^1(\lhs)} 
		\leq C_{\bar{q},r,\ga_3}t^{-\Fn(\bar{q},r)-1/2}e^{-\ga_3 t}\|\BU_0\|_{Y_{\bar{q}}}.
\end{equation*}
Analogously, we observe that
\begin{equation*}
	\|\CE(T_\de^1(t)\BU_0)\|_{W_r^2(\lhs)}
		\leq C_{\bar{q},r,\ga_3}t^{-\Fn(\bar{q},r)-1/2}e^{-\ga_3 t}\|\BU_0\|_{Y_{\bar{q}}},
\end{equation*}
which, combined with the trace theorem, furnishes that
\begin{equation*}
	\|T_\de^1(t)\BU_0\|_{L_r(\BR^2)} \leq C_r\|\CE(T_\de^1(t)\BU_0)\|_{W_r^1(\lhs)}
		\leq  C_{\bar{q},r,\ga_3} t^{-\Fn(\bar{q},r)-1/2}e^{-\ga_3 t}\|\BU_0\|_{Y_{\bar{q}}}.
\end{equation*}
This completes the proof of the lemma.
\end{proof}

Finally, we introduce some embedding properties (cf. e.g. \cite{MS12} and \cite{Shibata18}).

\begin{lemm}\label{lemm:embed}
Let $T\in(0,\infty)$ or $T=\infty$, and set $J=(0,T)$.
Then the following properties hold.
\begin{enumerate}[$(1)$]
\item\label{embed:0}
	If $1<p,q<\infty$, then
		$W_{q,p}^{2,1}(\lhs\times J) \hookrightarrow H_p^{1/2}(J,W_q^1(\lhs))$.
\item\label{embed:1}
	If $2<p<\infty$ and $1<q<\infty$, then
		$W_{q,p}^{2,1}(\lhs\times J)\hookrightarrow BUC(\overline{J},W_q^1(\BR_-^3))$.
\item\label{embed:2}
	If $2<p<\infty$, $3<q<\infty$, and $2/p+3/q<1$, then
		$W_{q,p}^{2,1}(\lhs\times J)\hookrightarrow BUC(\overline{J},BUC^1(\BR_-^3))$.
\end{enumerate}
\end{lemm}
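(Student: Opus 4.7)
The three assertions all flow from a single main ingredient, namely the mixed derivative theorem for the intersection space $W_{q,p}^{2,1}(\lhs\times J)=W_p^1(J,L_q(\lhs))\cap L_p(J,W_q^2(\lhs))$, combined with standard Sobolev embeddings in time and space.

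For (1), the plan is to invoke the Dore--Venni type mixed derivative theorem. The operators $A=1-\Delta$ on $L_q(\lhs)$ (say with Neumann or extension realization) and $B=d/dt$ on $L_p(J,L_q(\lhs))$ are commuting sectorial operators with bounded imaginary powers on a UMD space ($1<q<\infty$ gives UMD for $L_q$, hence for $L_p(J,L_q)$). Thus for $\theta\in(0,1)$ one has the continuous embedding
\[
W_p^1(J,L_q(\lhs))\cap L_p(J,W_q^2(\lhs))\hookrightarrow H_p^{1-\theta}(J,W_q^{2\theta}(\lhs)).
\]
Taking $\theta=1/2$ gives exactly $H_p^{1/2}(J,W_q^1(\lhs))$, which is (1). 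One must handle the case of a finite interval $J$ by a standard extension argument (extending functions in time to all of $\BR$ while preserving the norms up to a constant).

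For (2) and (3), the plan is to combine the trace theorem for $W_p^1(J,X_0)\cap L_p(J,X_1)$ with appropriate Besov embeddings. For any Banach couple $(X_0,X_1)$, the trace theorem yields
\[
W_p^1(J,X_0)\cap L_p(J,X_1)\hookrightarrow BUC(\overline J,(X_0,X_1)_{1-1/p,p}).
\]
Applied with $X_0=L_q(\lhs)$ and $X_1=W_q^2(\lhs)$, the trace space is $B_{q,p}^{2(1-1/p)}(\lhs)$. Since $p>2$ gives $2(1-1/p)>1$, we have $B_{q,p}^{2(1-1/p)}(\lhs)\hookrightarrow W_q^1(\lhs)$, which proves (2). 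For (3), the Besov--Sobolev embedding $B_{q,p}^s(\lhs)\hookrightarrow BUC^1(\lhs)$ requires $s-3/q>1$, i.e., $2(1-1/p)-3/q>1$, which is exactly the hypothesis $2/p+3/q<1$.

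The main obstacle is assertion (1): the mixed derivative theorem is the nontrivial operator-theoretic input, and one must verify the hypotheses (commuting sectorial operators with bounded imaginary powers on a UMD space) carefully, possibly after reducing to the whole-line case by extension. Once (1) is established, (2) and (3) are routine consequences of interpolation/trace theorems and Besov embeddings. Alternatively, (2) and (3) can be proved independently of (1) directly from the trace theorem, which is the cleaner route and avoids any reliance on the mixed derivative theorem for those two statements.
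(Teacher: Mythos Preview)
Your approach is correct and is precisely the standard route to these embeddings. Note, however, that the paper does not give its own proof of this lemma: it merely states the result and refers to Meyries--Schnaubelt \cite{MS12} and Shibata \cite{Shibata18} for the embedding properties. Your use of the mixed derivative theorem for (1) and the time-trace theorem plus Besov embeddings for (2) and (3) is exactly what underlies those references, so there is nothing to compare --- you have supplied the argument the paper outsources.
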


%
%
%
%
%
%
%
%
%
%
%
%

\subsection{Proof of Theorem \ref{theo:sec4_1}}
In this subsection, we prove Theorem \ref{theo:sec4_1}.
In this proof, we omit the subscript $\de$ of
$R_\de^l$, $S_\de^l$, and $T_\de^l$ $(l=1,2)$ for simplicity
and suppose that $r\in\{q,2\}$. 
Let $(\Bu,\Fp,h)$ be the solution obtained in Lemma \ref{lemm:sg_2} in what follows.

{\bf Step 1.}
In this step, we prove the following estimates:
\begin{align}
	\|\nabla\pa_t\CE(h)\|_{L_\infty^{\Fm(\bar{q},r)+1/2}(\BR_+,L_r(\lhs))}
		&\leq C_{p,\bar{q},r}\|(\Bu_0,h_0)\|_{\wt\BBI_1\times\BBI_2}, \label{161001_12} \\
	\|\pa_t\CE(h)\|_{L_\infty(\BR_+,L_2(\lhs))} 
		&\leq C_{p,\bar{q}}\|(\Bu_0,h_0)\|_{\wt\BBI_1\times\BBI_2} \label{161001_11} 
\end{align}
with positive constants $C_{p,\bar{q}}$, $C_{p,\bar{q},r}$.
To this end, we introduce

\begin{lemm}\label{lemm:ap}
Let $3<q<4$ and $2\leq r\leq q$, and let $\bar{q}=q/2$.
Suppose that $Y_r$, $Z_{\bar{q}}$ are given in Lemma $\ref{lemm:SaS}$
and that $\BU_0=(\Bu_0,h_0)\in Y_r\cap Z_{\bar{q}}$. 
Let 
\begin{equation*}
	R(t)\BU_0 = R^0(t)\Bu_0+R^1(t)(\Bu_0,h_0)+R^2(t)(\Bu_0,h_0),
\end{equation*}
and let $(R(t)\BU_0)_i$ be the $i$th component of $R(t)\BU_0$ for $i=1,2,3$. 
Then there exist positive constants $\ga_5$, $C_{\ga_5,r}$, and $C_{\bar{q},r}$ such that, for $k=0,1$, 
\begin{align}
	\|\CE((R(t)\BU_0)_3|_{\BR_0^3})\|_{L_r(\lhs)}
		&\leq C_{r,\ga_5}e^{\ga_5 t}\|\BU_0\|_{Y_r} \quad (t>0), \label{161028_1} \\
	\|\nabla^k\CE((R(t)\BU_0)_3|_{\BR_0^3})\|_{L_r(\lhs)}
		&\leq C_{\bar{q},r}(t+2)^{-\Fm(\bar{q},r)-k/2}\|\BU_0\|_{Y_r\cap Z_{\bar{q}}} \quad (t\geq 1), \label{161028_2} \\
	\|\nabla^k\CE((R(t)\BU_0)_3|_{\BR_0^3})\|_{L_r(\lhs)}
		&\leq C_{\bar{q},r}t^{-\Fn(\bar{q},r)-1/2}\|\BU_0\|_{Y_r\cap Z_{\bar{q}}} \quad (0<t\leq 1), \label{161028_3} \\
	\|\nabla^2\CE((R^2(t)\BU_0)_3|_{\BR_0^3})\|_{L_r(\lhs)}
		&\leq C_{\bar{q},r}(t+2)^{-\Fm(\bar{q},r)-1}\|\BU_0\|_{Z_{\bar{q}}} \quad (t\geq 1). \label{161219_21} \\
	\|\nabla^2\CE((R^2(t)\BU_0)_3|_{\BR_0^3})\|_{L_r(\lhs)}
		&\leq C_{\bar{q},r,\al}t^{-\al}\|\BU_0\|_{Z_{\bar{q}}} \quad (0< t\leq 1), \label{161220_2}		
\end{align}
where $\al>0$ and $C_{\bar{q},r,\al}$ is a positive constant depending on $\bar{q}$, $r$, and $\al$.
\end{lemm}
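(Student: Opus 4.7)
The starting point is the kinematic boundary condition of System~\eqref{linear3}: since $(\Bu,\Fp,h)=(R(t)\BU_0,\dots,(T^1+T^2)(t)\BU_0)$ satisfies $\pa_t h-u_3=0$ on $\BR_0^3$, and $\CE$ commutes with $\pa_t$, we have the operator identity
\begin{equation*}
	\CE\bigl((R(t)\BU_0)_3|_{\BR_0^3}\bigr)
	= \pa_t\CE(T^1(t)\BU_0) + \pa_t\CE(T^2(t)\BU_0)\quad\text{on }\lhs.
\end{equation*}
This identity reduces each of \eqref{161028_1}--\eqref{161028_3} to bounds on $\nabla^k\pa_t\CE(T^i(t)\BU_0)$ with $k=0,1$, which are already supplied by Lemma~\ref{lemm:SaS}\eqref{lemm:SaS3}, \eqref{lemm:SaS6}, \eqref{lemm:SaS5}: the $T^1$-contribution decays exponentially in $\|\BU_0\|_{Y_r}$ (subsumed by the weight $e^{\ga_5 t}$ in \eqref{161028_1} and negligible for \eqref{161028_2}), while the $T^2$-contribution yields the polynomial decay $(t+2)^{-\Fm(\bar q,r)-k/2}\|\BU_0\|_{Z_{\bar q}}$ for $t\ge 1$ and the short-time singularity $t^{-\Fn(\bar q,r)-1/2}\|\BU_0\|_{Y_r\cap Z_{\bar q}}$ for $0<t\le 1$, the latter compatible with the rate in \eqref{161028_3}.

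The bounds \eqref{161219_21}--\eqref{161220_2} isolate only the low-frequency block $R^2_\de$, for which the kinematic identity does not decouple cleanly. The plan is to derive instead the corrected identity
\begin{equation*}
	\CE\bigl((R^2_\de(t)\BU_0)_3|_{\BR_0^3}\bigr)
	= \pa_t\CE(T^2_\de(t)\BU_0) - \CE\bigl(\zeta_\de^2(D_{x'})(R^0(t)\Bu_0)_3|_{\BR_0^3}\bigr),
\end{equation*}
obtained by applying the cutoff $\zeta_\de^2(D_{x'})$ to the resolvent kinematic relation $\la H-V_3=h_0+U_3^0|_{\BR_0^3}$ and then using Cauchy's theorem to remove the $h_0$-contribution when inverting the Laplace transform along~$\Ga$. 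The first term on the right gives exactly the decay $(t+2)^{-\Fm(\bar q,r)-1}\|\BU_0\|_{Z_{\bar q}}$ claimed in \eqref{161219_21} via Lemma~\ref{lemm:SaS}\eqref{lemm:SaS5} with $k=2$. The correction term is the harmonic extension of a low-frequency-filtered boundary trace of the Stokes semigroup $R^0(t)\Bu_0$; its Fourier-Laplace symbol has the form
\begin{equation*}
	|\xi'|^{2}\,e^{|\xi'| y_3}\,\zeta_\de^2(\xi')\,\times\,(\text{trace symbol of }\BU^0\text{ at }x_3=0),
\end{equation*}
in which $\zeta_\de^2$ restricts $|\xi'|\le 2\de$ so that $e^{|\xi'|y_3}\le 1$ uniformly in $y_3<0$. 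Consequently this symbol inherits the multiplier-class estimates of the low-frequency Stokes operator analyzed in \cite{SaS1}, and the resulting $L_r$-$L_{\bar q}$ decay matches $(t+2)^{-\Fm(\bar q,r)-1}$. The short-time bound \eqref{161220_2} follows from the corresponding short-time multiplier estimates in the same reference, with $t^{-\al}$ reflecting the trivial upper bound on the symbol for small $t$.

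The principal obstacle is the correction term for \eqref{161219_21}--\eqref{161220_2}. Although the low-frequency cutoff $\zeta_\de^2$ keeps each factor of the multiplier bounded, the extension factor $e^{|\xi'|y_3}$ provides no genuine smoothing on the support of $\zeta_\de^2$, so the $y_3$-integrability needed to pass from $L_r(\BR^2,dy')$ to $L_r(\lhs,dy'\,dy_3)$ must be extracted entirely from the intrinsic decay of the Stokes trace symbol in $y_3$. This step parallels, but is slightly sharper than, the symbol analyses in \cite{SaS1}, and careful use of Lemmas~\ref{lemm:multi1}--\ref{lemm:multi2} together with Proposition~\ref{prop:SS01} will be required to verify that the combined symbol remains in the correct class $\BBM_{s,j}(\Si_\ep)$ uniformly in $y_3<0$.
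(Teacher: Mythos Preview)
Your route via the kinematic identity $\CE\bigl((R(t)\BU_0)_3|_{\BR_0^3}\bigr)=\pa_t\CE(T(t)\BU_0)$ is not how the paper proceeds. The paper's proof is a one-line sketch: \eqref{161028_1} is obtained as in Lemma~\ref{lemm:sg_1} from the abstract analytic-semigroup bounds, and \eqref{161028_2}--\eqref{161220_2} are obtained by redoing the Fourier--Laplace symbol analysis of \cite{SaS1}, i.e.\ inserting the extra factor $e^{|\xi'|y_3}$ into the representations \eqref{160609_10}--\eqref{160609_11} for $(R(t)\BU_0)_3|_{x_3=0}$ and repeating the multiplier estimates that produced Lemma~\ref{lemm:SaS}.

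Your reduction to Lemma~\ref{lemm:SaS} has a concrete gap. In Lemma~\ref{lemm:SaS}\eqref{lemm:SaS5} the bound on $\|\nabla^{k}\pa_t\CE(T_\de^2(t)\BU_0)\|_{L_r}$ is stated only for $k=1,2$, not $k=0$; so the $k=0$ case of \eqref{161028_2} is not supplied. Likewise, neither \eqref{lemm:SaS6} nor the short-time part of \eqref{lemm:SaS5} contains any estimate on $\pa_t\CE(T_\de^i)$ for $0<t\le 1$, so \eqref{161028_1} for small $t$ and all of \eqref{161028_3} are not covered by your appeal to Lemma~\ref{lemm:SaS}. Producing those missing bounds on $\pa_t\CE(T_\de^i)$ would require exactly the symbol-level computation (trace the third component of $\BU^0+\BV$ to $x_3=0$, multiply by $e^{|\xi'|y_3}$, and bound the resulting kernels as in \cite{SaS1}) that the paper performs directly; the kinematic identity merely renames the quantity in those cases and buys nothing. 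Your corrected identity for \eqref{161219_21}--\eqref{161220_2} is valid (and in fact follows cleanly in the time domain by applying $\zeta_\de^2(D_{x'})$ to $\pa_t h=u_3$, without invoking Cauchy's theorem), but the correction term $\CE\bigl(\zeta_\de^2(D_{x'})(R^0(t)\Bu_0)_3|_{\BR_0^3}\bigr)$ again forces you into precisely the multiplier analysis the paper defers to \cite{SaS1}, so there is no net savings over the direct approach.
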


\begin{proof}
We can prove the first inequality similarly to Lemma \ref{lemm:sg_1} and
the other inequalities similarly to \cite{SaS1} (cf. also \cite[Chapter 4]{Saito15}) and Lemma \ref{lemm:SaS},
so that we may omit the detailed proof.
\end{proof}

Since $\pa_ t h - u_3=0$ on $\BR_0^3$, we have
\begin{equation}\label{161006_10}
	\pa_t\CE(h)=\CE(u_3|_{\BR_0^3})=\CE((R(t)\BU_0)_3|_{\BR_0^3}) \quad \text{in $\lhs$.}
\end{equation}
Combining this identity with \eqref{161028_2} furnishes that
\begin{equation}\label{160927_1}
	\|\nabla\pa_t\CE(h)\|_{L_\infty^{\Fm(\bar{q},r)+1/2}((1,\infty),L_r(\lhs))}
		\leq C_{p,\bar{q},r}\|(\Bu_0,h_0)\|_{\wt\BBI_1\times\BBI_2},
\end{equation}
where we have used the fact that
\begin{equation}\label{embed:5}
	\|\Bu_0\|_{L_r(\lhs)}+\|\Bu_0\|_{L_{\bar{q}}(\lhs)}
		\leq C_{p,r}\|\Bu_0\|_{\wt\BBI_1}, \quad
	\|h_0\|_{W_r^{2-1/r}(\BR^2)}+\|h_0\|_{L_{\bar{q}}(\BR^2)} 
		\leq C_{p,r}\|h_0\|_{\BBI_2}. 
\end{equation}
It holds that, for $1<q<\infty$ and $k=1,2,3$,
\begin{equation}\label{161007_1}
	\|\nabla^k\CE(f|_{\BR_0^3})\|_{L_q(\lhs)} \leq C_q\|\nabla^k f\|_{L_q(\lhs)}, 
\end{equation}
and thus we observe that, by \eqref{161001_10}, \eqref{161006_10}, Lemma \ref{lemm:embed} \eqref{embed:0}, and Sobolev's embedding theorem,
\begin{align*}
	&\|\nabla\pa_t\CE(h)\|_{BUC((0,2),L_r(\lhs))}  
		\leq C_r \|\nabla u_3\|_{BUC((0,2),L_r(\lhs))}\\
		&\leq C_{p,r}\|\nabla u_3\|_{H_p^{1/2}((0,2),L_r(\lhs))}
			\leq C_{p,r}\|u_3\|_{H_p^{1/2}((0,2),W_r^1(\lhs))} \\
		&\leq	C_{p,r}\|u_3\|_{W_{r,p}^{2,1}(\lhs\times (0,2))}
		\leq  C_{p,\bar{q},r}\|(\Bu_0,h_0)\|_{\wt\BBI_1\times\BBI_2}, 
\end{align*}
which, combined with \eqref{160927_1}, furnishes \eqref{161001_12}.
The estimate \eqref{161001_11} follows directly from \eqref{161028_1}, \eqref{161028_2}, \eqref{161006_10}, and \eqref{embed:5}.
This completes Step 1.

%

{\bf Step 2.}
In this step, we prove, for $\Bz = (\Bu,\Fp,h,\CE(h))$, 
\begin{equation}\label{160923_1}
	\CN_{q,p}(\Bz;\Fa_1,\Fa_2) + \sum_{r\in\{q,2\}}\CM_{r,p}(\Bz)
		\leq C_{p,q,\bar{q}}\|(\Bu_0,h_0)\|_{\wt\BBI_1\times\BBI_2}
\end{equation}
with some positive constant $C_{p,q,\bar{q}}$.
By Lemma \ref{lemm:SaS} \eqref{lemm:SaS1}, we have
\begin{equation}\label{160923_11}
	\|(\pa_t R^0(t)\Bu_0,\nabla^2 R^0(t)\Bu_0)\|_{L_p^{\Fa_1}((1,\infty),L_q(\lhs))}
		\leq C_{p,q}\|\Bu_0\|_{L_q(\lhs)},
\end{equation}
because $p(1-\Fa_1)> p(\Fm(\bar{q},q)+1/4-\Fa_1)>1$.
In addition, it follows from Lemma \ref{lemm:SaS} \eqref{lemm:SaS3} that
\begin{align}\label{160923_12}
	&\|(\pa_t R^1(t)\BU_0,\nabla^2 R^1(t)\BU_0)\|_{L_p^{\Fa_1}((1,\infty),L_q(\lhs))} 
		+\|(\nabla^2\pa_t\CE(T^1(t)\BU_0),\nabla^3\CE(T^1(t)\BU_0))\|_{L_p^{\Fa_2}((1,\infty),L_q(\lhs))}  \\
		&\leq C_{p,q}\left(\|\Bu_0\|_{L_q(\lhs)}+\|h_0\|_{W_q^{2-1/q}(\BR^2)}\right). \notag
\end{align}
Noting $\Fn(\bar{q},q)>\Fm(\bar{q},q)$ and \eqref{assump:sg},
we observe that, by Lemma \ref{lemm:SaS} \eqref{lemm:SaS5}, 
\begin{align*}
	&\|(\pa_t R^2(t)\BU_0,\nabla^2 R^2(t)\BU_0)\|_{L_p^{\Fa_1}((1,\infty),L_q(\lhs))} 
		 +\|\nabla^2\pa_t\CE(T^2(t)\BU_0),\nabla^3\CE(T^2(t)\BU_0))\|_{L_p^{\Fa_2}((1,\infty),L_q(\lhs))} \\
		&\leq C_{p,q,\bar{q}}\left(\|\Bu_0\|_{L_{\bar{q}}(\lhs)} + \|h_0\|_{L_{\bar{q}}(\BR^2)}\right),
\end{align*}
which, combined with \eqref{160923_11} and \eqref{160923_12}, furnishes that
\begin{align}\label{160907_1}
	&\|(\pa_t\Bu,\nabla^2\Bu)\|_{L_p^{\Fa_1}((1,\infty),L_q(\BR_-^3))}
	+\|(\nabla^2\pa_t \CE(h),\nabla^3\CE(h))\|_{L_p^{\Fa_2}((1,\infty),L_q(\lhs))} \\
	&\leq C_{p,q,\bar{q}}\left(\|\Bu_0\|_{L_q(\lhs)\cap L_{\bar{q}}(\lhs)}+\|h_0\|_{W_q^{2-1/q}(\BR^2)\cap L_{\bar{q}}(\BR^2)}\right). \notag 
\end{align}
By using Lemma \ref{lemm:SaS} \eqref{lemm:SaS1}, \eqref{lemm:SaS2}, \eqref{lemm:SaS3}, and \eqref{lemm:SaS5},
we have
\begin{align*}
	&\|(\pa_t R^0(t)\Bu_0,R^0(t)\Bu_0,\nabla R^0(t)\Bu_0,\nabla^2 R^0(t)\Bu_0,\nabla S^0(t)\Bu_0)\|_{L_p((1,\infty),L_r(\lhs))} \\
		&\leq C_{p,\bar{q},r}\left(\|\Bu_0\|_{L_r(\lhs)}+\|\Bu_0\|_{L_{\bar{q}}(\lhs)}\right), \\
	&\|(\pa_t R^1(t)\BU_0,R^1(t)\BU_0,\nabla R^1(t)\BU_0,\nabla^2 R^1(t)\BU_0,\nabla S^1(t)\BU_0)\|_{L_p((1,\infty),L_r(\lhs))} \\
		& +\|T^1(t)\BU_0\|_{L_p((1,\infty),L_r(\BR^2))} +\|\pa_t\CE(T^1(t)\BU^0)\|_{L_p((1,\infty),\wh W_r^2(\lhs))} 
	+ \|\CE(T^1(t)\BU_0)\|_{L_p((1,\infty),\wh W_r^3(\lhs))} \\
	&\leq C_{p,r}\left(\|\Bu_0\|_{L_r(\lhs)} + \|h_0\|_{W_r^{2-1/r}(\BR^2)}\right), \\
	&\|(\pa_t R^2(t)\BU_0,R^2(t)\BU_0,\nabla R^2(t)\BU_0,\nabla^2 R^2(t)\BU_0,\nabla S^2(t)\BU_0)\|_{L_p((1,\infty),L_r(\lhs))} \\
	& + \|T^2(t)\BU_0\|_{L_p((1,\infty),L_r(\BR^2))} +\|\pa_t\CE(T^2(t)\BU^0)\|_{L_p((1,\infty),\wh W_r^2(\lhs))} 
	 +\|\CE(T^2(t)\BU_0)\|_{L_p((1,\infty),\wh W_r^3(\lhs))} \\
	&\leq C_{p,\bar{q},r}\left(\|\Bu_0\|_{L_{\bar{q}}(\lhs)}+\|h_0\|_{L_{\bar{q}}(\BR^2)}\right),
\end{align*}
because it holds that, by \eqref{pq:lin3},
\begin{equation*}
	p\cdot\Fn(\bar{q},r)>p\cdot\Fm(\bar{q},r) \geq p\cdot \Fm(\bar{q},2) = p\left(\frac{2}{q}-\frac{1}{2}\right)>1.
\end{equation*}
These inequalities imply that
\begin{align*}
	&\|(\pa_t\Bu,\Bu,\nabla\Bu,\nabla^2\Bu,\nabla\Fp)\|_{L_p((1,\infty),L_r(\lhs))} +\|h\|_{L_p((1,\infty),L_r(\BR^2))}\\
	&+\|\pa_t\CE(h)\|_{L_p((1,\infty),\wh W_r^2(\lhs))} + \|\CE(h)\|_{L_p((1,\infty),\wh W_r^3(\lhs))} \notag \\
	&\leq C_{p,\bar{q},r}\left(\|\Bu_0\|_{L_r(\lhs)\cap L_{\bar{q}}(\lhs)} + \|h_0\|_{W_r^{2-1/r}(\BR^2)\cap L_{\bar{q}}(\BR^2)}\right), \notag
\end{align*}
which, combined with \eqref{lemm:embed} for $T=2$, \eqref{embed:5}, and \eqref{160907_1},
furnishes that 
\begin{align}\label{160923_14}
	&\|(\pa_t\Bu,\nabla^2\Bu)\|_{L_p^{\Fa_1}(\BR_+,L_q(\lhs))}
	 +\|\nabla^2\pa_t\CE(h),\nabla^3\CE(h)\|_{L_p^{\Fa_2}(\BR_+,L_q(\lhs))} \\
	&+\|\Bu\|_{W_{r,p}^{2,1}(\lhs\times\BR_+)}+\|\nabla\Fp\|_{L_p(\BR_+,L_r(\lhs))}
	+\|h\|_{L_p(\BR_+,L_r(\BR^2))} \notag \\
	&+\|\pa_t\CE(h)\|_{L_p(\BR_+,\wh W_r^2(\lhs))} + \|\CE(h)\|_{L_p(\BR_+,\wh W_r^3(\lhs))} \notag \\
	&\leq C_{p,\bar{q},r}\|(\Bu_0,h_0)\|_{\wt\BBI_1\times \BBI_2}. \notag
\end{align}
Since it holds that, by the equation $\pa_t h-u_3=0$ on $\BR_0^3$ and by the trace theorem,
\begin{align*}
	\|\pa_t h(t)\|_{W_r^{2-1/r}(\BR^2)} &=\|u_3(t)\|_{W_r^{2-1/r}(\BR_0^3)} \leq C_r\|u_3(t)\|_{W_r^2(\lhs)}, \\
	\|\nabla' h(t)\|_{W_r^{2-1/r}(\BR^2)} &\leq C_r \|\nabla' \CE( h(t))\|_{W_r^2(\lhs)} 
\end{align*}
with a positive constant $C_r$, we have, by \eqref{160923_14},
\begin{equation*}
	\|\pa_t h\|_{L_p(\BR_+,W_r^{2-1/r}(\lhs))} + \|h\|_{L_p(\BR_+,W_r^{3-1/r}(\BR^2))}
		\leq C_{p,\bar{q},r}\|(\Bu_0,h_0)\|_{\wt\BBI_1\times \BBI_2}.
\end{equation*}
Combining this inequality with \eqref{160923_14} implies \eqref{160923_1}.

{\bf Step 3.}
In this step,  we prove, for $\Bz = (\Bu,\Fp,h,\CE(h))$,
\begin{equation}\label{160923_15}
	\sum_{r\in\{q,2\}}\CN_r(\Bz) \leq C_{p,q,\bar{q}}\|(\Bu_0,h_0)\|_{\wt\BBI_1\times \BBI_2}
\end{equation} 
with some positive constant $C_{p,q,\bar{q}}$.
Noting that $\Fn(\bar{q},r)>\Fm(\bar{q},r)$,
we see that, by Lemma \ref{lemm:SaS} \eqref{lemm:SaS2}, \eqref{lemm:SaS3}, \eqref{lemm:SaS5}, and \eqref{embed:5},
\begin{align}\label{160923_2}
	&\|\Bu\|_{L_\infty^{\Fm(\bar{q},r)}((1,\infty),L_r(\BR_-^3))}
	+\|\nabla \Bu(t)\|_{L_\infty^{\Fn(\bar{q},r)+1/8}((1,\infty),L_r(\BR_-^3))} \\
	&\quad  +\|h\|_{L_\infty^{1/\bar{q}-1/r}((1,\infty),L_r(\BR^2))}
	+\|\nabla\CE(h)\|_{L_\infty^{\Fm(\bar{q},r)+1/4}((1,\infty),W_r^1(\BR\-^3))} 
	\leq C_{p,\bar{q},r}\|(\Bu_0,h_0)\|_{\wt\BBI_1\times\BBI_2}. \notag
\end{align}
On the other hand, we have, by \eqref{161001_10} with $T=2$ and Lemma \ref{lemm:embed} \eqref{embed:1},
\begin{equation*}
	\|\Bu\|_{L_\infty((0,2),W_r^1(\lhs))}
	+\|\nabla\CE(h)\|_{L_\infty((0,2),W_r^1(\lhs))} \leq C_{p,r}\|(\Bu_0,h_0)\|_{\wt\BBI_1\times\BBI_2},
\end{equation*}
and also we observe that, by Sobolev's embedding theorem, 
\begin{align*}
	\|h\|_{L_\infty((0,2),L_r(\BR^2))} \leq C_p\|h\|_{W_p^1((0,2),L_r(\BR^2))}
	\leq C_{p,r}\|(\Bu_0,h_0)\|_{\wt\BBI_1\times\BBI_2}.
\end{align*}
Combining the last two estimates with \eqref{160923_2} furnishes that
\begin{align}\label{160923_4}
	&\|\Bu\|_{L_\infty^{\Fm(\bar{q},r)}(\BR_+,L_r(\BR_-^3))}
	+\|\nabla \Bu(t)\|_{L_\infty^{\Fn(\bar{q},r)+1/8}(\BR_+,L_r(\BR_-^3))} \\
	&\quad  +\|h\|_{L_\infty^{1/\bar{q}-1/r}(\BR_+,L_r(\BR^2))}
	+\|\nabla\CE(h)\|_{L_\infty^{\Fm(\bar{q},r)+1/4}(\BR_+,W_r^1(\BR\-^3))} 
	\leq C_{p,\bar{q},r}\|(\Bu_0,h_0)\|_{\wt\BBI_1\times\BBI_2}. \notag
\end{align}

To estimate $\pa_t h$, we use the equation: $\pa_th -u_3=0$ on $\BR_0^3$.
It is clear that, by the trace theorem,
\begin{equation*}
	\|\pa_t h(t)\|_{L_r(\BR^2)} =\|u_3(t)\|_{L_r(\BR_0^3)} \leq C_r \|u_3(t)\|_{W_r^1(\lhs)},
\end{equation*}
which, combined with \eqref{160923_4}, furnishes that
\begin{equation}\label{160923_16}
	\|\pa_t h\|_{L_\infty^{\Fm(\bar{q},r)}(\BR_+,L_r(\BR^2))} 
		\leq C_{p,\bar{q},r}\|(\Bu_0,h_0)\|_{\wt\BBI_1\times\BBI_2}. 
\end{equation}
By \eqref{161001_12}, \eqref{160923_4}, and \eqref{160923_16},
we have \eqref{160923_15}.
Combining \eqref{161001_11}, \eqref{160923_1}, and \eqref{160923_15}
completes the proof of Theorem \ref{theo:sec4_1}.

\subsection{Proof of Theorem \ref{theo:sec4_2}}
In this subsection, we prove Theorem \ref{theo:sec4_2}.
In this proof, we omit the subscript $\de$ of
$R_\de^l$, $S_\de^l$, $T_\de^l$, $\CR_\de^l$ $\CS_\de^l$, and $\CT_\de^l$ $(l=1,2)$ for simplicity.
Suppose that $r\in\{q,2\}$, $\bar{q}=q/2$, and 
$\Fb_0=\min(\Fb_1,\Fb_2,\Fb_3,\Fb_4)$ in what follows.

{\bf Step 1.}
The aim of this step is to show the following estimates:
\begin{align}
	&\|(\nabla^2\CR^0,\nabla \CS^0,\nabla^2\CE(\SSR^0|_{\BR_0^3}))\|_{L_p(\BR_+,L_r(\BR_-^3))}
		\leq C_{p,r}\|\Bf\|_{L_p(\BR_+,L_r(\BR_-^3))},
		 \label{160607_1} \\
	&\|\CR^0\|_{L_\infty^{\Fm(\bar{q},r)}(\BR_+,L_r(\BR_-^3))}
		+\|\nabla\CR^0\|_{L_\infty^{\Fn(\bar{q},r)+1/8}(\BR_+,L_r(\BR_-^3))} \label{160607_2} 
		\leq C_{p,\bar{q},r}
		\sum_{i=1}^3\|\Bf_i\|_{\BBF_i\cap\wt\BBF_i(\Fa_0,\Fb_i)}  
\end{align}
for positive constants $C_{p,r}$, $C_{p,\bar{q},r}$,
where $\SSR^0$ denotes the third component of $\CR^0$.

We know that \cite[Sections 2 and 3]{SS12} essentially proved the estimate \eqref{160607_1} and
\begin{equation}\label{141015_5}
	\|e^{-\ga_6 t}(\pa_t\CR^0,\CR^0,\nabla\CR^0,
	\nabla^2\CR^0)\|_{L_p(\BR_+,L_r(\BR_-^3))}  
		\leq
			C_{p,r,\ga_6}
			\|e^{-\ga_6 t}\Bf\|_{L_p(\BR_+,L_r(\BR_-^3))} 
		\leq
			C_{p,r,\ga_6}
			\sum_{i=1}^3\|\Bf_i\|_{\BBF_i} 
\end{equation}
for positive constants $\ga_6$, $C_{p,r,\ga_6}$. We here set
\begin{equation*}
	\CR^0(t) =
		\left(\int_0^{t/2}+\int_{t/2}^t\right)R^0(t-s)\Bf(s)\intd s
				=:\CR_1^0(t)+\CR_2^0(t)\quad(t>0).
\end{equation*}
Then, by Lemma \ref{lemm:SaS} \eqref{lemm:SaS2}, it holds that, for $l=0,1$ and $t>0$,
\begin{align*}
	\|\nabla^l\CR_1^0(t)\|_{L_r(\BR_-^3)} 
		&\leq
			C_{\bar{q},r} \int_{0}^{t/2}(t-s)^{-\frac{3}{2}
			\left(\frac{1}{\bar{q}}-\frac{1}{r}\right)-\frac{l}{2}}
			\|\Bf(s)\|_{L_{\bar{q}}(\BR_-^3)}\intd s \\
		&\leq
			C_{\bar{q},r}\,t^{-\frac{3}{2}\left(\frac{1}{\bar{q}}-\frac{1}{r}\right)-\frac{l}{2}}
			\Bigg\{\sum_{i=1}^2\bigg(\int_0^{t/2}(s+2)^{-\Fb_i}\intd s\bigg)
			\,\|\Bf_i\|_{L_\infty^{\Fb_i}(\BR_+,L_{\bar{q}}(\BR_-^3))}  \\
			&+\bigg(\int_0^{t/2}(s+2)^{-p'\Fb_3 }\intd s\bigg)^{1/p'}
			\|\Bf_3\|_{L_p^{\Fb_3}(\BR_+,L_{\bar{q}}(\BR_-^3))}\Bigg\} \\
		&\leq
			C_{p,\bar{q},r}\,t^{-\frac{3}{2}\left(\frac{1}{\bar{q}}-\frac{1}{r}\right)-\frac{l}{2}}
			\sum_{i=1}^3\|\Bf_i\|_{\wt\BBF_i(\Fa_0,\Fb_i)}, \\
	\|\nabla^l \CR_2^0(t)\|_{L_r(\BR_-^3)} 
		&\leq
			C_{\bar{q},r}
			\Bigg\{\sum_{i=1}^2(t+2)^{-\Fb_i}
			\bigg(\int_{t/2}^t
			(t-s)^{-\frac{3}{2}\left(\frac{1}{\bar{q}}-\frac{1}{r}\right)-\frac{l}{2}}
			\intd s\bigg) \|\Bf_i\|_{L_\infty^{\Fb_i}(\BR_+,L_{\bar{q}}(\BR_-^3))} \\
			&+(t+2)^{-\Fb_3}\bigg(\int_{t/2}^t(t-s)^{-p' \left\{\frac{3}{2}
			\left(\frac{1}{\bar{q}}-\frac{1}{r}\right)+\frac{l}{2}\right\}}\intd s\bigg)^{1/p'}
			\|\Bf_3\|_{L_p^{\Fb_3}(\BR_+,L_{\bar{q}}(\BR_-^3))} \Bigg\}\\
		&\leq
			C_{p,\bar{q},r}
			(t+2)^{-\Fb_0-\frac{3}{2}\left(\frac{1}{\bar{q}}-\frac{1}{r}\right)-\frac{l}{2}}
			\left((t+2)+(t+2)^{1/p'}\right)
			\sum_{i=1}^3\|\Bf_i\|_{\wt\BBF_i(\Fa_0,\Fb_i)}
			\\
		&\leq
			C_{p,\bar{q},r}
			(t+2)^{-\frac{3}{2}\left(\frac{1}{\bar{q}}-\frac{1}{r}\right)-\frac{l}{2}}
			\sum_{i=1}^3\|\Bf_i\|_{\wt\BBF_i(\Fa_0,\Fb_i)},
\end{align*}
where $p'=p/(p-1)$ and 
we have noted that, by $2/p+3/q<1$,
\begin{equation}\label{singular:1}
	\frac{3}{2}\left(\frac{1}{\bar{q}}-\frac{1}{r}\right)+\frac{l}{2}
		<p'\left\{\frac{3}{2}\left(\frac{1}{\bar{q}}-\frac{1}{r}\right)+\frac{l}{2}\right\}
		<p'\left(\frac{3}{2q}+\frac{1}{2}\right)<1.
\end{equation}
Hence, we have
\begin{align}
	&\|\CR^0(t)\|_{L_r(\BR_-^3)} 
		\leq
			C_{p,\bar{q},r}
			(t+2)^{-\Fm(\bar{q},r)}
			\sum_{i=1}^3\|\Bf_i\|_{\wt\BBF_i(\Fa_0,\Fb_i)} \quad (t\geq1), \label{141122_4} \\
	&\|\nabla \CR^0(t)\|_{L_r(\BR_-^3)} 
		\leq
			C_{p,\bar{q},r}
			(t+2)^{-\Fn(\bar{q},r)-1/8}
			\sum_{i=1}^3\|\Bf_i\|_{\wt\BBF_i(\Fa_0,\Fb_i)}\quad (t\geq1), \notag
\end{align}
because $\Fm(\bar{q},r)\leq \Fn(\bar{q},r)$ and $\Fn(\bar{q},r)+1/2\leq \Fn(\bar{q},r)+1/8$.
On the other hand,
by Lemma \ref{lemm:embed} \eqref{embed:1} and \eqref{141015_5},
\begin{equation}\label{160607_5}
	\sup_{0<t<2}\|\CR^0(t)\|_{W_r^1(\BR_-^3)}
		\leq C_{p,r}\|\CR^0\|_{W_{r,p}^{2,1}(\BR_-^3\times(0,2))}
		\leq  C_{p,r}\sum_{i=1}^3\|\Bf_i\|_{\BBF_i},
\end{equation}
which, combined with \eqref{141122_4}, furnishes the estimate \eqref{160607_2}.

{\bf Step 2.}
The aim of this step is to show the following estimates:
\begin{align}
	&\|(\nabla^2\CR^1,\nabla\CS^1,\nabla^3\CE(\CT^1),\nabla^2\CE(\SSR^1|_{\BR_0^3}))\|_{L_p(\BR_+,L_r(\BR_-^3))}  \label{160607_7} \\
		&\quad \leq
			C_{p,r}\left(\|\Bf\|_{L_p(\BR_+,L_r(\BR_-^3))}
			+\|k\|_{L_p(\BR_+,W_r^{2-1/r}(\BR_0^3))}\right),  
			\notag \\
	&\|\CR^1\|_{L_\infty^{\Fm(\bar{q},r)}(\BR_+,L_r(\BR_-^3))}
	+\|\nabla\CR^1\|_{L_\infty^{\Fn(\bar{q},r)+1/8}(\BR_+,L_r(\BR_-^3))} \label{160607_8} 
	+\|\nabla\CE(\CT^1)\|_{L_\infty^{\Fm(\bar{q},r)+1/4}(\BR_+,W_r^1(\BR_-^3))} \\
	&+\|\CT^1\|_{L_\infty^{1/\bar{q}-1/r}(\BR_+,L_r(\BR^2))} 
		\leq
			C_{p,\bar{q},r}
			\left(\sum_{i=1}^3\|\Bf_i\|_{\wt\BBF_i(\Fa_0,\Fb_i)}
			+\|k\|_{\wt\BBK(\Fa_0,\Fb_4)}\right),	\notag 
\end{align}
where $\SSR^1$ denotes the third component of $\CR^1$.

The estimate \eqref{160607_7} follows directly from \cite[Appendix B]{Saito15},
so that we here prove \eqref{160607_8} only. Set
\begin{align*}
	\CR^1(t)
		&=\left(\int_{0}^{t/2}+\int_{t/2}^t\right)R^1(t-s)(\Bf(s),k(s))\intd s
		=: \CR_1^1(t)+\CR_2^1(t)\quad(t>0),  \\
	\CT^1(t)
		&=\left(\int_{0}^{t/2}+\int_{t/2}^{t}\right)T^1(t-s)(\Bf(s),k(s))\intd s
		=:\CT_1^1(t)+\CT_2^1(t)\quad(t>0). 
\end{align*}
Noting that $\Fn(\bar{q},r)+1/2<p'(\Fn(\bar{q},r)+1/2)<1$ by \eqref{singular:1},
we have, by Lemma \ref{lemm:SaS} \eqref{lemm:SaS4} together with the trace theorem
and for any $t>0$, 
\begin{align*}
	&\|(\CR_1^1(t),\nabla\CE(\CT_1^1(t)))\|_{W_r^1(\BR_-^3)}
	+\|\CT_1^1(t)\|_{L_r(\BR^2)}  \\
		&\quad \leq	
			C_{\bar{q},r}e^{-(\ga_4/2)t}
			\Bigg\{\sum_{i=1}^2
			\bigg(\int_0^{t/2}(t-s)^{-\Fn\left(\bar{q},r\right)-\frac{1}{2}}(s+2)^{-\Fb_i}\intd s\bigg)
			\|\Bf_i\|_{L_\infty^{\Fb_i}(\BR_+,L_{\bar{q}}(\BR_-^3))}  \\
			& \quad + \bigg(\int_0^{t/2}(t-s)^{-p'\left(\Fn(\bar{q},r)
			+\frac{1}{2}\right)}(s+2)^{-p'\Fb_3}\intd s\bigg)^{1/p'}
			\|\Bf_3\|_{L_p^{\Fb_3}(\BR_+,L_{\bar{q}}(\BR_-^3))} \\
			& \quad + \bigg(\int_0^{t/2}(t-s)^{-p'\left(\Fn(\bar{q},r)
			+\frac{1}{2}\right)}(s+2)^{-p'\Fb_4}\intd s \bigg)^{1/p'}
			\|k\|_{L_p^{\Fb_4}(\BR_+,W_{\bar{q}}^{2-1/\bar{q}}(\BR_0^3))} \Bigg\} \\
		&\quad\leq
			C_{p,\bar{q},r}
			e^{-(\ga_4/2)t}\left(t^{1-\Fn(\bar{q},r)-\frac{1}{2}}+t^{\frac{1}{p'}-\Fn(\bar{q},r)-\frac{1}{2}}\right)
			\left(\sum_{i=1}^3\|\Bf_i\|_{\wt{\BBF}_i(\Fa_0,\Fb_i)}
			+\|k\|_{\wt\BBK(\Fa_0,\Fb_4)}\right)
			\\
		&\quad \leq
			C_{p,\bar{q},r}
			e^{-(\ga_4/4)t}
			\left(\sum_{i=1}^3\|\Bf_i\|_{\wt{\BBF}_i(\Fa_0,\Fb_i)}
			+\|k\|_{\BBK(\Fa_0,\Fb_4)}\right),
			\\
	&\|(\CS_2^1(t),\nabla\CE(\CT_2^1(t)))\|_{W_r^1(\BR_-^3)}
	+\|\CT_2^1(t)\|_{L_r(\BR^2)} \\
		&\quad \leq 
			C_{\bar{q},r}(t+2)^{-\Fb_0}
			\Bigg\{\sum_{i=1}^2\bigg(\int_{t/2}^{t}(t-s)^{-\Fn(\bar{q},r)-\frac{1}{2}}e^{-\ga_4(t-s)}\intd s\bigg)
			\|\Bf_i\|_{L_\infty^{\Fb_i}(\BR_+,L_{\bar{q}}(\BR_-^3))} \\
			&\quad +\bigg(\int_{t/2}^{t}(t-s)^{-p'\left(\Fn(\bar{q},r)+\frac{1}{2}\right)}
			e^{-p'\ga_4(t-s)}\intd s\bigg)^{1/p'}
			\|\Bf_3\|_{L_p^{\Fb_3}(\BR_+,L_{\bar{q}}(\BR_-^3))} \\
			&\quad +\bigg(\int_{t/2}^{t}(t-s)^{-p'\left(\Fn(\bar{q},r)+\frac{1}{2}\right)}
			e^{-p'\ga_4(t-s)}\intd s\bigg)^{1/p'}
			\|k\|_{L_p^{\Fb_4}(\BR_+,W_{\bar{q}}^{2-1/\bar{q}}(\BR_0^3))}\Bigg\} \\
		&\quad \leq C_{p,\bar{q},r}(t+2)^{-\Fb_0}
			\left(\sum_{i=1}^3\|\Bf_i\|_{\wt{\BBF}_i(\Fa_0,\Fb_i)}
			+\|k\|_{\wt{\BBK}(\Fa_0,\Fb_4)}\right),
\end{align*}
which furnishes that
\begin{equation*}
	\|(\CR^1,\nabla\CE(\CT^1))\|_{L_\infty^{\Fb_0}(\BR_+,W_r^1(\BR_-^3))}
	+\|\CT^1\|_{L_\infty^{\Fb_0}(\BR_+,L_r(\BR^2))}  
		\leq
			C_{p,\bar{q},r}
			\left(\sum_{i=1}^3\|\Bf_i\|_{\wt{\BBF}_i(\Fa_0,\Fb_i)}
			+\|k\|_{\wt{\BBK}(\Fa_0,\Fb_4)} \right).
\end{equation*}
This inequality implies the estimate \eqref{160607_8}, because
\begin{equation*}
	0<\frac{1}{\bar{q}}-\frac{1}{r}\leq  \Fm(\bar{q},r)
	< \Fn(\bar{q},r)+\frac{1}{8} < \Fm(\bar{q},r)+\frac{1}{4} <1 \leq \Fb_0.
\end{equation*}

{\bf Step 3.}
The aim of this step is to show estimates as follows:
\begin{align}
	&\|(\nabla^2\CR^2,\nabla\CS^2)\|_{L_p(\BR_+,L_q(\BR_-^3))}
	\label{160608_1}  \\
		&\quad \leq C_{p,q}
			\bigg(\|\Bf\|_{L_\infty^{\Fe_1}(\BR_+,L_2(\BR_-^3))\cap L_p(\BR_+,L_q(\BR_-^3))}  
			+\|k\|_{L_\infty^{\Fe_1}(\BR_+,L_2(\BR_0^3))\cap L_p(\BR_+,W_q^{2-1/q}(\BR_0^3))}\bigg),  \notag \\
	&\|\nabla^3\CE(\CT^2)\|_{L_p(\BR_+,L_q(\BR_-^3))}
	\label{161219_1} \\
		&\quad \leq C_{p,q}
			\bigg(\|\Bf\|_{L_\infty^{\Fe_2}(\BR_+,L_2(\BR_-^3))\cap L_p(\BR_+,L_q(\BR_-^3))}  
			 +\|k\|_{L_\infty^{\Fe_2}(\BR_+,L_2(\BR_0^3))\cap L_p(\BR_+,W_q^{2-1/q}(\BR_0^3))}\bigg), \notag \\
	&\|\nabla^2\CE(\SSR^2|_{\BR_0^3})\|_{L_p(\BR_+,L_q(\BR_-^3))}
	\label{161220_1}  \\
		&\quad \leq C_{p,q}
			\bigg(\|\Bf\|_{L_\infty^{\Fe_3}(\BR_+,L_2(\BR_-^3))\cap L_p(\BR_+,L_q(\BR_-^3))} 
			 +\|k\|_{L_\infty^{\Fe_3}(\BR_+,L_2(\BR_0^3))\cap L_p(\BR_+,W_q^{2-1/q}(\BR_0^3))}\bigg), \notag 
\end{align}
where $\SSR^2$ denotes the third component of $\CR^2$ and
$\Fe_1$, $\Fe_2$, $\Fe_3$ are positive constants satisfying the following conditions:
\begin{alignat}{2}\label{160613_1}
	&p\left(\Fe_1 +\Fm(2,q)-\frac{3}{4}\right)>1,  \quad && p\Fe_1>1,  \\
	&p \left(\Fe_2+\Fm(2,q)+\frac{1}{4}\right) > 1, \quad && p\Fe_2>1, \notag \\
	&p \left(\Fe_3+\Fm(2,q)\right) > 1, \quad && p\Fe_3>1, \notag 	
\end{alignat}
and furthermore,
\begin{align}
	&\|(\nabla^2\CR^2,\nabla\CS^2)\|_{L_p(\BR_+,L_q(\BR_-^3))}
	\label{160608_2}  
		\leq C_{p,q,\bar{q}}
			\bigg(\sum_{i=1}^2\|\Bf_i\|_{L_\infty^{\Fb_i-\Fa_1}(\BR_+,L_{\bar{q}}(\BR_-^3))\cap L_p(\BR_+,L_q(\BR_-^3))}  \\
			& \quad +\|\Bf_3\|_{L_p^{\Fb_3-\Fa_1}(\BR_+,L_{\bar{q}}(\BR_-^3))\cap L_p(\BR_+,L_q(\BR_-^3))}
			+\|k\|_{L_p^{\Fb_4-\Fa_1}(\BR_+,W_{\bar{q}}^2(\BR_-^3))\cap L_p(\BR_+,W_q^2(\BR_-^3))}\bigg),
			\notag\\
		&\|\nabla^3\CE(\CT^2)\|_{L_p(\BR_+,L_q(\BR_-^3))}+\|\nabla^2\CE(\SSR^2|_{\BR_0^3})\|_{L_p(\BR_+,L_q(\BR_-^3))}
	\label{161218_1}  \\
		&\quad \leq C_{p,q,\bar{q}}
			\bigg(\sum_{i=1}^2\|\Bf_i\|_{L_\infty^{\Fb_i-\Fa_2}(\BR_+,L_{\bar{q}}(\BR_-^3))\cap L_p(\BR_+,L_q(\BR_-^3))} \notag\\
			&\quad +\|\Bf_3\|_{L_p^{\Fb_3-\Fa_2}(\BR_+,L_{\bar{q}}(\BR_-^3))\cap L_p(\BR_+,L_q(\BR_-^3))}
			+\|k\|_{L_p^{\Fb_4-\Fa_2}(\BR_+,W_{\bar{q}}^2(\BR_-^3))\cap L_p(\BR_+,W_q^2(\BR_-^3))}\bigg),
			\notag\\
	& \|(\nabla^2\CR^2,\nabla\CS^2,\nabla^3\CE(\CT^2))\|_{L_p(\BR_+,L_r(\BR_-^3))} 
			\label{160608_3} 
			+\|\CR^2\|_{L_\infty^{\Fm(\bar{q},r)}(\BR_+,L_r(\BR_-^3))}
			 \\
			&\quad +\|\nabla\CR^2\|_{L_\infty^{\Fn(\bar{q},r)+1/8}(\BR_+,L_r(\BR_-^3))} 
			+\|\nabla\CE(\CT^2)\|_{L_\infty^{\Fm(\bar{q},r)+1/4}(\BR_+,W_r^1(\BR_-^3))} 
			+\|\CT^2\|_{L_\infty^{1/\bar{q}-1/r}(\BR_+,L_r(\BR^2))}
			\notag\\
		&\quad \leq C_{p,\bar{q},r}
			\bigg(\sum_{i=1}^3\|\Bf_i\|_{\BBF_i\cap\wt\BBF_i(\Fa_0,\Fb_i)}
			+\|k\|_{\BBK\cap\wt\BBK(\Fa_0,\Fb_4)}\bigg), \notag 
\end{align}
with some positive constants $C_{p,q,\bar{q}}$, $C_{p,\bar{q},r}$. 

First, we show the estimates \eqref{160608_1}-\eqref{161220_1}.
We here set, for $t\geq 2$,
\begin{align*}
	\CR^2(t)
		&=\left(\int_0^{t/2}+\int_{t/2}^{t-1}+\int_{t-1}^t\right)
		 R^2(t-s)(\Bf(s),k(s))\intd s 
		 =:\CR_1^2(t)+\CR_2^2(t)+\CR_3^2(t), \notag \\
	\CS^2(t)
		&=\left(\int_0^{t/2}+\int_{t/2}^{t-1}+\int_{t-1}^t\right)
		 S^2(t-s)(\Bf(s),k(s))\intd s 
		 =:\CS_1^2(t)+\CS_2^2(t)+\CS_3^2(t), \notag \\
	\CT^2(t)
		&=\left(\int_0^{t/2}+\int_{t/2}^{t-1}+\int_{t-1}^t\right)
		 T^2(t-s)(\Bf(s),k(s))\intd s 
		=:\CT_1^2(t)+\CT_2^2(t)+\CT_3^2(t). \notag
\end{align*}
By Lemma \ref{lemm:SaS} \eqref{lemm:SaS7}, we see that, for $t\geq 2$,
%
%
\begin{align*}
	&\|(\nabla^2\CR_1^2(t),\nabla^2\CS_1^2(t))\|_{L_q(\BR_-^3)} \\
		&\quad \leq C_q(t+2)^{-\Fm(2,q)-1/4}\int_0^{t/2}(s+2)^{-\Fe_1}\intd s  
			\left(\|\Bf\|_{L_\infty^{\Fe_1}(\BR_+,L_2(\BR_-^3))} +
			\|k\|_{L_\infty^{\Fe_1}(\BR_+,L_2(\BR_0^3))}\right) \\
		&\quad \leq
			C_q(t+2)^{-\Fm(2,q)-1/4}\left(1+\log(t+2)+(t+2)^{1-\Fe_1}\right) 
			 \left(\|\Bf\|_{L_\infty^{\Fe_1}(\BR_+,L_2(\BR_-^3))}
			+\|k\|_{L_\infty^{\Fe_1}(\BR_+,L_2(\BR_0^3))}\right), 
			\\
	&\|(\nabla^2\CR_1^2(t),\nabla^2\CS_2^2(t))\|_{L_q(\BR_-^3)} \\
		&\quad \leq C_q (t+2)^{-\Fe_1}\int_{t/2}^{t-1}(t+2-s)^{-\Fm(2,q)-1/4}\intd s 
			\left(\|\Bf\|_{L_\infty^{\Fe_1}(\BR_+,L_2(\BR_-^3))}
			+\|k\|_{L_\infty^{\Fe_1}(\BR_+,L_2(\BR_0^3))}\right) \\
		&\quad \leq C_q
			(t+2)^{1-\Fe_1-\Fm(2,q)-1/4}
			\left(\|\Bf\|_{L_\infty^{\Fe_1}(\BR_+,L_2(\BR_-^3))}
			+\|k\|_{L_\infty^{\Fe_1}(\BR_+,L_2(\BR_0^3))}\right), \\
	&\|(\nabla^2\CR_3^2(t),\nabla^2\CS_3^2(t))\|_{L_q(\BR_-^3)} \\
		&\quad \leq C_q 
			(t+2)^{-\Fe_1}\int_{t-1}^t (t-s)^{-\al}\intd s
			\left(\|\Bf\|_{L_\infty^{\Fe_1}(\BR_+,L_2(\BR_-^3))}
			+\|k\|_{L_\infty^{\Fe_1}(\BR_+,L_2(\BR_0^3))}\right) \\
			&\quad \leq C_q 
			(t+2)^{-\Fe_1}\left(\|\Bf\|_{L_\infty^{\Fe_1}(\BR_+,L_2(\BR_-^3))}
			+\|k\|_{L_\infty^{\Fe_1}(\BR_+,L_2(\BR_0^3))}\right)	 \quad  \text{for $0<\al<1$,} \\ 
	&\|\nabla^3\CE(\CT_1^2(t))\|_{L_q(\lhs)} \\
		&\quad \leq C_q(t+2)^{-\Fm(2,q)-5/4}\int_0^{t/2}(s+2)^{-\Fe_2}\intd s 
		\left(\|\Bf\|_{L_\infty^{\Fe_2}(\BR_+,L_2(\BR_-^3))} +
			\|k\|_{L_\infty^{\Fe_2}(\BR_+,L_2(\BR_0^3))}\right) \\
		&\quad \leq C_q(t+2)^{-\Fm(2,q)-5/4}\left(1+\log(t+2)+(t+2)^{1-\Fe_2}\right) 
		\left(\|\Bf\|_{L_\infty^{\Fe_2}(\BR_+,L_2(\BR_-^3))} +
			\|k\|_{L_\infty^{\Fe_2}(\BR_+,L_2(\BR_0^3))}\right), \\
	&\|\nabla^3\CE(\CT_2^2(t))\|_{L_q(\lhs)} \\
		&\quad \leq C_q(t+2)^{-\Fe_2}\int_{t/2}^{t-1}(t+2-s)^{-\Fm(2,q)-5/4}\intd s 
		\left(\|\Bf\|_{L_\infty^{\Fe_2}(\BR_+,L_2(\BR_-^3))} +
			\|k\|_{L_\infty^{\Fe_2}(\BR_+,L_2(\BR_0^3))}\right) \\
		&\quad \leq C_q(t+2)^{-\Fe_2}\left(\|\Bf\|_{L_\infty^{\Fe_2}(\BR_+,L_2(\BR_-^3))} +
			\|k\|_{L_\infty^{\Fe_2}(\BR_+,L_2(\BR_0^3))}\right), \\
	&\|\nabla^3\CE(\CT_3^2(t))\|_{L_q(\lhs)} \\
		&\quad \leq C_q(t+2)^{-\Fe_2}\int_{t-1}^t (t-s)^{-\beta}\intd s 
		\left(\|\Bf\|_{L_\infty^{\Fe_2}(\BR_+,L_2(\BR_-^3))} +
			\|k\|_{L_\infty^{\Fe_2}(\BR_+,L_2(\BR_0^3))}\right) \\
		&\quad \leq C_q(t+2)^{-\Fe_2}\left(\|\Bf\|_{L_\infty^{\Fe_2}(\BR_+,L_2(\BR_-^3))} +
			\|k\|_{L_\infty^{\Fe_2}(\BR_+,L_2(\BR_0^3))}\right) \quad \text{for $0<\beta<1$}.
\end{align*}
Noting that, by \eqref{pq:lin},
\begin{equation*}
	\|(t+2)^{-\Fm(2,q)-1/4}\log(t+2)\|_{L_p((2,\infty))}<\infty,
\end{equation*}
we have by \eqref{160613_1} 
\begin{align}\label{160613_2}
	&\|(\nabla^2\CR^2,\nabla\CS^2)\|_{L_p((2,\infty),L_q(\BR_-^3))} 
		\leq C_{p,q}
			\left(\|\Bf\|_{L_\infty^{\Fe_1}(\BR_+,L_2(\BR_-^3))}
			+\|k\|_{L_\infty^{\Fe_1}(\BR_+,L_2(\BR_0^3))}\right),  \\
	&\|\nabla^3\CE(\CT^2)\|_{L_p((2,\infty),L_q(\BR_-^3))} 
		 \leq C_{p,q}
			\left(\|\Bf\|_{L_\infty^{\Fe_2}(\BR_+,L_2(\BR_-^3))}
			+\|k\|_{L_\infty^{\Fe_2}(\BR_+,L_2(\BR_0^3))}\right). \notag
\end{align}
Analogously, it follows from \eqref{161219_21} and \eqref{161220_2} that
\begin{equation}\label{161220_3}
	\|\nabla^2\CE(\SSR^2|_{\BR_0^3})\|_{L_p((2,\infty),L_q(\lhs))} 
			\leq C_{p,q}
			\left(\|\Bf\|_{L_\infty^{\Fe_3}(\BR_+,L_2(\BR_-^3))}
			+\|k\|_{L_\infty^{\Fe_3}(\BR_+,L_2(\BR_0^3))}\right).  
\end{equation}
Since it holds that 
\begin{equation}\label{160613_5}
	\CR^2 = \Bu - \CR^0-\CR^1, \quad
	\CS^2 = \Fp - \CS^0-\CS^1, \quad	
	\CT^2 = h-\CT^1, 
\end{equation}
we have, by \eqref{141121_5}, \eqref{160607_1}, and \eqref{160607_7},
\begin{equation}\label{160613_6}
	\|(\nabla^2\CR^2,\nabla\CS^2,\nabla^3\CE(\CT^2),\nabla^2\CE(\SSR^2|_{\BR_0^3}))\|_{L_p((0,2),L_q(\BR_-^3))} 
		\leq
			C_{p,q}\left(\|\Bf\|_{L_p(\BR_+,L_q(\BR_-^3))}+\|k\|_{L_p(\BR_+,W_q^{2-1/q}(\BR_0^3))}\right), 
\end{equation}
which, combined with \eqref{160613_2} and \eqref{161220_3},
yields the estimates \eqref{160608_1}-\eqref{161220_1}.

Secondly, we prove \eqref{160608_2} and \eqref{161218_1}. Let $\SSF_j$ be given by
\begin{equation*}
	\SSF_j = \sum_{i=1}^2\|\Bf_i\|_{L_\infty^{\Fb_i-\Fa_j}(\BR_+,L_{\bar{q}}(\BR_-^3))}
		+\|\Bf_3\|_{L_p^{\Fb_3-\Fa_j}(\BR_+,L_{\bar{q}}(\BR_-^3))} 
		+\|k\|_{L_p^{\Fb_4-\Fa_j}(\BR_+,W_{\bar{q}}^2(\BR_-^3))} \quad (j=1,2),
\end{equation*}
and note that $\Fm(\bar{q},q)+1/4= 1/2+1/(2q)<1$ by \eqref{pq:lin}.
It then holds that, by Lemma \ref{lemm:SaS} \eqref{lemm:SaS5} with the trace theorem and for any $t\geq 2$,
\begin{align*}
	&\|(\nabla^2\CR_1^2(t),\nabla\CS_1^2(t))\|_{L_q(\BR_-^3)} \\
		&\quad \leq C_{q,\bar{q}}(t+2)^{-\Fm(\bar{q},q)-1/4} 
			\Bigg\{\sum_{i=1}^2
			\left(\int_0^{t/2}(s+2)^{-(\Fb_i-\Fa_1)}\intd s\right)
			\|\Bf_i\|_{L_\infty^{\Fb_i-\Fa_1}(\BR_+,L_{\bar{q}}(\BR_-^3))} \\
			&\quad +\left(\int_0^{t/2}(s+2)^{-p'\left(\Fb_3-\Fa_1\right)}\intd s\right)^{1/p'}
			\|\Bf_3\|_{L_p^{\Fb_3-\Fa_1}(\BR_+,L_{\bar{q}}(\BR_-^3))} \\
			&\quad+\left(\int_0^{t/2}(s+2)^{-p'\left(\Fb_4-\Fa_1\right)}\intd s\right)^{1/p'}
			\|k\|_{L_p^{\Fb_4-\Fa_1}(\BR_+,W_{\bar{q}}^2(\BR_-^3))}
			\Bigg\} \\
		&\quad \leq C_{q,\bar{q}}(t+2)^{-\Fm(\bar{q},q)-1/4} 
			\Big(1+(t+2)^{1-(\Fb_0-\Fa_1)}+\log(t+2)\Big)\SSF_1
		\leq 
			C_{q,\bar{q}}(t+2)^{\Fa_1-\Fm(\bar{q},q)-1/4}\SSF_1, \\
	&\|(\nabla^2\CR_2^2(t),\nabla\CS_2^2(t))\|_{L_q(\BR_-^3)} \\
		&\quad \leq C_{q,\bar{q}}(t+2)^{-(\Fb_0-\Fa_1)} 
				\Bigg\{
				\sum_{i=1}^2\left(\int_{t/2}^{t-1}(t+2-s)^{-\Fm(\bar{q},q)-1/4}\intd s\right)
				\|\Bf_i\|_{L_\infty^{\Fb_i-\Fa_1}(\BR_+,L_{\bar{q}}(\BR_-^3))} \\
				&\quad+\left(\int_{t/2}^{t-1}(t+2-s)^{-p'\left(\Fm(\bar{q},q)+1/4\right)}\intd s\right)^{1/p'}
				\|\Bf_3\|_{L_p^{\Fb_3-\Fa_1}(\BR_+,L_{\bar{q}}(\BR_-^3))} \\
				&\quad+\left(\int_{t/2}^{t-1}(t+2-s)^{-p'\left(\Fm(\bar{q},q)+1/4\right)}\intd s\right)^{1/p'}
				\|k\|_{L_p^{\Fb_4-\Fa_1}(\BR_+,W_{\bar{q}}^2(\BR_-^3))} 
				\Bigg\} \\
		&\quad \leq C_{q,\bar{q}}(t+2)^{-(\Fb_0-\Fa_1)+1-\Fm(\bar{q},q)-1/4}\SSF_1
			 \leq C_{q,\bar{q}}(t+2)^{\Fa_1-\Fm(\bar{q},q)-1/4}\SSF_1, \\ 
	&\|(\nabla^2\CR_3^2(t),\nabla\CS_3^2(t))\|_{L_q(\BR_-^3)} \\
			&\quad \leq C_{q,\bar{q}}(t+2)^{-(\Fb_0-\Fa_1)} 
			\Bigg\{\sum_{i=1}^2\left(\int_{t-1}^t(t-s)^{-\al}\intd s\right)\|\Bf_i\|_{L_\infty^{\Fb_i-\Fa_1}(\BR_+,L_{\bar{q}}(\BR_-^3))} \\
			&\quad +\left(\int_{t-1}^t(t-s)^{-\al p'}\intd s\right)^{1/p'}\|\Bf_3\|_{L_p^{\Fb_3-\Fa_1}(\BR_+,L_{\bar{q}}(\BR_-^3))} \\
			&\quad +\left(\int_{t-1}^t(t-s)^{-\al p'}\intd s\right)^{1/p'}\|k\|_{L_p^{\Fb_4-\Fa_1}(\BR_+,W_{\bar{q}}^2(\BR_-^3))}
			\Bigg\} \\
			&\quad \leq C_{q,\bar{q}}(t+2)^{-(\Fb_0-\Fa_1)}\SSF_1 	 \quad \text{for $0<\al<1/p'$.}
\end{align*}
The above inequalities impliy that, by \eqref{ab},
\begin{equation*}
	\|(\nabla^2\CR^2,\nabla\CS^2)\|_{L_p((2,\infty),L_q(\BR_-^3))} \\
		\leq C_{p,q,\bar{q}}\SSF_1.
\end{equation*}
Analogously, noting \eqref{ab}, we can prove that, by Lemma \ref{lemm:SaS} \eqref{lemm:SaS5}, \eqref{161219_21}, and \eqref{161220_2},
\begin{equation*}
	\|(\nabla^3\CT^2,\nabla^2\CE(\SSR^2|_{\BR_0^3}))\|_{L_p((2,\infty),L_q(\lhs))}
		\leq C_{p,q,\bar{q}}\SSF_2.
\end{equation*}
Combining the last two inequalities with \eqref{160613_6} furnishes \eqref{160608_2} and \eqref{161218_1}.

Finally, we prove the estimate \eqref{160608_3}.
To this end, we show 
\begin{lemm}\label{lemm:low}
Let $p,q$ be exponents satisfying \eqref{pq:lin} and $\bar{q}=q/2$,
and let $r$ be another exponent satisfying $2\leq r\leq q$.
Suppose that $\Fb_1,\Fb_2>1$, $\Fb_3,\Fb_4\geq1$, and 
\begin{equation*}
	\Bf_i \in L_\infty^{\Fb_i}(\BR_+,L_{\bar{q}}(\lhs))^3 \quad (i=1,2), 
	\Bf_3 \in L_p^{\Fb_3}(\BR_+,L_{\bar{q}}(\BR_-^3))^3, \quad
	k \in L_p^{\Fb_4}(\BR_+,W_{\bar{q}}^2(\BR_-^3)).
\end{equation*}
In addition,
let $Y_r$ be defined as in Lemma $\ref{lemm:SaS}$, and let
$\BS(t),\BT(t)$ be operators with
\begin{equation*}
	\BS(t)\in \CL(Y_{\bar{q}},L_r(\BR_-^3)^3),\quad
	\BT(t)\in \CL(Y_{\bar{q}},L_r(\BR^2))\quad(t>0)
\end{equation*}
that satisfy
\begin{equation*}
	\|\BS(t)\BG\|_{L_r(\BR_-^3)}\leq C_{\bar{q},r}(t+2)^{-a}\|\BG\|_{Y_{\bar{q}}}  \quad (t\geq 1), \quad
	\|\BT(t)\BG\|_{L_r(\BR^2)}\leq C_{\bar{q},r}(t+2)^{-b}\|\BG\|_{Y_{\bar{q}}}  \quad	(t\geq1)
\end{equation*}
for $\BG\in Y_{\bar q}$ and for real numbers $0<a,b<1$
with a positive constant $C_{\bar{q},r}$ independent of time $t$ and $\BG$, while	
\begin{equation*}
	\|\BS(t)\BG\|_{L_r(\BR_-^3)}\leq Ct^{-\al}\|\BG\|_{Y_{\bar q}}  \quad (0<t\leq 1), \\quad
	\|\BT(t)\BG\|_{L_r(\BR^2)} \leq Ct^{-\beta}\|\BG\|_{Y_{\bar q}} \quad (0<t\leq 1)
\end{equation*}
for $\BG\in Y_{\bar q}$ and for real numbers  $\al,\beta>0$ satisfying $p'\al<1$ and $p'\beta<1$ with $p'=p/(p-1)$.
Then, setting
\begin{equation*}
	\SSG =
		\sum_{i=1}^2\|\Bf_i\|_{L_\infty^{\Fb_i}(\BR_+,L_{\bar{q}}(\BR_-^3))}
		+\|\Bf_3\|_{L_p^{\Fb_3}(\BR_+,L_{\bar{q}}(\BR_-^3))}
		+\|k\|_{L_p^{\Fb_4}(\BR_+,W_{\bar{q}}^2(\BR_-^3))},
\end{equation*}
we see that the following assertions hold.
\begin{enumerate}[$(1)$]
	\item
		Let $\BF=(\Bf,k|_{\BR_0^3})$ with $\Bf=\Bf_1+\Bf_2+\Bf_3$.
		Then, there is a positive constant $C_{p,\bar{q},r}$,
		independent of $\Bf_1$, $\Bf_2$, $\Bf_3$, and $k$,
		such that 
		\begin{alignat*}{2}
			\left\|\int_0^t\BS(t-s)\BF(s)\intd s\right\|_{L_r(\BR_-^3)}
				&\leq C_{p,\bar{q},r} (t+2)^{-a}\SSG && \quad (t\geq 2), \\
			\left\|\int_0^t\BT(t-s)\BF(s)\intd s\right\|_{L_r(\BR^2)}
				&\leq C_{p,\bar{q},r} (t+2)^{-b}\SSG && \quad (t\geq 2).
		\end{alignat*}
	\item
		Let $\BF=(\Bf,k|_{\BR_0^3})$ with $\Bf=\Bf_1+\Bf_2+\Bf_3$.
		If $pa>1$ and $p b>1$, then there is a positive constant $C_{p,\bar{q},r}$ such that
		\begin{equation*}
			\left\|\int_0^t\BS(t-s)\BF(s)\intd s\right\|_{L_p((2,\infty),L_r(\BR_-^3))} 
				\leq C_{p,\bar{q},r}\SSG, \quad
			\left\|\int_0^t\BT(t-s)\BF(s)\intd s\right\|_{L_p((2,\infty),L_r(\BR^2))}
				\leq C_{p,\bar{q},r}\SSG.
		\end{equation*}
\end{enumerate}
\end{lemm}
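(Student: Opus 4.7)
The plan is to establish (1) via the three-piece temporal splitting
$$\int_0^t \BS(t-s)\BF(s)\,ds = \left(\int_0^{t/2}+\int_{t/2}^{t-1}+\int_{t-1}^t\right)\BS(t-s)\BF(s)\,ds =: J_1(t)+J_2(t)+J_3(t)$$
for $t\geq 2$, which already appeared in Step~3 of the proof of Theorem~\ref{theo:sec4_2}; assertion (2) will then follow immediately by $L_p$-integrating in $t$ the pointwise bound from (1), since $\int_2^\infty(t+2)^{-pa}\,dt<\infty$ exactly when $pa>1$ (and analogously for $pb>1$). The $\BT$-case is entirely parallel to the $\BS$-case with $(b,\beta)$ in place of $(a,\alpha)$, so I focus on $\BS$ throughout.

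On $J_1$ the kernel is in its far-field regime $t-s\geq t/2\geq 1$, so $(t-s+2)^{-a}\leq C(t+2)^{-a}$ may be extracted; the remaining source integrals are uniformly bounded, namely $\int_0^{t/2}(s+2)^{-\Fb_i}\,ds\leq C$ for $\Bf_1,\Bf_2$ and $\bigl(\int_0^{t/2}(s+2)^{-p'\Fb_j}\,ds\bigr)^{1/p'}\leq C$ for $\Bf_3$ and $k$ (Hölder in $s$), thanks to $\Fb_i>1$ and $p'\Fb_j\geq p'>1$ respectively. On $J_2$ the essential observation is $s\geq t/2$, which lets me move the source weight out as $(s+2)^{-\Fb_i}\leq C(t+2)^{-\Fb_i}$; the kernel integral $\int_{t/2}^{t-1}(t-s+2)^{-a}\,ds\lesssim (t+2)^{1-a}$ then combines with the strict inequalities $\Fb_i>1$ and $\Fb_j\geq 1>1/p'$ (the latter after Hölder, which effectively replaces $\Fb_j$ by $\Fb_j-1/p'$) to yield the net decay $(t+2)^{-a}$. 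On $J_3$ I invoke the local singular bound $\|\BS(t-s)\,\cdot\,\|\leq C(t-s)^{-\alpha}$; since $s\geq t-1\geq t/2$, the same source-weight extraction applies, and the integrability of $(t-s)^{-\alpha}$ (directly, using $\alpha<1$ which is implied by $p'\alpha<1$) or of $(t-s)^{-p'\alpha}$ on $[t-1,t]$ (for $\Bf_3,k$ via Hölder) is exactly what the hypothesis $p'\alpha<1$ provides. Summing the three pieces yields $\|J_1(t)+J_2(t)+J_3(t)\|_{L_r(\lhs)}\leq C_{p,\bar q,r}(t+2)^{-a}\SSG$, which is (1).

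The main obstacle is the bookkeeping on $J_2$: the kernel weight $(t-s+2)^{-a}$ with $a<1$ is not $L^1_s$-integrable on an interval of length $\sim t$, so the resulting growth factor $(t+2)^{1-a}$ has to be strictly dominated by the source decay $(t+2)^{-\Fb_i}$ (respectively $(t+2)^{-\Fb_j+1/p'}$ after Hölder). This is precisely where the hypotheses $\Fb_1,\Fb_2>1$ and $\Fb_3,\Fb_4\geq 1$ play a role, and both strict inequalities $\Fb_i>1$ and $\Fb_j>1/p'$ are used essentially. Verifying this absorption uniformly across the three sub-intervals and the three source types is the only delicate point; once it is in hand, (1) yields (2) by a single application of $\|(t+2)^{-a}\|_{L_p((2,\infty))}\leq C_{p,a}$ under $pa>1$, and analogously for $\BT$ under $pb>1$.
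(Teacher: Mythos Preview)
Your argument is correct and matches the paper's proof essentially line for line: the same three-piece splitting $\int_0^{t/2}+\int_{t/2}^{t-1}+\int_{t-1}^t$, the same far-field/near-field kernel estimates, the same H\"older treatment of $\Bf_3$ and $k$, and the same deduction of (2) from (1) by integrating $(t+2)^{-pa}$. One minor expository point: the strict inequality $\Fb_i>1$ for $i=1,2$ is actually needed in your $J_1$ (to make $\int_0^{t/2}(s+2)^{-\Fb_i}\,ds$ bounded), whereas in $J_2$ only $\Fb_i\geq 1$ is required to absorb the growth $(t+2)^{1-a}$ --- so your ``main obstacle'' paragraph slightly mislocates where the strictness enters, but the estimates themselves are all right.
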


\begin{proof}
We here prove the case of $\BS(t)$ only for both (1) and (2).
Let $t\geq 2$, and set
\begin{equation*}
	\int_0^t\BS(t-s)\BF(s)\intd s
		=\left(\int_0^{t/2}+\int_{t/2}^{t-1}+\int_{t-1}^t\right)\BS(t-s)\BF(s)\intd s 
		=:\BS_1(t)+\BS_2(t)+\BS_3(t). 
\end{equation*}	
Setting $\Fb_0=\min(\Fb_1,\Fb_2,\Fb_3,\Fb_4)$, we have $\Fb_0\geq 1$.
It then holds that, for $t\geq 2$,
\begin{align*}
	&\|\BS_1(t)\|_{L_r(\BR_-^3)}
		\leq
			C_{\bar{q},r}(t+2)^{-a}\Bigg\{\sum_{i=1}^2\left(\int_0^{t/2}(s+2)^{-\Fb_i}\intd s\right)
			\|\Bf_i\|_{L_\infty^{\Fb_i}(\BR_+,L_{\bar{q}}(\BR_-^3))} \\
		&\quad+ \left(\int_0^{t/2}(s+2)^{-p'\Fb_3}\intd s\right)^{1/p'}
			\|\Bf_3\|_{L_p^{\Fb_3}(\BR_+,L_{\bar{q}}(\BR_-^3))}\\
		&\quad+ \left(\int_0^{t/2}(s+2)^{-p'\Fb_3}\intd s\right)^{1/p'}
			\|k\|_{L_p^{\Fb_4}(\BR_+,W_{\bar{q}}^2(\BR_-^3))}\Bigg\} \\
		&\leq
			C_{p,\bar{q},r}(t+2)^{-a}\SSG, \\
	&\|\BS_2(t)\|_{L_r(\BR_-^3)}
		\leq
			C_{\bar{q},r}(t+2)^{-\Fb_0} 
			\Bigg\{\sum_{i=1}^2\left(\int_{t/2}^{t-1}(t+2-s)^{-a}\intd s\right)
			\|\Bf_i\|_{L_\infty^{\Fb_i}(\BR_+,L_{\bar{q}}(\BR_-^3))} \\
		&\quad+\left(\int_{t/2}^{t-1}(t+2-s)^{-p' a}\intd s \right)^{1/p'} 
			\|\Bf_3\|_{L_p^{\Fb_3}(\BR_+,L_{\bar q}(\BR_-^3))} \\ 
		&\quad+\left(\int_{t/2}^{t-1}(t+2-s)^{-p' a}\intd s \right)^{1/p'}
			\|k\|_{L_p^{\Fb_4}(\BR_+,W_{\bar q}^2(\BR_-^3))}\Bigg\} \\
		&\leq
			C_{p,\bar{q},r}(t+2)^{-\Fb_0}\left((t+2)^{1-a}+(t+2)^{1/p'-a}\right)\SSG
		\leq
			C_{p,\bar{q},r}(t+2)^{-a}\SSG, \\ 
	&\|\BS_3(t)\|_{L_r(\BR_-^3)}
		\leq
			C_{\bar{q},r}(t+2)^{-\Fb_0} 
			\Bigg\{\sum_{i=1}^2\left(\int_{t-1}^t(t-s)^{-\al}\intd s\right)
			\|\Bf_i\|_{L_\infty^{\Fb_i}(\BR_+,L_{\bar{q}}(\BR_-^3))} \\
			&\quad +\left(\int_{t-1}^{t}(t-s)^{-p'\al}\intd s \right)^{1/p'}
				 \|\Bf_3\|_{L_p^{\Fb_3}(\BR_+,L_{\bar{q}}(\BR_-^3))} \\
			&\quad+\left(\int_{t-1}^{t}(t-s)^{-p'\al}\intd s \right)^{1/p'}
				\|k\|_{L_p^{\Fb_4}(\BR_+,W_{\bar{q}}^2(\BR_-^3))}\Bigg\} \\
		&\leq
			C_{p,\bar{q},r}(t+2)^{-\Fb_0}\SSG \leq C_{p,\bar{q},r}(t+2)^{-a}\SSG.
\end{align*}
These inequalities complete the required estimate of $\BS(t)$ in (1). 
Then, taking $L_p$-norm, with respect to $t\in(2,\infty)$,
of the inequality obtained in (1)
yields the required estimate of $\BS(t)$ in (2).
\end{proof}

By Lemma \ref{lemm:low} and Lemma \ref{lemm:SaS} \eqref{lemm:SaS5}, we have
\begin{align}\label{141108_20}
	&
	\|\CR^2\|_{L_\infty^{\Fm(\bar{q},r)}((2,\infty),L_r(\lhs))}
	+\|\nabla\CR^2(t)\|_{L_\infty^{\Fn(\bar{q},r)+1/8}((2,\infty),L_r(\lhs))} \\
	&\quad+\|\nabla \CE(\CT^2(t))\|_{L_\infty^{{\Fm(\bar{q},r)+1/4}}((2,\infty),W_r^1(\lhs))}
	+\|\CT^2(t)\|_{L_\infty^{1/\bar{q}-1/r}((2,\infty),L_r(\BR^2))} \notag \\
		&\leq
			 C_{p,\bar{q},r}\left(\sum_{i=1}^3\|\Bf_i\|_{\wt{\BBF}_i(\Fa_0,\Fb_i)}
			+\|k\|_{\wt{\BBK}(\Fa_0,\Fb_4)}\right) \notag
\end{align}
with some positive constant $C_{p,\bar{q},r}$, and furthermore, we have, by \eqref{pq:lin},
\begin{align}\label{141108_21}
	&\|(\CR^2,\nabla\CR^2,\nabla^2\CR^2,\nabla\CS^2)\|_{L_p((2,\infty),L_r(\BR_-^3))} 
	+ \|\nabla\CE(\CT^2)\|_{L_p((2,\infty),W_r^2(\lhs))} \\
	&\quad +\|\CT^2\|_{L_p((2,\infty),L_r(\BR^2))}  
		\leq C_{p,\bar{q},r}\left(\sum_{i=1}^3\|\Bf_i\|_{\wt{\BBF}_i(\Fa_0,\Fb_i)}
			+\|k\|_{\wt{\BBK}(\Fa_0,\Fb_3)}\right). \notag		
\end{align}
On the other hand,
since it holds by Lemma \ref{lemm:embed} \eqref{embed:1} and \eqref{141121_5} that
\begin{equation*}
	\|\Bu\|_{L_\infty((0,3),W_r^1(\BR_-^3))} + \|\nabla\CE(h)\|_{L_\infty((0,3),W_r^1(\lhs))} 
		\leq  C_{p,r}\left(\sum_{i=1}^3\|\Bf_i\|_{\BBF_i}+\|k\|_{\BBK}\right),
\end{equation*}
and since it holds by Sobolev's embedding theorem and \eqref{141121_5} that
\begin{equation*}
	\|h\|_{L_\infty((0,3),L_r(\BR^2))}
		\leq C_{p,r}\left(\sum_{i=1}^3\|\Bf_i\|_{\BBF_i}+\|k\|_{\BBK}\right),
\end{equation*}
%
we have, by \eqref{141121_5}, \eqref{160607_1}, \eqref{160607_2}, \eqref{160607_7}, \eqref{160607_8},
and \eqref{160613_5}, 
\begin{align*}
&\|(\CR^2,\nabla \CE(\CT^2))\|_{L_\infty((0,3),W_r^1(\lhs))}
	+\|\CT^2\|_{L_\infty((0,3)L_r(\BR^2))}  \\
	&\quad +\|(\CR^2,\nabla\CR^2,\nabla^2\CR^2,\nabla\CS^2)\|_{L_p((0,3),L_r(\BR_-^3))} 
 + \|\nabla\CE(\CT^2)\|_{L_p((0,3),W_r^2(\lhs))}
	+\|\CT^2\|_{L_p((0,3),L_r(\BR^2))}  \notag \\
			&\leq C_{p,r}\left(\sum_{i=1}^3\|\Bf_i\|_{\BBF_i\cap\wt\BBF_i(\Fa_0,\Fb_i)}
			+\|k\|_{\BBK\cap \wt\BBK(\Fa_0,\Fb_4)}\right) \notag
\end{align*}
with some positive constant $C_{p,r}$.
Combining this inequality with \eqref{141108_20} and \eqref{141108_21} furnishes \eqref{160608_3}. 

Summing up Step 1-Step 3 (i.e., \eqref{160607_1}, \eqref{160607_2}, \eqref{160607_7}, \eqref{160607_8},
\eqref{160608_1}, \eqref{161219_1}, \eqref{161220_1},
\eqref{160608_2}, \eqref{161218_1}, and \eqref{160608_3}) and recalling
\begin{equation*}
	\Bu = \CR^0+\CR^1+\CR^2, \quad
	\Fp = \CS^0+\CS^1+\CS^2, \quad
	h=\CT^1+\CT^2,
\end{equation*}
we have obtained the following estimates:
\begin{align}
	&\|(\nabla^2\Bu,\nabla\Fp)\|_{L_p(\BR_+,L_q(\BR_-^3))} \label{160608_10} \\
		&\quad \leq C_{p,q,\bar{q}}
			\bigg(\|\Bf\|_{L_\infty^{\Fe_1}(\BR_+,L_2(\BR_-^3)\cap L_p(\BR_+,L_q(\BR_-^3))}
			+\|k\|_{L_\infty^{\Fe_1}(\BR_+,L_2(\BR_0^3)) \cap L_p(\BR_+,W_q^{2-1/q}(\BR_0^3))}\bigg),
			\notag \\
	&\|\nabla^3\CE(h)\|_{L_p(\BR_+,L_q(\BR_-^3))} \label{161219_7} \\
		&\quad \leq C_{p,q,\bar{q}}
			\bigg(\|\Bf\|_{L_\infty^{\Fe_2}(\BR_+,L_2(\BR_-^3)\cap L_p(\BR_+,L_q(\BR_-^3))} 
			+\|k\|_{L_\infty^{\Fe_2}(\BR_+,L_2(\BR_0^3)) \cap L_p(\BR_+,W_q^{2-1/q}(\BR_0^3))}\bigg),
			\notag \\
	&\|\nabla^2\CE(u_3|_{\BR_0^3})\|_{L_p(\BR_+,L_q(\BR_-^3))} \label{161220_5} \\
		&\quad \leq C_{p,q,\bar{q}}
			\bigg(\|\Bf\|_{L_\infty^{\Fe_3}(\BR_+,L_2(\BR_-^3)\cap L_p(\BR_+,L_q(\BR_-^3))} 
			+\|k\|_{L_\infty^{\Fe_3}(\BR_+,L_2(\BR_0^3)) \cap L_p(\BR_+,W_q^{2-1/q}(\BR_0^3))}\bigg),
			\notag \\
	&\|(\nabla^2\Bu,\nabla\Fp)\|_{L_p(\BR_+,L_q(\BR_-^3))} \label{160608_11} \\
		&\quad \leq C_{p,q,\bar{q}}
		\Bigg(\sum_{i=1}^2\|\Bf_i\|_{L_\infty^{\Fb_i-\Fa_1}(\BR_+,L_{\bar{q}}(\BR_-^3))\cap L_p(\BR_+,L_q(\BR_-^3))} 
		+\|\Bf_3\|_{L_p^{\Fb_3-\Fa_1}(\BR_+,L_{\bar{q}}(\BR_-^3))\cap L_p(\BR_+,L_q(\BR_-^3))}
		\notag \\
		&\quad +\|k\|_{L_p^{\Fb_4-\Fa_1}(\BR_+,W_{\bar{q}}^2(\BR_-^3))\cap L_p(\BR_+,W_q^2(\BR_-^3))}\Bigg)
		\notag\\
	&\|(\nabla^3\CE(h),\nabla^2\CE(u_3|_{\BR_0^3}))\|_{L_p(\BR_+,L_q(\BR_-^3))} \label{160608_15} \\
		&\quad \leq C_{p,q,\bar{q}}
		\Bigg(\sum_{i=1}^2\|\Bf_i\|_{L_\infty^{\Fb_i-\Fa_2}(\BR_+,L_{\bar{q}}(\BR_-^3))\cap L_p(\BR_+,L_q(\BR_-^3))} 
		+\|\Bf_3\|_{L_p^{\Fb_3-\Fa_2}(\BR_+,L_{\bar{q}}(\BR_-^3))\cap L_p(\BR_+,L_q(\BR_-^3))}
		\notag \\
		&\quad +\|k\|_{L_p^{\Fb_4-\Fa_2}(\BR_+,W_{\bar{q}}^2(\BR_-^3))\cap L_p(\BR_+,W_q^2(\BR_-^3))}\Bigg)
		\notag\\
	&\|(\nabla^2\Bu,\nabla\Fp,\nabla^3\CE(h))\|_{L_p(\BR_+,L_r(\BR_-^3))}
	+\|\Bu\|_{L_\infty^{\Fm(\bar{q},r)}(\BR_+,L_r(\BR_-^3))}
	 \label{160608_12} \\
	&\quad +\|\nabla\Bu\|_{L_\infty^{\Fn(\bar{q},r)+1/8}(\BR_+,L_r(\BR_-^3))}
	+\|\nabla\CE(h)\|_{L_\infty^{\Fm(\bar{q},r)+1/4}(\BR_+,W_r^1(\BR_-^3))}
	+\|h\|_{L_\infty^{1/\bar{q}-1/r}(\BR_+,L_r(\BR^2))} \notag  \\
	&\leq C_{p,\bar{q},r}
			\left(\sum_{i=1}^3\|\Bf_i\|_{\BBF_i\cap\wt\BBF_i(\Fa_0,\Fb_i)}
		+\|k\|_{\BBK\cap\wt\BBK(\Fa_0,\Fb_4)}\right) \notag 
\end{align}
with positive numbers $\Fe_1,\Fe_2,\Fe_3$ defined as in \eqref{160613_1}.

{\bf Step 4.}
This step is concerned with estimates of the time derivatives: $\pa_t\Bu$, $\pa_t h$, and $\pa_t\CE(h)$.
By using the first and the fourth equations of the system \eqref{linear4}, 
\eqref{160608_12}, and the trace theorem, we have
\begin{align}
	&\|\pa_t\Bu\|_{L_p(\BR_+,L_r(\BR_-^3))} 
		\leq C_{p,q,\bar{q},r}
		\left(\sum_{i=1}^3\|\Bf_i\|_{\BBF_i\cap\wt\BBF_i(\Fa_0,\Fb_i)}
		+\|k\|_{\BBK\cap\wt\BBK(\Fa_0,\Fb_4)}\right),
		\label{160613_12} \\
	&\|\pa_t h\|_{L_\infty^{\Fm(\bar{q},r)}(\BR_+,L_r(\BR^2))}+\|\pa_t h\|_{L_p(\BR_+,L_r(\BR^2))} \label{160613_15} \\
		&\leq C_{p,q,\bar{q},r}
		\left(\sum_{i=1}^3\|\Bf_i\|_{\BBF_i\cap\wt\BBF_i(\Fa_0,\Fb_i)}
		+\|k\|_{\BBK\cap\wt\BBK(\Fa_0,\Fb_4)}+\|k\|_{\BBB_1}\right). \notag 
\end{align}

Next, we prove an estimate of $\nabla\pa_t\CE(h)$ as follows:
\begin{equation}\label{161007_14}
	\|\nabla\pa_t\CE(h)\|_{L_\infty^{\Fm(\bar{q},r)+1/2}(\BR_+,L_r(\lhs))} 
		\leq C_{p,q}\left(\sum_{i=1}^3\|\Bf_i\|_{\BBF_i\cap \wt\BBF_i(\Fa_0,\Fb_i)}+\|k\|_{\BBK\cap\wt\BBK(\Fa_0,\Fb_4)}+\|k\|_{\BBA_4}\right).
\end{equation}
Note that, by the fourth equation of \eqref{linear4},
\begin{equation*}
	\pa_t\CE(h) = \CE(k|_{\BR_0^3})+\int_0^t\CE((R(t-s)(\Bf(s),k(s)))_3|_{\BR_0^3})\intd s.
\end{equation*}
Then, for $t\geq 2$, we have, by Lemmas \ref{lemm:ap} and \ref{lemm:low},
\begin{equation*}
	\left\|\nabla\int_0^t\CE((R(t-s)(\Bf(s),k(s)))_3|_{\BR_0^3})\intd s\right\|_{L_r(\lhs))}  
	\leq C_{p,q}(t+2)^{-\Fm(\bar{q},r)-1/2}\SSG \quad (t\geq 2).
\end{equation*}
Combining this identity with \eqref{161007_1}
furnishes that
\begin{equation}\label{161010_11}
	\|\nabla\pa_t\CE(h)\|_{L_\infty^{\Fm(\bar{q},r)+1/2}((2,\infty),L_r(\lhs))} 
		\leq C_{p,q}\left(\sum_{i=1}^3\|\Bf_i\|_{\BBF_i\cap \wt\BBF_i(\Fa_0,\Fb_i)}+\|k\|_{\BBK\cap\wt\BBK(\Fa_0,\Fb_4)}+\|k\|_{\BBA_4}\right).
\end{equation}
On the other hand,
by Lemma \ref{lemm:embed} \eqref{embed:0}, Sobolev's embedding theorem,
\eqref{141121_5}, and \eqref{161007_1}, 
we observe that
\begin{align*}
	&\|\nabla\pa_t\CE(h)\|_{BUC((0,2),L_r(\lhs))} \leq C_r\|(\nabla u_3,\nabla k)\|_{BUC((0,2)L_r(\lhs))} \\
	&\leq C_{p,r}\left(\|\nabla u_3\|_{W_p^{1/2}((0,2),L_r(\lhs))}+\|k\|_{\BBA_4}\right) \\
	&\leq C_{p,r}\left(\|u_3\|_{W_{r,p}^{2,1}(\lhs\times(0,2))}+\|k\|_{\BBA_4}\right) \\
	& \leq
		C_{p,r}\left(\sum_{i=1}^3\|\Bf_i\|_{\BBF_i\cap \wt\BBF_i(\Fa_0,\Fb_i)}+\|k\|_{\BBK\cap\wt\BBK(\Fa_0,\Fb_4)}+\|k\|_{\BBA_4}\right),
\end{align*}
which, combined with \eqref{161010_11}, furnishes \eqref{161007_14}.

Summing up \eqref{160608_12}, \eqref{160613_12}, \eqref{160613_15}, 
and \eqref{161007_14} and noting \eqref{pq:lin}, we have
\begin{equation}\label{161007_17}
	\sum_{r\in\{q,2\}}\left(\CM_{r,p}(\Bz)+\CN_r(\Bz)\right) 
	\leq C_{p,q}\left(\sum_{i=1}^3\|\Bf_i\|_{\BBF_i\cap \wt\BBF_i(\Fa_0,\Fb_i)}
		+\|k\|_{\BBK\cap\wt\BBK(\Fa_0,\Fb_4)\cap \BBA_4\cap \BBB_1}\right). 
\end{equation}
%
%
%
%
%
%
%

{\bf Step 5.}
The aim of this step is to prove the following estimate:
\begin{align}\label{160613_14}
	&\|(\pa_t\Bu,\nabla^2\Bu)\|_{L_p^{\Fa_1}(\BR_+,L_q(\BR_-^3))}
	+\|(\nabla^2\pa_t\CE(h),\nabla^3\CE(h))\|_{L_p^{\Fa_2}(\BR_+,L_q(\BR_-^3))} \\
	&\leq C_{p,q,\bar{q}}\left(\sum_{i=1}^3\|\Bf_i\|_{\BBF_i\cap\wt\BBF_i(\Fa_0,\Fb_i)}
		+\|k\|_{\BBK\cap\wt\BBK(\Fa_0,\Fb_4)\cap \BBA_4\cap \BBB_1}\right).
	\notag
\end{align}

We here set
	$\Bu^{\Fa_j}=(t+2)^{\Fa_j} \Bu$,
	$\Fp^{\Fa_j}=(t+2)^{\Fa_j} \Fp$, and
	$h^{\Fa_j}=(t+2)^{\Fa_j} h$ for $j=1,2$.
Then, $(\Bu^{\Fa_j},\Fp^{\Fa_j},h^{\Fa_j})$ satisfies
\begin{equation}\label{eq:weight}
	\left\{\begin{aligned}
		\pa_t\Bu^{\Fa_j}-\Di\BT(\Bu^{\Fa_j},\Fp^{\Fa_j})
			&=\Bf^{\Fa_j} && \text{in $\lhs$,} \\
		\di\Bu^{\Fa_j}
			&=0 && \text{in $\lhs$,} \\
		\BT(\Bu^{\Fa_j},\Fp^{\Fa_j})\Be_3+(c_g-c_\si\De')h^{\Fa_j}\Be_3
			&=0 && \text{on $\BR_0^3$,} \\
		\pa_t h^{\Fa_j}-u_3^{\Fa_j}
			&=k^{\Fa_j} && \text{on $\BR_0^3$,} \\
		\Bu^{\Fa_j}|_{t=0} &=0 && \text{in $\lhs$,} \\
		h^{\Fa_j}|_{t=0}&=0 && \text{on $\BR^2$,}
	\end{aligned}\right.
\end{equation}
where we have set $\Bu^{\Fa_j}=\tp(u_1^{\Fa_j},u_2^{\Fa_j},u_3^{\Fa_j})$ and
\begin{equation*}
	\Bf^{\Fa_j}=(t+2)^{\Fa_j}\Bf-\Fa_j (t+2)^{-1+{\Fa_j}}\Bu, \quad
	k^{\Fa_j}=(t+2)^{\Fa_j} k-\Fa_j(t+2)^{-1+{\Fa_j}}h.
\end{equation*}

To estimate $(\Bu^{\Fa_j},\Fp^{\Fa_j},h^{\Fa_j})$, we rewrite them as follows:
	$\Bu^{\Fa_j}=\Bv^{\Fa_j}+\Bw^{\Fa_j}$, 
	$\Fp^{\Fa_j} = \Fq^{\Fa_j}+\Fr^{\Fa_j}$, and 
	$h^{\Fa_j} = \ell ^{\Fa_j} + m^{\Fa_j}$
that are solutions to
\begin{align}
	&\left\{\begin{aligned}\label{eq:weight1}
		\pa_t\Bv^{\Fa_j}-\Di\BT(\Bv^{\Fa_j},\Fq^{\Fa_j})
			&=(t+2)^{\Fa_j}\Bf && \text{in $\lhs$,} \\
		\di\Bv^{\Fa_j}
			&=0 && \text{in $\lhs$,} \\
		\BT(\Bv^{\Fa_j},\Fq^{\Fa_j})\Be_3+(c_g-c_\si\De')\ell^{\Fa_j}\Be_3
			&=0 && \text{on $\BR_0^3$,} \\
		\pa_t \ell^{\Fa_j}-v_3^{\Fa_j}
			&=(t+2)^{\Fa_j} k && \text{on $\BR_0^3$,} \\
		\Bv^{\Fa_j}|_{t=0} &=0 && \text{in $\lhs$,} \\
		\ell^{\Fa_j}|_{t=0}&=0 && \text{on $\BR^2$,}
	\end{aligned}\right. \\
	&\left\{\begin{aligned}\label{eq:weight2}
		\pa_t\Bw^{\Fa_j}-\Di\BT(\Bw^{\Fa_j},\Fr^{\Fa_j})
			&=-{\Fa_j}(t+2)^{-1+{\Fa_j}}\Bu && \text{in $\lhs$,} \\
		\di\Bw^{\Fa_j}
			&=0 && \text{in $\lhs$,} \\
		\BT(\Bw^{\Fa_j},\Fr^{\Fa_j})\Be_3+(c_g-c_\si\De')m^{\Fa_j}\Be_3
			&=0 && \text{on $\BR_0^3$,} \\
		\pa_t m^{\Fa_j}-w_3^{\Fa_j}
			&=-{\Fa_j}(t+2)^{-1+{\Fa_j}}h && \text{on $\BR_0^3$,} \\
		\Bw^{\Fa_j}|_{t=0} &=0 && \text{in $\lhs$,} \\
		m^{\Fa_j}|_{t=0}&=0 && \text{on $\BR^2$,}
	\end{aligned}\right.
\end{align}
where $\Bv^{\Fa_j}=\tp(v_1^{\Fa_j},v_2^{\Fa_j}, v_3^{\Fa_j})$
and $\Bw^{\Fa_j}=\tp(w_1^{\Fa_j},w_2^{\Fa_j},w_3^{\Fa_j})$
for $j=1,2$.
It is clear that, for $i,j=1,2$,
\begin{align}\label{161218_5}
	&\|(t+2)^{\Fa_j}\Bf_i\|_{L_p(\BR_+,L_q(\BR_-^3)\cap L_\infty^{\Fb_i-\Fa_j}(\BR_+,L_{\bar{q}}(\BR_-^3))}
		\leq \|\Bf_i\|_{\wt{\BBF}(\Fa_0,\Fb_i)}, \\
	&\|(t+2)^{\Fa_j}\Bf_3\|_{L_p(\BR_+,L_q(\BR_-^3) \cap L_p^{\Fb_3-\Fa_j}(\BR_+,L_{\bar{q}}(\BR_-^3))}
		\leq \|\Bf_3\|_{\wt\BBF(\Fa_0,\Fb_3)}, \notag \\
	&\|(t+2)^{\Fa_j}k\|_{L_p(\BR_+,W_q^2(\BR_-^3)\cap L_p^{\Fb_4-\Fa_j}(\BR_+,W_{\bar{q}}^2(\BR_-^3))}
		\leq  \|k\|_{\wt\BBK(\Fa_0,\Fb_4)}. \notag
\end{align}
Since $(\Bv^{\Fa_j},\Fq^{\Fa_j},\ell^{\Fa_j})$ satisfies \eqref{eq:weight1},
we apply the estimates \eqref{160608_11}, \eqref{160608_15}, and \eqref{161218_5} to $(\Bv^{\Fa_j},\Fq^{\Fa_j},\ell^{\Fa_j})$
in order to obtain
\begin{equation}\label{160927_20}
	\|(\nabla^2\Bv^{\Fa_1},\nabla \Fq^{\Fa_1},\nabla^3\CE(\ell^{\Fa_2}),\nabla^2\CE(v_3^{\Fa_2}|_{\BR_0^3}))\|_{L_p(\BR_+,L_q(\lhs))} 
		\leq C_{p,q}\left(\sum_{i=1}^3\|\Bf_i\|_{\BBF_i\cap\wt\BBF_i(\Fa_0,\Fb_i)}
		+\|k\|_{\BBK\cap\wt\BBK(\Fa_0,\Fb_4)}\right). 
\end{equation}

Next, we estimate $(\Bw^{\Fa_j},\Fr^{\Fa_j},m^{\Fa_j})$. 
Since it holds by \eqref{ab} that
\begin{align*}
	&p(1+\Fm(\bar{q},q)-\Fa_j) \geq p \min\{1/4+\Fm(\bar{q},q)-\Fa_1,1/2+2/q-\Fa_2\}>1, \\
	&p(1+1/q-\Fa_j)\geq p\min\{1/4+\Fm(\bar{q},q)-\Fa_1,1/2+2/q-\Fa_2\}>1,
\end{align*}
we observe, for $j=1,2$, that 
\begin{align*}
	\|(t+2)^{-1+\Fa_j}\Bu\|_{L_p(\BR_+,L_q(\lhs))}  
		&\leq
			\|(t+2)^{-1+\Fa_j-\Fm(\bar{q},q)}\|_{L_p(\BR_+)}
			\|\Bu\|_{L_\infty^{\Fm(\bar{q},q)}(\BR_+,L_q(\lhs))} \notag  \\
		&\leq
			C_{p,q,\bar{q}}\|\Bu\|_{L_\infty^{\Fm(\bar{q},q)}(\BR_+,L_q(\lhs))}, \notag  
\end{align*}
and also that
\begin{align*}
	\|(t+2)^{-1+\Fa_j}h\|_{L_p(\BR_+,W_q^{2-1/q}(\BR^2))} 
		&\leq 
			\|(t+2)^{-1+\Fa_j}h\|_{L_p(\BR_+,L_q(\BR^2))}
			+\|(t+2)^{-1+\Fa_j}\nabla\CE(h) \|_{L_p(\BR_+,W_q^{1}(\BR_-^3))} \notag \\
		&\leq
			C_{p,q}\Big(\|(t+2)^{-1+\Fa_j-1/q}\|_{L_p(\BR_+)}
			\|h\|_{L_\infty^{1/q}(\BR_+,L_q(\BR^2))} \notag \\
			&+\|(t+2)^{-1+\Fa_j-\Fm(\bar{q},q)-1/4}\|_{L_p(\BR_+)}
			\|\nabla\CE(h)\|_{L_\infty^{\Fm(\bar{q},q)+1/4}(\BR_+,W_q^1(\lhs))}\Big) \notag \\
		&\leq
			C_{p,q,\bar{q}}\left(\|h\|_{L_\infty^{1/q}(\BR_+,L_q(\BR^2))}
			+\|\nabla\CE(h)\|_{L_\infty^{\Fm(\bar{q},q)+1/4}(\BR_+,W_q^1(\lhs))}\right). \notag 
\end{align*}
Furthermore, setting
\begin{equation*}
	\Fe_1 = 1-\Fa_1+\frac{1}{\bar{q}}-\frac{1}{2} = \frac{1}{2}+\frac{2}{q} -\Fa_1, \quad
	\Fe_2 = \Fe_3= 1-\Fa_2+\frac{1}{\bar{q}}-\frac{1}{2} = \frac{1}{2}+\frac{2}{q} -\Fa_2, 
\end{equation*}
we see that, by \eqref{pq:lin} and \eqref{ab},
\begin{align*}
	&p\left(\Fe_1+\Fm(2,q)-\frac{3}{4}\right) =p\left(\Fm(\bar{q},q)+\frac{1}{4}-\Fa_1\right)>1, \\
	&p\Fe_1=p\left(\frac{1}{2}+\frac{2}{q}-\Fa_1\right) > p\left(\Fm(\bar{q},q)+\frac{1}{4}-\Fa_1\right)>1, \\
	&p\left(\Fe_2+\Fm(2,q)+\frac{1}{4}\right)=p\left(\frac{3}{2}+\frac{1}{2q}-\Fa_2\right)
		>p\left(\frac{1}{2}+\frac{2}{q}-\Fa_2\right)>1, \\
	&p\Fe_2 = p\left(\frac{1}{2}+\frac{2}{q}-\Fa_2\right)>1, \quad
	p\left(\Fe_3 + \Fm(2,q)\right) >1, \quad p\Fe_3>1,
\end{align*}
which implies that the condition \eqref{160613_1} holds 
and that
\begin{align*}
	\|(t+2)^{-1+\Fa_j}\Bu\|_{L_\infty^{\Fe_j}(\BR_+,L_2(\BR_-^3))} 
		&\leq \|\Bu\|_{L_\infty^{\Fm(\bar{q},2)}(\BR_+,L_2(\lhs))}, \\
	\|(t+2)^{-1+\Fa_j}h\|_{L_\infty^{\Fe_j}(\BR_+,L_2(\BR^2))} 
		&\leq \|h\|_{L_\infty^{1/\bar{q}-1/2}(\BR_+,L_2(\lhs))}.
\end{align*}
Summing up the above estimates, we obtain, by \eqref{161007_17},
\begin{align}\label{161219_15}
	&\|(t+2)^{-1+\Fa_j}\Bu\|_{L_p(\BR_+,L_q(\lhs))\cap L_\infty^{\Fe_j}(\BR_+,L_2(\BR_-^3))}  
	+ \|(t+2)^{-1+\Fa_j}h\|_{L_p(\BR_+,W_q^{2-1/q}(\BR^2))\cap L_\infty^{\Fe_j}(\BR_+,L_2(\BR^2))} \\
		&\leq C_{p,q}\left(\sum_{i=1}^3\|\Bf_i\|_{\BBF_i\cap\wt\BBF_i(\Fa_0,\Fb_i)}
		+\|k\|_{\BBK\cap\wt\BBK(\Fa_0,\Fb_4)\cap\BBA_4\cap\BBB_1}\right) \notag
\end{align}
with some positive constant $C_{p,q}$ for $j=1,2$.
Combining this inequality with \eqref{160608_10}, \eqref{161219_7}, and \eqref{161220_5}
furnishes that
\begin{align}\label{161219_18}
	&\|(\nabla^2\Bw^{\Fa_1},\nabla\Fr^{\Fa_1},\nabla^3\CE(m^{\Fa_2}),\nabla^2\CE(w_3^{\Fa_2}|_{\BR_0^3}))\|_{L_p(\BR_+,L_q(\BR_-^3))} \\
		&\leq C_{p,q}\left(\sum_{i=1}^3\|\Bf_i\|_{\BBF_i\cap\wt\BBF_i(\Fa_0,\Fb_i)}
		+\|k\|_{\BBK\cap\wt\BBK(\Fa_0,\Fb_4)\cap\BBA_4\cap\BBB_1}\right).  \notag
\end{align}
By \eqref{160927_20}, \eqref{161219_18} and by using the first equation of \eqref{eq:weight} with \eqref{161219_15},
we have
\begin{align}\label{161219_19}
	&\|(\pa_t\Bu^{\Fa_1},\nabla^2\Bu^{\Fa_1},\nabla^3\CE(h^{\Fa_2}),\nabla^2\CE(u_3^{\Fa_2}|_{\BR_0^3}))\|_{L_p(\BR_+,L_q(\lhs))} \\
	&\leq  C_{p,q}\left(\sum_{i=1}^3\|\Bf_i\|_{\BBF_i\cap\wt\BBF_i(\Fa_0,\Fb_i)}
		+\|k\|_{\BBK\cap\wt\BBK(\Fa_0,\Fb_4)\cap\BBA_4\cap\BBB_1}\right), \notag
\end{align}
which, combined with the identity:
\begin{equation*}
	(t+2)^{\Fa_1}\pa_t \Bu
		=\pa_t \Bu^{\Fa_1}-\Fa_1 (t+2)^{-1+{\Fa_1}}\Bu
\end{equation*}
and with \eqref{161219_15}, furnishes that
\begin{equation*}
	\|\pa_t\Bu\|_{L_p^{\Fa_1}(\BR_+,L_q(\lhs))} 
		\leq 	C_{p,q}\left(\sum_{i=1}^3\|\Bf_i\|_{\BBF_i\cap\wt\BBF_i(\Fa_0,\Fb_i)}
		+\|k\|_{\BBK\cap\wt\BBK(\Fa_0,\Fb_4)\cap\BBA_4\cap\BBB_1}\right).
\end{equation*}

Finally, we consider $\nabla^2\pa_t\CE(h)$.
Since 
\begin{equation*}
	\nabla^2\pa_t\CE(h)=\nabla^2\CE(k|_{\BR_0^3})+\nabla^2\CE(u_3|_{\BR_0^3}),
\end{equation*}
we observe that, by \eqref{161007_1} and \eqref{161219_19},
\begin{align*}
	\|\nabla^2\pa_t\CE(h)\|_{L_p^{\Fa_2}(\BR_+,L_q(\lhs))} 
		&\leq 
			\|(\nabla^2\CE(k^{\Fa_2}|_{\BR_0^3}),\nabla^2\CE(u_3^{\Fa_2}|_{\BR_0^3}))\|_{L_p(\BR_+,L_q(\lhs))} \\
	&\leq C_{p,q}\left(\sum_{i=1}^3\|\Bf_i\|_{\BBF_i\cap\wt\BBF_i(\Fa_0,\Fb_i)}
		+\|k\|_{\BBK\cap\wt\BBK(\Fa_0,\Fb_4)\cap\BBA_4\cap\BBB_1}\right).
\end{align*}
Combining this inequality with \eqref{161219_19} implies \eqref{160613_14},
which completes the proof of Theorem \ref{theo:sec4_2}.

\section{Proof of Theorem \ref{theo:main1}}\label{sec5}
In this section, we prove Theorem \ref{theo:main1} by using Theorem \ref{theo:sec3_2}
and Theorem \ref{theo:sec4_2}.
Suppose that $r\in\{q,2\}$ in what follows.

%
%
%
%
%
%
%
%
%
%


\begin{rema}
We can assume that $\Bh|_{t=0}=0$ on $\BR_0^3$ without loss of generality.
In fact, it suffices to replace $\Fp$ by $\Fp-\Bh\cdot\Be_3$ in \eqref{linear1}.
\end{rema}

{\bf Step 1.}
For $\Bg=(g_1,g_2,g_3)$, we set $\wt \Bg=(g_1^o,g_2^o,g_3^e)$ with \eqref{oddeven}.
Then $\di \wt \Bg= (\di \Bg)^o=g^o$.
Let 
$\Bd=\tp(d_1,d_2,d_3)=\tp (d_1(x,t),d_2(x,t),d_3(x,t))$ with
\begin{equation*}
	d_j (x,t)= -\CF_{\xi}^{-1}\left[\frac{i\xi_j}{|\xi|^2}\CF[g^o(\cdot, t)](\xi)\right](x) \quad (j=1,2,3).
\end{equation*}
Note that 
\begin{equation*}
	\|g(t)\|_{\wh W_q^{-1}(\lhs)}\leq \|\Bg(t)\|_{L_q(\lhs)}, \quad
	\|\pa_t g(t)\|_{\wh W_q^{-1}(\lhs)}\leq \|\pa_t \Bg(t)\|_{L_q(\lhs)}
\end{equation*}
for $t>0$.
Thus, as was seen in \cite[Lemma 4.1 and its proof]{SS12},
$\Bd$ solves the divergence equation:
\begin{equation}\label{eq:div}
	\di \Bd =g=\di\Bg\quad
	\text{in $\BR_-^3$, $t>0$},
\end{equation}
and satisfies 
$\Bd|_{t=0}=0\text{ in $\BR_-^3$}$ and the following estimates:
\begin{alignat*}{2}
	\|\pa_t \Bd(t)\|_{L_r(\lhs)} 
		&\leq C_r\|\pa_t \Bg(t)\|_{L_r(\lhs)}, \quad 
	&\|\Bd(t)\|_{L_r(\lhs)}
		&\leq C_r \|\Bg(t)\|_{L_r(\lhs)}, \\
	\|\nabla \Bd(t)\|_{L_r(\lhs)}
		&\leq C_r\|g(t)\|_{L_s(\lhs)}, 
	&\|\nabla^2\Bd(t)\|_{L_r(\lhs)}
		&\leq C_r \|\nabla g(t)\|_{L_r(\lhs)}, \quad 
\end{alignat*}
with some positive constant $C_r$ independent of $\Bd$, $\Bg$, $g$, and $t$.
These inequalities tell us that the following estimates hold:
First, since it holds that
\begin{equation*}
	\|\Bd\|_{L_\infty^{\Fm(\bar{q},r)}(\BR_+,L_r(\lhs))}\leq \|\Bg\|_{\BBA_1}, \quad
	\|\nabla\Bd\|_{L_\infty^{\Fn(\bar{q},r)+1/8}(\BR_+,L_r(\lhs))} \leq \|g\|_{\BBA_2},
\end{equation*}
we have, for $\Bx = (\Bd,0,0,0)$, 
\begin{align}\label{141109_1}
	&\CN_{q,p}(\Bx;\Fa_1,\Fa_2)
	+\sum_{r\in\{q,2\}}
		\left(\CM_{r,p}(\Bx)+\CN_q(\Bx)\right) \\
		&\leq
			C_{p,q}\left(\|\Bg\|_{\CG\cap \BBA_1}
			+\|g\|_{\BBG\cap\BBA_2}+\|(\pa_t\Bg,\nabla g)\|_{L_p^{\Fa_1}(\BR_+,L_q(\lhs))}\right);\notag
\end{align}
Secondly, noting that $\|\Bd\|_{\BBA_4} \leq C_q\|g\|_{\BBA_2}$ and $\|\Bd\|_{\BBB_1}\leq C_q(\|\Bg\|_{\BBA_1}+\|g\|_{\BBA_2})$, we have
\begin{align}
 	\|(\pa_t\Bd,\nabla^2\Bd)\|_{\BBF_3\cap \wt\BBF_3(\Fa_0,\Fb_3)} 
		&\leq C_{p,q}\left(\|\Bg\|_{\CG\cap\wt\CG(\Fa_0,\Fb_3)}+\|g\|_{\BBG\cap\wt\BBG(\Fa_0,\Fb_3)}\right),
		\label{est:7} \\
	\|\Bd\|_{\BBK\cap\wt\BBK(\Fa_0,\Fb_4)\cap \BBA_4\cap\BBB_1}
		&\leq C_{p,q}\left(\|\Bg\|_{\CG\cap\wt\CG(\Fa_0,\Fb_4)\cap \BBA_1}+\|g\|_{\BBG\cap\wt\BBG(\Fa_0,\Fb_4)\cap\BBA_2}\right),
		\label{est:8} \\
 	\|\nabla\Bd\|_{L_p^{\Fc_1}(\BR_+,W_q^1(\lhs))}
 		&\leq C_{p,q}\|g\|_{L_p^{\Fc_1}(\BR_+,W_q^1(\lhs))}, 
		\label{est:5} \\
 	\|\nabla\Bd\|_{L_p^{\Fd_1}(\BR_+,W_{\bar{q}}^1(\lhs))}
 		&\leq C_{p,q}\|g\|_{L_p^{\Fd_1}(\BR_+,W_{\bar{q}}^1(\lhs))}, 
		\label{est:6}  \\
	\|\nabla\Bd\|_{\BBA_3}&\leq C_{p,q} \|g\|_{\BBA_3};
		\label{est:1} 
\end{align}
Thirdly, it holds that
\begin{align}
	&\|\nabla\Bd\|_{H_{r,p}^{1,1/2}(\lhs\times\BR_+)}
		\leq C_{p,r}\left(\|\Bg\|_{\CG}+\|g\|_{\BBG}\right), \label{est:2} \\
	&\|(t+2)^{\Fa_0}\nabla\Bd\|_{H_{q,p}^{1,1/2}(\BR_-^3\times\BR_+)} \label{est:3} 
		 \leq C_{p,q}\left(\|\Bg\|_{W_p^{1,\Fa_0}(\BR_+,L_q(\lhs))}+\|g\|_{L_p^{\Fa_0}(\BR_+,W_q^1(\lhs))}\right),  \\
	&\|(t+2)^{\Fb_j}\nabla\Bd\|_{H_{\bar{q},p}^{1,1/2}(\lhs\times\BR_+)} \label{est:4} 
		 \leq C_{p,\bar{q}}\left(\|\Bg\|_{W_p^{1,\Fb_j}(\BR_+,L_{\bar{q}}(\lhs))}+\|g\|_{L_p^{\Fb_j}(\BR_+,W_{\bar{q}}^1(\lhs))}\right),
		\quad j=3,4.  
\end{align}
In fact, by Lemma \ref{lemm:embed} (1),
\begin{align*}
	&\|\nabla\Bd\|_{H_{r,p}^{1,1/2}(\lhs\times\BR_+)}
		=\|\nabla\Bd\|_{H_p^{1/2}(\BR_+,L_r(\lhs))} + \|\nabla\Bd\|_{L_p(\BR_+,W_r^1(\lhs))} \\
	&\leq \|\Bd\|_{H_p^{1/2}(\BR_+,W_r^1(\lhs))} + \|\Bd\|_{L_p(\BR_+,W_r^2(\lhs))} \\
	&\leq C_{p,r}\|\Bd\|_{W_{r,p}^{2,1}(\BR_-^3\times\BR_+)}
		\leq C_{p,r}\|(\pa_t\Bg,\Bg,g,\nabla g)\|_{L_p(\BR_+,L_r(\lhs))} ,
\end{align*}
and also \eqref{est:3} and \eqref{est:4} can be proved similarly.

%
%
%

Next, we prove that
\begin{equation}\label{161010_13}
	\|\CE(d_3|_{\bdry})\|_{L_\infty(\BR_+,L_2(\lhs))}
		\leq C\|\Bg\|_{L_\infty(\BR_+,L_2(\lhs))}.
\end{equation}
Since
\begin{equation*}
	\CF[g^o(\cdot, t)](\xi) = \int_{-\infty}^0\left(-e^{iy_N\xi_N}+e^{-iy_N\xi_N} \right)\wh g(\xi',y_N,t)\intd y_N,
\end{equation*}
we see that
\begin{equation*}
	\wh d_3 (\xi',x_3,t) 
		 =-\int_{-\infty}^0 \wh g(\xi',y_N,t)
		\left\{\frac{1}{2\pi}\int_{-\infty}^\infty\frac{i\xi_3}{|\xi|^2}\left(-e^{i(x_3+y_3)\xi_3}+e^{i(x_3-y_3)\xi_3}\right)\intd\xi_3\right\}\intd y_3.
\end{equation*}
Combining this formula with
\begin{equation*}
	\frac{1}{2\pi}\int_{-\infty}^{\infty} \frac{i\xi_3e^{i a\xi_3}}{|\xi|^2}\intd\xi_3
		= -{\rm sign}(a)\frac{e^{-|a||\xi'|}}{2} \quad (a\in\BR\setminus\{0\}),
\end{equation*}
which can be proved by the residue theorem,
yields that
\begin{equation}\label{161010_15}
	\CE(d_3|_{\bdry}) 
		= -\int_{-\infty}^0\CF_{\xi'}^{-1}\left[e^{(x_3+y_3)|\xi'|}\wh g(\xi',y_3,t)\right](x')\intd y_3 \quad (x_3 < 0).
\end{equation}
Since $g=\di\Bg$ in $\lhs$ with $\Bg=\tp (g_1(x,t),g_2(x,t),g_3(x,t))$, we have
\begin{equation*}
	\wh g(\xi',y_3,t) = \sum_{j=1}^2i\xi_j\wh g_j(\xi',y_3,t) + \pa_3 \wh g_3(\xi',y_3,t).
\end{equation*}
We insert this identity into \eqref{161010_15} and use the integration by parts
in order to obtain
\begin{align*}
	\CE(d_3|_{\bdry}) &= 
-\int_{-\infty}^0\CF_{\xi'}^{-1}\left[i\xi_j e^{(x_3+y_3)|\xi'|}\wh g_j(\xi',y_3,t)\right](x')\intd y_3 \\
	&-\int_{-\infty}^0\CF_{\xi'}^{-1}\left[|\xi'| e^{(x_3+y_3)|\xi'|}\wh g_3(\xi',y_3,t)\right](x')\intd y_3,
\end{align*}
which, combined e.g. with \cite[Lemma 5.4]{SS12}, furnishes that
\begin{equation*}
	\|\CE(d_3|_{\bdry})\|_{L_2(\lhs)} \leq C\|\Bg(t)\|_{L_2(\lhs)}.
\end{equation*}
This estimate implies \eqref{161010_13} immediately.

{\bf Step 2.}
Noting that $\Bd|_{t=0}=0$ in $\lhs$ as was discussed in Step 1,
we set $\Bu = \Bd+\wt \Bu$ in \eqref{linear1} in order to obtain
\begin{equation}\label{redueq:1}
	\left\{\begin{aligned}
		\pa_t \wt\Bu -\Di\BT(\wt\Bu,\Fp) = \Bf-\pa_t\Bd+\Di(\mu\BD(\Bd)) & && \text{in $\BR_-^3$, $t>0$,} \\
		\di\wt\Bu =0& && \text{in $\BR_-^3$, $t>0$,} \\
		\BT(\wt\Bu,\Fp)\Be_3 +(c_g -c_\si\De')h\Be_3 = \Bh-\mu\BD(\Bd)\Be_3& && \text{on $\BR_0^3$, $t>0$,} \\
		\pa_t h - \wt u_3 = k+d_3& && \text{on $\BR_0^3$, $t>0$,} \\
		\wt\Bu|_{t=0} = 0& && \text{in $\BR_-^3$,} \\
		h|_{t=0} = 0 & && \text{on $\BR^2$,}
\end{aligned}\right.
\end{equation}
where $\BT=(\wt\Bu,\Fp)=\mu\BD(\wt\Bu)-\Fp\BI$ and $\wt\Bu=\tp(\wt u_1,\wt u_2,\wt u_3)$.
To estimate the solution $(\wt\Bu,\Fp,h)$, we decompose $\wt\Bu$ and $\Fp$ as follows:
	$\wt\Bu= \Bv+ \Bw$, $\Fp=\Fq+\Fr$
that satisfy 
\begin{align}
	&\left\{\begin{aligned}\label{redueq:2}
		\pa_t \Bv +\Bv-\Di\BT(\Bv,\Fq) &= 0 && \text{in $\BR_-^3$, $t>0$,} \\
		\di\Bv &= 0 && \text{in $\BR_-^3$, $t>0$,} \\
		\BT(\Bv,\Fq)\Be_3 &= \BH && \text{on $\BR_0^3$, $t>0$,} \\
		\Bv|_{t=0} &= 0 && \text{in $\BR_-^3$,} \\
	\end{aligned}\right. \\
	&\left\{\begin{aligned}\label{redueq:3}
		\pa_t \Bw -\Di\BT(\Bw,\Fr) &= \BF && \text{in $\BR_-^3$, $t>0$,} \\
		\di\Bw &=0  && \text{in $\BR_-^3$, $t>0$,} \\
		\BT(\Bw,\Fr)\Be_3 +(c_g -c_\si\De')h\Be_3 &= 0 && \text{on $\BR_0^3$, $t>0$,} \\
		\pa_t h -  w_3 &= K && \text{on $\BR_0^3$, $t>0$,} \\
		\Bw|_{t=0} &= 0 && \text{in $\BR_-^3$,} \\
		h|_{t=0} &= 0 && \text{on $\BR^2$,}
\end{aligned}\right.
\end{align}
where we have set
\begin{align*}
	&\BF = \BF_1 + \BF_2 + \BF_3, \quad 
	\BF_i = \Bf_i \quad (i=1,2), \quad 
	\BF_3 = \Bf_3-\pa_t\Bd+\Di(\mu\BD(\Bd))+\Bv, \\
	&\BH = \Bh-\mu\BD(\Bd)\Be_3, \quad 
	K = k+d_3+v_3.
\end{align*}

In the following steps, we estimate $(\Bv,\Fq)$ and $(\Bw,\Fr,h)$
by using Theorem \ref{theo:sec3_2} and Theorem \ref{theo:sec4_2}, respectively.

{\bf Step 3.}
By Theorem \ref{theo:sec3_1} and Theorem \ref{theo:sec3_2},
we observe that
\begin{align*}
	&\|(\pa_t \Bv,\Bv,\nabla\Bv,\nabla^2\Bv,\nabla\Fq)\|_{L_p(\BR_+,L_r(\lhs))}
		 \leq C_{p,r}\|\BH\|_{H_{r,p}^{1,1/2}(\BR_-^3\times\BR_+)};  \\ 
	&\|(\pa_t \Bv,\Bv,\nabla\Bv,\nabla^2\Bv,\nabla\Fq)\|_{L_p^{\Fa_0}(\BR_+,L_q(\lhs))} \\ 
		&\quad \leq C_{p,q}\left(\|\BH\|_{L_p^{\Fc_1}(\BR_+,W_q^1(\lhs))}
		+\|(t+2)^{\Fa_0}\BH\|_{H_{q,p}^{1,1/2}(\BR_-^3\times\BR_+)}\right); \\  
	&\|(\pa_t \Bv,\Bv,\nabla\Bv,\nabla^2\Bv,\nabla\Fq)\|_{L_p^{\Fb_j}(\BR_+,L_{\bar{q}}(\lhs))} \\ 
		&\quad  \leq C_{p,q}\left(\|\BH\|_{L_p^{\Fd_1}(\BR_+,W_{\bar{q}}^1(\lhs))}
		+\|(t+2)^{\Fb_j}\BH\|_{H_{\bar{q},p}^{1,1/2}(\BR_-^3\times\BR_+)}\right),  \quad j=3,4; \\ 
	&\|\Bv\|_{L_\infty^{\Fn(\bar{q},r)+1/8}(\BR_+,W_r^1(\lhs))}+\|\Bv\|_{\BBA_4\cap\BBB_1}
		\leq C_{p,q}\|\BH\|_{\BBA_3}.
\end{align*}
Combining these inequalities with \eqref{est:5}-\eqref{est:4} furnishes that,
for $\By = (\Bv,\Fq,0,0)$,
\begin{align}
	&\CN_{q,p}(\By;\Fa_1,\Fa_2) + \sum_{r\in\{q,2\}}\left(\CM_{r,p}(\By)+\CN_r(\By)\right) 
		\leq C_{p,q}\SSN; \label{161220_13} \\
	&\|\Bv\|_{\BBF_3\cap \wt\BBF_3(\Fa_0,\Fb_3)\cap \BBK\cap\wt\BBK(\Fa_0,\Fb_4)\cap\BBA_4\cap\BBB_1}
		\leq C_{p,q}\SSN. \label{161220_15}
\end{align}
%
%
%
%
Analogously, it holds by Theorem \ref{theo:sec3_1} \eqref{theo:sec3_1_2}  that
\begin{equation*}
	\|\CE(v_3|_{\BR_0^3})\|_{W_{2,p}^{2,1}(\BR_-^3\times\BR_+)}
		\leq C_p\|\BH\|_{H_{2,p}^{1,1/2}(\BR_-^3\times\BR_+)}
		\leq C_{p,q}\SSN,
\end{equation*}
which, combined with Lemma \ref{lemm:embed} \eqref{embed:1}, furnishes that
\begin{equation}\label{161027_2}
	\|\CE(v_3|_{\BR_0^3})\|_{L_\infty(\BR_+,L_2(\lhs))}
		\leq C_{p,q}\SSN.
\end{equation}

{\bf Step 4.}
The aim of this step is to prove the following estimates:
For $\Bz=(\Bw,\Fr,h,\CE(h))$,
\begin{equation}\label{161010_7}
	\CN_{q,p}(\Bz;\Fa_1,\Fa_2)+\sum_{r\in\{q,2\}}\left(\CM_{r,p}(\Bz)+\CN_r(\Bz)\right) 
	\leq C_{p,q}\SSN.
\end{equation}

By Theorem \ref{theo:sec4_2}, we have
\begin{align*}
	&\CN_{q,p}(\Bz;\Fa_1,\Fa_2)+\sum_{r\in\{q,2\}}\left(\CM_{r,p}(\Bz)+\CN_r(\Bz)\right) \\
	&\leq C_{p,q,\bar{q}}\bigg(\sum_{i=1}^3\|\BF_i\|_{\BBF_i\cap \BBF_i(\Fa_0,\Fb_i)}
		+\|\BK\|_{\BBK\cap\BBK(\Fa_0,\Fb_4)\cap \BBA_4\cap \BBB_1}\bigg) \notag \\
	&\leq C_{p,q,\bar{q}}\bigg(\sum_{i=1}^3\|\Bf_i\|_{\BBF_i\cap\wt\BBF_i(\Fa_0,\Fb_i)}
	+\|k\|_{\BBK\cap\wt\BBK(\Fa_0,\Fb_4)\cap\BBA_4\cap\BBB_1} \notag \\
	&\quad
	+\|(\pa_t\Bd,\nabla^2\Bd,\Bv)\|_{\BBF_3\cap\wt\BBF_3(\Fa_0,\Fb_3)}
	+\|(d_3,v_3)\|_{\BBK\cap\wt\BBK(\Fa_0,\Fb_4)\cap\BBA_4\cap\BBB_1}\bigg).
	\notag
\end{align*}
Combining this inequality with \eqref{est:7}, \eqref{est:8}, and \eqref{161220_15}
implies \eqref{161010_7}.


{\bf Step 5.}
In this step, we prove that
\begin{equation}\label{161016_11}
	\|\pa_t\CE(h)\|_{L_\infty(\BR_+,L_2(\lhs))} \leq C_{p,q,\te}\SSN.
\end{equation}

By the equation $\pa_t h -w_3 = k +d_3+v_3$ on $\bdry$, we see that
\begin{equation}\label{161016_13}
	\pa_t\CE(h) = \CE(k|_\bdry)+ \CE(d_3|_{\bdry})+\CE(v_3|_{\bdry}) + \CE(w_3|_\bdry) \quad \text{in $\lhs$}.
\end{equation}
In what follows, we estimate the four terms on the right-hand side of \eqref{161016_13} in order to obtain \eqref{161016_11}.

First, we show that
\begin{equation}\label{161013_14}
	\|\CE(k|_\bdry)\|_{L_\infty(\BR_+,L_2(\lhs))}
		\leq C_\te\|k\|_{\BBB_2(\te)}.
\end{equation}
By Perseval's theorem,
\begin{align*}
	&\|\CE(k|_{\BR_0^3})\|_{L_2(\lhs)}^2 
		=\int_{-\infty}^0\|\CF_{\xi'}^{-1}[e^{|\xi'|x_3}\wh k(\xi',0,t)]\|_{L_2(\BR^2)}^2\intd x_3 \\
		&= \int_{-\infty}^0\|e^{|\xi'|x_3}\wh k(\xi',0,t)\|_{L_2(\BR^2)}^2\intd x_3
		=\int_{\BR^2}\left(\int_{-\infty}^0e^{2|\xi'|x_3}\intd x_3\right)|\wh k(\xi',0,t)|^2\intd \xi' \\
		&=\frac{1}{2}\int_{\BR^2}\frac{|\wh k(\xi',0,t)|^2}{|\xi'|} \intd\xi'
		=\frac{1}{2}\left(\int_{|\xi'|\leq 1}+\int_{|\xi'|\geq 1}\right)\frac{|\wh k(\xi',0,t)|^2}{|\xi'|} \intd\xi' \\
		&=:I_1(t)+I_2(t).
\end{align*}
It then holds that, by H\"older's inequality,
\begin{align*}
	I_1(t) \leq \frac{1}{2}\left(\int_{|\xi'|\leq 1}\frac{d\xi'}{|\xi'|^\al}\right)^{1/\al}
		\left(\int_{|\xi'|\leq 1}|\wh k(\xi',0,t)|^{2\al'}\right)^{1/\al'},
\end{align*}
where $\al=2-\te$ and $\al'=\al/(\al-1)$.
Setting $\beta=2\al'$, we observe that
\begin{align*}
	I_1(t) \leq C_\te \|\wh k(\cdot,0,t)\|_{L_\beta(\BR^2)}^2,
\end{align*}
which, combined with the Hausdorff-Young inequality (cf. \cite[Theorem 1.2.1]{BL76}):
\begin{equation*}
	\|\wh f\|_{L_p(\BR^2)} \leq C_p\|f\|_{L_{p'}(\BR^2)} \quad \text{for $2\leq p \leq \infty$,}
\end{equation*}
furnishes that
\begin{equation}\label{161222_7}
	I_1(t)\leq C_\te\|k(t)\|_{L_{q(\te)}(\BR_0^3)}^2.
\end{equation}
On the other hand,
by Perseval's theorem, 
\begin{align*}
	I_2(t) \leq \frac{1}{2}\int_{|\xi'|\geq 1}|\wh k(\xi',0,t)|^2\intd\xi' \leq \frac{1}{2}\|\wh k(\cdot,0,t)\|_{L_2(\BR^2)}^2
	= \frac{(2\pi)^2}{2}\|k(t)\|_{L_2(\BR_0^3)}^2. 
\end{align*}
Combining this inequality with \eqref{161222_7} implies \eqref{161013_14}.
%
%
%
%

Next, we consider $\CE(w_3|_{\BR_0^3})$.
Note that $\CE(w_3|_{\BR_0^3})$ can be written as
\begin{equation*}
	\CE(w_3|_{\BR_0^3}) = \int_0^t \CE((R(t-s)(\BF(s),K(s)))_3|_{\BR_0^3})\intd s.
\end{equation*}
Let $0<t<3$, and then we have, by \eqref{161028_1} and the trace theorem,
\begin{align*}
	&\|\CE(w_3|_{\BR_0^3})\|_{L_2(\lhs)}
		\leq C_{p,\ga_5}\int_0^te^{\ga_5(t-s)}\|(\BF(s),K(s))\|_{L_2(\lhs)\times W_2^2(\lhs)}\intd s \\
		&\leq C_{p,\ga_5}\left(\int_0^t e^{p'\ga_5(t-s)}\intd s\right)^{1/p'}
			\left(\|\BF\|_{L_p(\BR_+,L_2(\lhs))}+\|K\|_{L_p(\BR_+,W_2^2(\lhs))}\right) 
\end{align*}
which, combined with \eqref{141109_1} and \eqref{161220_13}, furnishes that
\begin{equation}\label{161007_12}
	\|\CE(w_3|_{\BR_0^3})\|_{L_\infty((0,3),L_2(\lhs))} 
		\leq C_{p,\ga_5}\SSN. 
\end{equation}
On the other hand, for $t\geq 2$, 
we have, by Lemma \ref{lemm:ap} and Lemma \ref{lemm:low},
\begin{equation*}
	\|\CE(w_3|_{\BR_0^3})\|_{L_\infty((2,\infty),L_2(\lhs))}
		\leq C_{p,q}\left(\sum_{i=1}^3\|\BF_i\|_{\BBF_i(\Fa_0,\Fb_i)}+\|K\|_{\BBK(\Fa_0,\Fb_4)}\right),
\end{equation*}
which, combined with \eqref{161007_12}, furnishes that
\begin{equation*}
	\|\CE(w_3|_{\BR_0^3})\|_{L_\infty(\BR_+,L_2(\lhs))} \leq C_{p,q}\SSN.
\end{equation*}
Combining this inequality with \eqref{161010_13}, \eqref{161027_2}, \eqref{161016_13}, and \eqref{161013_14} implies \eqref{161016_11}.

Summing up \eqref{141109_1}, \eqref{161220_13}, \eqref{161010_7}, and \eqref{161016_11},
we have completed the proof of Theorem \ref{theo:main1}.

\section{Proof of Theorem \ref{theo:main2}}\label{sec6}
In this section, we prove Theorem \ref{theo:main2} 
by using Theorem \ref{theo:main1} and Theorem \ref{theo:sec4_1}.

\subsection{Initial flow}
In this subsection, we  construct an initial flow.

{\bf Step 1.}
Let $(\Bu^*,\Fq^*,h^*)$
be the solution to the following equations: 
\begin{equation}\label{eq:ini1}
	\left\{\begin{aligned}
		\pa_t \Bu^*-\Di\BT(\Bu^*,\Fq^*) &=0
			 && \text{in $\BR_-^3$, $t>0$,} \\
		\di\Bu^*&=0
			&& \text{in $\BR_-^3$, $t>0$,} \\
		\BT(\Bu^*,\Fq^*)\Be_3+(c_g-c_\si\De')h^*\Be_3 &=0
			&& \text{on $\BR_0^3$, $t>0$,} \\
		\pa_t h^* -u_3^* &=0
			&& \text{on $\BR_0^3$, $t>0$,} \\
		\Bu^*|_{t=0}&=0
			&& \text{in $\BR_-^3$,} \\
		h^*|_{t=0}&=h_0
			&& \text{on $\BR^2$.}
	\end{aligned}\right.
\end{equation}
By Theorem \ref{theo:sec4_1}, we have,
for $\Bx*=(\Bu^*,\Fq^*,h^*,\CE(h^*))$,
\begin{equation}\label{160805_1}
	\CN_{q,p}(\Bx^*;\Fa_1,\Fa_2) +
	\sum_{r\in\{q,2\}}\left(\CM_{r,p}(\Bx^*)+\CN_r(\Bx^*)\right) 
	+\|\pa_t\CE(h^*)\|_{L_\infty(\BR_+,L_2(\lhs))} 
	\leq C_{p,q}\|h_0\|_{\BBI_2} 
\end{equation}
for a positive constant $C_{p,q}$.


{\bf Step 2.}
Let $\wt\Bv_0$ be an extension of $\Bv_0$ satisfying
$\wt\Bv_0=\Bv_0$ in $\BR_-^3$ and 
\begin{equation}\label{141110_21}
	\|\wt\Bv_0\|_{B_{s,p}^{2-2/p}(\BR^3)} \leq
		C_{p,q,s}\|\Bv_0\|_{B_{s,p}^{2-2/p}(\BR_-^3)}
\end{equation}
with a positive constant $C_{p,q,s}$,
where $q(\te) \leq s \leq q$, here and hereafter.
%
%
%
%
Let $\Bw^*$ be the solution to
\begin{equation}\label{eq3_141024}
	\left\{\begin{aligned}
		\pa_t\Bw^*+\Bw^*-\mu\De\Bw^*&=0
			&& \text{in $\BR^3$, $t>0,$} \\
		\Bw^*|_{t=0}&=\wt\Bv_0 && \text{in $\BR^3$}.
	\end{aligned}\right.
\end{equation}
It then holds by \eqref{141110_21} that 
\begin{align}
	&\|e^{\ga_6 t}(\pa_t\Bw^*,\Bw^*,\nabla\Bw^*,\nabla^2\Bw^*)\|_{L_p(\BR_+,L_s(\lhs))}
		\leq C_{p,s}\|\Bv_0\|_{\BBI_1(\te)}, \label{161221_1} \\
	&\|\Bw^*(t)\|_{W_s^1(\lhs)}
		\leq C_se^{-\ga_ 7 t}\|\Bv_0\|_{\BBI_1(\te)} \quad (t\geq 1) \label{161221_2}
\end{align}
for some positive constants $\ga_6$, $\ga_7$, $C_{p,s}$, and $C_s$.
By Lemma \ref{lemm:embed} \eqref{embed:1} and \eqref{161221_1},
\begin{equation*}
	\|\Bw^*\|_{L_\infty((0,2),W_s^1(\lhs))} \leq C_{p,s}\|\Bw^*\|_{W_{s,p}^{2,1}(\lhs\times(0,2))}
		\leq C_{p,s}\|\Bv_0\|_{\BBI_1(\te)},
\end{equation*}
which, combined with \eqref{161221_2}, furnishes that
\begin{equation}\label{170123_11}
	\|e^{\ga_7 t}\Bw^*\|_{L_\infty(\BR_+,W_s^1(\lhs))} \leq C_{p,s}\|\Bv_0\|_{\BBI_1(\te)}.
\end{equation}
Thus, combining this inequality with \eqref{161221_1} implies that, for $\By^*=(\Bw^*,0,0,0)$,
\begin{equation}\label{160805_2}
	\CN_{q,p}(\By^*;\Fa_1,\Fa_2) +
	\sum_{r\in\{q,2\}}\left(\CM_{r,p}(\By^*)+\CN_r(\By^*)\right)
	\leq C_{p,q}\|\Bv_0\|_{\BBI_1(\te)}
\end{equation}
with a positive constant $C_{p,q}$.

{\bf Step 3.}
Let $\Bv^*=\Bu^*+\Bw^*$ and
$\Bz^*=\Bx^*+\By^*=(\Bv^*,\Fq^*,h^*,\CE(h^*))$.
We call $\Bz^*$ the initial flow in this paper,
and have, by \eqref{160805_1} and \eqref{160805_2},
\begin{equation}\label{160805_3}
	\CN_{q,p}(\Bz^*;\Fa_1,\Fa_2) +
	\sum_{r\in \{q,2\}}\left(\CM_{r,p}(\Bz^*)+\CN_r(\Bz^*)\right) + \|\pa_t\CE(h^*)\|_{L_\infty(\BR_+,L_2(\lhs))} 
	\leq C_{p,q}\|(\Bv_0,h_0)\|_{\BBI_1(\te)\times\BBI_2} 
\end{equation}
for a positive constant $C_{p,q}$.
In addition, setting in the equations \eqref{NS3_1}-\eqref{NS3_5}
\begin{equation*}
	\Bv=\bar{\Bv}+\Bv^*, \quad  \Fq = \bar{\Fq}+\Fq^*, \quad h=\bar{h}+h^*
\end{equation*}
and denoting $(\bar\Bv,\bar\Fq,\bar h)$ of the resultant equations by $(\Bv,\Fq,h)$ again,
we achieve
\begin{equation}\label{eq:comp}
	\left\{\begin{aligned}
		\pa_t\Bv -\Di\BT(\Bv,\Fq) =\SSF(\Bv,\CE(h))&
			&& \text{in $\BR_-^3$, $t>0$,}  \\
		\di\Bv = \SSG(\Bv,\CE(h)) = \di\Fg(\Bv,\CE(h))&
			&& \text{in $\BR_-^3$, $t>0$,} \\
		\BT(\Bv,\Fq)\Be_3-(c_g-c_\si\De')h\Be_3 =\SSH(\Bv,\CE(h))&
			&& \text{on $\BR_0^3$, $t>0$,} \\
		\pa_t h - v_3 =\SSK(\Bv,\CE(h))&
			&& \text{on $\BR_0^3$, $t>0$,} \\
		\Bv|_{t=0} =0&
 			&& \text{in $\BR_-^3$,} \\
		h|_{t=0} =0 &
			&& \text{on $\BR^2$,}
	\end{aligned}\right.
\end{equation}  
where we have set
\begin{align*}
	\SSF(\Bv,\CE(h)) &=
		\BF(\Bv+\Bv^*,\CE(h)+\CE(h^*)) +\Bw^*+\mu\nabla\di\Bw^*, \\
	\Fg(\Bv,\CE(h)) &=
		\BG(\Bv+\Bv^*,\CE(h)+\CE(h^*))-\Bw^*, \\
	\SSG(\Bv,\CE(h)) &=
		G(\Bv+\Bv^*,\CE(h)+\CE(h^*))-\di\Bw^*, \\
	\SSH(\Bv,\CE(h)) &=
		\BH(\Bv+\Bv^*,\CE(h)+\CE(h^*))-\mu\BD(\Bw^*)\Be_3, \\
	\SSK(\Bv,\CE(h)) &=
		K(\Bv+\Bv^*,\CE(h)+\CE(h^*))+ w_3^*.
\end{align*}


\subsection{Construction of solutions to \eqref{eq:comp}.}
Since it suffices to construct solutions to System \eqref{eq:comp}
in order to prove Theorem \ref{theo:main2},
we deal with System \eqref{eq:comp} in this subsection.
Let $\eta=\CE(h)$, and then $\eta$ is a unique solution to
\begin{equation}\label{eq:auxi}
	\left\{\begin{aligned}
		\De\eta&=0 && \text{in $\BR_-^3$, $t>0$,} \\
		\eta &= h && \text{on $\BR_0^3$, $t>0$.}
	\end{aligned}\right.
\end{equation}
From this viewpoint,
instead of System \eqref{eq:comp},
we solve 
\begin{equation}
	\left\{\begin{aligned}
		\pa_t\Bv -\Di\BT(\Bv,\Fq) =\SSF(\Bv,\eta)&
			&& \text{in $\BR_-^3$, $t>0$,}  \\
		\di\Bv = \SSG(\Bv,\eta) = \di\Fg(\Bv,\eta)&
			&& \text{in $\BR_-^3$, $t>0$,} \\
		\BT(\Bv,\Fq)\Be_3-(c_g-c_\si\De')h\Be_3 =\SSH(\Bv,\eta)&
			&& \text{on $\BR_0^3$, $t>0$,} \\
		\pa_t h - v_3 =\SSK(\Bv,\eta)&
			&& \text{on $\BR_0^3$, $t>0$,} \\
		\Bv|_{t=0} =0&
 			&& \text{in $\BR_-^3$,} \\
		h|_{t=0} =0 &
			&& \text{on $\BR^2$,}
	\end{aligned}\right.
\end{equation}
together with the auxiliary problem \eqref{eq:auxi} in what follows.

First, we introduce an underlying space ${}_0X_{q,p}(r;\Fa_1,\Fa_2)$
that is used for the contraction mapping principle.
Let $\|\cdot\|_{X_{q,p}(\Fa_1,\Fa_2)}$ be defined as \eqref{160920_1},
and then
\begin{equation*}
	X_{q,p}(\Fa_1,\Fa_2) := \{\Bz \mid \|\Bz\|_{X_{q,p}(\Fa_1,\Fa_2)}<\infty\}.
\end{equation*}
In addition, we set
\begin{align*}
	&{}_0X_{q,p}(\Fa_1,\Fa_2) 
		= \{(\Bv,\Fq,h,\eta) \in X_{q,p}(\Fa_1,\Fa_2) \mid
		 \text{$\Bv|_{t=0}=0$ in $\BR_-^3$, $\eta|_{t=0}=0$ in $\BR_-^3$} \}, \\
	&{}_0X_{q,p}(r;\Fa_1,\Fa_2)
		=\{\Bz =(\Bv,\Fq,h,\eta) \in {}_0X_{q,p}(\Fa_1,\Fa_2) \mid 
		\|\Bz\|_{X_{q,p}(\Fa_1,\Fa_2)}\leq r\} \quad (r>0).
\end{align*}

Next, we apply Theorem \ref{theo:main1} with
\begin{align}\label{141225_5}
	&\Fa_1=\frac{1}{2}, \quad \Fa_2=\frac{3}{4}, \quad \Fc_1=0, \quad \Fd_1=\frac{1}{4}, \\
	&\Fb_1=\Fb_2=\Fm\left(\bar{q},q\right)+\Fn\left(\bar{q},q\right)+\frac{1}{8}=\frac{2}{q}+\frac{3}{8}, \quad
	\Fb_3=\Fb_4=1. \notag
\end{align}
We then note the following relations: First, the assumption $3<q<16/5$ implies that $\Fb_1=\Fb_2>1$.
Secondly, since $5/16<1/q<1/3$ from the second condition of \eqref{pq},
it holds by the third condition of \eqref{pq} that
\begin{equation*}
\frac{2}{p}<1-\frac{3}{q}<1-3\cdot \frac{5}{16}.
\end{equation*}
This gives us $p>32$,
and we collect this inequality and several conditions used in the following argumentation as follows:
\begin{align}\label{141225_3}
	&p>32, \quad p\left(1+\Fc_1-\frac{3}{4}\right)=p\left(1+\Fd_1-1\right)=\frac{p}{4}>1, \\
	&p(\min\{\Fb_1,\Fb_2,\Fb_3,\Fb_4\}-\Fa_1) =\frac{p}{2}>1, \notag \\
	&p(\min\{\Fb_1,\Fb_2,\Fb_3,\Fb_4\}-\Fa_2) =\frac{p}{4}>1, \notag \\
	&p\left(\Fm(\bar{q},q)+\frac{1}{4}-\Fa_1\right) = \frac{p}{2q}>1, \quad
	p\left(\frac{1}{2}+\frac{2}{q}-\Fa_2\right)>\frac{p}{8}>1.\notag
\end{align}
Thirdly, by Lemma \ref{lemm:embed} and \eqref{pq}, 
\begin{align}\label{141224_1}
	\|(\Bv,\nabla\eta)\|_{L_\infty(\BR_+,W_\infty^1(\BR_-^3))} &\leq M\|\Bz\|_{X_{q,p}(1/2,3/4)}, \\
	\|(\Bv,\nabla\eta)\|_{L_\infty(\BR_+,W_q^1(\BR_-^3))} &\leq M\|\Bz\|_{X_{q,p}(1/2,3/4)}, \notag \\
	\|(\Bv,\nabla\eta)\|_{L_\infty(\BR_+,W_2^1(\BR_-^3))} &\leq M\|\Bz\|_{X_{q,p}(1/2,3/4)} \notag
\end{align}
for some positive constant $M$, depending only on $p$ and $q$, 
and for $\Bz=(\Bv,\Fq,h,\eta)\in X_{q,p}(1/2,3/4)$. 

\begin{rema}
The compatibility condition \eqref{comp:1} implies that
\begin{equation*}
	\SSG(\Bv,\eta)|_{t=0}=0 \quad \text{in $\lhs$,} \quad	[\SSH(\Bv,\eta)]_\tau|_{t=0}=0 \quad \text{on $\BR_0^3$}
\end{equation*}
for $\Bz = (\Bv,\Fq,h,\eta)\in {}_0 X_{q,p}(1/2,3/4)$.
\end{rema}

From now on, to use Theorem \ref{theo:main1} under the condition \eqref{141225_5},
we show that there exists a positive number $\ep_1\in(0,1/4)$ such that,
for any $\bar\Bz=(\bar\Bv,\bar\Fq,\bar h,\bar\eta)\in {}_0X_{q,p}(\ep_1;1/2,3/4)$,
\begin{align}\label{141224_10}
	&\BF_i(\bar\Bv+\Bv^*,\bar\eta+\CE(h^*)) \in \BBF_i\cap \widetilde{\BBF}_i(3/4,2/q+3/8) \quad (i=1,2), \\
	&\BF_3(\bar\Bv+\Bv^*,\bar\eta+\CE(h^*))+\Bw^*+\mu\nabla\di\Bw^* \in \BBF_3\cap \wt\BBF_3(3/4,1), \notag \\
	&\Fg(\bar\Bv,\bar\eta) \in \CG \cap \wt{\CG}(3/4,1)\cap\BBA_1, \quad
	\SSG(\bar\Bv,\bar\eta) \in \BBG \cap \wt{\BBG}(3/4,1) \cap \BBA_2 \cap \BBA_3, \notag \\
	&\SSH(\bar\Bv,\bar\eta) \in \BBH \cap \widetilde{\BBH}(3/4,1)\cap \BBA_3, \quad
	\SSK(\bar\Bv,\bar\eta) \in \BBK \cap \widetilde{\BBK}(3/4,1)\cap\BBA_4\cap\BBB_1\cap\BBB_2(\te),\notag \\
	&\SSG(\bar\Bv,\bar\eta)\in L_p(\BR_+,W_q^1(\BR_-^3)) \cap L_p^{1/4}(\BR_+,W_{\bar{q}}^1(\BR_-^3)), \notag \\
	&\SSH(\bar\Bv,\bar\eta)\in L_p(\BR_+,W_q^1(\BR_-^3))^3 \cap L_p^{1/4}(\BR_+,W_{\bar{q}}^1(\BR_-^3))^3, \notag 
\end{align}
with the inequality:
\begin{align}\label{141224_11}
	&\sum_{i=1}^2\|\BF_i(\bar\Bv+\Bv^*,\bar\eta+\CE(h^*))\|_{\BBF_i \cap \wt{\BBF}_i(3/4,2/q+3/8)} 
		+\|(\BF_3(\bar\Bv+\Bv^*,\bar\eta+\CE(h^*)),\Bw^*,\nabla\di\Bw^*)\|_{\BBF_3 \cap \widetilde{\BBF}_3(3/4,1)} \\
		&+ \|(\Fg(\bar\Bv,\bar\eta),\Bw^*)\|_{\CG \cap \wt\CG(3/4,1)\cap\BBA_1}
		+ \|(\SSG(\bar\Bv,\bar\eta),\di\Bw^*)\|_{\BBG \cap \wt{\BBG}(3/4,1) \cap \BBA_2 \cap \BBA_3}
		 \notag \\
		&+ \|(\SSH(\bar\Bv,\bar\eta),\BD(\Bw^*))\|_{\BBH \cap \wt{\BBH}(3/4,1)\cap \BBA_3}
 		+ \|(\SSK(\bar\Bv,\bar\eta),w_3^*)\|_{\BBK \cap \wt{\BBK}(3/4,1)\cap\BBA_4\cap\BBB_1\cap\BBB_2(\te)} \notag  \\
		&+\|(\SSG(\bar\Bv,\bar\eta),\di\Bw^*)\|_{L_p(\BR_+,W_q^1(\BR_-^3)) \cap L_p^{1/4}(\BR_+,W_{\bar{q}}^1(\BR_-^3))} \notag \\
		&  +\|(\SSH(\bar\Bv,\bar\eta),\BD(\Bw^*))\|_{L_p(\BR_+,W_q^1(\BR_-^3)) \cap L_p^{1/4}(\BR_+,W_{\bar{q}}^1(\BR_-^3))} \notag \\
	&\leq C_{p,q,\te}\left(\|(\Bv_0,h_0)\|_{\BBI_1(\te)\times\BBI_2}+\|\bar{\Bz}\|_{X_{q,p}(1/2,3/4)}^2\right) \notag
\end{align}
for a positive constant $C_{p,q,\te}$. 
Let $\Bz=(\Bv,\Fq,h,\eta)=(\bar{\Bv}+\Bv^*,\bar\Fq+\Fq^*,\bar h +h^*,\bar\eta+\eta^*)$
in the following proof.
To obtain \eqref{141224_10} and \eqref{141224_11}, it is enough to show
\begin{align}\label{160902_10}
	&\sum_{i=1}^2\|\BF_i(\Bv,\eta)\|_{\BBF_i \cap \widetilde{\BBF}_i(3/4,2/q+3/8)}+\|\BF_3(\Bv,\eta)\|_{\BBF_3 \cap \wt{\BBF}_3(3/4,1)} \\
		& \quad + \|\Fg(\Bv,\eta)\|_{\CG \cap \wt{\CG}(3/4,1)\cap\BBA_1}
		+ \|\SSG(\Bv,\eta)\|_{\BBG \cap \wt{\BBG}(3/4,1) \cap \BBA_2 \cap \BBA_3} \notag \\
		&\quad + \|\SSH(\Bv,\eta)\|_{\BBH \cap \wt{\BBH}(3/4,1)\cap \BBA_3}
		+ \|\SSK(\Bv,\eta)\|_{\BBK \cap \wt{\BBK}(3/4,1)\cap\BBA_4\cap\BBB_1\cap\BBB_2(\te)} \notag \\
		&\quad + \|\SSG(\Bv,\eta)\|_{L_p(\BR_+,W_q^1(\BR_-^3)) \cap L_p^{1/4}(\BR_+,W_{\bar{q}}^1(\BR_-^3))} \notag \\
		&\quad + \|\SSH(\Bv,\eta)\|_{L_p(\BR_+,W_q^1(\BR_-^3)^3) \cap L_p^{1/4}(\BR_+,W_{\bar{q}}^1(\BR_-^3))} \notag \\
	&\leq C_{p,q,\te}\|\Bz\|_{X_{q,p}(1/2,3/4)}^2. \notag	
\end{align} 
In fact, if we have \eqref{160902_10}, then \eqref{161221_1} and \eqref{170123_11} imply that \eqref{141224_10} and \eqref{141224_11} hold.
From this viewpoint, we prove the inequality \eqref{160902_10} in what follows.

{\bf Step 1: Estimates of $\BF_1(\Bv,\eta)$.}
By \eqref{141224_1}, it is clear that for $r\in\{q,2\}$
\begin{equation}\label{141224_16}
	\|(\Bv\cdot\nabla)\Bv\|_{L_p(\BR_+,L_r(\BR_-^3))}
		\leq \|\Bv\|_{L_\infty(\BR_+,L_\infty(\BR_-^3))}\|\nabla\Bv\|_{L_p(\BR_+,L_r(\BR_-^3))} 
		\leq C_{p,q}\|\Bz\|_{X_{q,p}(1/2,3/4)}^2. 
\end{equation}
Furthermore, by Sobolev's embedding theorem and H\"older's inequality,
\begin{align*}
	&\|(\Bv(t)\cdot\nabla)\Bv(t)\|_{L_q(\BR_-^3)}
		\leq \|\Bv(t)\|_{L_\infty(\BR_-^3)}\|\nabla\Bv(t)\|_{L_q(\BR_-^3)} \\
		&\quad\leq C_q \|\Bv(t)\|_{W_q^1(\BR_-^3)}\|\nabla\Bv(t)\|_{L_q(\BR_-^3)} 
		\leq C_q (t+2)^{-(2/q+3/8)}\|\Bz\|_{X_{q,p}(1/2,3/4)}^2, \\
	&\|(\Bv(t)\cdot\nabla)\Bv(t)\|_{L_{\bar{q}}(\BR_-^3)}
		\leq \|\Bv(t)\|_{L_q(\BR_-^3)}\|\nabla\Bv(t)\|_{L_q(\BR_-^3)} 
		\leq (t+2)^{-(2/q+3/8)}\|\Bz\|_{X_{q,p}(1/2,3/4)}^2
\end{align*}
for every $t>0$.
Then, noting that $p(2/q+3/8-3/4)>p/4>1$ by the assumption: $3<q<16/5$ and \eqref{141225_3},
we have
\begin{equation*}
	\|(\Bv\cdot\nabla)\Bv\|_{L_p^{3/4}(\BR_+,L_q(\BR_-^3))}
		\leq C_{p,q}\|\Bz\|_{X_{q,p}(1/2,3/4)}^2, \quad 
	\|(\Bv\cdot\nabla)\Bv\|_{L_\infty^{2/q+3/8}(\BR_+,L_{\bar{q}}(\BR_-^3))}
		\leq C_{p,q}\|\Bz\|_{X_{q,p}(1/2,3/4)}^2,
\end{equation*}
which, combined with \eqref{141224_16}, furnishes that
\begin{equation}\label{141225_10}
	\|(\Bv\cdot\nabla)\Bv\|_{\BBF_1\cap\widetilde{\BBF}_1(3/4,2/q+3/8)}
		\leq C_{p,q}\|\Bz\|_{X_{q,p}(1/2,3/4)}^2.
\end{equation}
Similarly, it holds that
\begin{equation*}
	\|(\Bv\cdot\nabla\eta)D_3\Bv\|_{\BBF_1\cap\widetilde{\BBF}_1(3/4,2/q+3/8)}
		\leq C_{p,q}\|\Bz\|_{X_{q,p}(1/2,3/4)}^2,
\end{equation*}
which, combined with \eqref{141224_1} and \eqref{141225_10}, furnishes that
\begin{align*}
	\|\BF_1(\Bv,\eta)\| \leq C_{p,q}(1+M)\left(1+\frac{1}{1-M\|\Bz\|_{X_{q,p}(1/2,3/4)}}\right)\|\Bz\|_{X_{q,p}(1/2,3/4)}^2.
\end{align*}
Thus, noting that $\|\Bz\|_{X_{q,p}(1/2,3/4)}\leq C_{p,q}(\de_0+\ep_1)$,
we choose $\de_0$, $\ep_1$ so small that
$C_{p,q}M(\de_0+\ep_1)\leq 1/2$ in what follows in order to obtain
\begin{equation}\label{170108_1}
	\|\BF_1(\Bv,\eta)\|_{\BBF_1\cap\widetilde{\BBF}_1(3/4,2/q+3/8)} \leq C_{p,q}\|\Bz\|_{X_{q,p}(1/2,3/4)}^2.
\end{equation}

{\bf Step 2: Estimates of $\BF_2(\Bv,\eta)$.}
We use Sobolev's inequality (cf. \cite[Theorem 4.31]{AF03}):
\begin{equation*}
	\|f\|_{L_6(\BR_-^3)}\leq C\|\nabla f\|_{L_2(\BR_-^3)} \quad \text{for any $f\in W_2^1(\lhs)$}
\end{equation*}
with a positive constant $C$ independent of $f$.
Since
	$\pa_t\eta\in L_\infty(\BR_+,W_2^1(\lhs))$,
we observe that, by Sobolev's inequality, H\"older's inequality, and Sobolev's embedding theorem,
\begin{align}\label{141224_12}
	&\|\pa_t \eta(t)D_3\Bv(t)\|_{L_q(\BR_-^3)} \\
		&\quad\leq
			\|\pa_t \eta(t)\|_{L_6(\BR_-^3)}\|\nabla \Bv(t)\|_{L_r(\BR_-^3)} \quad (1/6+1/r=1/q) \notag \\
		&\quad\leq
			C\|\nabla\pa_t \eta(t)\|_{L_2(\BR_-^3)}\|\nabla\Bv(t)\|_{W_q^1(\BR_-^3)} \notag \displaybreak[0] \\
		&\|\pa_t \eta(t)D_3\Bv(t)\|_{L_2(\BR_-^3)} 
		 \leq
			\|\pa_t \eta(t)\|_{L_6(\BR_-^3)}\|\nabla\Bv(t)\|_{L_3(\BR_-^3)} \notag \\
		&\quad \leq
			 C\|\nabla\pa_t \eta(t)\|_{L_2(\BR_-^3)}\|\nabla\Bv(t)\|_{L_2(\BR_-^3)}^{\al}\|\nabla \Bv(t)\|_{L_q(\BR_-^3)}^{1-\al},
			\notag \displaybreak[0] \\
	&\|\pa_t \eta(t)D_3\Bv(t)\|_{L_{\bar{q}}(\BR_-^3)} \notag \\
		&\quad\leq
			\|\pa_t \eta(t)\|_{L_6(\BR_-^3)}\|\nabla\Bv(t)\|_{L_s(\BR_-^3)} \quad (1/6+1/s=1/\bar{q}) \notag \\
		&\quad\leq
			C\|\nabla \pa_t \eta(t)\|_{L_2(\BR_-^3)}\|\nabla\Bv(t)\|_{L_2(\BR_-^3)}^\beta\|\nabla\Bv(t)\|_{L_q(\BR_-^3)}^{1-\beta}\notag
\end{align}
for $t>0$, where we note that $0<\al,\beta<1$ and
\begin{equation}\label{141224_15}
	3\left(\frac{1}{q}-\frac{1}{r}\right)=\frac{1}{2}<1,\quad
	\frac{1}{3}=\frac{\al}{2}+\frac{1-\al}{q},\quad
	\frac{1}{s}=\frac{\beta}{2}+\frac{1-\beta}{q}.
\end{equation}
By \eqref{141224_1} and \eqref{141224_12}, we obtain
\begin{align}\label{141225_7}
	\|\pa_t \eta D_3\Bv\|_{L_p(\BR_+,L_q(\BR_-^3))} 
			&\leq C\|\nabla \pa_t \eta\|_{L_\infty(\BR_+,L_2(\BR_-^3))}\|\nabla\Bv\|_{L_p(\BR_+,W_q^1(\BR_-^3))}
			 \leq C\|\Bz\|_{X_{q,p}(1/2,3/4)}^2,  \\
		\|\pa_t \eta D_3\Bv\|_{L_p(\BR_+,L_2(\BR_-^3))} 
			&\leq C\|\nabla \pa_t \eta\|_{L_p(\BR_+,L_2(\BR_-^3))}\|\nabla\Bv\|_{L_\infty(\BR_+,L_2(\BR_-^3))}^\al
				\|\nabla\Bv\|_{L_\infty(\BR_+,L_q(\BR_-^3))}^{1-\al} \notag \\
			&\leq C\|\Bz\|_{X_{q,p}(1/2,3/4)}^2. \notag
	\end{align}
In addition, it follows from \eqref{141224_12} that for any $t>0$ 
\begin{align*}
	&\|\pa_t \eta(t)D_3\Bv(t)\|_{L_q(\BR_-^3)}
		\leq
			C(t+2)^{-\Fm(\bar{q},2)-1/2}\|\Bz\|_{X_{q,p}(1/2,3/4)} \\
			&\quad \cdot\Big((t+2)^{-\Fn(\bar{q},q)-1/8}\|\Bz\|_{X_{q,p}(1/2,3/4)} +(t+2)^{-1/2}\big\{(t+2)^{1/2}\|\nabla^2\Bv(t)\|_{L_q(\BR_-^3)}\}\Big), \\
	 &\|\pa_t \eta(t)D_3\Bv(t)\|_{L_{\bar{q}}(\BR_-^3)}
		\leq
			C(t+2)^{-\Fm(\bar{q},2)-1/2}\|\Bz\|_{X_{q,p}(1/2,3/4)}	 \\
			&\quad\cdot\Big((t+2)^{-\Fn(\bar{q},2)-1/8}\|\Bz\|_{X_{q,p}(1/2,3/4)}\Big)^\beta
			\Big((t+2)^{-\Fn(\bar{q},q)-1/8}\|\Bz\|_{X_{q,p}(1/2,3/4)}\Big)^{1-\beta} \\
		&= C(t+2)^{-(2/q+3/8)}\|\Bz\|_{X_{q,p}(1/2,3/4)}^2,
\end{align*}
because $\Fm(\bar{q},2)+1/2=2/q$ and by \eqref{141224_15}
\begin{align*}
	&\beta \Fn\left(\bar{q},2\right)+(1-\beta)\Fn\left(\bar{q},q\right)
		=\frac{3\beta}{2}\left(\frac{2}{q}-\frac{1}{2}\right)+\frac{3(1-\beta)}{2q} \\
		&=\frac{3}{2q}-\frac{3\beta}{2}\left(\frac{1}{2}+\frac{1}{q}-\frac{2}{q}\right)
		=\frac{3}{2q}-\frac{3}{2}\cdot\frac{6-q}{3(q-2)}\cdot\frac{q-2}{2q}=\frac{1}{4}.
\end{align*}
Then, noting that by \eqref{141225_3}
\begin{align*}
	&p\left(\Fm\left(\bar{q},2\right)+\frac{1}{2}+\Fn\left(\bar{q},q\right)+\frac{1}{8}-\frac{3}{4}\right)
	=p\left(\frac{7}{2q}-\frac{5}{8}\right)>p\left(\frac{7}{8}-\frac{5}{8}\right)=\frac{p}{4}>1, \\
	&\Fm\left(\bar{q},2\right)+\frac{1}{2}+\frac{1}{2}-\frac{3}{4}=\frac{2}{q}-\frac{1}{4}>\frac{2}{4}-\frac{1}{4}>0
\end{align*}
because $q<16/5<4$, we see that
\begin{align*}
	&\|\pa_t \eta D_3\Bv\|_{L_p^{3/4}(\BR_+,L_q(\BR_-^3))} \\
		&\leq  C_{p,q}\|\Bz\|_{X_{q,p}(1/2,3/4)} 
		\Big(\|(t+2)^{-\left(\Fm(\bar{q},2)+1/2+\Fn(\bar{q},q)+1/8-3/4\right)}\|_{L_p(\BR_+)}\|\Bz\|_{X_{q,p}(1/2,3/4)} \\
		&\quad+\|(t+2)^{-\left(\Fm(\bar{q},2)+1/2+1/2-3/4\right)}\|_{L_\infty(\BR_+)}\|\nabla^2\Bv\|_{L_p^{1/2}(\BR_+,L_q(\BR_-^3))}\Big) \\
		&\leq
			C_{p,q}\|\Bz\|_{X_{q,p}(1/2,3/4)}^2,  \\
	&\|\pa_t \eta D_3\Bv\|_{L_\infty^{2/q+3/8}(\BR_+,L_{\bar{q}}(\BR_-^3))}\leq C_{p,q}\|\Bz\|_{X_{q,p}(1/2,3/4)}^2.
\end{align*}
Combining this inequality with \eqref{141225_7} furnishes that
\begin{equation}\label{141225_11}
	\|\pa_t \eta D_3\Bv\|_{\BBF_2\cap\widetilde{\BBF}_2(3/4,2/q+3/8)}\leq C_{p,q}\|\Bz\|_{X_{q,p}(1/2,3/4)}^2.
\end{equation}
By \eqref{141224_1} and \eqref{141225_11}, we have
\begin{equation}\label{170108_2}
	\|\BF_2(\Bv,\eta)\|_{\BBF_2\cap\widetilde{\BBF}_2(3/4,2/q+3/8)} 
		\leq 
		C_{p,q}\|\Bz\|_{X_{q,p}(1/2,3/4)}^2.
\end{equation}

{\bf Step 3: Estimates of $\BF_3(\Bv,\eta)$.}
We only consider the term:
\begin{equation*}
	\mu(1+\BM_3(\eta))\sum_{j=1}^3\CD_{jj}(\eta)\Bv,
\end{equation*}
because we can deal with the other terms similarly.
By \eqref{141224_1}, it is clear that for $r\in\{q,2\}$ and $j=1,2,3$
\begin{equation}\label{141225_13}
	\|\CD_{jj}(\eta)\Bv\|_{L_p(\BR_+,L_r(\BR_-^3))}  \leq C_{p,q} \|\Bz\|_{X_{q,p}(1/2,3/4)}^2. \notag
\end{equation}
In addition, it follows from H\"older's inequality and Sobolev's embedding theorem that,
for any $t>0$ and $j=1,2,3$,
\begin{align}\label{141225_19}
	&\|\CD_{jj}(\eta(t))\Bv(t)\|_{L_q(\BR_-^3)} \\
		&\leq
			C_q\Big(\|\nabla \Bv(t)\|_{L_\infty(\BR_-^3)}\|\nabla^2 \eta(t)\|_{L_q(\BR_-^3)}
			+\|\nabla^2\Bv(t)\|_{L_q(\BR_-^3)}\|\nabla \eta(t)\|_{L_\infty(\BR_-^3)} \Big) \notag \\
		&\leq
			C_q\Big(\|\nabla \Bv(t)\|_{W_q^1(\BR_-^3)}\|\nabla^2\eta(t)\|_{L_q(\BR_-^3)}
			+\|\nabla^2\Bv(t)\|_{L_q(\BR_-^3)}\|\nabla \eta(t)\|_{W_q^1(\BR_-^3)} \Big) \notag \\
		&\leq
			C_q(t+2)^{-\Fm(\bar{q},q)-1/4}\|\Bz\|_{X_{q,p}(1/2,3/4)} \Big((t+2)^{-\Fn(\bar{q},q)-1/8}\|\Bz\|_{X_{q,p}(1/2,3/4)} \notag \\
			&\quad+(t+2)^{-1/2}\big\{(t+2)^{1/2}\|\nabla^2\Bv(t)\|_{L_q(\BR_-^3)}\big\}\Big),
			\displaybreak[0] \notag 
\end{align}
We thus observe that
\begin{align}\label{141225_17}
	&\|\CD_{jj}(\eta)\Bv\|_{L_p^{3/4}(\BR_+,L_q(\BR_-^3))}  \\ 
	&\leq C_{p,q}\|\Bz\|_{X_{q,p}(1/2,3/4)}
			\Big(\|(t+2)^{-\left(\Fm(\bar{q},q)+1/4+\Fn(\bar{q},q)+1/8-3/4\right)}\|_{L_p(\BR_+)}\|\Bz\|_{X_{q,p}(1/2,3/4)} \notag \\
			&\quad+\|(t+2)^{-\Fm(\bar{q},q)}\|_{L_\infty(\BR_+)}\|\nabla^2\Bv\|_{L_p^{1/2}(\BR_+,L_q(\BR_-^3))}\Big) \notag \\
		&\leq
			C_{p,q}\|\Bz\|_{X_{q,p}(1/2,3/4)}^2, \displaybreak[0] \notag 
\end{align}
because we know that, by \eqref{141225_3} and the assumption: $3<q<16/5$, 
\begin{align*}
	&p\left(\Fm\left(\bar{q},q\right)+\Fm\left(\bar{q},q\right)+\frac{1}{4}+\Fn\left(\bar{q},q\right)+\frac{1}{8}-\frac{3}{4}\right) \\
		&>p\left(\Fm\left(\bar{q},q\right)+\frac{1}{4}+\Fn\left(\bar{q},q\right)+\frac{1}{8}-\frac{3}{4}\right)
		=p\left(\frac{2}{q}-\frac{1}{8}\right)>\frac{p}{2}>1.
\end{align*} 
Analogously, it holds that  
\begin{equation}\label{141225_21}
	\|\CD_{jj}(\eta)\Bv\|_{L_p^1(\BR_+,L_{\bar{q}}(\BR_-^3))}\leq C_{p,q}\|\Bz\|_{X_{q,p}(1/2,3/4)}^2
\end{equation}
by using inequalities,
which are obtained in a similar way to \eqref{141225_19}, as follows:
For every $t>0$ and $j=1,2,3$,
\begin{align*}
	&\|\CD_{jj}(\eta)\Bv(t)\|_{L_{\bar{q}}(\BR_-^3)}
		\leq C_{p,q}(t+2)^{-\Fm(\bar{q},q)-1/4}\|\Bz\|_{X_{q,p}(1/2,3/4)} \\
			&\cdot \Big((t+2)^{-\Fn(\bar{q},q)-1/8}\|\Bz\|_{X_{q,p}(1/2,3/4)}
			+(t+2)^{-1/2}\big\{(t+2)^{1/2}\|\nabla^2\Bu(t)\|_{L_q(\BR_-^3)}\big\}\Big), 
\end{align*}
and note that $\Fm(\bar{q},q)+3/4-1=1/(2q)>0$ and that by \eqref{141225_3}
\begin{align*}
	&p\left(\Fm\left(\bar{q},q\right)+\Fm\left(\bar{q},q\right)+\frac{1}{4}+\Fn\left(\bar{q},q\right)+\frac{1}{8}-1\right) \\
		&>p\left(\Fm\left(\bar{q},q\right)+\frac{1}{4}+\Fn\left(\bar{q},q\right)+\frac{1}{8}-1\right)
		=p\left(\frac{2}{q}-\frac{3}{8}\right)>\frac{p}{4}>1.
\end{align*}
By combining \eqref{141224_1} with \eqref{141225_13}, \eqref{141225_17}, and \eqref{141225_21},
we obtain 
\begin{equation*}
	\left\|\mu(1+\BM_3(\eta))\sum_{j=1}^3\CD_{jj}(\eta)\Bv\right\|_{\BBF_3\cap\widetilde{\BBF}_3(3/4,1)}
		\leq C_{p,q}\|\Bz\|_{X_{q,p}(1/2,3/4)}^2.
\end{equation*}
It thus holds that
\begin{equation}\label{170108_3}
	\|\BF_3(\Bv,\eta)\|_{\BBF_3\cap\widetilde{\BBF}_3(3/4,1)}\leq C_{p,q}\|\Bz\|_{X_{q,p}(1/2,3/4)}^2.
\end{equation}

{\bf Step 4: Estimates of $\Fg(\Bv,\eta)$.}
By \eqref{141224_1}, it is clear that for $r\in\{q,2\}$
\begin{align}\label{141225_25}
	&\|\BM_1(\eta)\Bv\|_{W_p^1(\BR_+,L_r(\BR_-^3))}
		\leq
			\|\nabla \eta\|_{L_\infty(\BR_+,L_\infty(\BR_-^3))}\|\Bv\|_{L_p(\BR_+,L_r(\BR_-^3))} \\
			&\quad+\|\pa_t \nabla \eta\|_{L_p(\BR_+,L_r(\BR_-^3))}\|\Bv\|_{L_\infty(\BR_+,L_\infty(\BR_-^3))} 
		+\|\nabla \eta\|_{L_\infty(\BR_+,L_\infty(\BR_-^3))}\|\pa_t\Bv\|_{L_p(\BR_+,L_r(\BR_-^3))} \notag \\
		&\leq
			C_{p,q}\|\Bz\|_{X_{q,p}(1/2,3/4)}^2. \notag
\end{align}
On the other hand, it follows from H\"older's inequality and Sobolev's embedding theorem that, for any $t>0$,
\begin{align*}
	&\|\BM_1(\eta(t))\Bv(t)\|_{L_q(\lhs)}
			\leq \|\nabla\eta(t)\|_{L_\infty(\BR_-^3)}\|\Bv(t)\|_{L_q(\lhs)} \\
		&\quad \leq C_q \|\nabla\eta(t)\|_{W_q^1(\lhs)}\|\Bv(t)\|_{L_q(\lhs)} 
		 \leq C_q (t+2)^{-\Fm(\bar{q},q)-1/4-\Fm(\bar{q},q)}\|\Bz\|_{X_{q,p}(1/2,3/4)}^2 \\
	&\|(\pa_t \BM_1(\eta(t)))\Bv(t)\|_{L_q(\BR_-^3)}
		\leq
			\|\pa_t\nabla\eta(t)\|_{L_q(\BR_-^3)}\|\Bv(t)\|_{L_\infty(\BR_-^3)} \\
		&\quad\leq
			C_q\|\pa_t\nabla \eta(t)\|_{L_q(\BR_-^3)}\|\Bv(t)\|_{W_q^1(\BR_-^3)} 
		\leq
			C_q(t+2)^{-\Fm(\bar{q},q)-1/2-\Fm(\bar{q},q)}\|\Bz\|_{X_{q,p}(1/2,3/4)}^2, \displaybreak[0] \\
	&\|\BM_1(\eta(t))\pa_t\Bv(t)\|_{L_q(\BR_-^3)}
		\leq
			\|\nabla \eta(t)\|_{L_\infty(\BR_-^3)}\|\pa_t\Bv(t)\|_{L_q(\BR_-^3)} \\
		&\quad\leq
			C_q\|\nabla\eta(t)\|_{W_q^1(\BR_-^3)}\|\pa_t\Bv(t)\|_{L_q(\BR_-^3)} \\
		&\quad\leq
			C_q(t+2)^{-\Fm(\bar{q},q)-3/4}\|\Bz\|_{X_{q,p}(1/2,3/4)}\big\{(t+2)^{1/2}\|\pa_t\Bv(t)\|_{L_q(\BR_-^3)}\big\},
\end{align*}
and furthermore,
\begin{align*}
	&\|\BM_1(\eta(t))\Bv(t)\|_{L_{\bar{q}}(\lhs)}
			\leq C_q\|\nabla\eta(t)\|_{L_q(\lhs)}\|\Bv(t)\|_{L_q(\lhs)} \\
		&\quad \leq C_q(t+2)^{-\Fm(\bar{q},q)-1/4-\Fm(\bar{q},q)}\|\Bz\|_{X_{q,p}(1/2,3/4)}^2, \\
	&\|(\pa_t\BM_1(\eta(t)))\Bv(t)\|_{L_{\bar{q}}(\BR_-^3)}
		\leq
			\|\pa_t \nabla \eta(t)\|_{L_q(\BR_-^3)}\|\Bv(t)\|_{L_q(\BR_-^3)} \\
		&\quad\leq
			(t+2)^{-\Fm(\bar{q},q)-1/2-\Fm(\bar{q},q)}\|\Bz\|_{X_{q,p}(1/2,3/4)}^2, \\
	&\|\BM_1(\eta(t))\pa_t\Bv(t)\|_{L_{\bar{q}}(\BR_-^3)}
		\leq
			\|\nabla \eta(t)\|_{L_q(\BR_-^3)}\|\pa_t\Bv(t)\|_{L_q(\BR_-^3)} \\
		&\quad\leq
			(t+2)^{-\Fm(\bar{q},q)-3/4}\|\Bz\|_{X_{q,p}(1/2,3/4)}\big\{(t+2)^{1/2}\|\pa_t\Bv(t)\|_{L_q(\BR_-^3)}\big\}.
\end{align*}
Then, noting that by \eqref{141225_3}
\begin{equation*} 
	p\left(\Fm\left(\bar{q},q\right)+\frac{1}{4}+\Fm\left(\bar{q},q\right)-\frac{3}{4}\right)
	=\frac{p}{q}>1,
\end{equation*}
we have
\begin{align*}
	\|\BM_1(\eta)\Bv\|_{L_p^{3/4}(\BR_+,L_q(\lhs))} 
		 &\leq C_{p,q}\|(t+2)^{-\left(\Fm(\bar{q},q)+1/4+\Fm(\bar{q},q)-3/4\right)}\|_{L_p(\BR_+)}\|\Bz\|_{X_{q,p}(1/2,3/4)}^2 \\
		&\leq
			C_{p,q}\|\Bz\|_{X_{q,p}(1/2,3/4)}^2 , \\
	\|(\pa_t\BM_1(\eta))\Bv\|_{L_p^{3/4}(\BR_+,L_q(\BR_-^3))} 
		&\leq
			C_{p,q}\|(t+2)^{-\left(\Fm(\bar{q},q)+1/2+\Fm(\bar{q},q)-3/4\right)}\|_{L_p(\BR_+)}\|\Bz\|_{X_{q,p}(1/2,3/4)}^2 \\
		&\leq
			C_{p,q}\|\Bz\|_{X_{q,p}(1/2,3/4)}^2 , \\
	\|\BM_1(\eta)\pa_t\Bv\|_{L_p^{3/4}(\BR_+,L_q(\BR_-^3))} 
		&\leq
			C_{p,q}\|(t+2)^{-\Fm(\bar{q},q)}\|_{L_\infty(\BR_+)}\|\Bz\|_{X_{q,p}(1/2,3/4)}\|\pa_t\Bv\|_{L_p^{1/2}(\BR_+,L_q(\BR_-^3))} \\
		&\leq
			C_{p,q}\|\Bz\|_{X_{q,p}(1/2,3/4)}^2
\end{align*}
with some positive constant $C_{p,q}$, and besides,
it similarly holds that
\begin{align*}		
	\|\BM_1(\eta)\Bv\|_{L_p^1(\BR_+,L_{\bar{q}}(\BR_-^3))}
		&\leq C_{p,q}\|\Bz\|_{X_{q,p}(1/2,3/4)}^2, \\
	\|(\pa_t\BM_1(\eta))\Bv\|_{L_p^1(\BR_+,L_{\bar{q}}(\BR_-^3))}
		&\leq C_{p,q}\|\Bz\|_{X_{q,p}(1/2,3/4)}^2, \\
	\|\BM_1(\eta)\pa_t\Bv\|_{L_p^1(\BR_+,L_{\bar{q}}(\BR_-^3))}
		&\leq C_{p,q}\|\Bz\|_{X_{q,p}(1/2,3/4)}^2,
\end{align*}
because $\Fm(\bar{q},q)+3/4-1=1/(2q)>0$ and 
\begin{equation*}
	 p\left(\Fm\left(\bar{q},q\right)+\frac{1}{4}+\Fm\left(\bar{q},q\right)-1\right)
	=\frac{p}{q}-\frac{1}{4}>\frac{5p-4}{16}>\frac{p}{16}>1
\end{equation*}
by \eqref{141225_3} and the assumption: $3<q<16/5$.
We thus obtain 
\begin{equation}\label{161222_1}
	\|\BM_1(\eta)\Bv\|_{W_p^{1,3/4}(\BR_+,L_q(\BR_-^3))}+\|\BM_1(\eta)\Bv\|_{W_p^{1,1}(\BR_+,L_{\bar{q}}(\BR_-^3))} 
		\leq C_{p,q}\|\Bz\|_{X_{q,p}(1/2,3/4)}^2. 
\end{equation}
In addition, it is clear that by \eqref{141224_1} and H\"older's inequality
\begin{equation*}
	\|\BM_1(\eta(t))\Bv(t)\|_{L_r(\BR_-^3)}
		\leq 
			\|\nabla \eta(t)\|_{L_r(\BR_-^3)}\|\Bv(t)\|_{L_\infty(\BR_-^3)} 
		\leq
			C_{p,q}(t+2)^{-\Fm(\bar{q},r)-1/4}\|\Bz\|_{X_{q,p}(1/2,3/4)}^2
\end{equation*}
for $r\in\{q,2\}$ and any $t>0$, which furnishes that
\begin{equation*}
\|\BM_1(\eta)\Bv\|_{\BBA_1}\leq C_{p,q}\|\Bz\|_{X_{q,p}(1/2,3/4)}^2.
\end{equation*}
By combining the inequality with \eqref{141225_25} and \eqref{161222_1},
we obtain
\begin{equation}\label{170108_4}
	\|\Fg(\Bv,\eta)\|_{\CG\cap\wt\CG(3/4,1)\cap\BBA_1}\leq C_{p,q}\|\Bz\|_{X_{q,p}(1/2,3/4)}^2.
\end{equation}


{\bf Step 5: Estimates of $\SSG(\Bv,\eta)$.}
By Lemma \ref{lemm:embed} 
 and \eqref{141224_1}, it is clear that for $r\in\{q,2\}$
\begin{equation}\label{141226_11}
	\|\SSG(\Bv,\eta)\|_{\BBG}\leq C_{p,q}\|\Bz\|_{X_{q,p}(1/2,3/4)}^2
\end{equation}
with some positive constant $C_{p,q}$.
On the other hand, it holds that for any $t>0$
\begin{align*}
	&\|\nabla \eta(t)\cdot D_3\Bv(t)\|_{W_q^1(\BR_-^3)}
		\leq
			C_q\|\nabla \eta(t)\|_{W_q^1(\BR_-^3)}\|\nabla \Bv(t)\|_{W_q^1(\BR_-^3)} \\
		&\leq
			C_q(t+2)^{-\Fm(\bar{q},q)-1/4}\|\Bz\|_{X_{q,p}(1/2,3/4)} 
		\Big( (t+2)^{-\Fn(\bar{q},q)-1/8}\|\Bz\|_{X_{q,p}(1/2,3/4)} \\
		&\quad +(t+2)^{-1/2}\big\{(t+2)^{1/2}\|\nabla^2\Bv(t)\|_{L_q(\BR_-^3)}\big\}\Big)
\end{align*}
because $W_q^1(\BR_-^3)$ is a Banach algebra, and also that by H\"older's inequality
\begin{align*}
	&\|\nabla \eta(t)\cdot D_3\Bv(t)\|_{W_{\bar{q}}^1(\BR_-^3)}
		\leq 
			C_q\|\nabla \eta(t)\|_{W_q^1(\BR_-^3)}\|\nabla\Bv(t)\|_{W_q^1(\BR_-^3)} \\
		&\leq
			C_q(t+2)^{-\Fm(\bar{q},q)-1/4}\|\Bz\|_{X_{q,p}(1/2,3/4)}
			\Big((t+2)^{-\Fn(\bar{q},q)-1/8}\|\Bz\|_{X_{q,p}(1/2,3/4)} \\
		&\quad+(t+2)^{-1/2}\big\{(t+2)^{1/2}\|\nabla^2\Bv(t)\|_{L_q(\BR_-^3)}\big\}\Big).
\end{align*}
Then, noting that $\Fm(\bar{q},q)+3/4-1=1/(2q)>0$ and
\begin{equation*}
	p\left(\Fm\left(\bar{q},q\right)+\frac{1}{4}+\Fn\left(\bar{q},q\right)+\frac{1}{8}-1\right)
	=p\left(\frac{2}{q}-\frac{3}{8}\right)>\frac{p}{4}>1
\end{equation*}
by \eqref{141225_3} and the assumption: $3<q<16/5$, we have
\begin{align}\label{141226_13}
	&\|\nabla \eta\cdot D_3\Bv\|_{L_p^1(\BR_+,W_q^1(\BR_-^3))}+ \|\nabla \eta\cdot D_3\Bv\|_{L_p^1(\BR_+,W_{\bar{q}}^1(\BR_-^3))}\\
		&\leq
			C_{p,q}\|\Bz\|_{X_{q,p}(1/2,3/4)} 
			 \Big(\|(t+2)^{-\left(\Fm(\bar{q},q)+1/4+\Fn(\bar{q},q)+1/8-1\right)}\|_{L_p(\BR_+)}\|\Bz\|_{X_{q,p}(1/2,3/4)} \notag \\
		&\quad+\|(t+2)^{-\left(\Fm(\bar{q},q)+3/4-1\right)}\|_{L_\infty(\BR_+)}\|\nabla^2\Bv\|_{L_p^{1/2}(\BR_+,L_q(\BR_-^3))}\Big) \notag \\
		&\leq
			C_{p,q}\|\Bz\|_{X_{q,p}(1/2,3/4)}^2. \notag
\end{align}
In addition, since for $r\in\{q,2\}$ and any $t>0$
\begin{align*}
	&\|\nabla \eta(t)\cdot D_3\Bv(t)\|_{L_r(\BR_-^3)}
		\leq
			\|\nabla \eta(t)\|_{L_\infty(\BR_-^3)}\|\nabla\Bv(t)\|_{L_r(\BR_-^3)} \\
		&\leq
			C_q\|\nabla \eta(t)\|_{W_q^1(\BR_-^3)}\|\nabla\Bv(t)\|_{L_r(\BR_-^3)} \\
		&\leq
				C_q(t+2)^{-\Fm(\bar{q},q)-1/4-\Fn(\bar{q},r)-1/8}\|\Bz\|_{X_{q,p}(1/2,3/4)}^2
\end{align*} 
by H\"older's inequality and Sobolev's embedding theorem, which furnishes that
\begin{equation}\label{141226_17}
	\|\nabla \eta\cdot D_3\Bv\|_{\BBA_2}\leq C_{p,q}\|\Bz\|_{X_{q,p}(1/2,3/4)}^2,
\end{equation}
where we have used
\begin{equation*}
	\Fm\left(\bar{q},q\right)+\frac{1}{4}+\Fn\left(\bar{q},r\right)+\frac{1}{8}
		>\Fm\left(\bar{q},r\right)+\frac{1}{2}.
\end{equation*}
Concerning $\BBA_3$-norm, we shall calculate as follows: First, by H\"older's inequality
\begin{align*}
	&\|\nabla \eta(t)\cdot D_3\Bv(t)\|_{W_2^1(\BR_-^3)}
		\leq
			\|\nabla \eta(t)\|_{L_\infty(\BR_-^3)}\|\nabla \Bv(t)\|_{L_2(\BR_-^3)} \\
			&\quad +\|\nabla^2\eta(t)\|_{L_\infty(\BR_-^3)}\|\nabla\Bv(t)\|_{L_2(\BR_-^3)}
			+ \|\nabla\eta(t)\|_{L_\infty(\BR_-^3)}\|\nabla^2\Bv(t)\|_{L_2(\BR_-^3)}\\
		&\leq
			C_q\Big(\|\nabla \eta(t)\|_{W_q^1(\BR_-^3)}\|\nabla\Bv(t)\|_{W_2^1(\BR_-^3)}
			+\|\nabla^2\eta(t)\|_{W_q^1(\BR_-^3)}\|\nabla\Bv(t)\|_{L_2(\BR_-^3)}\Big) \\
		&\leq
			C_q\Big((t+2)^{-\Fm(\bar{q},q)-1/4-\Fn(\bar{q},2)-1/8}\|\Bz\|_{X_{q,p}(1/2,3/4)}^2 \\
			&\quad+ (t+2)^{-\Fm(\bar{q},q)-1/4}\|\Bz\|_{X_{q,p}(1/2,3/4)}\|\nabla^2\Bv(t)\|_{L_2(\BR_-^3)} \\
			&\quad+ (t+2)^{-3/4-\Fn(\bar{q},2)-1/8}\|\Bz\|_{X_{q,p}(1/2,3/4)}\big\{(t+2)^{3/4}\|\nabla^3 \eta(t)\|_{L_q(\BR_-^3)}\big\}\Big),
\end{align*}
which furnishes that
\begin{align}\label{141229_1}
	&\|\nabla \eta\cdot D_3\Bv\|_{L_p^{\Fm(\bar{q},2)+1/2}(\BR_+,W_2^1(\BR_-^3))}  \leq C_{p,q}\|\Bz\|_{X_{q,p}(1/2,3/4)} \\
		& \quad \cdot\Big(\|(t+2)^{-\left(\Fm(\bar{q},q)+1/4+\Fn(\bar{q},2)+1/8-\Fm(\bar{q},2)-1/2\right)}\|_{L_p(\BR_+)}\|\Bz\|_{X_{q,p}(1/2,3/4)}\notag \\
			&\quad+\|(t+2)^{-\left(\Fm(\bar{q},q)+1/4-\Fm(\bar{q},2)-1/2\right)}\|_{L_\infty(\BR_+)}\|\nabla^2\Bu\|_{L_p(\BR_+,L_2(\BR_-^3))}
			\notag \\
			&\quad+\|(t+2)^{-\left(3/4+\Fn(\bar{q},2)+1/8-\Fm(\bar{q},2)-1/2\right)}\|_{L_\infty(\BR_+)}
				\|\nabla^3 \eta\|_{L_p^{3/4}(\BR_+,L_q(\BR_-^3))}\Big) \notag \\
		&\leq
			C_{p,q}\|\Bz\|_{X_{q,p}(1/2,3/4)}^2, \notag
\end{align}
because by the assumption: $q<3<16/5$ and \eqref{141225_3}
\begin{align*}
	&\Fm\left(\bar{q},2\right)+\frac{1}{4}-\Fm\left(\bar{q},2\right)-\frac{1}{2}=\frac{1}{2}\left(1-\frac{3}{q}\right)>0, \\
	&\frac{3}{4}+\Fn\left(\bar{q},2\right)+\frac{1}{8}-\Fm\left(\bar{q},2\right)-\frac{1}{2}=\frac{1}{q}+\frac{1}{8}>0, \\
	&p\left(\Fm\left(\bar{q},q\right)+\frac{1}{4}+\Fn\left(\bar{q},2\right)+\frac{1}{8}-\Fm\left(\bar{q},2\right)-\frac{1}{2}\right) \\
		&>p\left(\Fm\left(\bar{q},q\right)+\frac{1}{4}+\Fn\left(\bar{q},2\right)+\frac{1}{8}-\Fn\left(\bar{q},2\right)-\frac{1}{2}\right)
		=p\left(\frac{1}{2q}+\frac{1}{8}\right)>1.
\end{align*}

Secondly, noting that $1-\Fm(\bar{q},q)-1/2=1/4-1/(2q)>0$ by $q>3$, we have by \eqref{141226_13} and \eqref{141229_1}
\begin{align}\label{141230_3}
	&\|\nabla \eta\cdot D_3\Bv\|_{\BBA_3}
		=
			\|\nabla\eta\cdot D_3\Bv\|_{L_p^{\Fm(\bar{q},q)+1/2}(\BR_+,W_q^1(\BR_-^3))} 
			+\|\nabla\eta\cdot D_3\Bv\|_{L_p^{\Fm(\bar{q},2)+1/2}(\BR_+,W_2^1(\BR_-^3))} \\
		&\leq
			\|(t+2)^{-(1-\Fm(\bar{q},q)-1/2)}\|_{L_\infty(\BR_+)}\|\nabla\eta\cdot D_3\Bv\|_{L_p^1(\BR_+,W_q^1(\BR_-^3))} 
			+\|\nabla\eta\cdot D_3\Bv\|_{L_p^{\Fm(\bar{q},2)+1/2}(\BR_+,W_2^1(\BR_-^3))} \notag \\
		&\leq C_{p,q}\|\Bz\|_{X_{q,p}(1/2,3/4)}^2. \notag
\end{align}
Summing up, \eqref{141224_1}, \eqref{141226_11}, \eqref{141226_13}, \eqref{141226_17}, and \eqref{141230_3}, we have
\begin{align}\label{170108_5}
	\|\SSG(\Bv,\eta)\|_{\BBG\cap\wt{\BBG}(3/4,1)\cap\BBA_2\cap\BBA_3} 
		&\leq
			C_{p,q}\left\|\frac{1}{1+D_3 \eta}\right\|_{W_\infty^1(\BR_-^3)}
			\|\nabla \eta\cdot D_3\Bv(t)\|_{\BBG\cap\wt{\BBG}(3/4,1)\cap\BBA_2\cap\BBA_3}  \\
		&\leq
			C_{p,q}\|\Bz\|_{X_{q,p}(1/2,3/4)}^2. \notag
\end{align}

{\bf Step 6: Estimates of $\SSH(\Bv,\eta)$.}
In this step, we suppose that
$\CA$ is any of the following terms:
\begin{equation*}
	v_1,\,v_2,\,v_3,\,D_1 \eta,\,D_2 \eta,\,D_3 \eta,
\end{equation*}
while $\CB$ is any of the following terms:
\begin{align*}
	\frac{|\nabla' \eta|^2}{(1+\sqrt{1+|\nabla'\eta|^2})\sqrt{1+|\nabla'\eta|^2}},
	\,\frac{D_i\eta D_j \eta}{(1+|\nabla' \eta|^2)^{3/2}},
	\,D_i \eta,\,D_i \eta D_j \eta
\end{align*}
for $i,j=1,2,3$. Then we have 

\begin{lemm}\label{lemm:taylor}
Let $\CA$ and $\CB$ be as above, and exponents $p,q$ satisfy \eqref{pq}.
Then, there exists a positive number $0<\ep_2<1$ such that
for any $\Bz=(\Bv,\Fq,h,\eta)\in X_{q,p}(\ep_2;1/2,3/4)$ the following assertions hold.
\begin{enumerate}[$(1)$]
	\item\label{lemm:taylor_1}
		There exists a $C_{p,q}>0$, independent of $\Bz$, such that
		\begin{alignat*}{2}
			\|\CA\|_{L_\infty^{\Fm(\bar{q},q)}(\BR_+,L_q(\lhs))}  &\leq C_{p,q}\|\Bz\|_{X_{q,p}(1/2,3/4)}, \quad
			&\|\nabla \CA\|_{L_\infty^{\Fn(\bar{q},q)+1/8}(\BR_+,L_q(\BR_-^3))} &\leq C_{p,q}\|\Bz\|_{X_{q,p}(1/2,3/4)}, \\
			\|\CB\|_{L_\infty(\BR_+,W_\infty^1(\BR_-^3))} &\leq C_{p,q}\|\Bz\|_{{X_{q,p}(1/2,3/4)}}, \quad
			&\|\CB\|_{L_\infty^{\Fm(\bar{q},q)+1/4}(\BR_+,W_q^1(\BR_-^3))}  &\leq C_{p,q}\|\Bz\|_{X_{q,p}(1/2,3/4)}.
		\end{alignat*}
	\item\label{lemm:taylor_3}
		There exists a $C_{p,q}>0$, independent of $\Bz$, such that
		\begin{equation*}
			\|\CA\CB\|_{W_{r,p}^{2,1}(\lhs\times\BR_+)} 
				\leq C_{p,r}\|\Bz\|_{X_{q,p}(1/2,3/4)}^2, \quad
			\|\CA\nabla\CB\|_{W_p^1(\BR_+,L_r(\lhs))}
				\leq C_{p,r}\|\Bz\|_{X_{q,p}(1/2,3/4)}^2.
		\end{equation*}
	\item\label{lemm:taylor_4}
		There exists a $C_{p,q}>0$, independent of $\Bz$, such that
		\begin{equation*}
			\|(t+2)^{3/4}\CA\CB\|_{W_{q,p}^{2,1}(\BR_-^3\times\BR_+)} 
				\leq C_{p,q}\|\Bz\|_{X_{q,p}(1/2,3/4)}^2, \quad
			\|\CA\nabla\CB\|_{W_p^{1,3/4}(\BR_+,L_q(\lhs))} 
				\leq C_{p,q}\|\Bz\|_{X_{q,p}(1/2,3/4)}^2,  
		\end{equation*}
			and also that
		\begin{equation*}
			\|(t+2)\CA\CB\|_{W_{\bar{q},p}^{2,1}(\BR_-^3\times\BR_+)} 
				\leq C_{p,q}\|\Bz\|_{X_{q,p}^{1/2,3/4}}^2, \quad
			\|\CA\nabla\CB\|_{W_p^{1,1}(\BR_+,L_{\bar{q}}(\lhs))} 
				\leq C_{p,q}\|\Bz\|_{X_{q,p}(1/2,3/4)}^2.
		\end{equation*}
	\item\label{lemm:taylor_5}
		There exists a $C_{p,q}>0$, independent of $\Bz$, such that	
		\begin{align*}
			\|(\nabla\CA)\CB\|_{L_p^{\Fm(\bar{q},q)+1/2}(\BR_+,W_q^1(\lhs))}
				&\leq C_{p,q}\|\Bz\|_{X_{q,p}(1/2,3/4)}^2, \\
			\|(\nabla\CA)\CB\|_{L_p^{\Fm(\bar{q},2)+1/2}(\BR_+,W_2^1(\lhs))}
				&\leq C_{p,q}\|\Bz\|_{X_{q,p}(1/2,3/4)}^2.
		\end{align*}
\end{enumerate}
\end{lemm}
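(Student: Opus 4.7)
The four assertions are nonlinear versions of the estimates carried out in Steps 1--5 above for the individual forcing terms $\SSF,\Fg,\SSG,\SSH,\SSK$, and they reduce to (i) the decay bounds built into $\CN_q(\Bz)$ and $\CN_2(\Bz)$, (ii) the a priori $L_\infty(\BR_+,W_\infty^1(\lhs))$-bound on $(\Bv,\nabla\eta)$ furnished by \eqref{141224_1}, (iii) the Banach-algebra property of $W_q^1(\lhs)$ for $q>3$, and (iv) H\"older's inequality in space and time together with the exponent bookkeeping of \eqref{141225_3}.

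For part \eqref{lemm:taylor_1}, the bounds on $\CA$ are immediate. When $\CA\in\{v_1,v_2,v_3\}$ they are literally summands of $\CN_q(\Bz)$, while for $\CA=D_i\eta$ one uses the inclusion $\nabla\eta\in L_\infty^{\Fm(\bar q,q)+1/4}(\BR_+,W_q^1(\lhs))$ together with the elementary inequalities $\Fm(\bar q,q)+1/4\geq \Fm(\bar q,q)$ and, for $q>8/3$, $\Fm(\bar q,q)+1/4\geq \Fn(\bar q,q)+1/8$. For $\CB$, each of the listed expressions is a smooth function of $\nabla\eta$ vanishing to order $\geq 1$ at the origin; choosing $\ep_2$ so small that $\|\nabla\eta\|_{L_\infty(\BR_+,W_\infty^1(\lhs))}\leq M\ep_2\leq 1/2$ via \eqref{141224_1} keeps the denominators $(1+|\nabla'\eta|^2)^{\pm 1}$ uniformly bounded, and a Taylor expansion combined with the Banach-algebra property of $W_q^1(\lhs)$ and $W_\infty^1(\lhs)$ reduces the $W_q^1$- and $W_\infty^1$-bounds on $\CB$ to the already-established bounds on $\nabla\eta$.

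For parts \eqref{lemm:taylor_3}--\eqref{lemm:taylor_5}, the strategy is the product rule combined with pointwise-in-time H\"older estimates: in each product one factor is placed in $L_\infty(\lhs)$, supplied either by $\CB$ via part \eqref{lemm:taylor_1} or by $\CA$ through the Sobolev embedding $W_q^1\hookrightarrow L_\infty$, while the companion factor is placed in the Lebesgue space dictated by the target norm. Time derivatives are distributed by the Leibniz rule, so that $\pa_t\CA$ is either $\pa_t\Bv$, controlled by $\CM_{r,p}(\Bz)$, or $\pa_t D_i\eta=D_i\pa_t\eta$, controlled by the summand $\|\nabla\pa_t\eta\|_{L_\infty^{\Fm(\bar q,q)+1/2}(L_q)}$ of $\CN_q(\Bz)$; the factor $\pa_t\CB$ is handled by the chain rule and a second application of part \eqref{lemm:taylor_1}. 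Second spatial derivatives come from $\nabla^2\Bv$ and $\nabla^3\eta$, supplied by $\CM_{q,p}(\Bz)$ and $\CN_{q,p}(\Bz;1/2,3/4)$ respectively. After the pointwise bound, integration in time reduces each claim to an elementary $p$-th power integrability condition of the form $p(a+b-c)>1$, with $a,b$ the pointwise decay exponents of the two factors and $c$ the time weight on the left-hand side; every such inequality is a repetition of the checks performed just after \eqref{141225_3} using $p>32$ and $3<q<16/5$.

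The main obstacle is purely bookkeeping: the combinations of the eight target norms with the six choices of $\CA$ and the four families of $\CB$ yield many sub-cases, and in several of them, in particular the $L_{\bar q}$-based estimates in parts \eqref{lemm:taylor_3}--\eqref{lemm:taylor_4}, one must interpolate carefully between the $L_q$- and $L_2$-decay rates in $\CN_q(\Bz)$ and $\CN_2(\Bz)$, exactly as in the Sobolev/H\"older interpolation used to derive \eqref{141224_12} and \eqref{141229_1}. A delicate point is that, for the maximal-regularity statements in parts \eqref{lemm:taylor_3}--\eqref{lemm:taylor_4}, the factor $\nabla^2\Bv$ is only controlled in $L_p^{1/2}(L_q)$ while the target weight is strictly larger than $1/2$; the extra decay is recovered by pairing $\nabla^2\Bv$ with $\CB$, whose own time weight $\Fm(\bar q,q)+1/4$ precisely compensates the deficit. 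No new idea is required beyond those already invoked in Steps 1--5, provided $\ep_2$ is taken small enough both to bound the nonlinear denominators and to absorb the algebra constants coming from the Taylor expansion of $\CB$.
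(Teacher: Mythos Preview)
Your proposal is correct and follows essentially the same approach as the paper: part~\eqref{lemm:taylor_1} is handled by Taylor expansion of the nonlinear denominators together with the a~priori bound \eqref{141224_1}, and parts~\eqref{lemm:taylor_3}--\eqref{lemm:taylor_5} are reduced, via the product rule, H\"older's inequality, Sobolev embedding, and the Banach-algebra property of $W_q^1$, to the same exponent bookkeeping already carried out in Steps~1--5. The paper in fact omits the details for \eqref{lemm:taylor_3}--\eqref{lemm:taylor_5}, merely referring back to Step~5, so your outline is, if anything, more explicit than the original.
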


\begin{proof}
(1). The first and second inequalities are clear,
so that we here prove the other inequalities.
We use the following expansions:
\begin{align*}
	&\frac{1}{\sqrt{1+x}}=1-\frac{x}{2}+O(x^2)\quad \text{as $x \to 0$}, \quad
	\frac{1}{1+\sqrt{1+x}}=\frac{1}{2}-\frac{x}{8}+O(x^2)\quad\text{as $x\to 0$}, \displaybreak[0] \\
	&\frac{1}{(1+x)^{3/2}}=1-\frac{3}{2}x+O(x^2)\quad \text{as $x\to0$}.
\end{align*}
Combining the above expansions with \eqref{141224_1}, we see that
there exist positive constants $C$ and $0<\ep_3<1$ such that
\begin{align}\label{141231_1}
	&\left\|\frac{1}{\sqrt{1+|\nabla' \eta|^2}}\right\|_{L_\infty(\BR_+,W_\infty^1(\BR_-^3))} \leq C, \quad
	\left\|\frac{1}{1+\sqrt{1+|\nabla' \eta|^2}}\right\|_{L_\infty(\BR_+,W_\infty^1(\BR_-^3))} \leq C, \\
	&\left\|\frac{1}{(1+|\nabla' \eta|^2)^{3/2}}\right\|_{L_\infty(\BR_+,W_\infty^1(\BR_-^3))} \leq C, \notag
\end{align}
if $\eta(x,t)$ satisfies $\|\nabla \eta\|_{L_\infty(\BR_+,W_\infty^1(\BR_-^3))}\leq \ep_3$.
We thus obtain, by using \eqref{141224_1}
and choosing $\ep_2$ small enough,
\begin{align*}
	&\left\|\frac{|\nabla' \eta|^2}{(1+\sqrt{1+|\nabla' \eta|^2})\sqrt{1+|\nabla' \eta|^2}}\right\|_{L_\infty(\BR_+,W_\infty^1(\BR_-^3))}
	\displaybreak[0] \\
	&\leq
		\left\|\frac{1}{1+\sqrt{1+|\nabla' \eta|^2}}\right\|_{L_\infty(\BR_+,W_\infty^1(\BR_-^3))}
		\left\|\frac{1}{\sqrt{1+|\nabla' \eta|^2}}\right\|_{L_\infty(\BR_+,W_\infty^1(\BR_-^3))} 
		\|\nabla \eta\|_{L_\infty(\BR_+,W_\infty^1(\BR_-^3))}^2 \\
	&\leq
		C(M\|\Bz\|_{X_{q,p}(1/2,3/4)})^2\leq CM^2\ep_2\|\Bz\|_{X_{q,p}(1/2,3/4)}
\end{align*}
for any $\Bz=(\Bv,\Fq,h,\eta)\in X_{q,p}(\ep_2;1/2,3/4)$, and also it similarly holds that
\begin{align*}
	\left\|\left(\frac{D_ i\eta D_j \eta}{(1+|\nabla' \eta|^2)^{3/2}},D_i \eta, D_i \eta D_j \eta\right)\right\|_{L_\infty(\BR_+,W_\infty^1(\BR_-^3))}
	\leq C_{p,q}\|\Bz\|_{X_{q,p}(1/2,3/4)},
\end{align*}
which completes the third inequality of (1).

Next, we show the last inequality. By using \eqref{141224_1} and \eqref{141231_1},
we have, for any $t>0$ and $\Bz=(\Bv,\Fq,h,\eta)\in X_{q,p}(\ep_2;1/2,3/4)$,
\begin{align*}
	&\left\|\frac{|\nabla' \eta(t)|^2}{(1+\sqrt{1+|\nabla' \eta(t)|^2})\sqrt{1+|\nabla' \eta(t)|^2}}\right\|_{W_q^1(\BR_-^3)}
	\displaybreak[0] \\
		&\leq
		\left\|\frac{1}{1+\sqrt{1+|\nabla' \eta|^2}}\right\|_{L_\infty(\BR_+,W_\infty^1(\BR_-^3))}
		\left\|\frac{1}{\sqrt{1+|\nabla' \eta|^2}}\right\|_{L_\infty(\BR_+,W_\infty^1(\BR_-^3))} 
		\|\nabla \eta\|_{L_\infty(\BR_+,W_\infty^1(\BR_-^3))}\|\nabla \eta(t)\|_{W_q^1(\BR_-^3)} \\
		&\leq
		CM\|\Bz\|_{X_{q,p}(1/2,3/4)}(t+2)^{-\Fm(\bar{q},q)-1/4}\big\{(t+2)^{\Fm(\bar{q},q)+1/4}\|\nabla \eta(t)\|_{W_q^1(\BR_-^3)}\big\} \\
		&\leq
			C_{p,q}(t+2)^{-\Fm(\bar{q},q)-1/4}\|\Bz\|_{X_{q,p}(1/2,3/4)},
\end{align*}
which furnishes that
\begin{equation*}
	\left\|\frac{|\nabla' \eta|^2}{(1+\sqrt{1+|\nabla' \eta|^2})\sqrt{1+|\nabla' \eta|^2}}
	\right\|_{L_\infty^{\Fm(\bar{q},q)+1/4}(\BR_+,W_q^1(\BR_-^3))}
	\leq C_{p,q}\|\Bz\|_{X_{q,p}(1/2,3/4)}.
\end{equation*}
Analogously, we have
\begin{equation*}
		\left\|\left(\frac{D_ i\eta D_j \eta}{(1+|\nabla' \eta|^2)^{3/2}},D_i \eta, D_i \eta D_j \eta\right)
		\right\|_{L_\infty^{\Fm(\bar{q},q)+1/4}(\BR_+,W_q^1(\BR_-^3))} 
			\leq C_{p,q}\|\Bz\|_{X_{q,p}(1/2,3/4)}
\end{equation*}
for $i,j=1,2,3$, 
which completes the last inequality of (1).

(2). 
By direct calculations, we can prove the required estimates, so that
we may omit the proof here.

(3), (4). 
By the inequalities obtained in (1), we can prove that
the required inequalities hold in the same manner as in the case of  Step 5, 
so that we omit the detailed proof. 
\end{proof}

In what follows, we additionally assume that $\ep_1\in(0,\ep_2)$
in order to use Lemma \ref{lemm:taylor}.
Since it holds that $(\nabla \CA)\CB = \nabla (\CA\CB)-\CA(\nabla \CB)$,
we observe that
\begin{equation*}
	\|(\nabla \CA)\CB\|_{H_p^{1/2}(\BR_+,L_s(\lhs))} 
		 \leq \|\CA\CB\|_{H_p^{1/2}(\BR_+,W_s^1(\lhs))}
		+\|\CA(\nabla \CB)\|_{H_p^{1/2}(\BR_+,L_s(\lhs))},
\end{equation*}
where $s\in\{q,2,\bar{q}\}$.
Combining this inequality with Lemma \ref{lemm:embed} and
\begin{equation*}\label{161223_7}
	W_p^1(\BR_+,L_s(\lhs))  \hookrightarrow
	H_p^{1/2}(\BR_+,L_s(\lhs))
\end{equation*}
furnishes that
\begin{equation*}
	\|(\nabla \CA)\CB\|_{H_p^{1/2}(\BR_+,L_r(\lhs))} 
		\leq C_{p,q}
		\left(\|\CA\CB\|_{W_{q,p}^{2,1}(\BR_-^3\times\BR_+)}+\|\CA(\nabla \CB)\|_{W_p^1(\BR_+,L_r(\lhs))}\right). \notag
\end{equation*}
Thus, we have, by Lemma \ref{lemm:taylor} \eqref{lemm:taylor_3},
\begin{equation*}
	\|(\nabla \CA)\CB\|_{H_p^{1/2}(\BR_+,L_r(\lhs))}
		\leq C_{p,q}\|\Bz\|_{X_{q,p}(1/2,3/4)}^2,
\end{equation*}
while we have, by Lemma \ref{lemm:taylor} \eqref{lemm:taylor_1},
\begin{equation*}
	\|(\nabla \CA)\CB\|_{L_p(\BR_+,W_r^1(\BR_-^3))} 
	\leq
		C_{p,q}\|\nabla \CA\|_{L_p(\BR_+,W_r^1(\BR_-^3))}\|\CB\|_{L_\infty(\BR_+,W_\infty^1(\BR_-^3))} 
	\leq
		C_{p,q}\|\Bz\|_{{X_{q,p}(1/2,3/4)}}^2.
\end{equation*}
These two inequalities imply that $\|\SSH(\Bv,\eta)\|_{\BBH}\leq C_{p,q}\|\Bz\|_{X_{q,p}(1/2,3/4)}^2$.

Next, we consider the estimates of $\wt{\BBH}(3/4,1)$-norm and $\BBA_3$-norm.
As mentioned above, we see that
\begin{align*}
	&\|(t+2)^{3/4}(\nabla \CA)\CB\|_{H_p^{1/2}(\BR_+,L_q(\lhs))} \\
		&\leq C_{p,q}\left(\|(t+2)^{3/4}\CA\CB\|_{W_{q,p}^{2,1}(\lhs\times\BR_+)}
		+\|(t+2)^{3/4}\CA(\nabla\CB)\|_{W_p^1(\BR_+,L_q(\lhs))}\right),
\end{align*}
which, combined with Lemma \ref{lemm:taylor} \eqref{lemm:taylor_4}, furnishes that
\begin{equation*}
	\|(t+2)^{3/4}(\nabla \CA)\CB\|_{H_p^{1/2}(\BR_+,L_q(\lhs))}\leq C_{p,q}\|\Bz\|_{X_{q,p}(1/2,3/4)}^2.
\end{equation*}
It similarly holds that
\begin{equation*}
	\|(t+2)(\nabla\CA)\CB\|_{H_p^{1/2}(\BR_+,L_{\bar{q}}(\lhs))} \leq C_{p,q}\|\Bz\|_{X_{q,p}(1/2,3/4)}^2.
\end{equation*}
On the other hand, by Lemma \ref{lemm:taylor} \eqref{lemm:taylor_1},
\begin{align*}
	\|(t+2)^{3/4}(\nabla\CA)\CB\|_{L_p(\BR_+,W_q^1(\lhs))}
		&\leq C_{p,q}\|\Bz\|_{X_{q,p}(1/2,3/4)}^2, \\	
	\|(t+2)(\nabla\CA)\CB\|_{L_p(\BR_+,W_{\bar{q}}^1(\lhs))}
		&\leq C_{p,q}\|\Bz\|_{X_{q,p}(1/2,3/4)}^2. 
\end{align*}
We thus see that $\|\SSH(\Bv,\eta)\|_{\wt\BBH(3/4,1)} \leq C_{p,q}\|\Bz\|_{X_{q,p}(1/2,3/4)}^2$.
Concerning $\BBA_3$-norm, it follows from Lemma \ref{lemm:taylor} \eqref{lemm:taylor_5} that
$\|(\nabla\CA)\CB\|_{\BBA_3}\leq C_{p,q}\|\Bz\|_{X_{q,p}(1/2,3/4)}^2$. 
Hence,
\begin{equation}\label{170108_6}
	\|\SSH(\Bv,\eta)\|_{\BBH\cap\wt\BBH(3/4,1)\cap \BBA_3}\leq C_{p,q}\|\Bz\|_{X_{q,p}(1/2,3/4)}^2.
\end{equation}

{\bf Step 7: Estimates of $K(\Bv,\eta)$.}
Let $\Bv'\cdot\nabla'\eta=\sum_{j=1}^2v_j\pa_j \eta$.
By \eqref{141224_1}, it is clear that for $r\in \{q,2\}$
\begin{align*}
	\|\Bv'\cdot \nabla'\eta\|_{W_{r,p}^{2,1}(\BR_-^3\times\BR_+)}
		\leq C_{p,q}\|\Bz\|_{X_{q,p}(1/2,3/4)}^2.
\end{align*}
On the other hand, it follows from H\"older's inequality and Sobolev's embedding theorem that for any $t>0$
\begin{align*}
	&\|\Bv'(t)\cdot\nabla' \eta(t)\|_{W_q^2(\BR_-^3)}
		\leq
			C_q\Big(\|\Bv(t)\|_{L_q(\BR_-^3)}\|\nabla \eta(t)\|_{L_\infty(\BR_-^3)} \\
			&\quad+\|\nabla\Bv(t)\|_{L_q(\BR_-^3)}\|\nabla\eta(t)\|_{L_\infty(\BR_-^3)}
			+\|\nabla \Bv(t)\|_{L_q(\BR_-^3)}\|\nabla^2\eta(t)\|_{L_\infty(\BR_-^3)} \\
			&\quad+\|\nabla^2\Bv(t)\|_{L_q(\BR_-^3)}\|\nabla\eta(t)\|_{L_\infty(\BR_-^3)}
			+\|\Bv(t)\|_{L_\infty(\BR_-^3)}\|\nabla^3\eta(t)\|_{L_q(\BR_-^3)}\Big) \displaybreak[0] \\
		&\leq
			C_q\Big(\|\Bv(t)\|_{W_q^1(\BR_-^3)}\|\nabla\eta(t)\|_{W_q^1(\BR_-^3)}
			+\|\Bv(t)\|_{W_q^1(\BR_-^3)}\|\nabla^3\eta(t)\|_{L_q(\BR_-^3)} \\
			&\quad+\|\nabla^2\Bv(t)\|_{L_q(\BR_-^3)}\|\nabla\eta(t)\|_{W_q^1(\BR_-^3)}\Big) \displaybreak[0] \\
		&\leq
			C_q\Big((t+2)^{-\Fm(\bar{q},q)-\Fm(\bar{q},q)-1/4}\|\Bz\|_{X_{q,p}(1/2,3/4)}^2 \\
			&\quad+(t+2)^{-\Fm(\bar{q},q)-3/4}\|\Bz\|_{X_{q,p}(1/2,3/4)}\big\{(t+2)^{1/2}\|\nabla^2\Bu(t)\|_{L_q(\BR_-^3)}\big\} \\
			&\quad+(t+2)^{-\Fm(\bar{q},q)-3/4}\|\Bz\|_{X_{q,p}(1/2,3/4)}\big\{(t+2)^{3/4}\|\nabla^3 \eta(t)\|_{L_q(\BR_-^3)}\big\}\Big),
\end{align*}		
and furthermore, it similarly holds that for any $t>0$
\begin{align*}
	&\|\Bv'(t)\cdot\nabla'\eta(t)\|_{W_{\bar{q}}^2(\BR_-^3)}
		\leq
			C\Big((t+2)^{-\Fm(\bar{q},q)-\Fm(\bar{q},q)-1/4}\|\Bz\|_{X_{q,p}(1/2,3/4)}^2 \\
			&\quad+(t+2)^{-\Fm(\bar{q},q)-3/4}\|\Bz\|_{X_{q,p}(1/2,3/4)}\big\{(t+2)^{1/2}\|\nabla^2\Bu(t)\|_{L_q(\BR_-^3)}\big\} \\
			&\quad+(t+2)^{-\Fm(\bar{q},q)-3/4}\|\Bz\|_{X_{q,p}(1/2,3/4)}\big\{(t+2)^{3/4}\|\nabla^3 \eta(t)\|_{L_q(\BR_-^3)}\big\}\Big).
\end{align*}
Then, noting that by \eqref{141225_3} and the assumption: $3<q<16/5$
\begin{equation*}
	p\left(m\left(\frac{q}{2},q\right)+m\left(\frac{q}{2},q\right)+\frac{1}{4}-\frac{3}{4}\right)
		=\frac{p}{q}>\frac{5p}{16}>1,
\end{equation*}
we have
\begin{align*}
	\|\Bv'\cdot\nabla'\eta\|_{L_p^{3/4}(\BR_+,W_q^2(\BR_-^3))} 
		&\leq
			C\|\Bz\|_{X_{q,p}(1/2,3/4)}\Big(\|(t+2)^{-\left(\Fm(\bar{q},q)+\Fm(\bar{q},q)+1/4-3/4\right)}\|_{L_p(\BR_+)}
			\|\Bz\|_{X_{q,p}(1/2,3/4)} \displaybreak[0] \\
			&\quad+\|(t+2)^{-\Fm(\bar{q},q)}\|_{L_\infty(\BR_+)}\|\nabla^2\Bu\|_{L_p^{1/2}(\BR_+,L_q(\BR_-^3))}
			\displaybreak[0] \\
			&\quad+\|(t+2)^{-\Fm(\bar{q},q)}\|_{L_\infty(\BR_+)}\|\nabla^3 \eta\|_{L_p^{3/4}(\BR_+,L_q(\BR_-^3))}\Big) \\
		&\leq C\|\Bz\|_{X_{q,p}(1/2,3/4)}^2.
\end{align*}
Analogously, we have 
\begin{equation*}
	\|\Bv'\cdot\nabla'\eta\|_{L_p^{1}(\BR_+,W_{\bar{q}}^2(\BR_-^3))}\leq C\|\Bz\|_{X_{q,p}(1/2,3/4)}^2,
\end{equation*}
because $\Fm(\bar{q},q)+3/4-1=1/(2q)>0$ and 
\begin{equation*}
	p\left(\Fm\left(\bar{q},q\right)+\Fm\left(\bar{q},q\right)+\frac{1}{4}-1\right)=p\left(\frac{1}{q}-\frac{1}{4}\right)
	>\frac{p}{16}>1
\end{equation*}
by \eqref{141225_3} and the assumption: $3<q<16/5$.

Next, we consider $\BBA_4$ and $\BBB_1$-norm.
Since it holds that 
%
%
\begin{align*}
	&\|\Bv'\cdot\nabla'\eta\|_{\BBA_4} 
		= \|\Bv'\cdot\nabla' \eta\|_{L_\infty^{\Fm(\bar{q},q)+1/2}(\BR_+,\wh W_q^1(\lhs))\cap L_\infty^{\Fm(\bar{q},2)+1/2}(\BR_+,\wh W_2^1(\lhs))} \\
		&\leq \|\Bv'\cdot\nabla' \eta\|_{L_\infty^{\Fm(\bar{q},q)+1/2}(\BR_+,W_q^1(\lhs))\cap L_\infty^{\Fm(\bar{q},2)+1/2}(\BR_+,W_2^1(\lhs))}
\end{align*}
and that
\begin{align*}
	&\|\Bv'\cdot\nabla'\eta\|_{\BBB_1} = \|\Bv'\cdot\nabla'\eta\|_{L_\infty^{\Fm(\bar{q},q)}(\BR_+,L_q(\BR_0^3))\cap L_\infty^{\Fm(\bar{q},2)}(\BR_+,L_2(\BR_0^3))} \\
		&\leq 
			C_q\|\Bv'\cdot\nabla'\eta\|_{L_\infty^{\Fm(\bar{q},q)+1/2}(\BR_+,W_q^1(\BR_-^3))\cap L_\infty^{\Fm(\bar{q},2)+1/2}(\BR_+,W_2^1(\BR_-^3))},
\end{align*}
it is enough to show the following estimate:
\begin{equation}\label{161226_1}
	\|\Bv'\cdot\nabla'\eta\|_{L_\infty^{\Fm(\bar{q},r)+1/2}(\BR_+,W_r^1(\lhs))}
		\leq C_{p,q,r}\|\Bz\|_{X_{q,p}(1/2,3/4)}^2.
\end{equation}
We observe that, by Sobolev's embedding theorem,
\begin{align*}
	&\|\Bv'(t)\cdot\nabla'\eta(t)\|_{W_r^1(\lhs)} \\
		&\leq \|\Bv\|_{L_\infty(\lhs)}\|\nabla\eta\|_{L_r(\lhs)}+\|\nabla \Bv\|_{L_r(\lhs)}\|\nabla\eta\|_{L_\infty(\lhs)}
		+\|\Bv\|_{L_\infty(\lhs)}\|\nabla^2\eta\|_{L_r(\lhs)} \\
		&\leq C_r\left(\|\nabla \Bv\|_{L_r(\lhs)}\|\nabla\eta\|_{W_q^1(\lhs)}+\|\Bv\|_{W_q^1(\lhs)}\|\nabla^2\eta\|_{L_r(\lhs)}\right) \\
		&\leq C_{q,r}\left((t+2)^{-\Fn(\bar{q},r)-1/8-\Fm(\bar{q},q)-1/4}+(t+2)^{-\Fm(\bar{q},q)-\Fm(\bar{q},r)-1/4}\right)\|\Bz\|_{X_{q,p}(1/2,3/4)}^2 \\
		&\leq C_{q,r}(t+2)^{-\Fm(\bar{q},q)-\Fm(\bar{q},r)-1/4}\|\Bz\|_{X_{q,p}(1/2,3/4)}^2,
\end{align*}
which, combined with
\begin{equation*}
	\Fm(\bar{q},q)+\Fm(\bar{q},r)+\frac{1}{4} =\Fm(\bar{q},r)+\frac{1}{2}+\frac{1}{2q},
\end{equation*}
implies \eqref{161226_1}. Thus, we obtain
$\|\Bv'\cdot\nabla'\eta\|_{\BBA_4\cap \BBB_1}\leq C_{p,q}\|\Bz\|_{X_{q,p}(1/2,3/4)}^2$.

Finally, we consider $\BBB_2(\te)$-norm with $0< \te< 1$.
It is clear that, by the trace theorem and H\"older's inequality,
\begin{equation}\label{161226_5}
	\|\Bv'(t)\cdot\nabla'\eta(t)\|_{L_{q(\te)}(\BR_0^3)} \leq \|\Bv'(t)\cdot\nabla'\eta(t)\|_{W_{q(\te)}^1(\lhs)} 
	\leq C\|\Bv(t)\|_{W_{2q(\te)}^1(\lhs)}\|\nabla\eta(t)\|_{W_{2q(\te)}^1(\lhs)}. 
\end{equation}
Since $2< 2 q(\te) <q$, there exists a positive number $\al_\te\in(0,1)$ such that
$2q(\te) = 2\al_\te + q(1-\al_\te)$. 
Thus, 
\begin{equation*}
	\|\Bv(t)\|_{W_{2q(\te)}^1(\lhs)} \leq \|\Bv(t)\|_{W_2^1(\lhs)}^{\al_\te}\|\Bv(t)\|_{W_q^1(\lhs)}^{1-\al_\te}
		\leq C_{p,q}\|\Bz\|_{X_{q,p}(1/2,3/4)},
\end{equation*}
and also
	$\|\nabla\eta(t)\|_{W_{2q(\te)}^1(\lhs)} \leq C_{p,q}\|\Bz\|_{X_{q,p}(1/2,3/4)}$.
Combining these two inequalities with \eqref{161226_5} furnishes
\begin{equation*}
	\|\Bv'\cdot\nabla\eta\|_{L_\infty(\BR_+,L_{q(\te)}(\BR_0^3))} \leq C_{p,q}\|\Bz\|_{X_{q,p}(1/2,3/4)}^2.
\end{equation*}
On the other hand, it is clear that
\begin{equation*}
	\|\Bv'\cdot\nabla\eta\|_{L_\infty(\BR_+,L_{2}(\BR_0^3))} \leq C_{p,q}\|\Bz\|_{X_{q,p}(1/2,3/4)}^2,
\end{equation*}
and thus
\begin{equation*}
	\|\Bv'\cdot\nabla\eta\|_{\BBB_2(\te)} \leq C_{p,q}\|\Bz\|_{X_{q,p}(1/2,3/4)}^2.
\end{equation*}
Summing up the above estimates, we have
\begin{equation}\label{170108_7}
	\|\SSK(\Bv,\eta)\|_{\BBK\cap\wt\BBK(3/4,1)\cap\BBA_4\cap\BBB_1\cap\BBB(\te)}\leq C_{p,q}\|\Bz\|_{X_{q,p}(1/2,3/4)}^2.
\end{equation}


Thus, \eqref{170108_1}, \eqref{170108_2}, \eqref{170108_3},
\eqref{170108_4}, \eqref{170108_5}, \eqref{170108_6}, and \eqref{170108_7}
imply \eqref{160902_10}.
For given $\bar{\Bz}=(\bar{\Bv},\bar{\Fq},\bar{h},\bar{\eta})\in {}_0X_{q,p}(\ep_1;1/2,3/4)$,
we have, by \eqref{141224_10}, \eqref{141224_11}, and Theorem \ref{theo:main1},
a unique solution $\Bz=(\Bv,\Fq,h,\eta)$ to
\begin{equation*}
	\left\{\begin{aligned}
		\pa_t\Bv -\Di\BT(\Bv,\Fq) =\SSF(\bar\Bv,\bar\eta)&
			&& \text{in $\BR_-^3$, $t>0$,}  \\
		\di\Bv = \SSG(\bar\Bv,\bar\eta) = \di\Fg(\bar\Bv,\bar\eta)&
			&& \text{in $\BR_-^3$, $t>0$,} \\
		\BT(\Bv,\Fq)\Be_3-(c_g-c_\si\De')h\Be_3 =\SSH(\bar\Bv,\bar \eta)&
			&& \text{on $\BR_0^3$, $t>0$,} \\
		\pa_t h -  v_3 =\SSK(\bar\Bv,\bar\eta)&
			&& \text{on $\BR_0^3$, $t>0$,} \\
		\Bv|_{t=0} =0&
 			&& \text{in $\BR_-^3$,} \\
		 h|_{t=0} =0 &
			&& \text{on $\BR^2$,}
	\end{aligned}\right.	
\end{equation*} 
together with
\begin{equation*}
	\left\{\begin{aligned}
		\De\eta &=0 && \text{in $\lhs$, $t>0$,} \\
		\eta &= h && \text{on $\bdry$, $t>0$.}
	\end{aligned}\right.
\end{equation*}
It is then possible to define a map $\Phi$ as
\begin{equation*}
	\Phi: {}_0X_{q,p}(\ep_1;1/2,3/4)\ni \bar\Bz \mapsto \Bz\in {}_0X_{q,p}(\ep_1;1/2,3/4),
\end{equation*}
because we can choose $\de_0$, $\ep_1$ so small that, by \eqref{141224_11},
\begin{equation*}
	\|\Phi(\bar{z})\|_{X_{q,p}(1/2,3/4)}
		\leq C_{p,q}(\|(\Bv_0,h_0)\|_{\BBI_1(\te)\times\BBI_2}+\|\bar\Bz\|^2_{X_{q,p}(1/2,3/4)}) 
		\leq C_{p,q}(\de_0+\ep_1^2) \leq \ep_1.
\end{equation*}
In addition, similarly to Step 1-Step 7, we see that
\begin{equation*}
	\|\Phi(\bar{\Bz}_1)-\Phi(\bar{\Bz}_2)\|_{X_{q,p}(1/2,3/4)}
		\leq \frac{1}{2}\|\bar{\Bz}_1-\bar{\Bz}_2\|_{X_{q,p}(1/2,3/4)}
\end{equation*}
for $\bar{\Bz}_i=(\Bv_i,\Fq_i,h_i,\eta_i)\in {}_0X_{q,p}(\ep_1;1/2,3/4)$ $(i=1,2)$.
The contraction mapping principle then implies that
$\Phi$ has a fixed point $\Bz'=(\Bv',\Fq',h',\eta')$,
which furnishes that System \eqref{eq:comp} admits a solution $(\Bv',\Fq',h')$.
Setting $\Bz=(\Bv,\Fq,h,\eta)$ as
\begin{equation*}
	\Bz=\Bz'+\Bz^* =
		(\Bv'+\Bv^*,\Fq'+\Fq^*,h'+h^*,\eta'+\CE(h^*)),
\end{equation*}
we see that $(\Bv,\Fq,h)$ is a global-in-time solution of the equations \eqref{NS3_1}-\eqref{NS3_5}
and satisfies the estimate: $\|\Bz\|_{X_{q,p}(1/2,3/4)}\leq \ep_0$ with $\ep_0=\ep_1$.
This  completes the proof of Theorem \ref{theo:main2}.

\section{Proof of Theorem \ref{theo:main3} and Theorem \ref{theo:main4}}\label{sec7}
In this section, we prove Theorem \ref{theo:main3} and Theorem \ref{theo:main4}
by using Theorem \ref{theo:main2}.

\subsection{Proof of Theorem \ref{theo:main3}}
(1).
%
%
We here show that $\Te_0$ is a $C^2$-diffeomorphism from $\BR_-^3$ onto 
$\Om_0=\{(x',x_3) \mid x'=(x_1,x_2)\in\BR^2, x_3<h_0(x')\}$.
Since it holds that
\begin{equation*}
	\|\CE(h_0)\|_{W_q^m(G)} \leq C_{q,G}\|h_0\|_{W_q^{m-1/q}(\BR^2)} \quad (m=2,3), \quad
	\|\nabla\CE(h_0)\|_{W_q^n(\BR_-^3)} \leq C_q\|h_0\|_{W_q^{n+1-1/q}(\BR^2)} \quad (n=1,2)
\end{equation*}
for any compact set $G$ in $\BR_-^3$, we obtain,  by the real interpolation method,
\begin{equation*}
	\|\CE(h_0)\|_{B_{q,p}^{3-1/p}(G)} \leq M_{p,q,G}\|h_0\|_{B_{q,p}^{3-1/q-1/p}(\BR^2)}, \quad \\ 
	\|\nabla\CE(h_0)\|_{B_{q,p}^{2-1/p}(\BR_-^3)} \leq M_{p,q}\|h_0\|_{B_{q,p}^{3-1/q-1/p}(\BR^2)} \notag 
\end{equation*}
with positive constants $M_{p,q.G}$, $M_{p,q}$. 
Combining these inequalities with Sobolev's embedding theorem:
\begin{equation*}
	B_{q,p}^{3-1/p}(G) \hookrightarrow BUC^2(G), \quad 
	B_{q,p}^{2-1/p}(\BR_-^3) \hookrightarrow BUC^1(\BR_-^3) \quad (1/p+3/q<1)
\end{equation*}
furnishes that $\CE(h_0)$ is a function of class $C^2$ and that
\begin{equation*}
	\frac{\pa x_3}{\pa y_3} = 1 + \frac{\pa}{\pa y_3}\CE(h_0) \geq 1- \|\nabla\CE(h_0)\|_{L_\infty(\BR_-^3)} \geq 1- M_{p,q}r_0. 
\end{equation*}
If we choose $r_0\in(0,1)$ so that $M_{p,q}r_0\in(0,1/2)$,
then we observe that $\Te_0$ is a $C^2$-diffeomorphism from $\BR_-^3$ onto $\Om_0$ as was discussed in Subsection \ref{subsec2_2}.



(2). We prove a smallness condition of $\Bv_0=\Bu_0\circ\Te_0$ and $h_0$ as follows:
\begin{equation}\label{170104_5}
	\|\Bv_0\|_{\BBI_1(\te)}+\|h_0\|_{\BBI_2} \leq \de_0,
\end{equation}
where $\de_0$ is the same positive number as in Theorem \ref{theo:main2}.
By direct calculations,
\begin{equation*}
	\|\Bv_0\|_{W_r^{l+1}(\BR_-^3)} \leq C_{p,q}F(\|\nabla\CE(h_0)\|_{W_\infty^l(\lhs)})\|\Bu_0\|_{W_r^{l+1}(\Om_0)}
\end{equation*}
for $r\in \{q,q(\te)\}$ and $l=0,1$, 
where $F:[0,\infty)\to\BR$ is a continuous function with $F(0)=1$.
Combining this inequality with the real interpolation yields that
$\|\Bv_0\|_{\BBI_1(\te)} \leq c_1\|\Bu_0\|_{\BBJ_{q,p,\te}(h_0)}$.
If necessary, we choose $r_0$ so small that
\begin{equation*}
	\|\Bv_0\|_{\BBI_1(\te)}+\|h_0\|_{\BBI_2}\leq  (c_1+1)r_0\leq \de_0,
\end{equation*}
which implies \eqref{170104_5}.
On the other hand, the condition \eqref{comp:2} implies \eqref{comp:1}.
Thus we obtain


(3).  Noting that
\begin{align*}
	&W_p^1(\BR_+,W_q^2(G)) \cap L_p(\BR_+,W_q^3(G)) \hookrightarrow BUC([0,\infty),B_{q,p}^{3-1/p}(G)), \\
	&W_p^1(\BR_+,W_q^1(\lhs)) \cap L_p(\BR_+,W_q^2(\lhs))\hookrightarrow BUC([0,\infty),B_{q,p}^{2-1/p}(\lhs)),
\end{align*}
we can prove the required properties in the same manner as in (1).
This completes the proof of Theorem \ref{theo:main3}.

\subsection{Proof of Theorem \ref{theo:main4}}
Let $(\Bu,\Fp,h)$ be the solution, obtained in Theorem \ref{theo:main3}, to System \eqref{NS1}.
Then, by the change of variables,
\begin{equation*}
	\|\Bu(t)\|_{L_q(\Om_t)}=\left(\int_{\BR_-^3}|\Bv(y,t)|^q\left|\frac{\pa x}{\pa y}\right|\intd y\right)^{1/q}
	\leq C_{p,q}\|\Bv(t)\|_{L_q(\lhs)},
\end{equation*}
where $\pa x/\pa y$ is the Jacobian matrix defined as in \eqref{Jacobian}.
Thus, by Theorem \ref{theo:main3}, we observe that
\begin{equation*}
	\|\Bu(t)\|_{L_r(\Om_t)} \leq \ep_0 C_{p,q}(t+2)^{-\Fm(\bar{q},r)} \quad \text{for $r\in\{q,2\}$,}
\end{equation*}
which, combined with the interpolation inequality:
\begin{equation*}
	\|f\|_{L_{s_1}(\BR_-^3)}\leq \|f\|_{L_{s_2}(\BR_-^3)}^{\al}\|f\|_{L_{s_3}(\BR_-^3)}^{1-\al} \quad 
	\text{for } \frac{1}{s_1} = \frac{\al}{s_2}+\frac{1-\al}{s_3}, \text{ $0\leq \al \leq 1$,} 
\end{equation*}
furnishes that  $\|\Bu(t)\|_{L_r(\Om_t)}=O(t^{-\Fm(\bar{q},r)})$ as $t\to\infty$ for $2\leq r \leq q$.
Analogously, the asymptotic behavior of the other terms can be proved.
This completes the proof of Theorem \ref{theo:main4}.

\def\thesection{A}
\renewcommand{\theequation}{A.\arabic{equation}}
\section{}

In the appendix, we consider the $N$-dimensional case for $N\geq 2$.
Let
\begin{align*}
	\BR_-^N=\{(x',x_N) \mid x'=(x_1,\dots,x_{N-1})\in\BR^{N-1}, x_N<0\}, \\
	\BR_0^N=\{(x',x_N) \mid x'=(x_1,\dots,x_{N-1})\in\BR^{N-1}, x_N=0\}.
\end{align*}

The aim of this appendix is to prove the existence of extension
used in Subsections \ref{subsec:3_2} and \ref{subsec:3_3}, i.e. we prove

\begin{lemm}\label{lemm:A1}
Let $2<p<\infty$ and $1<q<\infty$, and suppose that $2/p+1/q<1$.
Then, for any $u\in H_{q,p}^{1,1/2}(\BR_-^N\times\BR_+)$ with $u|_{t=0}=0$ on $\BR_0^N$,
there exists $\wt u\in H_{q,p}^{1,1/2}(\BR_-^N\times\BR)$ such that
\begin{equation*}
	\wt u=u \quad \text{in $\BR_-^N$ $(t>0)$,} \quad \wt u=0 \quad \text{on $\BR_0^N$ $(t<0)$}
\end{equation*}
and that
\begin{equation*}
	\|\wt u\|_{H_{q,p}^{1,1/2}(\BR_-^N\times\BR)} \leq C_{N,p,q}\|u\|_{H_{q,p}^{1,1/2}(\BR_-^N\times\BR_+)}
\end{equation*}
with some positive constant $C_{N,p,q}$ independent of $u$ and $\wt u$.
\end{lemm}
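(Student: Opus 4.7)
The plan is to reduce the problem to standard trace and extension results for anisotropic Bessel-potential spaces by splitting $u$ into a piece with vanishing spatial trace on $\BR_0^N$ (which can be reflected in time without disturbing the boundary condition) and a piece encoded by its spatial trace (which is controlled using the corner compatibility $u|_{t=0}=0$ on $\BR_0^N$).

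First I would form $\phi:=u|_{\BR_0^N\times\BR_+}$. By the trace theorem applied to the anisotropic space $H_{q,p}^{1,1/2}(\BR_-^N\times\BR_+)=H_p^{1/2}(\BR_+,L_q(\BR_-^N))\cap L_p(\BR_+,W_q^1(\BR_-^N))$, this trace lies in a suitable anisotropic Besov/Bessel-potential space $Y$ on $\BR_0^N\times\BR_+$, with $\|\phi\|_Y\le C\|u\|_{H_{q,p}^{1,1/2}(\BR_-^N\times\BR_+)}$. The condition $2/p+1/q<1$ ensures that the effective time-regularity index of $Y$ exceeds $1/p$, so the evaluation $\phi(\cdot,0)$ is a bounded operation into $L_q(\BR_0^N)$; by the hypothesis $u|_{t=0}=0$ on $\BR_0^N$, we obtain $\phi(\cdot,0)=0$.

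Since $p>2$ gives $1/p<1/2$ and the effective time-index of $Y$ stays below $1+1/p$, the classical zero-extension result on a half-line (in the vector-valued Bessel-potential scale) yields that the extension $\phi^e$ of $\phi$ by zero for $t<0$ is bounded from $Y(\BR_0^N\times\BR_+)$ into $Y(\BR_0^N\times\BR)$. Applying a bounded right inverse of the spatial trace operator on $\BR_-^N\times\BR$ then produces $\Phi\in H_{q,p}^{1,1/2}(\BR_-^N\times\BR)$ with $\Phi|_{\BR_0^N\times\BR}=\phi^e$ and $\|\Phi\|\le C\|u\|_{H_{q,p}^{1,1/2}(\BR_-^N\times\BR_+)}$; in particular $\Phi|_{\BR_0^N}=0$ for $t<0$. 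Now consider $u_0:=u-\Phi|_{\BR_-^N\times\BR_+}\in H_{q,p}^{1,1/2}(\BR_-^N\times\BR_+)$, which has zero spatial trace on $\BR_0^N\times\BR_+$. Extend it in time by even reflection, $w(x,t):=u_0(x,|t|)$; because $1/2<1+1/p$, this reflection is bounded $H_p^{1/2}(\BR_+,L_q(\BR_-^N))\to H_p^{1/2}(\BR,L_q(\BR_-^N))$, the $L_p(\BR,W_q^1(\BR_-^N))$ component is trivial, and the spatial vanishing trace is preserved pointwise in $t$. Setting $\tilde u:=w+\Phi$, one checks that $\tilde u|_{t>0}=u_0+\Phi=u$, while for $t<0$ both $w$ and $\Phi$ vanish on $\BR_0^N$, so $\tilde u|_{\BR_0^N}=0$. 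The required norm estimate follows by collecting the bounds from each step.

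The main obstacle is the second step: identifying the precise anisotropic trace space $Y$ produced by the $H_{q,p}^{1,1/2}$ trace theorem and verifying that its effective time-regularity lies in the open window $(1/p,1+1/p)$ where zero extension of functions with vanishing corner trace is bounded. This is precisely the role of the standing assumptions $p>2$ and $2/p+1/q<1$, and the verification essentially reduces to an interpolation computation between $L_p(\BR_+,W_q^{1-1/q}(\BR_0^N))$ and the half-order time space inherited from the $H_p^{1/2}(\BR_+,L_q(\BR_-^N))$ component, together with the standard corner-trace description via Sobolev-Slobodecki{\u\i} spaces.
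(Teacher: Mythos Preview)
Your approach is correct but takes a genuinely different route from the paper. You split off the spatial boundary trace $\phi=u|_{\BR_0^N\times\BR_+}$, extend $\phi$ by zero in time using the corner compatibility $\phi(\cdot,0)=0$, lift it back to $\Phi$, and then even-reflect the zero-trace remainder $u_0=u-\Phi$ in time. The paper instead never touches the spatial trace: it solves the parabolic problem $\partial_t v+(1-\Delta_D)v=0$, $v(0)=u|_{t=0}$, with the Dirichlet Laplacian on $L_q(\BR_-^N)$, so that $v$ automatically vanishes on $\BR_0^N$; it then sets $\tilde u=u$ for $t>0$ and $\tilde u=v(-t)$ for $t<0$, and proves the $H_p^{1/2}$ regularity across $t=0$ by a Fourier-multiplier computation together with a pointwise-multiplier bound for $\chi_{\BR_+}$ on $H_p^{-1/2}$. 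The technical core there is the identification $(L_q,D(A))_{1/2-1/p,p}=\{f\in B_{q,p}^{1-2/p}(\BR_-^N):f|_{\BR_0^N}=0\}$ and $D(A^{1/2})=\{f\in W_q^1(\BR_-^N):f|_{\BR_0^N}=0\}$, obtained via the $H^\infty$-calculus of $-\Delta_D$.

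What each approach buys: your argument is more elementary in spirit---no semigroups, no $H^\infty$-calculus, no bounded imaginary powers---but it relocates all the work into the anisotropic trace theorem you flag as the ``main obstacle'', namely showing that the spatial trace lands in a space $Y$ with time regularity exactly $\tfrac12-\tfrac1{2q}$, so that $2/p+1/q<1$ puts this index in the window $(1/p,1+1/p)$ where zero extension with vanishing initial value is bounded. The paper's route trades that trace-space computation for operator-theoretic input that is standard but heavier; both exploit the hypothesis $2/p+1/q<1$ at the same essential point (existence of the time trace and membership of $u(0)$ in the correct compatibility class).
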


\begin{proof}
Let $u^e$ be the even extension of $u$ with respect to $x_N$.
Then 
$$
u^e\in H_{q,p}^{1,1/2}(\BR^N\times \BR_+) \text{\ \  with \ \  } 
\|u^e\|_{H_{q,p}^{1,1/2}(\BR^N\times \BR_+)}
\leq C\|u\|_{H_{q,p}^{1,1/2}(\BR_-^N\times \BR_+)}.
$$
By the time trace theorem (cf. \cite[Theorem 3.4.8 and Example 3.4.9]{PS16}),
we have 
$$
u^e|_{t=0}\in B_{q,p}^{1-2/p}(\BR^N) \text{\ \ with \ \ }
\|u^e|_{t=0}\|_{B_{q,p}^{1-2/p}(\BR^N)}\leq C\|u^e\|_{H_{q,p}^{1,1/2}(\BR^N\times \BR_+)}.
$$
Thus, setting $u(0)=u|_{t=0}$ in $\BR_-^N$, we see that 
\begin{equation}\label{timetrace}
\|u(0)\|_{B_{q,p}^{1-2/p}(\BR_-^N)} 
\leq \|u^e|_{t=0}\|_{B_{q,p}^{1-2/p}(\BR^N)} 
\leq C \|u^e\|_{H_{q,p}^{1,1/2}(\BR^N\times \BR_+)} 
\leq C \|u\|_{H_{q,p}^{1,1/2}(\BR_-^N\times \BR_+)}. 
\end{equation}

We first consider in $X=L_q(\BR_-^N)$ the following system:
\begin{equation*}
	\left\{\begin{aligned}
		\pa_t v+  A v =0,& \quad \text{ $t>0$,} \\
		v|_{t=0} = u(0),& 
	\end{aligned}\right.
\end{equation*}
where we have set $A v= (1-\De_D)v$ for $v\in D(A)$ with domain
\begin{equation*}
	D(A)  = \{u \in W_q^2(\BR_-^N) \mid u=0 \text{ on $\BR_0^N$}\}.
\end{equation*}
Then, by \cite[Proposition 3.4.3]{PS16}, we have
\begin{equation}\label{170214_5}
	\|v\|_{H_p^{1/2}(\BR_+,X)}+\|v\|_{L_p(\BR_+,D(A^{1/2}))}\leq C_{p,q}\|u(0)\|_{(X,D(A))_{1/2-1/p,p}.}
\end{equation}
Since $-\De_D$ admits a bounded $H^\infty$-calculus
on $X$ on the sector $\Sigma_\varepsilon$ for each $0<\ep<\pi/2$ (cf. \cite[Corollary 7.3]{DHP01},
and \cite[Section 6]{DHP01} for the definition),
the operator $A=1-\De_D$ admits bounded imaginary powers on $X$ by \cite[Proposition 3.3.8, page 125 (3.62)]{PS16}.
Thus, by \cite[Theorem 3.3.7]{PS16}, we have
the characterization of $D(A^{1/2})$ as $D(A^{1/2}) = [X,D(A)]_{1/2}$.
In addition, by \cite[Subsection 2.3]{NS03},
\begin{equation}\label{170218_1}
 [X,D(A)]_{1/2}=\{u\in W_q^1(\BR_-^N) \mid u=0 \text{ on $\BR_0^N$}\}.
\end{equation}
On the other hand, by \cite[Theorem 4.9.1]{Amann09},
we observe that
\begin{equation}\label{170219_1}
	(X,D(A))_{1/2-1/p,p} = \{u\in B_{q,p}^{1-2/p}(\BR_-^N) \mid u=0 \text{ on $\BR_0^N$}\}.
\end{equation}
Summing up \eqref{timetrace}-\eqref{170219_1}, we have
\begin{align}
&C_1\|v\|_{H_{q,p}^{1,1/2}(\BR_-^N\times\BR_+)} 
\leq \|u(0)\|_{B_{q,p}^{1-2/p}(\BR_+^N)}\leq C_2\|u\|_{H_{q,p}^{1,1/2}(\BR_-^N\times\BR_+)}, \label{170219_2}\\
	&v=0 \quad \text{on $\BR_0^N$ $(t>0)$,} \label{170219_7}
\end{align}
with some positive constants $C_1$ and $C_2$.

We now set $w=w(x,t)$ as follows: $w=u(t)$ when $t>0$ and $w=v(-t)$ when $t<0$.
It clearly holds that, by \eqref{170219_2},
\begin{equation}\label{170214_7}
	\|w\|_{L_p(\BR,W_q^1(\BR_-^N))} \leq C_{p,q}\|u\|_{H_{q,p}^{1,1/2}(\BR_-^N\times\BR_+)}.
\end{equation}
In what follows, we consider the time regularity of $w$. 
By using the even extension with respect to time $t$,
we extend $u(t)$ and $v(t)$ to $U(t)$ and $V(t)$ defined on $\BR$, respectively, such that
\begin{align}\label{170219_3}
	&u(t)=U(t) \quad (t>0), \quad v(t) = V(t) \quad (t>0), \\
	&\|U\|_{H_{q,p}^{1,1/2}(\BR_-^N\times\BR)} \leq C\|u\|_{H_{q,p}^{1,1/2}(\BR_-^N\times\BR_+)}, \label{est:ext_U} \\
	&\|V\|_{H_{q,p}^{1,1/2}(\BR_-^N\times\BR)} \leq C\|v\|_{H_{q,p}^{1,1/2}(\BR_-^N\times\BR_+)}. \label{est:ext_V}
\end{align}
Then, we have
\begin{align*}
	D_t^{1/2}w = \CF_\tau^{-1}[(1+|\tau|^2)^{1/4}\CF_t[\chi_{\BR_+}(t)U(t)+\chi_{\BR_-}(t)V(-t)](\tau)](t),
\end{align*}
where $D_t^{1/2}$ is defined in Subsection \ref{subsec2_1}. Since 
\begin{equation*}
	(1+|\tau|^2)^{1/4} = \frac{1+|\tau|^2}{(1+|\tau|^2)^{3/4}}
		=\frac{1}{(1+|\tau|^2)^{3/4}}-\frac{i\tau}{(1+|\tau|^2)^{3/4}}i\tau
\end{equation*}
and 
\begin{equation*}
	\pa_t\left(\chi_{\BR_+}(t)U(t)+\chi_{\BR_-}(t)V(-t)\right)
		=\chi_{\BR_+}(t)(\pa_t U)(t)-\chi_{\BR_-}(t)(\pa_t V)(-t)
\end{equation*}
we see that
\begin{align*}
	D_t^{1/2}w
		&=\CF_\tau^{-1}\left[\frac{1}{(1+|\tau|^2)^{3/4}}\CF_t\left[\chi_{\BR_+}(t)U(t)+\chi_{\BR_-}(t)V(-t)\right](\tau)\right](t) \\
		&-\CF_\tau^{-1}\left[\frac{i\tau}{(1+|\tau|^2)^{3/4}}\CF_t\left[\chi_{\BR_+}(t)(\pa_t U)(t) -\chi_{\BR_-}(t)(\pa_t V)(-t)\right](\tau)\right](t) \\
		&=:I_1(t)+I_2(t).
\end{align*}
Combining the vector-valued Fourier multiplier theorem of Zimmermann \cite[Proposition 3]{Zimmermann89} with
\eqref{170219_2}, \eqref{est:ext_U}, and \eqref{est:ext_V}
furnishes that 
\begin{equation*}
	\|I_1\|_{L_p(\BR,L_q(\BR_-^N))}
		\leq C\|(U,V)\|_{L_p(\BR,L_q(\BR_-^N))}
		\leq C_{p,q}\|u\|_{H_{q,p}^{1,1/2}(\BR_-^N\times\BR_+)}.
\end{equation*}

We next estimate $I_2(t)$ that can be written as
\begin{align*}
	I_2(t) 
		&=-\CF_\tau^{-1}\left[\frac{i\tau}{(1+|\tau|^2)^{1/2}}\CF_t\left[D_t^{-1/2}\left(\chi_{\BR_+}(t)(\pa_t U)(t)\right)\right](\tau)\right](t) \\
		&+\CF_\tau^{-1}\left[\frac{i\tau}{(1+|\tau|^2)^{1/2}}\CF_t\left[D_t^{-1/2}\left(\chi_{\BR_-}(t)(\pa_t V)(-t)\right)\right](\tau)\right](t) \\
		&=:J_1(t)+J_2(t).
\end{align*} 
Then, by the vector-valued Fourier multiplier theorem again,
we have
\begin{equation*}
	\|J_1\|_{L_p(\BR,L_q(\BR_-^N))} \leq C_{p,q}\|\chi_{\BR_+}\pa_t U\|_{H_p^{-1/2}(\BR,L_q(\BR_-^N))}.
\end{equation*}
Combining this inequality with \cite[Theorem 1.1]{MV15} and \eqref{est:ext_U} furnishes that
\begin{equation*}
	\|J_1\|_{L_p(\BR,L_q(\BR_-^N))} 
		\leq C\|\pa_t U\|_{H_p^{-1/2}(\BR,L_q(\BR_-^N))}
		\leq C\|U\|_{H_p^{1/2}(\BR,L_q(\BR_-^N))} 
		\leq C\|u\|_{H_{q,p}^{1,1/2}(\BR_-^N\times\BR_+)}.
\end{equation*}
Similarly, we have
	$\|J_2\|_{L_p(\BR,L_q(\BR_-^N))}
		\leq C\|u\|_{H_{q,p}^{1,1/2}(\BR_-^N\times\BR_+)}$.
It thus holds that
\begin{equation*}
	\|w\|_{H_p^{1/2}(\BR,L_q(\BR_-^N))} \leq C_{p,q}\|u\|_{H_{q,p}^{1,1/2}(\BR_-^N\times\BR_+)},
\end{equation*}
which, combined with \eqref{170219_7} and \eqref{170214_7},
implies that $w$ is the desired extension of Lemma \ref{lemm:A1}.
This completes the proof of the lemma.
\end{proof}

\end{document}